\documentclass[bj,authoryear,noshowframe]{imsart}

\providecommand{\xishape}{\itshape}

\makeatletter
\def\journal@name{}
\makeatother

\usepackage{mathtools}
\usepackage{bbm}
\usepackage{booktabs}
\usepackage{caption}
\usepackage{subcaption}
\usepackage{float}
\usepackage{hyperref}
\hypersetup{hidelinks}

\startlocaldefs
\DeclareMathOperator{\Var}{Var}
\DeclareMathOperator{\Cov}{Cov}
\DeclareMathOperator{\Corr}{Corr}
\DeclareMathOperator{\tr}{tr}
\DeclareMathOperator{\MSE}{MSE}

\theoremstyle{plain}
\newtheorem{theorem}{Theorem}[section]
\newtheorem{lemma}[theorem]{Lemma}
\newtheorem{proposition}[theorem]{Proposition}
\newtheorem{corollary}[theorem]{Corollary}

\theoremstyle{definition}
\newtheorem{definition}[theorem]{Definition}
\newtheorem{assumption}[theorem]{Assumption}
\newtheorem{remark}[theorem]{Remark}

\endlocaldefs

\begin{document}

\begin{frontmatter}

\title{Finite-Sample Bounds for Expected Signature Estimation under Weak Dependence}
\runtitle{Finite-sample bounds for expected signatures}
\runauthor{B.\ Schenck}

\begin{aug}
\author{\inits{B.}\fnms{Bryson}~\snm{Schenck}\ead[label=e1]{brysondale@gmail.com}}
\address{Independent Researcher\\
New York, NY, USA\\
\printead{e1}}
\end{aug}

\begin{abstract}
The expected signature uniquely determines the law of a random rough path under a moment-growth condition, yet finite-sample bounds for estimating its truncations from a single long dependent trajectory remain unavailable. We study a strictly stationary stochastic process equipped with a geometric rough-path lift, observed in non-overlapping blocks of equally-spaced samples, and prove a non-asymptotic mean-squared error (MSE) bound for the block-averaging estimator of its truncated expected signature. Under moment and stationarity assumptions together with a direct covariance-decay condition on block signatures---strictly weaker than $\alpha$-mixing and applicable to long-range-dependent processes---the error separates into a discretization term and a fluctuation term, with rates determined respectively by path regularity and dependence strength. A levelwise rough-factorial variance analysis keeps finite-truncation constants explicit and yields an optimal allocation rule under a fixed observation budget. We verify the assumptions for independent-coordinate fractional Ornstein--Uhlenbeck processes in three regimes: short-range (Hurst $1/4<H<1/2$), semimartingale ($H=1/2$), and long-range ($H>1/2$); in all three, the block-signature covariance is summable, so the fluctuation term decays at the same rate as in the independent-block case, even under long memory at $H>1/2$. Monte Carlo experiments show empirical slopes steeper than the guaranteed upper-bound rates.
\end{abstract}

\begin{keyword}
\kwd{expected signature}
\kwd{finite-sample bounds}
\kwd{fractional Ornstein--Uhlenbeck}
\kwd{rough paths}
\kwd{weak dependence}
\end{keyword}

\begin{keyword}[class=MSC]
\kwd{60L10}
\kwd[; ]{60L20}
\kwd[; ]{62G05}
\kwd[; ]{60G22}
\end{keyword}

\end{frontmatter}

% 1. Introduction
\section{Introduction}
\label{sec:introduction}

We establish non-asymptotic bounds for estimating the expected signature from a single long dependent trajectory. The expected signature is the mean of the signature transform~\citep{Lyons1998, FrizVictoir2010}, an iterated-integral feature map that, under a moment-growth condition, characterizes the law of a random rough path~\citep{ChevyrevLyons2016, Chevyrev2022Signature}, with pathwise signatures unique up to tree-like equivalence~\citep{HamblyLyons2010, BoedihardjoGengLyonsYang2016}. Signature methods underpin applications in machine learning~\citep{ChevyrevKormilitzin2016, Kidger2020Neural} and financial model calibration~\citep{Cuchiero2022Signature}.

In many applications---finance, physics, and time series analysis---one observes a single long realization of a dependent process and must estimate its expected signature under a fixed observation budget. A practical design question is therefore unavoidable: how should that budget be split between mesh refinement, which reduces discretization bias, and block count, which reduces estimation variance? Without finite-sample bounds, this allocation remains heuristic. Although asymptotic consistency and normality for block-averaging estimators under temporal dependence have been established~\citep{Lucchese2025Learning}, explicit non-asymptotic error bounds quantifying how path roughness and dependence strength jointly determine the achievable error have remained unavailable. The gap is especially important for long-range-dependent processes such as stationary fractional Ornstein--Uhlenbeck (fOU) with $H > 1/2$, which lie outside standard $\alpha$-mixing-based analyses~\citep{GehringerLi2022}.

Consider a trajectory of a geometric $p$-rough path (a continuous path enhanced with iterated-integral data sufficient to define a pathwise integration theory; see Section~\ref{sec:mathematical_framework}), partitioned into $K$ non-overlapping blocks of length $\Delta$, each observed at $n$ equally-spaced times. Under strict stationarity and a \emph{weak dependence} condition---realized either as classical $\alpha$-mixing or, more permissively, as a direct covariance-decay bound on block signatures (Assumptions~\ref{ass:A} and~\ref{ass:Aprime} respectively, both falling within the broader weak-dependence framework that unifies mixing and covariance-style decay conditions for functionals of stationary sequences~\citealp{DoukhanLouhichi1999, Dedecker2007Weak})---Theorem~\ref{thm:main_result} yields
\begin{equation}
\label{eq:intro_bound}
\MSE \;\le\; \frac{C_{\mathrm{bias}}(\Delta, M)}{(n-1)^\gamma} \;+\; \frac{V(\Delta)}{K^\eta},
\end{equation}
in the summable regime, with an arbitrarily small $\varepsilon$-loss under non-summable $\alpha$-mixing and a logarithmic factor at the direct-covariance borderline (Theorem~\ref{thm:main_result}). Here $\gamma$ tracks path regularity and $\eta$ tracks dependence strength; the two terms move in opposite directions in $n$ and $K$, and the optimal allocation under a fixed budget $N = Kn$ is determined by their interaction.

The closest existing work is~\citet{Lucchese2025Learning}, which proves asymptotic consistency and a $K^{-1/2}$ central limit theorem for expected-signature estimators in a flexible framework accommodating both independent path samples and the single-trajectory ``chop'' scheme, irregular observation meshes, and a martingale correction. The gap addressed here is non-asymptotic. Our covariance-decay condition is strictly weaker than $\alpha$-mixing, and Section~\ref{sec:verification} verifies it directly in these long-range-dependent regimes. The MSE bound is explicit in $K$ and $n$, with a levelwise variance decomposition whose rough-factorial damping yields finite constants for each fixed truncation level $M$. An explicit allocation rule then minimizes the upper bound under a fixed observation budget $N = Kn$.

The hypotheses are verified for independent-coordinate fOU spanning the short-range ($1/4 < H < 1/2$), semimartingale ($H = 1/2$), and long-range ($H > 1/2$) regimes (Section~\ref{sec:verification}); the discretization rate $\gamma$ follows the Hurst-dependent regimes of Proposition~\ref{prop:gamma_values}. The verification yields $\eta = 1$ in all three regimes: block signatures are translation-invariant functionals of increments, so the long memory of the fOU \emph{level} --- which obstructs $\alpha$-mixing for $H > 1/2$ --- never transfers to the block-signature covariance, which decays summably (at rate $\nu = 4-2H > 1$ for $H \neq 1/2$, exponentially at $H = 1/2$; Proposition~\ref{prop:fOU_covariance_decay}). Ground-truth expected signatures are computed via the regularized Wick formulas of~\citet{CassFerrucci2024}, with the $H > 1/4$ threshold sharp for level-2 iterated-integral convergence~\citep[Thm.~2]{CoutinQian2002}. Monte Carlo experiments at $H \in \{0.40, 0.50, 0.60\}$ show empirical convergence rates strictly faster than the upper-bound rates across all three regimes (Section~\ref{sec:numerical_results}).

The remainder of the paper establishes the framework and assumptions (Section~\ref{sec:mathematical_framework}), states the main result with optimal allocation formulas (Section~\ref{sec:convergence}), verifies the assumptions for fOU (Section~\ref{sec:verification}), and illustrates the bounds numerically (Section~\ref{sec:numerical_results}). Limitations and open questions are discussed in Section~\ref{sec:discussion}. Proofs of the main theorem appear in Appendix~\ref{app:proof}; the discretization rates of Proposition~\ref{prop:gamma_values} are verified in Appendix~\ref{app:gamma}; exact expected signature formulas for fOU are derived in Appendix~\ref{app:expected_signature_fOU}.

% 2. Preliminaries
\section{Preliminaries}
\label{sec:mathematical_framework}

We fix the tensor and rough-path notation used in Theorem~\ref{thm:main_result}, then state the standing assumptions. The assumptions isolate four distinct sources of difficulty in signature estimation: path regularity (governing moments), temporal dependence (governing variance decay), stationarity (enabling block averaging), and discretization quality (governing bias).

\subsection{Tensor Algebra}

Equip $\mathbb{R}^d$ with the Euclidean inner product. The truncated tensor algebra of degree $M \ge 1$,
\begin{equation}
T^{(M)}(\mathbb{R}^d) = \bigoplus_{m=0}^{M} (\mathbb{R}^d)^{\otimes m},
\end{equation}
is endowed with the Hilbert--Schmidt inner product, so that
\begin{equation}
\|a\|_{\mathrm{HS}}^2 = \sum_{m=0}^{M} \|a^{(m)}\|^2,
\end{equation}
where $a^{(m)}$ denotes the degree-$m$ component. For a $T^{(M)}(\mathbb{R}^d)$-valued random variable $Y$, write $\|Y\|_{L^2} = (\mathbb{E}[\|Y\|_{\mathrm{HS}}^2])^{1/2}$.

\subsection{Rough Paths}

Fix roughness $p \ge 1$ and truncation $M \ge \lfloor p \rfloor$. Let $X: [0, \infty) \to \mathbb{R}^d$ be continuous, admitting a lift to a random geometric $p$-rough path $\mathbf{X}$ in $G^{\lfloor p \rfloor}(\mathbb{R}^d)$; see~\citep{FrizVictoir2010, FrizHairer2020}. The lift determines the signature to all degrees $M \ge \lfloor p \rfloor$ via the Lyons extension theorem~\citep[Thm.~9.5]{FrizVictoir2010}. The signature of $\mathbf{X}$ over $[s,t]$ is
\begin{equation}
S(\mathbf{X})_{s,t} = \Bigl( 1,\, \int_{s<u<t} dX_u,\, \int_{s<u_1<u_2<t} dX_{u_1} \otimes dX_{u_2},\, \ldots \Bigr),
\end{equation}
where the iterated integrals are defined via rough-path integration (e.g.,~\citealp[Thm.~4.10]{FrizHairer2020} for $p \in [2,3)$). Write $S^{(M)}(\mathbf{X})_{s,t} \in T^{(M)}(\mathbb{R}^d)$ for the truncation to degree $M$ and $\beta = 1/p$ for the H\"older exponent. We write $(a)! := \Gamma(a+1)$ for non-integer arguments, and $\Lambda_p > 0$ for the Lyons rough-factorial constant of~\citet[Thm.~2.2.1]{Lyons1998}. Throughout, the lift $\mathbf{X}_{s,t}$ is assumed measurable with respect to $\sigma(X_u : u \in [s,t])$; for canonical Gaussian lifts this is automatic by~\citep[Thm.~15.33(iv)]{FrizVictoir2010}.

\subsection{Assumptions}
\label{subsec:assumptions}

\begin{assumption}[Moment control]
\label{ass:M}
For every $r \ge 1$ and every fixed $T < \infty$, there exists $C_r(T) < \infty$ such that $\mathbb{E}[\|\mathbf{X}\|_{p\text{-var};[s,t]}^{r}] \le C_r(T) (t-s)^{r\beta}$ for all $0 \le s \le t \le T$. Moreover, $\mathbb{E}\|X_0\|^2 < \infty$.
\end{assumption}

\begin{assumption}[$\alpha$-mixing]
\label{ass:A}
The strong mixing coefficient~\citep{Bradley2005}
\[
\alpha_{X}(t) \;=\; \sup_{s \ge 0}\; \sup\bigl\{|\mathbb{P}(A \cap B) - \mathbb{P}(A)\mathbb{P}(B)| : A \in \mathcal{F}_{\le s},\; B \in \mathcal{F}_{\ge s+t}\bigr\},
\]
where $\mathcal{F}_{\le s} = \sigma(X_u : u \le s)$ and $\mathcal{F}_{\ge t} = \sigma(X_u : u \ge t)$, satisfies one of:
\begin{enumerate}
\item[\emph{(E)}] \textbf{Exponential mixing:} $\alpha_{X}(t) \le C_X e^{-\lambda_X t}$ for some $C_X, \lambda_X > 0$.
\item[\emph{(P)}] \textbf{Polynomial mixing:} $\alpha_{X}(t) \le C_X (1+t)^{-\nu}$ for some $C_X > 0$ and $\nu > 0$.
\end{enumerate}
\end{assumption}

For processes that need not be strongly mixing (e.g., long-memory Gaussian processes), we introduce an alternative to Assumption~\ref{ass:A}: a direct condition on the block-signature covariances. Here and below, $I_k := [(k-1)\Delta, k\Delta]$ denotes a generic block of length $\Delta$.

\begin{assumption}[Block-signature covariance decay]
\label{ass:Aprime}
Let $W_k^{(m)} := S^{(m)}(\mathbf{X})_{I_k} - \mathbb{E}[S^{(m)}(\mathbf{X})_{I_k}]$ denote the centered level-$m$ block signature over $I_k$, and define the lag-$h$ autocovariance $c_{h,m} := \mathbb{E}[\langle W_1^{(m)}, W_{1+h}^{(m)} \rangle_{\mathrm{HS}}]$ for $h \ge 0$ (independent of the base index by stationarity; Assumption~\ref{ass:S} below). For each $1 \le m \le M$, one of the following holds:
\begin{enumerate}
\item[\emph{(E$'$)}] \textbf{Exponential covariance decay:} there exist $\widetilde{C}_m, \lambda > 0$ such that
\[
|c_{h,m}| \;\le\; \frac{d^m\widetilde{C}_m}{\bigl(\Lambda_p(m/p)!\bigr)^2}\Delta^{2m\beta} e^{-\lambda h \Delta} \quad \text{for all } h \ge 1.
\]
\item[\emph{(P$'$)}] \textbf{Polynomial covariance decay:} there exist $\widetilde{C}_m > 0$ and $\nu > 0$ such that
\[
|c_{h,m}| \;\le\; \frac{d^m\widetilde{C}_m}{\bigl(\Lambda_p(m/p)!\bigr)^2}\Delta^{2m\beta}(1+h)^{-\nu} \quad \text{for all } h \ge 1.
\]
\end{enumerate}
The exponent $\nu$ (resp.\ $\lambda$) is assumed uniform across levels $1 \le m \le M$, while the constants $\widetilde{C}_m$ may depend on $m$ and on the block length $\Delta$.
\end{assumption}

\begin{remark}
\label{rem:A_implies_Aprime}
Under the moment control of Assumption~\ref{ass:M}, Assumption~\ref{ass:A} implies Assumption~\ref{ass:Aprime} via Rio's covariance inequality (Lemma~\ref{lem:hilbert_davydov}): under (E), (E$'$) holds for any $\lambda' < \lambda_X$, and under (P) with exponent $\nu$, (P$'$) holds for any $\nu' < \nu$, with the additional dimension factor and the mixing and moment constants absorbed into the level-dependent constant $\widetilde{C}_m$. Both losses are arbitrarily small (Lemma~\ref{lem:hilbert_davydov}, Proposition~\ref{prop:poly_mixing_rate}). Assumption~\ref{ass:Aprime} is strictly weaker than Assumption~\ref{ass:A} and accommodates non-mixing examples (Section~\ref{sec:verification}).
\end{remark}

\begin{assumption}[Strict stationarity]
\label{ass:S}
The base process $X$ is strictly stationary: for all $\tau \ge 0$ and all $0 \le t_1 < \cdots < t_k$,
\[
(X_{\tau+t_1}, \ldots, X_{\tau+t_k}) \stackrel{d}{=} (X_{t_1}, \ldots, X_{t_k}).
\]
In particular, $X_\tau \stackrel{d}{=} X_0$ for all $\tau \ge 0$. We further require that the lift inherit this invariance: $\vartheta_\tau\mathbf{X} \stackrel{d}{=} \mathbf{X}$ for all $\tau \ge 0$, where the time-shift $\vartheta_\tau$ acts on rough paths by $(\vartheta_\tau\mathbf{X})_{s,t} := \mathbf{X}_{\tau+s,\, \tau+t}$ (we reserve $\vartheta$ for this shift to avoid collision with the fOU mean-reversion rate $\theta$ of Section~\ref{sec:verification}). For the canonical lifts used in this paper, lift stationarity is automatic: the lift is the limit (in probability) of the lifted piecewise-linear interpolations of $X$ (naturality,~\citealp[Thm.~15.33(iv)]{FrizVictoir2010}), hence a shift-equivariant measurable functional of the path.
\end{assumption}

\begin{assumption}[Discretization rate]
\label{ass:gamma}
There exist $C_{\mathrm{approx}}(\Delta, M) > 0$ and $\gamma > 0$ such that for every $n \ge 2$, the piecewise-linear interpolation $X^{(n)}$ of $n$ equally-spaced observations at mesh $\delta = \Delta/(n-1)$, with $\mathbf{X}^{(n)}$ denoting its canonical rough-path lift (unique since piecewise-linear paths have bounded variation),
\begin{equation}
\label{eq:approx_rate}
\mathbb{E}\bigl[\|S^{(M)}(\mathbf{X})_{0,\Delta} - S^{(M)}(\mathbf{X}^{(n)})_{0,\Delta}\|_{\mathrm{HS}}^2\bigr] \le C_{\mathrm{approx}}(\Delta, M)\, \delta^\gamma.
\end{equation}
\end{assumption}

The exponent $\gamma$ reflects path regularity:

\begin{proposition}
\label{prop:gamma_values}
Assumption~\ref{ass:gamma} holds with:

\emph{(i)} $\gamma = 1 - \varepsilon$ for any $\varepsilon > 0$ at truncation level $M$, with $\gamma = 1$ attained exactly at signature level~2, for centered continuous Gaussian semimartingales $X = X_0 + \mathsf{N} + A$ with independent coordinates, bounded martingale quadratic-variation density ($d\langle \mathsf{N}^j\rangle/dt \le c_\mathsf{N}^2$ a.s.), $L^2$-bounded drift-density ($\mathbb{E}[(dA^j/du)^2] \le c_A^2$), and each drift coordinate $A^j$ adapted to $\sigma(X^j_s : s \le u)$. This class contains stationary Ornstein--Uhlenbeck and fOU at $H = 1/2$ (Appendix~\ref{app:gamma_semi}; cf.~\citealp{FrizRiedel2014, WongZakai1965}).

\emph{(ii)} $\gamma = 4H-1-\varepsilon$ for any $\varepsilon > 0$ when $H \in (1/4, 1/2)$ (with $\gamma = 4H-1$ attained exactly at signature level~2), and $\gamma = 2H$ exactly when $H \in (1/2, 1)$, for independent-coordinate fractional Ornstein--Uhlenbeck with Hurst $H \in (1/4, 1) \setminus \{1/2\}$ (see Appendix~\ref{app:gamma}). At $H = 1/2$, fOU coincides with stationary Ornstein--Uhlenbeck and case~(i) applies, consistent with the boundary values $4H-1 = 2H = 1$.

\emph{(iii)} $\gamma = 2$ for bounded variation paths satisfying Assumption~\ref{ass:M} with $p = 1$ and $\beta = 1$ (Appendix~\ref{app:gamma_bv}).
\end{proposition}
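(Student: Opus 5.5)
The plan is to handle the three cases through one reduction. We bound the truncated signature difference levelwise, using multiplicativity (Chen's identity) and the Lyons extension estimates. The estimate then reduces to controlling the level-1 and level-2 discrepancies between $\mathbf{X}$ and $\mathbf{X}^{(n)}$ on each mesh interval and summing over intervals. Concretely, write $S^{(M)}(\mathbf{X})_{0,\Delta}=\bigotimes_{i}S^{(M)}(\mathbf{X})_{t_i,t_{i+1}}$, and do the same for $\mathbf{X}^{(n)}$. The level-1 increments agree exactly, because the interpolation matches $X$ at the grid points. The local level-2 error on $[t_i,t_{i+1}]$ is therefore the antisymmetric part of the L\'evy area, $\mathbb{A}_{t_i,t_{i+1}}$, since the piecewise-linear lift has zero area on each cell. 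Telescoping the products gives
\[
S^{(M)}(\mathbf{X})_{0,\Delta}-S^{(M)}(\mathbf{X}^{(n)})_{0,\Delta}=\sum_i S_{0,t_i}\otimes\bigl(S(\mathbf{X})_{t_i,t_{i+1}}-S(\mathbf{X}^{(n)})_{t_i,t_{i+1}}\bigr)\otimes S^{(n)}_{t_{i+1},\Delta}.
\]
At level 2 this is exactly $\sum_i \mathbb{A}_{t_i,t_{i+1}}$. The higher levels consist of this sum, multiplied by lower-order signature pieces, plus a remainder.

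For level 2, we compute $\mathbb{E}\|\sum_i \mathbb{A}_{t_i,t_{i+1}}\|^2$ directly. With independent coordinates, each area component $\int (X^j-X^j_{t_i})\,dX^k$ over a cell is a second-chaos (or martingale) object.

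In case (i), we decompose $X=X_0+\mathsf N+A$. The martingale parts of the cell areas are orthogonal across cells, and each has second moment $\lesssim c_{\mathsf N}^4\delta^2$, so their sum is $O(\delta)$. The cross terms involving the drift are bounded by Cauchy--Schwarz using the $L^2$ drift density, and the adaptedness of $A^j$ to $X^j$ lets independence kill the mixed expectations; these contribute $O(\delta^{2})$ per cell in a way that keeps the total at $O(\delta)$. This gives $\gamma=1$ exactly at level 2.

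In case (ii), the covariance of the fOU increments, $\rho(u,v)$, behaves like that of fBm at small scales. The quantity $\mathbb{E}[\mathbb{A}_{I}\mathbb{A}_{J}]$ factorizes over coordinates into products of two-dimensional covariance integrals $R\binom{I}{J}$. For cells at distance $k\delta$ this is $\approx\delta^{4H}k^{4H-4}$ for $H<1/2$. Summing over $n^2$ pairs gives $n\delta^{4H}\sum_k k^{4H-4}\asymp \delta^{4H-1}$, which yields $\gamma=4H-1$. For $H>1/2$ the sum $\sum_k k^{4H-4}$ over $k\le n$ still converges when $H<3/4$. The stated rate $2H$ therefore has to come from the cross-covariance structure: for smooth-ish paths the area of each cell is itself small, and its correlations do not sum to a variance scaling. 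I would use the Coutin--Qian / Friz--Victoir 2D $\rho$-variation estimates (Friz--Victoir Thm.~15.45-type interpolation bounds) with the mixed covariance norm to obtain $\delta^{2H}$. The OU drift perturbation $\theta\int e^{-\theta(t-s)}dB^H_s$ is smooth relative to $B^H$ and contributes higher order.

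In case (iii), $X$ is $C^1$-like in mean square with $\beta=1$. The cell area is then $O(\delta^2)$ in $L^2$, the cells are weakly correlated, and this gives $\gamma=2$.

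For levels $m\ge3$, we use the local Lipschitz continuity of the Lyons extension in the inhomogeneous $p'$-variation metric for some $p'>p$, together with interpolation. The $p'$-variation distance between $\mathbf{X}$ and $\mathbf{X}^{(n)}$ is controlled by the level-2 rate, up to an $\varepsilon$-loss, via Friz--Victoir Thm.~15.42 / Friz--Riedel. The required moments are supplied by Assumption~\ref{ass:M} and Gaussian integrability, using Cass--Litterer--Lyons tail bounds on the local accumulated $N$ if needed. This is where the $-\varepsilon$ in cases (i) and (ii) arises, and why exactness holds only at level 2.

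I expect the main obstacle to be twofold. First, getting the exact exponent $2H$ for $H>1/2$ without loss requires care, since the generic rough-path machinery yields only $2H-1$-type or $\varepsilon$-lossy rates. There I would exploit the Young regime, where $\mathbb{X}$ is a Young integral, and bound the error directly in $L^2$ via Malliavin/second-chaos computations rather than through rough-path continuity. Second, obtaining moment-integrable constants for the higher levels requires care.
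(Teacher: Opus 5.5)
\textbf{There is a genuine gap: the exact exponent $\gamma=2H$ for $H\in(1/2,1)$ at truncation levels $m\ge3$.}

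First, you misplace the source of $2H$. Your own level-2 count already gives $\delta^{\min(4H-1,2)}$, which is strictly better than $\delta^{2H}$ when $H>1/2$. So level~2 needs no further work, and the detour through 2D $\rho$-variation bounds to "obtain $\delta^{2H}$" there is unnecessary. The exponent $2H$ is set by levels $m\ge3$, where the comparison is driven by the pointwise interpolation error $\sup_t\mathbb{E}\|X_t-X^{(n)}_t\|^2\lesssim\delta^{2H}$, with no trapezoidal cancellation.

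For those levels, your only concrete tool (Lyons-extension continuity, or Friz--Riedel) gives at best $\delta^{2H-\varepsilon}$, not $\delta^{2H}$. Your fallback of "second-chaos computations" cannot reach a level-$m$ coordinate, which lives in chaoses up to order $m$ with nested contraction kernels. The missing idea, as in the paper, is to compare the Cass--Ferrucci kernels $f^{X}_{w,j}$ and $f^{X^{(n)}}_{w,j}$ directly:
\begin{itemize}
\item Write $h_t,h^{(n)}_t\in\mathcal{H}$ for the Cameron--Martin representatives of $X_t-X_0$ and $X^{(n)}_t-X_0$, and set $h^e_t:=h_t-h^{(n)}_t$.
\item Telescoping the product of kernel factors leaves summands with exactly one error gate. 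Either it is a free factor $h^e_t$ with $\|h^e_t\|_{\mathcal{H}}\lesssim\delta^H$, or a paired difference $\langle h^e_s,h_t\rangle_{\mathcal{H}}+\langle h^{(n)}_s,h^e_t\rangle_{\mathcal{H}}=O(\delta^H)$.
\item All remaining factors are bounded uniformly in $n$ by the envelope $D(u,v)=C(1+|u-v|^{2H-2})$. This envelope is integrable precisely because $H>1/2$, and the piecewise-constant cell density of the interpolant's covariance obeys it too.
\item Free gates are small only in $\mathcal{H}$-norm, not pointwise, so the gate slot must be contracted first.
\end{itemize}
This yields $j!\,\|\tilde g_{w,j}\|^2_{\mathcal{H}^{\otimes j}}\lesssim\delta^{2H}$ at every chaos order, with no loss.

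\textbf{Your higher-level step in case (i) and for $H\in(1/4,1/2)$ misidentifies the controlling quantity.} The inhomogeneous $p'$-variation distance is not governed by the level-2 endpoint rate. It contains the $p'/2$-variation of $\mathbb{A}-\mathbb{A}^{(n)}$ over all subintervals, including sub-cell ones with no cancellation. Lipschitz continuity of the Lyons extension in that metric therefore gives an exponent strictly worse than $4H-1$ (Remark~\ref{rem:inhomogeneous}).

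What works is Friz--Riedel Theorem~5, applied directly at truncation $M$ with $r=2$. Its right-hand side is $\sup_t\|X^{(n)}_t-X_t\|_{L^2}^{1-\rho/\gamma_{\mathrm{FR}}}$, with $\sup_t\|X^{(n)}_t-X_t\|_{L^2}\lesssim\delta^{H}$ (or $\delta^{1/2}$ with $\rho=1$ in case (i)). The exponent $4H-1-\varepsilon$ then comes from the admissibility constraint $\rho/\gamma_{\mathrm{FR}}>\rho-1>(1-2H)/(2H)$, since $2H\bigl(1-(1-2H)/(2H)\bigr)=4H-1$. It is not inherited from level~2: for $H>1/2$ the same bound gives $2H-\varepsilon$, not $4H-1$.

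\textbf{Two smaller points.}
\begin{itemize}
\item In (i), the drift cell terms $\int_{t_i}^{t_{i+1}}(X_u-X_{t_i})\wedge dA_u$ are not orthogonal across cells. You need their squared $L^2$ norm to be $O(\delta^3)$, not $O(\delta^2)$, for Minkowski to give $O(\delta)$.
\item In (iii), no weak correlation is available or needed: Minkowski with $\|\mathbb{A}_i\|_{L^2}=O(\delta^2)$ suffices. However, levels $m\ge3$ must avoid your $\varepsilon$-lossy route. Your own Chen telescoping identity, together with $\|\pi_a(\mathbf{X}_{s,t})\|_{\mathrm{HS}}\le\|X\|^a_{1\text{-var};[s,t]}/a!$ (also valid for $X^{(n)}$) and H\"older, gives $O(\delta^2)$ exactly.
\end{itemize}
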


\begin{proof}
See Appendix~\ref{app:gamma}.
\end{proof}

\begin{remark}
Proposition~\ref{prop:gamma_values}(i) and the rough regime of case~(ii) each admit an arbitrarily small $\varepsilon$-loss. We write $\varepsilon_0$ for the loss in case~(i) ($\gamma = 1 - \varepsilon_0$) and $\varepsilon_1$ for the loss in the rough regime of case~(ii) ($\gamma = 4H-1-\varepsilon_1$ for $H \in (1/4,1/2)$); these labels are used in sample-complexity computations (Corollary~\ref{cor:sample_complexity}). The exponents $\gamma = 2H$ for $H \in (1/2,1)$ in case~(ii) and $\gamma = 2$ in case~(iii) carry no $\varepsilon$-loss. When discussing heuristic sample-allocation exponents we occasionally suppress the arbitrarily small losses, but all rigorous statements below retain them where they occur.
\end{remark}

% 3. Main Results
\section{Main Results}
\label{sec:convergence}

Consider a trajectory over $T = K\Delta$, with blocks $I_k = [(k-1)\Delta, k\Delta]$ for $k = 1, \ldots, K$ and each block observed at $n \ge 2$ equally-spaced times with mesh $\delta = \Delta/(n-1)$. Let $X^{(n)}_k$ denote the piecewise-linear interpolation in block $I_k$, and $\mathbf{X}^{(n)}_k$ its rough-path lift. Define the true and observable block signatures:
\begin{equation}
Z_k := S^{(M)}(\mathbf{X})_{I_k}, \qquad Y_k := S^{(M)}(\mathbf{X}^{(n)}_k)_{I_k}.
\end{equation}

\begin{definition}
\label{def:estimator}
The block-averaging estimator is $\widehat{\Sigma}_{K} := K^{-1} \sum_{k=1}^{K} Y_k$, targeting $\mathbb{E}[S^{(M)}] := \mathbb{E}[Z_1]$.
\end{definition}

\begin{lemma}
\label{lem:stationarity}
Under Assumption~\ref{ass:S}, $(Z_k)_{k \ge 1}$ and $(Y_k)_{k \ge 1}$ are each strictly stationary.
\end{lemma}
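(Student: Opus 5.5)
We will show that each block-signature sequence is the image of one stationary object under a fixed measurable map that commutes with the block shift by $\Delta$. Strict stationarity of $(Z_k)$ then follows from the lift invariance $\vartheta_\tau\mathbf{X}\stackrel{d}{=}\mathbf{X}$ in Assumption~\ref{ass:S}. Strict stationarity of $(Y_k)$ follows from the finite-dimensional stationarity of $X$ itself.

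\emph{Step 1: the sequence $(Z_k)$.} By the Lyons extension theorem, $S^{(M)}(\mathbf{X})_{s,t}=\Phi_M(\mathbf{X}|_{[s,t]})$ for a fixed measurable map $\Phi_M$ that depends only on the restricted rough path. Its time-translation covariance gives
\[
Z_k=\Phi_M\bigl((\vartheta_{(k-1)\Delta}\mathbf{X})|_{[0,\Delta]}\bigr).
\]
Hence, for any $j\ge 0$ and $k_1<\dots<k_r$, the vector $(Z_{k_1+j},\dots,Z_{k_r+j})$ equals $F(\vartheta_{j\Delta}\mathbf{X})$, where
\[
F(\mathbf{Y}):=\bigl(\Phi_M((\vartheta_{(k_i-1)\Delta}\mathbf{Y})|_{[0,\Delta]})\bigr)_{i\le r}
\]
does not depend on $j$. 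Applying $\vartheta_{j\Delta}\mathbf{X}\stackrel{d}{=}\mathbf{X}$ gives $F(\vartheta_{j\Delta}\mathbf{X})\stackrel{d}{=}F(\mathbf{X})$, which is the required equality of finite-dimensional laws.

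\emph{Step 2: the sequence $(Y_k)$.} Each $Y_k$ is a deterministic continuous function of the $n$ sample values $X_{(k-1)\Delta+i\delta}$, $i=0,\dots,n-1$. The function is the signature of the piecewise-linear path, computed as the Chen product of the exponentials of the increments:
\[
Y_k=\exp(\Delta_1)\otimes\cdots\otimes\exp(\Delta_{n-1})\quad\text{truncated at level }M,
\]
where the $\Delta_i$ are the successive increments of the sampled values. The function is the same for every $k$. The vector $(Y_{k_1+j},\dots,Y_{k_r+j})$ is therefore a fixed function of finitely many samples of $X$ at times shifted by $j\Delta$. The finite-dimensional stationarity in Assumption~\ref{ass:S} then gives equality in law with the unshifted vector. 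This step needs neither the lift nor its invariance.

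\emph{Main obstacle.} The only delicate point is measurability in Step 1. We must check that $\Phi_M$ and the restriction and shift maps are measurable on the chosen rough-path space, so that equality in law of $\vartheta_{j\Delta}\mathbf{X}$ and $\mathbf{X}$ passes to $F$. We would handle this by viewing $\mathbf{X}$ as a random element of the Polish space of geometric $p$-rough paths on $[0,\infty)$ with the local $p$-variation topology. On that space the shift is continuous, and the Lyons extension is continuous by \citet[Thm.~9.5]{FrizVictoir2010}. Equality in law of random elements is preserved under measurable maps, which closes the argument. As a fallback, for canonical lifts we can use the remark in Assumption~\ref{ass:S}: the lift is a limit in probability of the lifts of piecewise-linear interpolations. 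Then $Z_k$ is the limit in probability of $Y_k$-type quantities as the mesh refines. Finite-dimensional laws pass to limits in probability, so Step 2 alone would also yield stationarity of $(Z_k)$.
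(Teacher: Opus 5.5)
Your proof is correct and, for $(Z_k)$, it is the paper's argument: $Z_k$ is a fixed Borel functional of the shifted lift $\vartheta_{(k-1)\Delta}\mathbf{X}$, and $\vartheta_{j\Delta}\mathbf{X}\stackrel{d}{=}\mathbf{X}$ then transfers to all finite-dimensional laws. You write this joint step out explicitly, whereas the paper only displays the single-block representation. For $(Y_k)$ you use only the finite-dimensional stationarity of $X$ instead of the paper's representation $Y_k=G(\vartheta_{(k-1)\Delta}\mathbf{X})$; this is slightly more economical, since it needs neither the lift nor its invariance. One minor citation point: continuity of the Lyons extension is Cor.~9.11 of Friz--Victoir, not Thm.~9.5, although measurability alone suffices for your argument.
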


\begin{proof}
The truncated signature map $\mathbf{x} \mapsto S^{(M)}(\mathbf{x})$ is the Lyons lift, uniformly continuous on bounded sets in the $p$-variation rough-path metric (\citealp[Cor.~9.11]{FrizVictoir2010}, resting on the quantitative Lyons extension estimate~\citealp[Thm.~9.5]{FrizVictoir2010}; \citealp[Thm.~2.2.1]{Lyons1998}), in particular Borel measurable, so $Z_k = F(\vartheta_{(k-1)\Delta}\mathbf{X})$ for a Borel functional $F$, and stationarity follows from Assumption~\ref{ass:S}. The same argument applies to $Y_k = G(\vartheta_{(k-1)\Delta}\mathbf{X})$, where $G$ maps the shifted process to the piecewise-linear signature after deterministic subsampling and interpolation.
\end{proof}

The estimation error decomposes as
\[
\widehat{\Sigma}_{K} - \mathbb{E}[S^{(M)}] \;=\; \mathcal{B} + \mathcal{V},
\qquad
\mathcal{B} := \frac{1}{K}\sum_{k=1}^K (Y_k - Z_k), \quad
\mathcal{V} := \frac{1}{K}\sum_{k=1}^K (Z_k - \mathbb{E}[Z_k]),
\]
with $\mathcal{B}$ the discretization bias (whose contribution to MSE vanishes as the observation mesh is refined) and $\mathcal{V}$ the statistical fluctuation (whose variance decreases as more blocks are averaged). The discretization rate~$\gamma$ depends on path regularity; the variance rate~$\eta$ depends on the strength of temporal dependence. Their interaction determines the allocation of a fixed observation budget between mesh refinement and block count that minimizes the upper bound (Corollary~\ref{cor:sample_complexity}).

\begin{theorem}
\label{thm:main_result}
Let $\mathbf{X}$ be a random geometric $p$-rough path satisfying Assumptions~\ref{ass:M},~\ref{ass:S}, and~\ref{ass:gamma}, with truncation $M \ge \lfloor p \rfloor$. Suppose in addition that either Assumption~\ref{ass:A} ($\alpha$-mixing) or Assumption~\ref{ass:Aprime} (block-signature covariance decay) holds. Then $\widehat{\Sigma}_{K}$ satisfies
\begin{equation}
\label{eq:mse_bound}
\MSE := \mathbb{E}\bigl[\|\widehat{\Sigma}_{K} - \mathbb{E}[S^{(M)}]\|_{\mathrm{HS}}^2\bigr] \;\le\; \frac{C_{\mathrm{bias}}(\Delta,M)}{(n-1)^\gamma} + V_K(\Delta),
\end{equation}
where $C_{\mathrm{bias}}(\Delta,M) = 2C_{\mathrm{approx}}(\Delta,M)\Delta^\gamma$ and the deterministic variance bound $V_K(\Delta)$ (which dominates the fluctuation contribution $2\,\mathbb{E}\|\mathcal{V}\|_{\mathrm{HS}}^2$ to the MSE bound; it is distinct from the random fluctuation $\mathcal{V}$) depends on the dependence regime:
\begin{enumerate}
\item[\emph{(i)}] \emph{Summable} (Assumption~\ref{ass:A}(E) or~\ref{ass:Aprime}(E$'$); or Assumption~\ref{ass:A}(P) or~\ref{ass:Aprime}(P$'$) with $\nu > 1$): $V_K(\Delta) \le V(\Delta)/K$ with $V(\Delta) := 2V_1(\Delta) + 2\sum_{m=2}^M V_m(\Delta) < \infty$, where $V_1(\Delta) \le 4\,\tr(\Cov(X_0))$ comes from level-1 telescoping (Remark~\ref{rem:level1_telescope}) and the levelwise bound for $m \ge 2$ is
\begin{equation}
\label{eq:Vm_main}
V_m(\Delta) \le \frac{d^m\widetilde{C}_m}{\bigl(\Lambda_p(m/p)!\bigr)^2}\Delta^{2m\beta}\Phi_m(\Delta), \qquad m \ge 2,
\end{equation}
where $\beta = 1/p$, $(m/p)! = \Gamma(1+m/p)$, $\widetilde{C}_m < \infty$ is level dependent, and $\Phi_m(\Delta)$ is a mixing-dependent factor (Appendix~\ref{app:proof}).
\item[\emph{(ii)}] \emph{Non-summable $\alpha$-mixing} (Assumption~\ref{ass:A}(P) with $0 < \nu \le 1$): for every $\varepsilon \in (0, \nu)$, $V_K(\Delta) \le V_\varepsilon(\Delta)\, K^{-\nu+\varepsilon}$ for a finite $\varepsilon$-dependent constant (the bound furnished by the proof for $V_\varepsilon(\Delta)$ is not uniform as $\varepsilon \downarrow 0$; see Appendix~\ref{app:completion}).
\item[\emph{(iii)}] \emph{Non-summable direct covariance decay} (Assumption~\ref{ass:Aprime}(P$'$) with $\nu \le 1$): for $0 < \nu < 1$, $V_K(\Delta) \le V_{\mathrm{poly}}(\Delta)\, K^{-\nu}$; at the borderline $\nu = 1$, $V_K(\Delta) \le V_{\mathrm{log}}(\Delta)\, K^{-1}\log K$ for $K \ge 2$.
\end{enumerate}
The level-1 contribution to the variance term improves to $O(K^{-2})$ via telescoping (Remark~\ref{rem:level1_telescope}); the $\varepsilon$-loss in case~(ii) arises from optimizing the Rio exponent in Lemma~\ref{lem:hilbert_davydov}, and is absent under Assumption~\ref{ass:Aprime}(P$'$). The constants $V_\varepsilon(\Delta)$, $V_{\mathrm{poly}}(\Delta)$, $V_{\mathrm{log}}(\Delta)$ are defined explicitly in Appendix~\ref{app:completion}.
\end{theorem}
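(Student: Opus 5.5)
We will use the bias--fluctuation decomposition $\widehat{\Sigma}_K - \mathbb{E}[S^{(M)}] = \mathcal{B} + \mathcal{V}$ together with $\|a+b\|^2 \le 2\|a\|^2 + 2\|b\|^2$. This gives $\MSE \le 2\mathbb{E}\|\mathcal{B}\|_{\mathrm{HS}}^2 + 2\mathbb{E}\|\mathcal{V}\|_{\mathrm{HS}}^2$, and we treat the two terms separately.

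\emph{Bias term.} By convexity of $\|\cdot\|_{\mathrm{HS}}^2$ (Jensen on the average over $k$), $\mathbb{E}\|\mathcal{B}\|^2 \le K^{-1}\sum_k \mathbb{E}\|Y_k - Z_k\|^2$. Lemma~\ref{lem:stationarity}, applied to the joint pair $(Y_k,Z_k) = (G,F)(\vartheta_{(k-1)\Delta}\mathbf{X})$, shows that each summand equals $\mathbb{E}\|Y_1 - Z_1\|^2$. Assumption~\ref{ass:gamma} bounds this by $C_{\mathrm{approx}}\delta^\gamma = C_{\mathrm{approx}}\Delta^\gamma (n-1)^{-\gamma}$. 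This yields the first term with $C_{\mathrm{bias}} = 2C_{\mathrm{approx}}\Delta^\gamma$. No dependence structure is needed here.

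\emph{Fluctuation term.} The tensor levels are HS-orthogonal, so $\mathbb{E}\|\mathcal{V}\|^2 = \sum_{m=1}^M \mathbb{E}\|K^{-1}\sum_k W_k^{(m)}\|^2$. Expanding each summand and using stationarity gives
\[
\mathbb{E}\Bigl\|\frac1K\sum_k W_k^{(m)}\Bigr\|^2 = \frac{1}{K}\Bigl(c_{0,m} + 2\sum_{h=1}^{K-1}\bigl(1-\tfrac{h}{K}\bigr)c_{h,m}\Bigr) \le \frac1K\Bigl(c_{0,m} + 2\sum_{h=1}^{K-1}|c_{h,m}|\Bigr).
\]
For level $1$ we telescope: $\sum_k W_k^{(1)} = (X_{K\Delta}-X_0) - \mathbb{E}[\cdot]$, so its second moment is at most $4\tr\Cov(X_0)$. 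This gives the $K^{-2}$ rate (Remark~\ref{rem:level1_telescope}) and in particular the stated $V_1$ bound. For $m \ge 2$, the diagonal term $c_{0,m}$ is controlled by the Lyons extension estimate $\|S^{(m)}\| \le \|\mathbf{X}\|_{p\text{-var}}^m/(\Lambda_p (m/p)!)$ on a block of length $\Delta$, combined with Assumption~\ref{ass:M} at $r=2m$. This produces the factor $d^m\Delta^{2m\beta}/(\Lambda_p(m/p)!)^2$, with $d^m$ coming from comparing the projective and HS norms.

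Off-diagonal terms are handled according to the regime.
\begin{enumerate}
\item[(a)] Under Assumption~\ref{ass:Aprime} the bounds on $|c_{h,m}|$ are given directly.
\item[(b)] Under Assumption~\ref{ass:A} we first pass to (E$'$)/(P$'$) via Remark~\ref{rem:A_implies_Aprime}. Concretely, we apply Rio's covariance inequality (Lemma~\ref{lem:hilbert_davydov}) to $W_1^{(m)}\in\mathcal{F}_{\le\Delta}$ and $W_{1+h}^{(m)}\in\mathcal{F}_{\ge h\Delta}$ (measurability of the lift), with gap $(h-1)\Delta$. This gives $|c_{h,m}| \lesssim \alpha((h-1)\Delta)^{1-2/q}\|W\|_{L^q}^2$. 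Taking $q$ large via Assumption~\ref{ass:M} costs an exponent $\nu(1-2/q)$, which is arbitrarily close to $\nu$.
\end{enumerate}

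Summing over $h$ then gives the three cases.
\begin{enumerate}
\item[(i)] In the summable regime $\sum_h |c_{h,m}| < \infty$. This defines $\Phi_m(\Delta) = 1 + 2\sum_{h\ge1} e^{-\lambda h\Delta}$ or $1 + 2\sum_h (1+h)^{-\nu}$, and gives $V_m/K$.
\item[(ii)] For polynomial mixing with $\nu\le 1$, the Rio exponent is $\nu' = \nu - \varepsilon < 1$ after choosing $q$. Then $\sum_{h<K}(1+h)^{-\nu'} \lesssim K^{1-\nu'}/(1-\nu')$, so the variance is $O(K^{-\nu+\varepsilon})$. The constant blows up as $q\to\infty$ through the $L^q$ moments, and this is the source of the non-uniformity as $\varepsilon\downarrow0$.
\item[(iii)] Under (P$'$) with $\nu<1$ the same sum computation gives $K^{-\nu}$ without loss, and at $\nu=1$ the harmonic sum gives $\log K$.
\end{enumerate}
The extra factor $2$ in $V(\Delta)$ comes from the initial splitting inequality.

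\emph{Main obstacle.} We expect the hardest step to be step (b) in the non-summable mixing case: applying Rio's inequality in the Hilbert-space (HS) setting while keeping the moment constants explicit. This requires a Hilbert-valued version of Davydov/Rio, obtained either by reducing to coordinates at a cost of $d^m$ or by using Dehling–Philipp-type bounds. It also requires checking that $\mathbb{E}\|\mathbf{X}\|_{p\text{-var}}^{mq}$ is finite uniformly on a block, which Assumption~\ref{ass:M} guarantees for every $r$.
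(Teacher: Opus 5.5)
Your proposal is correct and follows the paper's proof essentially step for step. The shared steps are: the factor-$2$ split into $\mathcal{B}$ and $\mathcal{V}$; Jensen plus stationarity plus Assumption~\ref{ass:gamma} for the bias; the HS-orthogonal levelwise time-domain variance formula with level-1 telescoping; lag-zero control via the rough-factorial moment bound (Lemma~\ref{lem:level_moments}); and cross-lag control either directly from Assumption~\ref{ass:Aprime} or via a coordinatewise Rio inequality (paying the $d^m$ factor, Lemma~\ref{lem:hilbert_davydov}) with $q$ pushed to infinity under polynomial mixing. You also note that the bias step needs stationarity of the joint pair $(Y_k,Z_k)$, not just of each sequence separately as stated in Lemma~\ref{lem:stationarity}; this is a small but correct sharpening of the paper's argument.
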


\begin{proof}[Proof sketch]
The bias is controlled blockwise via Assumption~\ref{ass:gamma} (Appendix~\ref{app:bias}). The variance bound exploits the orthogonal grading of $T^{(M)}(\mathbb{R}^d)$ level by level: under Assumption~\ref{ass:A}, Lemma~\ref{lem:hilbert_davydov} converts mixing into block-signature covariance bounds, with the Rio exponent sent close to unity under (P) at the cost of an arbitrarily small loss; under Assumption~\ref{ass:Aprime}, the covariance estimates are inserted directly with no $\varepsilon$-loss (Appendix~\ref{app:variance}). Full details appear in Appendix~\ref{app:proof}.
\end{proof}

\begin{remark}[Level-1 telescoping]
\label{rem:level1_telescope}
Block increments at level~1 telescope:
\begin{equation}
\label{eq:level1_telescope}
\bar{S}^{(1)}_K = \frac{1}{K}\sum_{k=1}^K (X_{k\Delta} - X_{(k-1)\Delta}) = \frac{1}{K}(X_{K\Delta} - X_0).
\end{equation}
Strict marginal stationarity ($X_T \stackrel{d}{=} X_0$) gives $\Var(X_{K\Delta} - X_0) \le 4\,\tr(\Cov(X_0))$ for all $K$ (Lemma~\ref{lem:level1_telescope_proof}), hence $\Var(\bar{S}^{(1)}_K) = O(K^{-2})$ --- faster than the levelwise bound~\eqref{eq:Vm_main} suggests.
\end{remark}

\begin{corollary}[Sample complexity]
\label{cor:sample_complexity}
Given $N$ total observations with $N = Kn$, ignoring constants and integer effects, minimizing the displayed upper bound of Theorem~\ref{thm:main_result} gives $n^* \asymp N^{\eta/(\gamma+\eta)}$ and
\[
\MSE \le N^{-\gamma\eta/(\gamma+\eta)+o(1)}.
\]
When $\eta = 1$, this simplifies to $N^{-\gamma/(1+\gamma)}$. For continuous Gaussian semimartingales with independent coordinates (Proposition~\ref{prop:gamma_values}(i)), in the summable regime ($\eta = 1$), $\gamma = 1 - \varepsilon_0$ yields $\MSE \le N^{-1/2 + o(1)}$ as $\varepsilon_0 \downarrow 0$. For fOU, the block-signature covariance is summable in all three regimes (Proposition~\ref{prop:fOU_covariance_decay}: $\nu = 4-2H > 1$ for $H \neq 1/2$, exponential decay at $H = 1/2$), so $\eta = 1$ throughout: for $H \in (1/4, 1/2)$, $\gamma = 4H-1$ gives $\MSE \le N^{-(4H-1)/(4H)+o(1)}$, while for $H > 1/2$ the exact exponents $\gamma = 2H$ (Proposition~\ref{prop:gamma_values}(ii)) and $\eta = 1$ give
\[
\MSE \le C\,N^{-\frac{2H}{2H+1}}
\]
with no $\varepsilon$-loss.
\end{corollary}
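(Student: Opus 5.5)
The plan is to minimize the upper bound of Theorem~\ref{thm:main_result} directly under the budget constraint. I will write it schematically as $F(n,K) = A\,n^{-\gamma} + B\,K^{-\eta}$, with $A = C_{\mathrm{bias}}(\Delta,M)$ and $B$ equal to $V(\Delta)$, $V_{\mathrm{poly}}(\Delta)$, or $V_\varepsilon(\Delta)$ according to the regime; this uses $(n-1)^{-\gamma} \le 2^\gamma n^{-\gamma}$ for $n \ge 2$. The exponent $\eta$ is read off from the cases of the theorem: $\eta = 1$ in case (i), $\eta = \nu$ in case (iii) with $\nu<1$, and $\eta = \nu - \varepsilon$ in case (ii).

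The two non-clean cases are absorbed into the $o(1)$ of the exponent. In case (ii), $\varepsilon$ is arbitrary. At the borderline $\nu = 1$ of case (iii), $K^{-1}\log K \le C_\epsilon K^{-1+\epsilon}$ for every $\epsilon > 0$.

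Substituting $K = N/n$ gives $F = A n^{-\gamma} + B\, n^{\eta} N^{-\eta}$. This is a convex function of $n$ (treated as a real variable) that tends to infinity at both ends. Setting the derivative to zero balances the two terms, $n^{-\gamma} \asymp n^{\eta}N^{-\eta}$, which gives $n^* \asymp N^{\eta/(\gamma+\eta)}$ and correspondingly $K^* = N/n^* \asymp N^{\gamma/(\gamma+\eta)}$. Both terms are then of order $N^{-\gamma\eta/(\gamma+\eta)}$.

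If one wants the integer effects handled, choose $n = \lceil N^{\eta/(\gamma+\eta)}\rceil$ and $K = \lfloor N/n\rfloor$. For large $N$ this changes each term by at most a constant factor. This is the only step needing care, and the corollary explicitly waives it.

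The specializations then follow directly.

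- \textbf{$\eta = 1$:} the exponent becomes $\gamma/(1+\gamma)$.
- \textbf{Semimartingales:} with $\gamma = 1-\varepsilon_0$ from Proposition~\ref{prop:gamma_values}(i), the exponent is $(1-\varepsilon_0)/(2-\varepsilon_0) \to 1/2$ as $\varepsilon_0 \downarrow 0$.
- \textbf{fOU, summability:} I invoke Proposition~\ref{prop:fOU_covariance_decay}. It gives Assumption~\ref{ass:Aprime} with (P$'$) at $\nu = 4-2H > 1$ for $H \ne 1/2$ and with (E$'$) at $H = 1/2$. Hence case (i) of the theorem applies with $\eta = 1$ and no loss.
- \textbf{fOU, $H\in(1/4,1/2)$:} here $\gamma = 4H-1-\varepsilon_1$, so $\gamma/(1+\gamma) \to (4H-1)/(4H)$, with the loss in $o(1)$.
- \textbf{fOU, $H>1/2$:} the exact value $\gamma = 2H$ from Proposition~\ref{prop:gamma_values}(ii), together with the exact $\eta = 1$, involves no asymptotic absorption. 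It gives $\MSE \le C N^{-2H/(2H+1)}$ with $C$ depending on $A$, $B$, $\gamma$ and $\eta$ through the factor $A^{\eta/(\gamma+\eta)}B^{\gamma/(\gamma+\eta)}$ up to absolute constants.

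The main potential obstacle is the claim of no $\varepsilon$-loss in the fOU case for $H > 1/2$. This requires that $V(\Delta)$ be finite with $\nu = 4-2H > 1$ exactly and that $C_{\mathrm{approx}}$ be finite at $\gamma = 2H$. Both are supplied by the cited propositions, so the argument reduces to elementary optimization.
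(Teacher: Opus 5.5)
Your proposal is correct and is essentially the paper's own (implicit) argument. You substitute $K=N/n$ into the bound of Theorem~\ref{thm:main_result}, balance the two terms to get $n^*\asymp N^{\eta/(\gamma+\eta)}$, absorb the $\varepsilon$-losses and the borderline $\log K$ into the $o(1)$, and round as the paper does in Experiment~3 ($n^*=\lceil N^{1/(1+\gamma)}\rceil$, $K^*=\lfloor N/n^*\rfloor$ when $\eta=1$).

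There are two harmless slips. First, for $\eta<1$ the term $n^{\eta}N^{-\eta}$ is concave, so the objective is not convex in $n$; its derivative still changes sign exactly once, and for an upper bound you only need to plug in $n=N^{\eta/(\gamma+\eta)}$. Second, at $H=1/2$ Proposition~\ref{prop:fOU_covariance_decay} supplies Assumption~\ref{ass:A}(E) rather than (E$'$), but case~(i) of the theorem accepts either.
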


% 4. Fractional Ornstein--Uhlenbeck Processes
\section{Fractional Ornstein--Uhlenbeck Processes}
\label{sec:verification}

We verify the hypotheses of Theorem~\ref{thm:main_result} for fractional Ornstein--Uhlenbeck (fOU) processes, whose Hurst index $H$ parameterizes short-range ($H < 1/2$, relevant to rough volatility models~\citealp{GatheralJaissonRosenbaum2018}), semimartingale ($H = 1/2$), and long-range ($H > 1/2$) dependence. Expected signatures are computed via the regularized Wick formulas of~\citet{CassFerrucci2024}; explicit formulas appear in Appendix~\ref{app:expected_signature_fOU}. All fOU verification results in this section assume independent coordinates.

Consider the $d$-dimensional fOU process with independent coordinates:
\begin{equation}
\label{eq:fOU_sde}
dX^i_t = -\theta_i (X^i_t - \mu_i)\,dt + \sigma_i \, dB^{H,i}_t, \qquad i = 1, \ldots, d,
\end{equation}
where $B^{H,i}_t$ are independent fractional Brownian motions (fBm) with Hurst parameter $H \in (0, 1)$, $\theta_i > 0$ are mean-reversion rates, and $\sigma_i > 0$ are volatilities. Since the signature is translation-invariant, we set $\mu_i = 0$ without loss of generality. Initialized from the stationary distribution, the process has stationary autocovariance $R^i(\tau) := \Cov(X^i_t, X^i_{t+\tau})$.

The verification proceeds assumption by assumption; the key challenge is verifying the dependence condition (Assumption~\ref{ass:A} or~\ref{ass:Aprime}), which requires spectral analysis and Gaussian covariance arguments.

\begin{proposition}[Standard hypotheses]
\label{prop:fOU_assumptions}
The independent-coordinate fOU process~\eqref{eq:fOU_sde} with $\theta_i > 0$ and $H \in (1/4, 1)$, initialized from its stationary distribution, admits a canonical geometric rough-path lift $\mathbf{X}$~\citep[Thm.~15.33]{FrizVictoir2010} with finite $p$-variation for every $p > \max(2, 1/H)$, and satisfies the following hypotheses of Theorem~\ref{thm:main_result} with $\beta = 1/p$ for any fixed $p > \max(2, 1/H)$:
\begin{enumerate}
\item[\emph{(M)}] Assumption~\ref{ass:M} holds by the Gaussian rough-path moment bounds of~\citet[Thm.~15.33]{FrizVictoir2010}: the fOU covariance kernel $K_X(s,t) = R(|t-s|)$ has finite 2D $\rho$-variation on $[0,\Delta]^2$ for every $\rho > \rho_*(H) := \max(1, 1/(2H))$, and~\citet[Thm.~15.33]{FrizVictoir2010} requires $p > 2\rho$. The control is moreover H\"older-dominated, $|K_X|^{\rho}_{\rho\text{-var};[s,t]^2} \le K\,|t-s|$ for all $s < t$ in $[0,\Delta]$: in the decomposition $R = R(0) - \kappa|\tau|^{2H} + \psi$ of Lemma~\ref{lem:fOU_cov}, the fractional part satisfies this for $H \le 1/2$ by the $\rho_* = 1/(2H)$ H\"older-dominated control of~\citet[Prop.~15.5]{FrizVictoir2010} (which only improves for $\rho > \rho_*$ on the bounded window) and for $H > 1/2$ by $V_1 \lesssim (t-s)^{2H} \le \Delta^{2H-1}(t-s)$, while the $C^2$ remainder has $V_1(\tilde{\psi}; [s,t]^2) \lesssim (t-s)^2$. The H\"older variant of~\citet[Thm.~15.33(iii)]{FrizVictoir2010} then yields the $(t-s)^{r\beta}$ moment scaling required by Assumption~\ref{ass:M}, with $\beta = 1/p$.
\item[\emph{(S)}] Strict stationarity holds by initialization from the Gaussian invariant measure.
\item[\emph{($\gamma$)}] The discretization rate is given by Proposition~\ref{prop:gamma_values}(ii) for $H \neq 1/2$ and by Proposition~\ref{prop:gamma_values}(i) at $H = 1/2$, with the level-2 exact rates and arbitrarily small $\varepsilon$-loss at higher truncation as stated there.
\item[\emph{(Meas.)}] The lift $\mathbf{X}_{s,t}$ is $\sigma(X_u : u \in [s,t])$-measurable: it is the a.s.\ limit of the step-3 lifts of dyadic piecewise-linear interpolations, which depend only on path values in~$[s,t]$ (naturality,~\citealp[Thm.~15.33(iv)]{FrizVictoir2010}; a.s.\ convergence along dyadics,~\citealp[Ex.~15.44]{FrizVictoir2010}, whose H\"older-dominated-control hypothesis is verified in~(M)).
\end{enumerate}
\end{proposition}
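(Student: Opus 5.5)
The plan is to reduce everything to the Gaussian rough-path machinery of \citet[Thm.~15.33]{FrizVictoir2010}. That theorem needs a single structural input: the covariance kernel $K_X(s,t)=R(|t-s|)$ must have finite, H\"older-dominated 2D $\rho$-variation on $[0,\Delta]^2$ for some $\rho<p/2$, where $p>\max(2,1/H)$ is the fixed roughness. Coordinates are independent and the process is Gaussian, so it suffices to treat each coordinate separately. The joint lift then exists by the independent-components case of the theorem.

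\textbf{Step 1: the covariance kernel.} I first use the decomposition $R(\tau)=R(0)-\kappa|\tau|^{2H}+\psi(\tau)$ of Lemma~\ref{lem:fOU_cov}, with $\psi\in C^2$ near the origin. This comes from the spectral density $\propto|\xi|^{1-2H}/(\theta^2+\xi^2)$, whose small-$\tau$ expansion gives the fBm-type singular term plus a smooth remainder.

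The constant part contributes nothing to rectangular increments. The term $-\kappa|\tau|^{2H}$ is, up to a constant, the fBm covariance minus one-variable functions, and the latter also vanish on rectangular increments. Its 2D variation is therefore that of fBm:
\begin{itemize}
\item for $H\le 1/2$, \citet[Prop.~15.5]{FrizVictoir2010} gives H\"older-dominated $1/(2H)$-variation;
\item for $H>1/2$, the kernel $\partial_s\partial_t|t-s|^{2H}$ is integrable, so the 1-variation on $[s,t]^2$ is $\lesssim (t-s)^{2H}\le\Delta^{2H-1}(t-s)$.
\end{itemize}
For the remainder, $\partial_s\partial_t\psi(t-s)=-\psi''(t-s)$ is bounded, so its 1-variation is $\lesssim (t-s)^2$.

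Since $\rho$-variation is subadditive up to constants under sums and dominated by lower variation, the control is $|K_X|^\rho_{\rho\text{-var};[s,t]^2}\le K|t-s|$ for every $\rho>\rho_*(H)$. Choosing $\rho\in(\rho_*,p/2)$ is possible exactly because $p>\max(2,1/H)=2\rho_*$. This also explains the restriction $H>1/4$: it ensures $\rho<2$, which is the lift-existence condition.

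\textbf{Step 2: item (M).} I apply Thm.~15.33(iii) in its H\"older-dominated form. It gives Gaussian (Fernique-type) tails for $\|\mathbf{X}\|_{p\text{-var};[s,t]}/\omega(s,t)^{1/(2\rho)}$, where $\omega(s,t)=K|t-s|$. Hence
\[
\mathbb{E}\|\mathbf{X}\|^r_{p\text{-var};[s,t]}\lesssim |t-s|^{r/(2\rho)}\le C|t-s|^{r/p}
\]
on bounded windows, using $1/(2\rho)>1/p$ and $|t-s|\le T$. This holds on $[0,T]$ for any $T$ by the same argument, and $\mathbb{E}\|X_0\|^2=\sum_i R^i(0)<\infty$.

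\textbf{Step 3: items (S), ($\gamma$), (Meas.).} (S) holds because a Gaussian process started from its invariant law with a stationary covariance has shift-invariant finite-dimensional distributions. Lift stationarity then follows by naturality, as already noted in Assumption~\ref{ass:S}. ($\gamma$) is simply a citation of Proposition~\ref{prop:gamma_values}. For (Meas.), the dyadic piecewise-linear lifts on $[s,t]$ use only path values in $[s,t]$. Ex.~15.44 gives almost-sure convergence under the H\"older-dominated control from Step 1, and an a.s.\ limit of $\sigma(X_u:u\in[s,t])$-measurable maps is measurable with respect to the completion.

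The main obstacle is Step 1: justifying the decomposition with a genuinely $C^2$ remainder on $[0,\Delta]$ and controlling the mixed-variation cross terms. I would first try to derive $\psi$ directly from the Mellin/spectral representation of $R$. If that is too delicate, I would write $X$ as fBm plus a smoother convolution term and use the Cauchy--Schwarz bound on cross-covariances from \citet[Ch.~15]{FrizVictoir2010}.
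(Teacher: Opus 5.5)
Your proposal is correct and follows essentially the same route as the paper. Both arguments use the decomposition $R = R(0) - \kappa|\tau|^{2H} + \psi$ of Lemma~\ref{lem:fOU_cov}, treat the fractional part via Friz--Victoir Prop.~15.5 for $H \le 1/2$ and via the integrable mixed derivative for $H > 1/2$, bound the $C^2$ remainder by $V_1 \lesssim (t-s)^2$, apply the H\"older-dominated Thm.~15.33(iii) for (M), and settle (S), ($\gamma$) and (Meas.) by citation and naturality; your extra remarks (extending the H\"older domination from $[0,\Delta]$ to arbitrary $[0,T]$, and reading (Meas.) up to completion) are minor refinements the paper leaves implicit.
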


\begin{proof}
Items (M), (S), and (Meas.) follow from the cited references; the discretization rates underlying~($\gamma$) are proved in Appendix~\ref{app:gamma}.
\end{proof}

The remaining hypothesis is the dependence condition, whose verification for fOU is the most technically demanding step. For $H > 1/2$ the stationary fOU is not strongly mixing~\citep{GehringerLi2022}: although the spectral density $f(\lambda) = c\,|\lambda|^{1-2H}/(\theta^2+\lambda^2)$ (for some $c = c(H, \theta, \sigma) > 0$) belongs to $L^1(\mathbb{R})$, it is unbounded at the origin, and the covariance decays non-summably as $R(\tau) \sim c'\,\tau^{2H-2}$ (a distinct positive constant $c'$); in the stationary Gaussian setting, this long-range dependence is incompatible with the complete-regularity criterion for strong mixing~\citep[Ch.~IV]{IbragimovRozanov1978}. For $H < 1/2$ the spectral density vanishes at the origin; we do not use this to claim strong mixing. Instead, we verify Assumption~\ref{ass:Aprime}(P$'$) directly for all $H \neq 1/2$ via the Gaussian structure. The key point is that block signatures are translation-invariant, so they depend on the path only through its \emph{increments}: the long memory of the fOU level never enters. The proof rests on a single analytical input (Lemma~\ref{lem:gaussian_inputs}(a) below): the second derivative of the stationary autocovariance --- the kernel governing increment correlations --- decays as $|R''(\tau)| \le C_R\,\tau^{2H-4}$, two full powers faster than $R$ itself. Separated-support chaos calculus converts this into a $((h-1)\Delta)^{2H-4}$ block-covariance decay, so Assumption~\ref{ass:Aprime}(P$'$) holds with $\nu = 4-2H > 1$ for every $H \neq 1/2$ (at the boundary $H = 1/2$, exponential $\alpha$-mixing holds instead, Proposition~\ref{prop:fOU_covariance_decay}): in all three regimes the block-signature covariance is summable and $\eta = 1$ with no $\varepsilon$-loss, even though the driver itself is long-range dependent for $H > 1/2$. Working with Assumption~\ref{ass:Aprime} rather than $\alpha$-mixing is what makes this sharpening visible.

Let $\tilde{X}_t := X_t - X_0$ denote the centered increment process. By translation invariance of the signature, $S^{(M)}(\mathbf{X})_{s,t} = S^{(M)}(\tilde{\mathbf{X}})_{s,t}$, so we may work with $\tilde{X}$ throughout. Let $\mathcal{H}$ be the Cameron--Martin space of $\tilde{X}$ with reproducing kernel $\mathcal{K}(s,t) := \Cov(\tilde{X}_s, \tilde{X}_t) = R(|t-s|) - R(s) - R(t) + R(0)$, and $I_j^{\mathcal{H}}: \mathcal{H}^{\odot j} \to L^2(\Omega)$ the $j$-th multiple Wiener integral~\citep[\S1.1]{Nualart2006}. Locally write $I_k := [k\Delta, (k+1)\Delta]$, so the separation between $I_0$ and $I_h$ is $(h-1)\Delta$ for $h \ge 1$. The proof of Proposition~\ref{prop:fOU_covariance_decay} rests on the following four-part lemma.

\begin{lemma}[Separated-block Gaussian signature inputs]
\label{lem:gaussian_inputs}
For the stationary fOU process with $H \in (1/4, 1) \setminus \{1/2\}$, fix a block index $k \ge 0$. There exist coordinate-uniform constants $C_R = C_R(H, (\theta_i), (\sigma_i), \Delta) < \infty$ and $C_{\mathrm{CF}} = C_{\mathrm{CF}}(H, (\theta_i), (\sigma_i), \Delta, m) < \infty$, uniform over words $w$ of length $|w| = m$, such that:
\begin{enumerate}
\item[\emph{(a)}] (\emph{Tail bounds}) For each coordinate $i$, the scalar autocovariance $R^i$ satisfies $|R^i(\tau)| \le C_R\,\tau^{2H-2}$ and $|(R^i)''(\tau)| \le C_R\,\tau^{2H-4}$ for all $\tau \ge \Delta$.
\item[\emph{(b)}] (\emph{Chaos expansion}) The block-signature coordinate admits the finite Wiener chaos expansion
\[
\langle w, S^{(m)}(\tilde{\mathbf{X}})_{I_k}\rangle = \sum_{\substack{0 \le j \le m \\ j \equiv m\,(2)}} I_j^{\mathcal{H}}(f_{k,w,j}^{(m)}),
\]
where $f_{k,w,j}^{(m)} \in \mathcal{H}^{\otimes j}$ is the partial-contraction kernel of~\citet[Def.~3.3]{CassFerrucci2024}, supported on the ordered simplex $\Delta^j[k\Delta, (k+1)\Delta]$ and extended by zero to the box $I_k^j$.
\item[\emph{(c)}] (\emph{Kernel $L^\infty$ bound}) For each admissible chaos level $j$, the kernel satisfies the simplex bound
\[
\|f_{k,w,j}^{(m)}\|_{L^\infty(\Delta^j[k\Delta, (k+1)\Delta])} \le C_{\mathrm{CF}}\,\Delta^{(m-j)H},
\]
which the sorting-based symmetric extension carries to $L^\infty(I_k^j)$ unchanged; the symmetrization (all the multiple Wiener integral $I_j^{\mathcal{H}}$ sees) obeys the same bound.
\item[\emph{(d)}] (\emph{Cameron--Martin integral on separated supports}) For each admissible $j$, $h \ge 2$, every slot permutation $\pi \in S_j$, and each scalar coordinate of $\tilde X$, the plain (unsymmetrized) tensor-product pairing satisfies
\[
\bigl\langle f_{0,w,j}^{(m)}, (f_{h,w,j}^{(m)})^{\pi}\bigr\rangle_{\mathcal{H}^{\otimes j}} = (-1)^j \int_{I_0^j \times I_h^j} f_{0,w,j}^{(m)}(u)\, (f_{h,w,j}^{(m)})^{\pi}(v) \prod_{\ell=1}^j R''(v_\ell - u_\ell)\,du\,dv,
\]
where $R$ denotes the scalar autocovariance of the relevant coordinate and the multi-coordinate case follows by independence of coordinates (Appendix~\ref{app:expected_signature_fOU}).
\end{enumerate}
Proofs of \emph{(a)--(d)}, together with the verification of the Cass--Ferrucci standing hypotheses for the increment kernel $\mathcal{K}$ and the properties of the sorting-based symmetric extension, appear in Appendix~\ref{app:expected_signature_fOU}.
\end{lemma}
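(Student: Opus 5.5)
I will prove the four parts in the order (a), (b), (c), (d), since (d) is the only part that needs the decay information and the kernel structure together.

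\textbf{Part (a).} I start from the spectral representation of the stationary fOU autocovariance,
\[
R(\tau) = \int_{\mathbb{R}} e^{i\lambda\tau} f(\lambda)\,d\lambda, \qquad f(\lambda) = c\,|\lambda|^{1-2H}/(\theta^2+\lambda^2).
\]
The large-$\tau$ behaviour of $R$ is governed by the non-smooth term $|\lambda|^{1-2H}$ at the origin. Near $0$ the density expands as $f(\lambda) = c\theta^{-2}|\lambda|^{1-2H}(1 + O(\lambda^2))$. A standard Abelian/Tauberian Fourier computation then gives $R(\tau) \sim c'\tau^{2H-2}$.

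For the second derivative I will not differentiate under the integral, which would not converge absolutely for $H$ small. Instead I use the Cheridito--Kawaguchi--Maejima representation of the stationary fOU covariance,
\[
R(\tau) = C\int_0^\infty \cos(\lambda\tau)\,\frac{\lambda^{1-2H}}{\theta^2+\lambda^2}\,d\lambda.
\]
I integrate by parts twice, or equivalently use the known asymptotic expansion $R(\tau) = \sum_n c_n \tau^{2H-2n}$. This yields $R''(\tau) = O(\tau^{2H-4})$ for $\tau \ge \Delta$, with a constant that is uniform in the coordinate because there are only finitely many coordinates. On the compact range near $\Delta$ I use smoothness of $R$ away from $0$ to absorb constants.

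\textbf{Parts (b) and (c).} These are essentially imported from \citet{CassFerrucci2024}. The main work is checking their standing hypotheses for the increment kernel $\mathcal{K}$ on each block. These hypotheses are finite 2D $\rho$-variation with $\rho < 2$ (available since $H > 1/4$, as in Proposition~\ref{prop:fOU_assumptions}(M)) and the regularity of the mixed derivative $\partial_s\partial_t\mathcal{K} = -R''(t-s)$ off the diagonal.

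Once these are checked, the iterated integral over a word is a polynomial in the Gaussian path. Its Wick/Hermite decomposition places it in chaoses of order $j \le m$ with $j \equiv m \pmod 2$, and the kernels are obtained by contracting $(m-j)/2$ pairs of slots against the regularized covariance. Each contraction contributes a factor bounded by $\sup|\mathcal{K}|$ over a window of length $\Delta$, which is $\lesssim \Delta^{2H}$. This gives $\Delta^{(m-j)H}$ in (c), with the remaining simplex integrations absorbed into $C_{\mathrm{CF}}$.

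The zero extension to the box $I_k^j$ does not increase the $L^\infty$ norm. Symmetrization is an average of permutations, so it preserves the bound as well.

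\textbf{Part (d).} I compute the Cameron--Martin inner product on tensor kernels that are indicator-type (bounded, compactly supported) via
\[
\langle \mathbf{1}_{[a,b]}, \mathbf{1}_{[c,d]}\rangle_{\mathcal{H}} = \mathbb{E}[(\tilde X_b - \tilde X_a)(\tilde X_d - \tilde X_c)].
\]
This equals the double increment of $R$, namely $\int_a^b\int_c^d (-R''(v-u))\,dv\,du$ for disjoint intervals, since $R$ is smooth away from $0$. The sign comes out as $-R''$ here, and $(-1)^j$ after taking the tensor product.

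I extend by linearity and density to bounded kernels on $I_0^j$ and $I_h^j$ with $h \ge 2$, where the supports are separated by at least $\Delta > 0$. On this region $R''$ is bounded and continuous, so the bilinear form is continuous in $L^1 \times L^1$. Tensorizing over the $j$ slots with the permutation $\pi$ gives the product formula, and independent coordinates factorize the pairing coordinate by coordinate.

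\textbf{Main obstacle.} I expect the hardest step to be the density argument in (d). The kernels $f^{(m)}_{k,w,j}$ are only known to be bounded elements of $\mathcal{H}^{\otimes j}$, and for $H < 1/2$ the space $\mathcal{H}$ contains distributions. I would handle this by approximating $f$ with step functions on the simplex that converge in both $L^1$ and $\mathcal{H}^{\otimes j}$ (using the Cass--Ferrucci construction as limits of piecewise-linear data), then passing to the limit on both sides.
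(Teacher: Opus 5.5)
Your overall structure matches the paper's: spectral asymptotics for (a), import from Cass--Ferrucci for (b)--(c), and an indicator computation plus approximation for (d). However, the $R''$ tail in (a), which is the estimate that ultimately produces $\nu = 4-2H$, is not established by what you describe.

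Integrating by parts twice in $\lambda$ in $R(\tau) = 2\zeta\int_0^\infty \cos(\lambda\tau) f(\lambda)\,d\lambda$ breaks at the second step. Since $f'(\lambda) \sim (1-2H)\theta^{-2}\lambda^{-2H}$ blows up at $\lambda = 0$, the boundary term $[f'(\lambda)\cos(\lambda\tau)]_0^\infty$ is infinite and $f''$ is not integrable at the origin. Moreover, integrating by parts in the representation of $R$ controls $R$, not $R''$. The spectral integral for $R''$ itself has amplitude $\lambda^2 f(\lambda) \sim \lambda^{1-2H}$ at infinity, so it is not absolutely convergent for any $H < 1$. Your fallback, differentiating $R(\tau) = \kappa_{H,1}\tau^{2H-2} + \kappa_{H,2}\tau^{2H-4} + O(\tau^{2H-6})$ term by term, is not legitimate for a Poincar\'e expansion without independent information about $R''$.

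The missing idea is the covariance form of the SDE,
\[
R''(\tau) = \theta^2 R(\tau) - \sigma^2 H(2H-1)\,\tau^{2H-2}, \qquad \tau > 0 .
\]
The paper derives this from $(\theta^2+\lambda^2) f(\lambda) = \lambda^{1-2H}$. It first proves $R \in C^\infty((0,\infty))$ by splitting the spectral integral at a fixed cutoff. It then pairs with test functions in $C_c^\infty((0,\infty))$ and evaluates $\int_0^\infty \lambda^{1-2H}\cos(\lambda\tau)\,d\lambda$ by Abel regularization. Since $\kappa_{H,1} = \sigma^2 H(2H-1)/\theta^2$, the leading terms cancel exactly, giving $R'' = \theta^2(R - \kappa_{H,1}\tau^{2H-2}) = O(\tau^{2H-4})$. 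The same identity also gives the near-diagonal bound $|R''(\tau)| \le C\tau^{2H-2}$ on $(0,\Delta]$, which (b)--(c) need.

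In (b)--(c) the hypothesis check is incomplete, and the mechanism you give for $\Delta^{(m-j)H}$ fails. The Cass--Ferrucci results you import (Thm.~2.3, Prop.~3.1, Lem.~3.8, Prop.~3.9) are proved under pointwise covariance-derivative hypotheses that do not follow from finite 2D $\rho$-variation. Besides the quantitative bound $|\partial_s\partial_t\mathcal{K}(s,t)| = |R''(t-s)| \le C(t-s)^{2H-2}$, you need $|R'(t-s)| \le C(t-s)^{2H-1}$, $|\tfrac{d}{d\tau}\mathcal{K}(\tau,\tau)| \le C\tau^{2H-1}$ and $\Var(\tilde X_t - \tilde X_s) \le C(t-s)^{2H}$. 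These should be verified on $[0,\Delta]$ for the re-based block process and transported isometrically to $I_k$, so that the constants are $k$-independent.

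Bounding each contraction by $\sup|\mathcal{K}| \lesssim \Delta^{2H}$ ignores that, once one pair is contracted, the result is a function of the remaining contracted variables. That function must then be integrated against $d\mathcal{K}$, which for $H < 1/2$ is not a finite measure. For the alternating pairing of a word $(i,j,i,j)$ the outcome is $\iint_{0<t_2<t_3<\Delta}[R_i'(t_3) - R_i'(t_3-t_2)][R_j'(\Delta-t_2) - R_j'(t_3-t_2)]\,dt_2\,dt_3$. Its $(t_3-t_2)^{4H-2}$ singularity is integrable only for $H > 1/4$. Your argument never uses $H > 1/4$: applied to smooth approximants it would bound $\mathbb{E}[S^{(4)}_{ijij}]$ uniformly for every $H > 0$, yet this quantity blows up as the smoothing is removed when $H \le 1/4$. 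So the $\Delta^{(m-j)H}$ bound must be taken from Cass--Ferrucci Prop.~3.1, passed to the limit kernel by the ``Moreover'' part of Prop.~3.9; that is where the pointwise hypotheses are consumed.

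Your plan for (d) is sound and is the paper's. The Cass--Ferrucci approximants are integrable mixtures of indicator tensors, and a.e.\ convergence plus the uniform domination of Lem.~3.8 handle the integral side. One caveat: Cass--Ferrucci give $L^2(\Omega)$-convergence of chaos components, i.e.\ $\mathcal{H}^{\otimes j}$-convergence of the \emph{symmetrized} kernels only. What is identified unambiguously is therefore the sum over $\pi \in S_j$, which is all that is used downstream. Incidentally, for fBm-type kernels it is $H > 1/2$, not $H < 1/2$, in which $\mathcal{H}$ contains distributions.
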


\begin{proposition}[Block-signature covariance decay]
\label{prop:fOU_covariance_decay}
Under the conditions of Proposition~\ref{prop:fOU_assumptions}, the independent-coordinate fOU process satisfies Assumption~\ref{ass:A}(E) when $H = 1/2$ and Assumption~\ref{ass:Aprime}(P$'$) with $\nu = 4-2H$ when $H \neq 1/2$. Specifically, for $H = 1/2$ each coordinate OU process $X^i$ is Gaussian with exponentially decaying correlation $\Corr(X^i_0,X^i_t)=e^{-\theta_i t}$; by the Gaussian mixing criterion~\citep[eq.~(1.9), Ch.~IV.1]{IbragimovRozanov1978} and independence of coordinates, the vector process $X$ is exponentially $\alpha$-mixing with rate $\theta_{\min} := \min_i \theta_i$. For $H \neq 1/2$ the block-signature covariances satisfy
\begin{equation}
\label{eq:chm_gaussian}
|c_{h,m}| \;\le\; \frac{d^m\widetilde{C}_m}{\bigl(\Lambda_p(m/p)!\bigr)^2}\Delta^{2m\beta} ((h-1)\Delta)^{2H-4} \quad\text{for all } h \ge 2,
\end{equation}
where $\widetilde{C}_m$ depends on $m, H, \theta, \sigma, \Delta$ but not on $h$. The adjacent-block case $h = 1$ is handled separately via Cauchy--Schwarz, yielding the unified Assumption~\ref{ass:Aprime}(P$'$) envelope
\[
|c_{h,m}| \le d^m \widetilde{C}_m\bigl(\Lambda_p(m/p)!\bigr)^{-2}\Delta^{2m\beta}(1+h)^{-\nu} \quad \text{for all } h \ge 1.
\]
Since $\nu = 4-2H \in (2, 7/2)$ for every $H \in (1/4,1) \setminus \{1/2\}$, the block-signature covariance is summable in all three regimes, and Theorem~\ref{thm:main_result} applies in its summable case~(i): $\eta = 1$ with no $\varepsilon$-loss.
\end{proposition}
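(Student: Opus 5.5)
The proof splits into the case $H = 1/2$ and the case $H \neq 1/2$.

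\emph{Case $H = 1/2$.} Each coordinate $X^i$ is a stationary Gaussian OU process with $R^i(\tau) = \frac{\sigma_i^2}{2\theta_i}e^{-\theta_i\tau}$. Its spectral density is rational and bounded away from zero on compacts. I would invoke the Ibragimov--Rozanov Gaussian criterion, which bounds $\alpha$ by the maximal correlation $\rho$. For a Markov Gaussian process $\rho(t)$ equals the correlation coefficient $e^{-\theta_i t}$, so we obtain $\alpha_{X^i}(t) \le e^{-\theta_i t}$. Independence of the coordinates gives
\[
\alpha_X(t) \le \sum_i \alpha_{X^i}(t) \le d\,e^{-\theta_{\min}t},
\]
since $\alpha$ of independent $\sigma$-fields is subadditive. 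This is Assumption~\ref{ass:A}(E).

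\emph{Case $H \neq 1/2$, setup.} Work with the increment process $\tilde X$ and expand $c_{h,m}$ in words. We have
\[
c_{h,m} = \sum_{|w|=m}\Cov\bigl(\langle w, S_{I_0}\rangle, \langle w, S_{I_h}\rangle\bigr).
\]
By Lemma~\ref{lem:gaussian_inputs}(b) and orthogonality of distinct Wiener chaoses, each covariance equals
\[
\sum_{j\ge 1,\, j\equiv m\,(2)} j!\,\bigl\langle \widetilde{f}_{0,w,j}, \widetilde{f}_{h,w,j}\bigr\rangle_{\mathcal H^{\otimes j}}.
\]
The $j=0$ term is the constant and cancels upon centering. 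Expanding the symmetrizations turns this into an average over $\pi\in S_j$ of the plain pairings $\langle f_{0,w,j},(f_{h,w,j})^\pi\rangle$. For multi-coordinate words, independence of coordinates factorizes $\mathcal H$ into orthogonal pieces, so only slot pairings between matching letters survive. Each surviving pairing is a product of scalar kernels.

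\emph{Case $H \neq 1/2$, the bound for $h\ge 2$.} Apply Lemma~\ref{lem:gaussian_inputs}(d) and bound the integral crudely:
\[
\bigl|\langle f_0,f_h^\pi\rangle\bigr| \le \|f_0\|_\infty\|f_h\|_\infty\,\Delta^{2j}\sup_{u\in I_0,\,v\in I_h}|R''(v-u)|^j.
\]
For $h\ge2$, $v-u\ge (h-1)\Delta\ge\Delta$, so Lemma~\ref{lem:gaussian_inputs}(a) gives
\[
|R''|^j \le C_R^j\,((h-1)\Delta)^{j(2H-4)}.
\]
Since $j\ge1$ and $((h-1)\Delta)^{2H-4}\le \Delta^{2H-4}$, we have $((h-1)\Delta)^{j(2H-4)} \le \Delta^{(j-1)(2H-4)}((h-1)\Delta)^{2H-4}$, so every chaos level decays at least like $((h-1)\Delta)^{2H-4}$. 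The kernel bounds of Lemma~\ref{lem:gaussian_inputs}(c) give a factor $C_{\mathrm{CF}}^2\Delta^{2(m-j)H}$. There are $d^m$ words and finitely many $j$ and $\pi$. Collecting all $\Delta$-powers and combinatorial factors, I define
\[
\widetilde C_m := \bigl(\Lambda_p(m/p)!\bigr)^2\Delta^{-2m\beta}\times(\text{that constant}),
\]
which depends on $\Delta$, as Assumption~\ref{ass:Aprime} allows. This yields \eqref{eq:chm_gaussian}.

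\emph{Case $H \neq 1/2$, the envelope.} For $h=1$, Cauchy--Schwarz and stationarity give $|c_{1,m}|\le c_{0,m}=\mathbb E\|W_1^{(m)}\|^2$. This is finite by Assumption~\ref{ass:M} together with the Lyons extension bound $\|S^{(m)}\|\le \|\mathbf X\|_{p\text{-var}}^m/(\Lambda_p(m/p)!)$, which produces exactly the prefactor $\Delta^{2m\beta}/(\Lambda_p(m/p)!)^2$. Converting $(h-1)^{-\nu}$ to $(1+h)^{-\nu}$ for $h\ge2$ costs the factor $3^\nu$. After enlarging $\widetilde C_m$ to absorb both, Assumption~\ref{ass:Aprime}(P$'$) holds with $\nu=4-2H$, and $\nu > 2 > 1$ places us in the summable case~(i) of Theorem~\ref{thm:main_result}.

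\emph{Main obstacle.} The delicate step is the reduction from symmetrized kernels to the plain pairings of Lemma~\ref{lem:gaussian_inputs}(d) in the multi-coordinate setting. One must check that the cross-coordinate pairings vanish rather than contribute, and that the $L^\infty$ bounds survive symmetrization. Lemma~\ref{lem:gaussian_inputs}(c) states the latter explicitly, so I would rely on it. The rest is bookkeeping of constants.
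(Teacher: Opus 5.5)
Your proposal is correct and follows the paper's proof essentially step for step. For $H=1/2$ you use the Ibragimov--Rozanov maximal-correlation bound, the Markov identification $\rho_{\max}(t)=e^{-\theta_i t}$, and subadditivity of $\alpha$ over independent coordinates. For $H\neq 1/2$ you use the chaos expansion with centering, the permutation expansion into plain pairings, and the crude bound of $L^\infty$ norms times box volume times $\sup|R''|^j$ from Lemma~\ref{lem:gaussian_inputs}(a),(c),(d); you then extract a single $((h-1)\Delta)^{2H-4}$ factor, normalize into $\widetilde C_m$, and treat $h=1$ by Cauchy--Schwarz. The only bookkeeping you leave implicit is that passing to $(1+h)^{-\nu}$ also absorbs a fixed $\Delta^{2H-4}$ alongside the $3^{\nu}$, which your final enlargement of $\widetilde C_m$ already covers.
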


\begin{proof}
The $H = 1/2$ case follows from the Gaussian mixing criterion of~\citet[eq.~(1.9), Ch.~IV.1 and Thm.~2, Ch.~IV.2]{IbragimovRozanov1978}, applied componentwise.

For $H \neq 1/2$, fix a word $w$ of length $m$. By Lemma~\ref{lem:gaussian_inputs}(b) and the Wiener chaos isometry, the centered block-signature covariance decomposes as
\begin{equation}
\label{eq:chm_chaos_sum_main}
c_{h,m}(w) = \sum_{\substack{1 \le j \le m \\ j \equiv m\,(2)}} j!\,\bigl\langle \tilde{f}_{0,w,j}^{(m)}, \tilde{f}_{h,w,j}^{(m)}\bigr\rangle_{\mathcal{H}^{\otimes j}}
= \sum_{\substack{1 \le j \le m \\ j \equiv m\,(2)}}\, \sum_{\pi \in S_j} \bigl\langle f_{0,w,j}^{(m)}, (f_{h,w,j}^{(m)})^{\pi}\bigr\rangle_{\mathcal{H}^{\otimes j}},
\end{equation}
where $\tilde{f}$ denotes the symmetrization, the second form expands it over the $j!$ slot permutations in plain pairings, and centering eliminates the $j = 0$ contribution.

\emph{Bound for $h \ge 2$.} For each admissible chaos level $j$, the integral representation of Lemma~\ref{lem:gaussian_inputs}(d) gives, uniformly over the slot permutations $\pi$ (the right-hand side is $\pi$-invariant),
\[
\bigl|\bigl\langle f_{0,w,j}^{(m)}, (f_{h,w,j}^{(m)})^{\pi}\bigr\rangle_{\mathcal{H}^{\otimes j}}\bigr| \le \|f_{0,w,j}^{(m)}\|_\infty \|f_{h,w,j}^{(m)}\|_\infty \cdot |I_0^j \times I_h^j| \cdot \sup_{u \in I_0^j,\, v \in I_h^j}\prod_{\ell=1}^j |R''(v_\ell - u_\ell)|.
\]
On $I_0 \times I_h$ with $h \ge 2$, the separation $v_\ell - u_\ell \ge (h-1)\Delta$ together with Lemma~\ref{lem:gaussian_inputs}(a) gives $|R''(v_\ell - u_\ell)| \le C_R\,((h-1)\Delta)^{2H-4}$. Combined with the kernel bound (c) and the box volume $|I_0^j \times I_h^j| = \Delta^{2j}$:
\[
\bigl|\bigl\langle f_{0,w,j}^{(m)}, (f_{h,w,j}^{(m)})^{\pi}\bigr\rangle_{\mathcal{H}^{\otimes j}}\bigr| \le C_{\mathrm{CF}}^2\,\Delta^{2(m-j)H + 2j}\, C_R^j\,((h-1)\Delta)^{j(2H-4)}.
\]
Since $2H - 4 < 0$ and $h - 1 \ge 1$ for $h \ge 2$, the inequality
\[
((h-1)\Delta)^{j(2H-4)} \;\le\; ((h-1)\Delta)^{2H-4}\,\Delta^{(j-1)(2H-4)} \qquad (j \ge 1)
\]
extracts the uniform $((h-1)\Delta)^{2H-4}$ envelope at the cost of a $j$- and $\Delta$-dependent factor. Summing the $j!$ slot permutations in~\eqref{eq:chm_chaos_sum_main}, the finitely many admissible $j$, and the $d^m$ words of length $m$, and absorbing all resulting $\Delta$- and $m$-dependent factors into a level-dependent constant, yields
\begin{equation}
\sum_{|w| = m} |c_{h,m}(w)| \;\le\; \frac{d^m \widetilde{C}_m}{\bigl(\Lambda_p(m/p)!\bigr)^2}\,\Delta^{2m\beta}\,((h-1)\Delta)^{2H-4},
\end{equation}
which is~\eqref{eq:chm_gaussian}. Since $h - 1 \ge (1+h)/3$ for $h \ge 2$ and $2H - 4 < 0$,
\[
((h-1)\Delta)^{2H-4} \le 3^{4-2H}\,\Delta^{2H-4}\,(1+h)^{-(4-2H)};
\]
absorbing $3^{4-2H}\Delta^{2H-4}$ into $\widetilde{C}_m$ gives Assumption~\ref{ass:Aprime}(P$'$) with $\nu = 4-2H$.

\emph{Adjacent block ($h=1$).} The bound~\eqref{eq:chm_gaussian} cannot be used at $h = 1$ since $((h-1)\Delta)^{2H-4}$ diverges. Instead, Cauchy--Schwarz and stationarity give $|c_{1,m}| \le c_{0,m}$, which is bounded by Lemma~\ref{lem:level_moments} (Appendix~\ref{app:proof}) as $c_{0,m} \le d^m C^{\mathrm{sig}}_{m,2}(\Lambda_p(m/p)!)^{-2}\Delta^{2m\beta}$. Enlarging $\widetilde{C}_m$ to dominate $2^\nu C^{\mathrm{sig}}_{m,2}$ ensures $|c_{1,m}| \le d^m\widetilde{C}_m(\Lambda_p(m/p)!)^{-2}\Delta^{2m\beta}\,(1+1)^{-\nu}$, completing the unified envelope for all $h \ge 1$.

\emph{Multi-coordinate reduction.} For $d > 1$, by independence of the fOU coordinates each Wick-pairing component of the chaos kernel $f_{k,w,j}^{(m)}$ lies in $\bigotimes_{i=1}^d \mathcal{H}_i^{\otimes a_i}$ for its own free-coordinate profile $(a_i)_{i=1}^d$ with $\sum_i a_i = j$ (distinct pairings may leave distinct profiles). Orthogonality across distinct profiles forces surviving component pairings to factor coordinatewise --- likewise for every slot-permuted pairing, since a slot permutation preserves the profile --- so the scalar argument applies componentwise, the finitely many components enlarging $\widetilde{C}_m$ only. Full details appear in Appendix~\ref{subsec:proof_fOU_covariance_decay}.
\end{proof}

Propositions~\ref{prop:fOU_assumptions} and~\ref{prop:fOU_covariance_decay} together supply the exponents required by Theorem~\ref{thm:main_result} for fOU on $H \in (1/4, 1)$ --- with $\eta = 1$ throughout --- and with the $H = 1/2$ boundary treated as a Gaussian semimartingale via Proposition~\ref{prop:gamma_values}(i).

Beyond verifying the hypotheses, the same Gaussian-chaos analysis yields the fOU increment signature in closed form, providing the exact deterministic ground truth for the experiments of Section~\ref{sec:numerical_results}.

\begin{proposition}[Closed-form fOU expected signatures]
\label{prop:fou_closed_forms}
For the centered, independent-coordinate fOU increment process over a block of length $\Delta$, write $R_i := R^i$ for the stationary autocovariance of coordinate $i$ and $R'_i$ for its derivative. The odd signature levels vanish in expectation, and for every $H \in (1/4, 1)$ the level-2 and level-4 expected signatures admit the closed forms
\begin{equation}
\label{eq:gt_level2}
\mathbb{E}[S^{(2)}_{ij}] = v_i\,\mathbb{1}\{i = j\}, \qquad v_i := R_i(0) - R_i(\Delta),
\end{equation}
and, for distinct coordinates $i \neq j$,
\begin{equation}
\label{eq:gt_level4}
\begin{aligned}
\mathbb{E}[S^{(4)}_{iijj}] &= \iint_{s + w < \Delta,\, s, w > 0} R'_i(s)\, R'_j(w)\, ds\, dw, \\
\mathbb{E}[S^{(4)}_{ijij}] &= \iint_{0 < t_2 < t_3 < \Delta} [R'_i(t_3) - R'_i(t_3 - t_2)]\,[R'_j(\Delta - t_2) - R'_j(t_3 - t_2)]\, dt_2\, dt_3, \\
\mathbb{E}[S^{(4)}_{ijji}] &= \int_0^\Delta R'_i(t)\,[R_j(t) - R_j(0)]\, dt - \int_0^\Delta (\Delta - w)\, R'_i(w)\, R'_j(w)\, dw.
\end{aligned}
\end{equation}
The diagonal word $\mathbb{E}[S^{(4)}_{iiii}]$ is the sum of the three expressions evaluated at $R_i = R_j$. These follow from the Cass--Ferrucci Wiener-chaos framework~\citep{CassFerrucci2024}; the derivation and the $H \le 1/2$ regularization appear in Appendix~\ref{app:expected_signature_fOU}.
\end{proposition}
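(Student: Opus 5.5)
The plan is to compute each expected signature coordinate directly from the increment process $\tilde X$ by Wick/Isserlis pairing, using piecewise-smooth approximations and passing to the limit. For $H\le 1/2$ this limit is where the Cass--Ferrucci regularization enters. Write $\mathcal{K}_i(s,t)=R_i(|t-s|)-R_i(s)-R_i(t)+R_i(0)$ for the increment covariance of coordinate $i$. We work with a piecewise-linear approximation $\tilde X^{(n)}$, whose iterated integrals are genuine Riemann--Stieltjes integrals. By Proposition~\ref{prop:gamma_values} and Assumption~\ref{ass:gamma}, $\mathbb{E}[S(\tilde{\mathbf X}^{(n)})]\to\mathbb{E}[S(\tilde{\mathbf X})]$, since $L^2$ convergence of the truncated signature implies convergence of means. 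It therefore suffices to evaluate the expectations for the approximations and identify their limits.

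\emph{Odd levels.} The increment process is centered Gaussian, so $\tilde X\stackrel{d}{=}-\tilde X$. The level-$m$ signature is $m$-homogeneous under $\tilde X\mapsto-\tilde X$, so odd levels have mean zero.

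\emph{Level 2.} For $i\ne j$, independence gives $\mathbb{E}\int dX^i dX^j=0$. For the diagonal word, geometricity gives $S^{(2)}_{ii}=\tfrac12(\tilde X^i_\Delta)^2$. Hence $\mathbb{E}[S^{(2)}_{ii}]=\tfrac12\mathcal{K}_i(\Delta,\Delta)=R_i(0)-R_i(\Delta)=v_i$.

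\emph{Level 4, distinct $i\ne j$.} For a word in $\{i,j\}$ with two letters of each, independence factors the expectation of the iterated integral $\int_{t_1<\dots<t_4}d\tilde X^{w_1}_{t_1}\cdots d\tilde X^{w_4}_{t_4}$ into the covariance of the two $i$-differentials times that of the two $j$-differentials. Formally, $\mathbb{E}[d\tilde X^i_s d\tilde X^i_t]=\partial_s\partial_t\mathcal{K}_i(s,t)\,ds\,dt=-R_i''(t-s)\,ds\,dt$. The aim is to integrate out one variable per pair so that only $R'$ appears; this is what makes the formulas valid for $H\le1/2$, where $R''$ is not integrable at the diagonal.

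For $iijj$, the $i$-pair occupies $t_1<t_2$ and the $j$-pair $t_3<t_4$. Integrating $t_1$ over $(0,t_2)$ and $t_4$ over $(t_3,\Delta)$ gives $\mathbb{E}[\tilde X^i_{t_2}\,d\tilde X^i_{t_2}]=\tfrac12 d\mathcal{K}_i(t_2,t_2)=-R_i'(t_2)\,dt_2$, and analogously a factor $-R'_j(\Delta-t_3)$. The remaining integral is $\iint_{t_2<t_3}R_i'(t_2)R_j'(\Delta-t_3)$. The substitution $s=t_2$, $w=\Delta-t_3$ maps it to the region $\{s+w<\Delta\}$, which is the first formula.

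For $ijij$ and $ijji$, the pairs interlace or nest, so only one outer variable can be integrated explicitly. One then uses $\partial_t\mathcal{K}(s,t)=R'(t-s)+\dots$ in the form $\partial_t\mathcal{K}_i(t_1,t)$ integrated in $t_1$ from $0$ to $t_2$, which yields the bracket $R'_i(t_3)-R'_i(t_3-t_2)$, and similarly for $j$. For $ijji$, one integrates by parts in the inner nested pair to produce the $(\Delta-w)R_i'R_j'$ convolution term and the boundary term $\int R_i'(R_j-R_j(0))$.

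\emph{Diagonal word $iiii$.} Isserlis gives three pairings of the four $i$-differentials, corresponding exactly to the $iijj$, $ijij$ and $ijji$ patterns with both pairs now drawn from coordinate $i$. Hence $\mathbb{E}[S^{(4)}_{iiii}]$ is the sum of the three expressions with $R_j=R_i$. As a consistency check, $\mathbb{E}[S^{(4)}_{iiii}]=\mathbb{E}[(\tilde X^i_\Delta)^4]/24=3v_i^2\cdot 4/24$ should hold, which validates the algebra.

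The main obstacle is justifying these formal manipulations for $H<1/2$. There $R''(\tau)\sim\tau^{2H-2}$ near $0$ is not integrable, so $\mathcal{K}$ is only of finite $\rho$-variation with $\rho>1$. The step to try first is to perform the integrations for the smooth piecewise-linear approximation, where every step is exact Fubini and Riemann--Stieltjes calculus, to obtain each formula with $R$ replaced by the discretized covariance. One then passes $n\to\infty$, using that $R'_i(s)\sim -c\,s^{2H-1}$ is integrable near $0$ for $H>0$, and that the products appearing, such as $R'_iR'_j$ with exponent $4H-2>-1$, are integrable exactly when $H>1/4$. Dominated convergence then handles the limit. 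This agrees with the regularized Wick formulas of \citet{CassFerrucci2024} and with the $H>1/4$ threshold.
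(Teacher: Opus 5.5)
Your formal computations (odd levels, level~2, and the Isserlis integrations that produce the three brackets) agree with the paper's. The convergence $\mathbb{E}[S(\tilde{\mathbf X}^{(n)})]\to\mathbb{E}[S(\tilde{\mathbf X})]$ via Proposition~\ref{prop:gamma_values} is also legitimate. The gap is the final limit for $H\le 1/2$.

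The piecewise-linear interpolant is not stationary. So at fixed $n$ you do not get the closed forms ``with $R$ replaced by a discretized covariance''; you get expressions in one-sided derivatives of its two-parameter covariance $\mathcal{K}^{(n)}$. In the adjacent and nested pairings, and hence also for $iiii$, the inner integration produces a covariance derivative at coincident times, $\mathbb{E}[\tilde X^{(n)}_t\dot{\tilde X}^{(n)}_t]=\tfrac12\tfrac{d}{dt}\mathcal{K}^{(n)}(t,t)$. On a cell, with $t=t_k+\lambda\delta$, a direct computation gives
\[
\tfrac12\tfrac{d}{dt}\mathcal{K}^{(n)}(t,t)\;=\;\frac{(2\lambda-1)\bigl(R(0)-R(\delta)\bigr)}{\delta}\;-\;\frac{R(t_{k+1})-R(t_k)}{\delta}.
\]
The first term oscillates with amplitude $\asymp\delta^{2H-1}$:
\begin{itemize}
\item for $H<1/2$ it has no pointwise limit, and its $L^1(0,\Delta)$-norm is $\asymp\delta^{2H-1}\to\infty$;
\item at $H=1/2$ it is an order-one oscillation, the discrete trace of the diagonal atom in Remark~\ref{rem:H_half_atom}.
\end{itemize}
So there is no $n$-uniform integrable envelope, and dominated convergence is unavailable exactly where the integration reaches $\tau=0$. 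Integrability of $R'$ and $R_i'R_j'$ for the \emph{limit} does not address this, since the obstruction sits in the approximants. The alternating pairing is not affected in this way, because its inner integrations never evaluate a covariance derivative at coincident times.

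The paper avoids this by keeping the approximation stationary (Lemma~\ref{lem:mollifier}). Mollifying the stationary fOU on the whole line gives $X^\varepsilon$ with a smooth \emph{even} autocovariance $R_\varepsilon$, so $R_\varepsilon'(0)=0$. The integrations by parts then hold verbatim with $R_\varepsilon$ and no boundary terms. Part~(iii) supplies exactly the two inputs your dominated-convergence step presupposes:
\begin{itemize}
\item $R'_\varepsilon\to R'$ pointwise on $(0,\Delta]$;
\item the uniform envelope $|R'_\varepsilon(\tau)|\le C\tau^{2H-1}$, where the case $\varepsilon>\tau/8$ uses $R'_\varepsilon(0)=0$ and the mean value theorem.
\end{itemize}
Part~(ii) then identifies the limit of the expected signatures via Friz--Riedel.

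If you prefer to keep piecewise-linear approximants, you must replace dominated convergence by a cancellation argument that uses the zero cell-average of $2\lambda-1$. For instance, integrate $t_2$ out completely to obtain $\tfrac12\mathcal{K}^{(n)}_i(t,t)$. Then write the remaining integral as a Stieltjes integral of uniformly convergent, uniformly $2H$-H\"older functions and invoke Young-integral continuity; this is where $4H>1$ enters. As written, the proposal supplies neither route.
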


% 5. Numerical Experiments
\section{Numerical Experiments}
\label{sec:numerical_results}

For the fOU model of Section~\ref{sec:verification}, we illustrate the upper-bound rates of Theorem~\ref{thm:main_result} through Monte Carlo experiments testing: (i)~discretization bias $O((n-1)^{-\gamma})$; (ii)~variance decay $O(K^{-\eta})$ with $\eta = 1$; and (iii)~optimal allocation $O(N^{-\gamma/(1+\gamma)})$.

\subsection{Experimental Framework}

\paragraph{Process configuration}
We simulate bivariate ($d = 2$) fOU processes with independent coordinates, each satisfying $dX^i_t = -\theta X^i_t\,dt + \sigma\,dB^{H,i}_t$ with $\theta = \sigma = 1$ and $\mu = 0$. The Hurst parameters $H \in \{0.40, 0.50, 0.60\}$ span the short-range ($1/4 < H < 1/2$), semimartingale ($H = 1/2$), and long-range ($H > 1/2$) regimes. Under Proposition~\ref{prop:fOU_covariance_decay}, $H = 0.50$ satisfies Assumption~\ref{ass:A}(E), while $H = 0.40$ and $H = 0.60$ satisfy Assumption~\ref{ass:Aprime}(P$'$) with $\nu = 4 - 2H$ ($= 3.2$ and $2.8$ respectively) $> 1$; all three configurations fall in case~(i) of Theorem~\ref{thm:main_result} ($\eta = 1$). Discretization rates $\gamma$ are given by Proposition~\ref{prop:gamma_values}; the corresponding targets are $\gamma \in \{0.6, 1, 1.2\}$ and $\eta = 1$ for all three. All experiments use truncation level $M = 4$. Stationary fOU paths are generated with the Davies--Harte circulant embedding algorithm~\citep{DaviesHarte1987}, which is exact in distribution given the stationary autocovariance; the autocovariance is computed numerically (spectral quadrature), with discretization-side approximations far below the Monte Carlo noise floor.

\paragraph{Statistical methodology}
All experiments use $1{,}000$ independent replications. Each replication generates an fOU path, computes $\widehat{\Sigma}_K$, and records $\|\widehat{\Sigma}_K - \mathbb{E}[S^{(M)}]\|_{\mathrm{HS}}^2$. Convergence rates are estimated via OLS regression of $\log(\MSE)$ on the logarithm of the relevant parameter; the resulting OLS slope $\hat{b}$ estimates the power-law exponent. We report 95\% confidence intervals from pairs bootstrap ($10{,}000$ resamples). The reported \emph{Bound} in Table~\ref{tab:rate_results} is the theoretical target slope; for $H \in \{0.40, 0.50\}$ it suppresses the arbitrarily small $\varepsilon$-loss carried by the rigorous truncation-$M=4$ discretization statement (the level-2 discretization rates, all variance rates, and all $H = 0.60$ rates are exact). Since Theorem~\ref{thm:main_result} provides upper bounds, empirical slopes strictly more negative than the bound are consistent with the theory.

\paragraph{Ground truth}
The deterministic target $\mathbb{E}[S^{(M)}]$ is computed from the closed forms of Proposition~\ref{prop:fou_closed_forms} for $H > 1/2$ (and for the level-2 and $P_2$ components at every $H$), and for the remaining components at $H \le 1/2$ by discretizing the Cass--Ferrucci approximants of~\citep[Def.~3.3, Prop.~3.9]{CassFerrucci2024} via Riemann--Stieltjes summation of the covariance kernel $\mathcal{K}$, Richardson-extrapolated over a geometric sequence of meshes --- the closed forms remain valid there (Proposition~\ref{prop:fou_closed_forms}), but the singularity of $R'$ at the origin makes their direct quadrature numerically delicate; the two routes agree where both are applied. As a non-circular self-consistency check, the shuffle product identity for geometric rough paths~\citep{FrizVictoir2010} forces the sum of the six level-$4$ words obtained by shuffling $(i,i)$ and $(j,j)$ for independent coordinates $i \neq j$ to equal $\mathbb{E}[S^{(2)}_{ii}]\,\mathbb{E}[S^{(2)}_{jj}]$; the computed ground truth satisfies this identity to within $0.8\%$ relative error at every tested $(H, \Delta)$ configuration.

\subsection{Results}

\paragraph{Experiment 1: Discretization bias}
We test $\mathbb{E}[\|\mathcal{B}\|_{\mathrm{HS}}^2] = O((n-1)^{-\gamma})$ with targets $\gamma \in \{0.6, 1, 1.2\}$ for $H \in \{0.40, 0.50, 0.60\}$. We take block length $\Delta = 1$ (larger than in Experiments~2--3, to isolate the discretization bias) and partition each of $1{,}000$ independent long paths into $K = 10^6$ consecutive blocks, so the variance contribution lies well below the total MSE and $\log(\MSE)$ in Table~\ref{tab:rate_results} reads as a proxy for $\log(\mathbb{E}[\|\mathcal{B}\|_{\mathrm{HS}}^2])$. We vary $n \in \{3, 5, 9, 17, 33\}$.

\paragraph{Experiment 2: Variance decay}
We test $\mathbb{E}[\|\mathcal{V}\|_{\mathrm{HS}}^2] = O(K^{-\eta})$, with $\eta = 1$ in all three regimes (Proposition~\ref{prop:fOU_covariance_decay}: the block-signature covariance is summable --- $\nu = 4-2H > 1$ at $H \in \{0.40, 0.60\}$, exponential decay at $H = 0.50$ --- with no $\varepsilon$-loss). We take $\Delta = 0.1$ with $n = 1{,}000$ points per block, so the squared bias lies well below the total MSE and $\log(\MSE)$ in Table~\ref{tab:rate_results} reads as a proxy for $\log(\mathbb{E}[\|\mathcal{V}\|_{\mathrm{HS}}^2])$. We vary $K \in \{64, 128, 256, 512, 1024, 2048\}$.

The empirical slopes are more negative than the upper bounds, partly reflecting the $O(K^{-2})$ level-1 telescoping contribution (Remark~\ref{rem:level1_telescope}); the steepest excess occurs at $H = 0.60$ ($-1.47$ against the guaranteed $-1.0$), consistent with the summable-covariance prediction $\eta = 1$ together with finite-sample cancellations beyond it.

\paragraph{Experiment 3: Optimal allocation}
We test the combined rate $\MSE^* = O(N^{-\gamma/(1+\gamma)})$ under the budget constraint $N = Kn$. For $N \in \{10^2, 10^3, 10^4, 10^5, 10^6, 10^7\}$ with $\Delta = 0.1$, we allocate $n^* = \lceil N^{1/(1+\gamma)} \rceil$ (with $\eta = 1$ and $\gamma = \min(4H-1,\,2H)$, ignoring constants and, for $H \le 1/2$, the arbitrarily small $\varepsilon$-losses) and $K^* = \lfloor N/n^* \rfloor$.

The theoretical upper bounds quantify the cost of roughness: obtaining $\MSE \le \varepsilon_{\mathrm{tol}}$ through this calculation requires $N \propto \varepsilon_{\mathrm{tol}}^{-8/3 + o(1)}$ observations for $H = 0.40$, versus $N \propto \varepsilon_{\mathrm{tol}}^{-2 + o(1)}$ for the continuous Gaussian semimartingale case.

Table~\ref{tab:rate_results} reports the fitted convergence rates for all three experiments, and Figure~\ref{fig:convergence} displays the corresponding log-log plots.

\begin{table}[tb]
\caption{Convergence-rate validation for the three experiments at $H \in \{0.40, 0.50, 0.60\}$: OLS slope $\hat{b}$ of $\log(\MSE)$ on the logarithm of the design variable, with the theoretical upper-bound slope. $1{,}000$ replications; 95\% CIs from $10{,}000$-resample pairs bootstrap.}
\label{tab:rate_results}
\centering
\begin{tabular}{lcccc}
\toprule
$\boldsymbol{H}$ & \textbf{Bound} & $\boldsymbol{\hat{b}}$ & \textbf{95\% CI} & $\boldsymbol{R^2}$ \\
\midrule
\multicolumn{5}{l}{\emph{Experiment 1: Discretization bias} ($\log\MSE$ vs.\ $\log(n-1)$)} \\
$0.40$ & $\le -0.6$ & $-1.01$ & $[-1.05, -0.97]$ & $0.999$ \\
$0.50$ & $\le -1.0$ & $-1.82$ & $[-1.88, -1.74]$ & $0.999$ \\
$0.60$ & $\le -1.2$ & $-2.08$ & $[-2.20, -1.89]$ & $0.998$ \\
\addlinespace
\multicolumn{5}{l}{\emph{Experiment 2: Variance decay} ($\log\MSE$ vs.\ $\log K$)} \\
$0.40$ & $\le -1.0$ & $-1.10$ & $[-1.19, -1.02]$ & $0.997$ \\
$0.50$ & $\le -1.0$ & $-1.26$ & $[-1.39, -1.13]$ & $0.995$ \\
$0.60$ & $\le -1.0$ & $-1.47$ & $[-1.60, -1.34]$ & $0.996$ \\
\addlinespace
\multicolumn{5}{l}{\emph{Experiment 3: Optimal allocation} ($\log\MSE$ vs.\ $\log N$)} \\
$0.40$ & $\le -0.375$ & $-0.50$ & $[-0.54, -0.46]$ & $0.997$ \\
$0.50$ & $\le -0.50$ & $-0.67$ & $[-0.75, -0.58]$ & $0.994$ \\
$0.60$ & $\le -0.545$ & $-0.81$ & $[-0.92, -0.68]$ & $0.991$ \\
\bottomrule
\end{tabular}
\end{table}

\begin{figure}[tb]
\centering
\includegraphics[width=\linewidth]{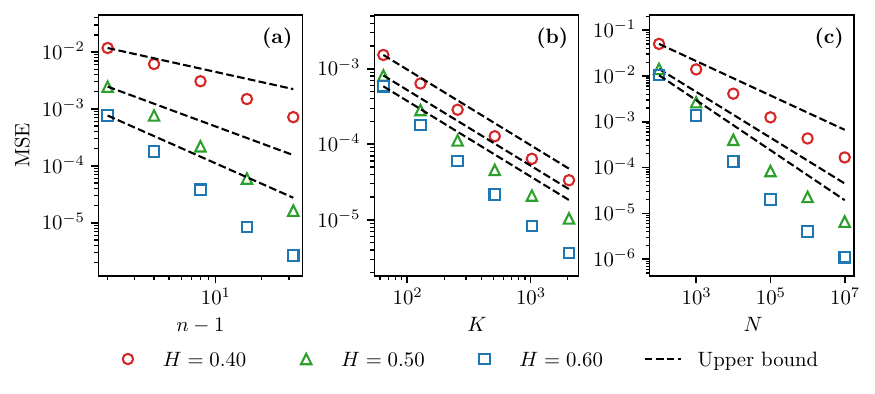}
\caption{Log-log MSE convergence for $H \in \{0.40, 0.50, 0.60\}$ (Experiments~1--3). Markers are empirical; dashed lines carry the upper-bound slopes, normalized at the leftmost point: (a)~discretization bias, $-\gamma = -\min(4H-1,\,2H)$; (b)~variance decay, $-\eta = -1$; (c)~optimal allocation, $-\gamma/(1+\gamma)$. In each panel the empirical error decays at least as fast as its bound.}
\label{fig:convergence}
\end{figure}

\paragraph{Levelwise variance concentration}
At $K = 1{,}000$, $\Delta = 0.1$, $n = 1{,}000$, level~2 alone captures $86$--$96\%$ of the $m \ge 2$ block-signature variance across $H \in \{0.40, 0.50, 0.60\}$, with levels~3 and~4 contributing at most $11.6\%$ and $2.0\%$ respectively (truncation $M = 4$, $d = 2$). Although $T^{(4)}(\mathbb{R}^2)$ has dimension~31, this rough-factorial level concentration is consistent with the levelwise structure of Theorem~\ref{thm:main_result}.

In these fOU experiments, the empirical convergence rates are consistent with --- and strictly more negative than --- the upper bounds of Theorem~\ref{thm:main_result}.

% 6. Discussion
\section{Discussion}
\label{sec:discussion}

The non-asymptotic bounds of Theorem~\ref{thm:main_result} decompose the estimation error into path-regularity and dependence contributions, with empirical convergence rates faster than the upper-bound rates in all tested fOU configurations (Section~\ref{sec:numerical_results}). The analysis isolates four structural ingredients (moment control, dependence decay, stationarity, and discretization quality), any one of which may be the binding constraint in a given application.

The optimal allocation formula $n^* \asymp N^{\eta/(\gamma+\eta)}$ provides explicit guidance for splitting a computational budget between mesh refinement and block count, given knowledge of the regularity and dependence parameters. For instance, with $H = 0.40$ and a budget of $N = 10^6$ observations, the upper-bound-optimal split allocates $n^* \approx N^{5/8} \approx 5{,}624$ points per block and $K^* \approx 177$ blocks; the resulting MSE bound is $\MSE \le N^{-0.375 + o(1)}$. The levelwise variance constants carry rough-factorial damping and are finite for fixed truncation $M$; for the tested $d=2$, level~2 alone accounts for $87$--$96\%$ of the block-signature variance (levels $m \ge 2$; Section~\ref{sec:numerical_results}).

In Section~\ref{sec:numerical_results}, observed slopes are consistently more negative than the upper bounds; for variance decay (Experiment~2), ratios of empirical to theoretical exponents range from $1.1\times$ ($H = 0.40$) to $1.5\times$ ($H = 0.60$). Much of fOU's Gaussian structure is already exploited: the separated-block chaos calculus behind Proposition~\ref{prop:fOU_covariance_decay} shows that block-signature covariances are governed by the second derivative of the driver's autocovariance, $R''(\tau) \sim \tau^{2H-4}$, rather than by $R(\tau) \sim \tau^{2H-2}$ itself --- block signatures see only increments --- which is why $\eta = 1$ holds even in the long-range regime. The remaining empirical--theoretical gap is attributable to the $O(K^{-2})$ level-1 telescoping contribution (Remark~\ref{rem:level1_telescope}), to non-asymptotic constants, and to the conservatism of the unified envelope $\nu = 4-2H$ across chaos levels (higher chaos orders decay faster). We also note that this increment-covariance mechanism produces a summable envelope whenever the second derivative of the driver's stationary autocovariance decays summably, as it does for fOU --- there $|R''(\tau)| \le C_R\,\tau^{2H-4}$ (Lemma~\ref{lem:gaussian_inputs}(a)) gains two full powers over $R$ itself --- so the non-summable branch of Theorem~\ref{thm:main_result}(iii) is not exercised by the fOU class; it remains relevant for non-Gaussian or cross-correlated dependence structures, and for Gaussian drivers whose autocovariance has a slowly decaying oscillatory tail, for which the envelope $\sup_{\tau \ge (h-1)\Delta}|R''(\tau)|$ need not be summable.

The current scope is constrained by four structural restrictions: independent coordinates in the fOU verification (correlated fractional drivers require joint spectral analysis; see the proof of Proposition~\ref{prop:fOU_covariance_decay}); the strict-stationarity requirement of the block-averaging estimator (which excludes trends, regime changes, and non-constant diffusion coefficients); the piecewise-linear interpolation scheme (other schemes may yield different exponents); and the fixed truncation $M$ (the bound's constants are finite at each $M$ but their behavior as $M$ grows is not addressed). The empirical evidence for slack of the bounds is itself restricted to fOU at $d = 2$, $M = 4$, and three Hurst values. Several questions remain open: whether the rates $\gamma$ and $\eta$ are sharp (which would require matching minimax lower bounds over appropriate function classes), whether the framework extends to non-Gaussian rough paths (where Wick-based ground-truth formulas are unavailable), and whether direct $L^2$ techniques can close the empirical gap for the Gaussian case.

% -------------------------------------------------------------------
% Acknowledgments
% -------------------------------------------------------------------
\begin{acks}[Acknowledgments]
This work builds on the author's Master's thesis, completed within the MSc UZH ETH in Quantitative Finance program under the supervision of Professor Josef Teichmann, whom the author thanks for suggesting the Ornstein--Uhlenbeck application and for advice on presentation. The author also acknowledges the use of large-language-model assistants (Anthropic Claude, OpenAI GPT, and Google Gemini) during manuscript preparation for editorial review, notation consistency checks, and prose drafting and revision. All such text was reviewed and edited by the author. All mathematical results are the author's own, and the author bears full responsibility for the contents.
\end{acks}

% -------------------------------------------------------------------
% Appendices
% -------------------------------------------------------------------
\appendix

% Appendix A: Proofs of Theorem~\ref{thm:main_result}
\section{Proofs}
\label{app:proof}

This appendix provides the complete proof of Theorem~\ref{thm:main_result}, including the levelwise variance bounds. We work throughout in the Hilbert space $L^2(\Omega; T^{(M)}(\mathbb{R}^d))$ equipped with norm $\|Y\|_{L^2} = (\mathbb{E}[\|Y\|_{\mathrm{HS}}^2])^{1/2}$. The proof proceeds in four parts: auxiliary moment and mixing estimates (Appendix~\ref{app:auxiliary}), bias analysis (Appendix~\ref{app:bias}), variance analysis via orthogonal levelwise decomposition (Appendix~\ref{app:variance}), and final assembly (Appendix~\ref{app:completion}). Verification of the discretization rates used in Proposition~\ref{prop:gamma_values} is deferred to Appendix~\ref{app:gamma}.

\paragraph{Notation}
Throughout this appendix: $Z_k := S^{(M)}(\mathbf{X})_{I_k}$ denotes the true block signature over the $k$-th block $I_k = [(k-1)\Delta, k\Delta]$; $Y_k := S^{(M)}(\mathbf{X}^{(n)}_k)_{I_k}$ denotes the observable signature computed from piecewise-linear interpolation of $n$ observations; and $W_k := Z_k - \mathbb{E}[Z_k]$ denotes the centered block signature. We write $\|\cdot\|$ for $\|\cdot\|_{\mathrm{HS}}$ when the context is clear.

%%%%%%%%%%%%%%%%%%%%%%%%%%%%%%%%%%%%%%%%%%%%%%%%%%%%%%%%%%%%%%%%%%%%%%%%%%%%%%%
\subsection{Auxiliary Estimates}
\label{app:auxiliary}
%%%%%%%%%%%%%%%%%%%%%%%%%%%%%%%%%%%%%%%%%%%%%%%%%%%%%%%%%%%%%%%%%%%%%%%%%%%%%%%

\begin{lemma}[Level-specific moment bounds]
\label{lem:level_moments}
Under Assumption~\ref{ass:M}, for the degree-$m$ signature component $S^{(m)}(\mathbf{X})_{0,\Delta} \in (\mathbb{R}^d)^{\otimes m}$ with $1 \le m \le M$,
\begin{equation}
\label{eq:level_moment_L2}
\mathbb{E}\left[\|S^{(m)}(\mathbf{X})_{0,\Delta}\|_{\mathrm{HS}}^2\right] \le \frac{d^m C^{\mathrm{sig}}_{m,2}}{\bigl(\Lambda_p(m/p)!\bigr)^2}\Delta^{2m\beta},
\end{equation}
where $(m/p)! = \Gamma(1+m/p)$ and $C^{\mathrm{sig}}_{m,2}<\infty$ is level dependent (and depends on $\Delta$ through Assumption~\ref{ass:M}) and contains the deterministic constants from the signature map. More generally, for any $q \ge 1$,
\begin{equation}
\label{eq:level_moment_Lq}
\mathbb{E}\left[\|S^{(m)}(\mathbf{X})_{0,\Delta}\|_{\mathrm{HS}}^q\right] \le \frac{d^{mq/2} C^{\mathrm{sig}}_{m,q}}{\bigl(\Lambda_p(m/p)!\bigr)^q}\Delta^{mq\beta}.
\end{equation}
\end{lemma}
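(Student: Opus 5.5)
The plan is to combine Lyons' extension theorem, which controls each signature level deterministically by the $p$-variation of the lift, with the moment bounds of Assumption~\ref{ass:M}.

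\emph{Step 1: pathwise factorial bound.} By Lyons' extension theorem~\citep[Thm.~2.2.1]{Lyons1998} (see also~\citealp[Thm.~9.5]{FrizVictoir2010}), for a geometric $p$-rough path with control $\omega(s,t) := \|\mathbf{X}\|_{p\text{-var};[s,t]}^p$ there is a bound
\[
\|S^{(m)}(\mathbf{X})_{0,\Delta}\| \le \frac{\omega(0,\Delta)^{m/p}}{\Lambda_p\,(m/p)!}.
\]
Here $\|\cdot\|$ is the admissible (projective or injective) tensor norm used by Lyons, and $\Lambda_p$ absorbs the extension constant. For $m \le \lfloor p\rfloor$ the level is part of the lift itself, so the bound holds with a constant that is then absorbed. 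Note that $\omega(0,\Delta)^{m/p} = \|\mathbf{X}\|_{p\text{-var};[0,\Delta]}^{m}$.

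\emph{Step 2: norm comparison.} On $(\mathbb{R}^d)^{\otimes m}$, the Hilbert--Schmidt norm is dominated by the admissible norm up to a dimensional factor. For $\ell^\infty$-type coordinates, $\|a\|_{\mathrm{HS}}^2 \le d^m \max_w |a_w|^2$. This gives $\|S^{(m)}\|_{\mathrm{HS}} \le d^{m/2}\,c_m \|\mathbf{X}\|^m_{p\text{-var};[0,\Delta]}/(\Lambda_p (m/p)!)$, and this step is the origin of the $d^{m}$ (respectively $d^{mq/2}$) prefactor.

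\emph{Step 3: take moments.} Raise the bound to the power $q$ and take expectations. Assumption~\ref{ass:M} with $r = mq$ and $T = \Delta$ gives $\mathbb{E}\|\mathbf{X}\|^{mq}_{p\text{-var};[0,\Delta]} \le C_{mq}(\Delta)\Delta^{mq\beta}$. This yields \eqref{eq:level_moment_Lq} with $C^{\mathrm{sig}}_{m,q} := c_m^q C_{mq}(\Delta)$, and setting $q = 2$ gives \eqref{eq:level_moment_L2}. By stationarity (Assumption~\ref{ass:S}) the same bound holds on every block.

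The main obstacle is Step 1 for levels $m > \lfloor p\rfloor$. There the extension estimate is usually stated with a constant $\beta_p$, and it holds only for controls with $\omega \le 1$ or under a normalization. The general form has a growth factor that is superexponential in $m$ unless one uses the neoclassical inequality. I would first try the sharp Hambly--Lyons/Boedihardjo-style form of the extension bound, with the constant depending only on $p$. If that fails, I would fall back on the cruder form with an $m$-dependent constant, which the statement tolerates because $C^{\mathrm{sig}}_{m,q}$ is allowed to be level dependent; the factorial is then retained only for bookkeeping.
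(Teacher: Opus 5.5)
Your proposal is correct and follows essentially the paper's route: Lyons' factorial extension estimate (with the control rescaled so that the hypothesis holds at levels $\le \lfloor p\rfloor$), then a finite-dimensional norm comparison that produces the $d^m$ (resp.\ $d^{mq/2}$) factor, which the paper phrases as bounding each of the $d^m$ word coordinates and summing, and finally Assumption~\ref{ass:M} applied with exponent $mq$. Your worry about the extension constant for $m > \lfloor p\rfloor$ is moot, since $m \le M$ is fixed and your fallback is in effect what the paper does: all extension and norm-equivalence constants go into a level-dependent $K_{m,p,M}$, so no sharp uniform-in-$m$ form of the extension theorem is needed.
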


\begin{proof}
A \emph{word} of length $m$ is a multi-index $w = (i_1, \ldots, i_m)$ with $i_j \in \{1, \ldots, d\}$; we write $|w| = m$. The pairing $\langle w, S \rangle$ denotes the corresponding iterated-integral coefficient of the signature.

The Hilbert--Schmidt norm decomposes over the $d^m$ tensor basis elements:
\begin{equation}
\|S^{(m)}(\mathbf{X})_{0,\Delta}\|_{\mathrm{HS}}^2 = \sum_{|w|=m} |\langle w, S(\mathbf{X})_{0,\Delta} \rangle|^2.
\end{equation}

This estimate uses the rough-factorial extension bound, not the bounded-variation simplex factorial. The Lyons extension theorem~\citep[Thm.~9.5]{FrizVictoir2010} determines the order-$M$ lift from the order-$\lfloor p\rfloor$ rough path, and Lyons' factorial-decay estimate~\citep[Thm.~2.2.1]{Lyons1998} gives, for a suitable rough-path control and a Lyons constant $\Lambda_p>0$, the bound $\|X^i_{s,t}\| \le \omega(s,t)^{i/p}/(\Lambda_p(i/p)!)$ both for $i \le p$ and for the extended levels $i>p$ (for $i \le p$ the bound is the theorem's hypothesis, arranged by rescaling the control $\omega$; the theorem then extends it to all $i > p$, and the rescaling constant is absorbed into $K_{m,p,M}$ below). Applying this with $i=m$, using equivalence of homogeneous norms on $G^M(\mathbb{R}^d)$~\citep[Thm.~7.44]{FrizVictoir2010}, and absorbing finite-dimensional tensor-norm constants gives deterministic constants $K_{m,p,M}<\infty$ such that
\begin{equation}
\label{eq:level_rough_factorial_pathwise}
|\langle w, S(\mathbf{X})_{0,\Delta}\rangle|
\le \frac{K_{m,p,M}}{\Lambda_p(m/p)!}\|\mathbf{X}\|_{p\text{-var};[0,\Delta]}^m,
\qquad |w|=m.
\end{equation}
Squaring the preceding display, summing over the $d^m$ words, taking expectations, and applying Assumption~\ref{ass:M} with exponent $2m$ yields~\eqref{eq:level_moment_L2}. The same pathwise bound with exponent $q$ and Assumption~\ref{ass:M} with exponent $mq$ yields~\eqref{eq:level_moment_Lq}.

For the Gaussian examples verified in Appendices~\ref{app:gamma} and~\ref{app:expected_signature_fOU}, the same constants admit a Wiener chaos derivation via~\citet[Thm.~2.3]{CassFerrucci2024} combined with hypercontractivity on each finite chaos component~\citep[Thm.~5.10]{Janson1997}.
\end{proof}

The next lemma is stated for general exponents to enable optimization of the mixing rate.

\begin{lemma}[Finite-dimensional Hilbert-valued covariance bound]
\label{lem:hilbert_davydov}
Let $U, V$ be centered random variables in a finite-dimensional Hilbert space $\mathsf{E}$ with $\dim(\mathsf{E}) = D$ and $\alpha$-mixing coefficient $\alpha$. For $q > 2$ and $r > 1$ satisfying $2/q + 1/r = 1$:
\begin{equation}
\label{eq:davydov_hilbert}
|\mathbb{E}[\langle U, V \rangle_{\mathsf{E}}]| \le 2^{1+1/r}\, D\, \alpha^{1/r}\, \|U\|_{L^q(\mathsf{E})}\, \|V\|_{L^q(\mathsf{E})}.
\end{equation}
\end{lemma}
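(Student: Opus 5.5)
We reduce the Hilbert-valued statement to $D$ scalar covariance inequalities in an orthonormal basis and apply Rio's quantile form of the covariance inequality to each. Throughout, $\alpha$ is read as the strong mixing coefficient between $\sigma(U)$ and $\sigma(V)$. In the applications, $U$ is $\mathcal{F}_{\le s}$-measurable and $V$ is $\mathcal{F}_{\ge s+t}$-measurable, so $\alpha \le \alpha_X(t)$. Since $D<\infty$, no infinite-dimensional covariance inequality is needed; the price is the explicit factor $D$.

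\emph{Step 1 (coordinate reduction).} Fix an orthonormal basis $e_1,\dots,e_D$ of $\mathsf{E}$ and set $U_i := \langle U, e_i\rangle$ and $V_i := \langle V, e_i\rangle$. Because $U$ and $V$ are centered, so are the $U_i$ and $V_i$. By Parseval and linearity of expectation,
\[
\mathbb{E}\langle U,V\rangle_{\mathsf{E}} = \sum_{i=1}^D \Cov(U_i,V_i).
\]
Each $U_i$ is $\sigma(U)$-measurable and each $V_i$ is $\sigma(V)$-measurable, so every pair $(U_i,V_i)$ has mixing coefficient at most $\alpha$. Also $|U_i| \le \|U\|_{\mathsf{E}}$ pointwise, which gives $\|U_i\|_{L^q} \le \|U\|_{L^q(\mathsf{E})}$, and likewise $\|V_i\|_{L^q} \le \|V\|_{L^q(\mathsf{E})}$.

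\emph{Step 2 (scalar Rio bound).} For real random variables $\xi$ and $\eta$ with strong mixing coefficient $\alpha$, Rio's covariance inequality states
\[
|\Cov(\xi,\eta)| \le 2\int_0^{2\alpha} Q_{|\xi|}(u)\,Q_{|\eta|}(u)\,du,
\]
where $Q_{|\xi|}$ is the quantile function of $|\xi|$. The exponents satisfy $1/q+1/q+1/r=1$, so we apply the three-factor Hölder inequality on $[0,2\alpha]$: the constant function $\mathbbm{1}_{[0,2\alpha]}$ is taken in $L^r$, and each quantile function is taken in $L^q(0,1)$. This uses the identity $\int_0^1 Q_{|\xi|}(u)^q\,du = \mathbb{E}|\xi|^q$, which holds because $Q_{|\xi|}(\mathcal{U}) \stackrel{d}{=} |\xi|$ for $\mathcal{U}$ uniform on $(0,1)$. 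The result is
\[
|\Cov(\xi,\eta)| \le 2(2\alpha)^{1/r}\|\xi\|_{L^q}\|\eta\|_{L^q} = 2^{1+1/r}\alpha^{1/r}\|\xi\|_{L^q}\|\eta\|_{L^q}.
\]
This gives exactly the constant appearing in~\eqref{eq:davydov_hilbert}.

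\emph{Step 3 (assembly).} Apply Step 2 to each pair $(U_i,V_i)$ and use the norm comparison from Step 1. Each of the $D$ terms is then bounded by $2^{1+1/r}\alpha^{1/r}\|U\|_{L^q(\mathsf{E})}\|V\|_{L^q(\mathsf{E})}$, and summing over $i$ yields~\eqref{eq:davydov_hilbert}.

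The only delicate point is the Hölder step in Step 2. One must check that the cutoff $2\alpha \le 1/2$ stays inside $(0,1)$, so the integral lives on a subinterval of $(0,1)$. One must also check that the quantile-to-moment identity is correct when $Q$ is taken as the right-continuous inverse of the tail function $u \mapsto \mathbb{P}(|\xi|>u)$. Both are standard, and I would cite Rio's inequality (as presented in Bradley 2005 or Rio's monograph) rather than reprove it. If $\alpha=0$, the pair is independent and both sides vanish, so that case is trivial.
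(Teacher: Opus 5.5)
Your proposal is correct and matches the paper's proof essentially step for step. Both reduce coordinatewise in an orthonormal basis, noting that each pair $(U_i,V_i)$ has mixing coefficient at most $\alpha$ since $\sigma(U_i)\subseteq\sigma(U)$ and $\sigma(V_i)\subseteq\sigma(V)$. Both then convert Rio's quantile bound to moment form via H\"older with exponents $(r,q,q)$, use the pointwise comparison $|U_i|\le\|U\|_{\mathsf{E}}$, and sum the $D$ coordinate bounds to get the factor $D$.
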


\begin{proof}
Fix an orthonormal basis $(e_i)_{i=1}^{D}$ of $\mathsf{E}$ and write $U_i := \langle U, e_i \rangle_{\mathsf{E}}$, $V_i := \langle V, e_i \rangle_{\mathsf{E}}$. Apply Rio's scalar covariance inequality for $\alpha$-mixing variables in moment form --- obtained from the quantile bound $|\Cov(X,Y)| \le 2\int_0^{2\alpha} Q_{|X|}(u)\,Q_{|Y|}(u)\,du$ of~\citet[Thm.~1.1]{Rio1993} ($Q_{|X|}$ the quantile function of $|X|$) by H\"older with exponents $(r,q,q)$ and the identity $\int_0^1 Q_{|X|}^q\,du = \mathbb{E}|X|^q$; cf.~\citep[Ineq.~(1.1)]{Davydov1968} for the earlier $\varphi$-mixing form --- to each coordinate pair $(U_i, V_i)$, whose mixing coefficient satisfies $\alpha(\sigma(U_i), \sigma(V_i)) \le \alpha$ because $\sigma(U_i) \subseteq \sigma(U)$ and $\sigma(V_i) \subseteq \sigma(V)$, so the lemma's $\alpha$ bounds every pair:
\begin{equation}
\label{eq:rio_scalar}
|\mathbb{E}[\langle U, V \rangle_{\mathsf{E}}]| \le \sum_{i=1}^D |\mathbb{E}[U_i V_i]| \le 2^{1+1/r}\, \alpha^{1/r} \sum_{i=1}^D \|U_i\|_{L^q} \|V_i\|_{L^q}.
\end{equation}
By the pointwise bound $\|U_i\|_{L^q} \le \|U\|_{L^q(\mathsf{E})}$ (since $|U_i| \le \|U\|_{\mathsf{E}}$), summing over the $D$ coordinates gives $2^{1+1/r}\, D\, \alpha^{1/r}\, \|U\|_{L^q(\mathsf{E})}\, \|V\|_{L^q(\mathsf{E})}$.
\end{proof}

\begin{remark}
\label{rem:dim_factor}
The factor $D = d^m$ in~\eqref{eq:davydov_hilbert} arises from applying the triangle inequality to the coordinate sum in~\eqref{eq:rio_scalar}. Replacing it by Cauchy--Schwarz on the coordinate sum, combined with the finite-dimensional interpolation $\|\cdot\|_{\ell^2_D} \le D^{1/2-1/q}\|\cdot\|_{\ell^q_D}$ (valid for $q \ge 2$), yields the sharper bound $\sum_i \|U_i\|_{L^q}\|V_i\|_{L^q} \le D^{1/r}\|U\|_{L^q(\mathsf{E})}\|V\|_{L^q(\mathsf{E})}$ (using $1/r = 1 - 2/q$), reducing the dimension prefactor from $D$ to $D^{1/r}$. This affects only finite level-dependent constants in the bounds below.
\end{remark}

%%%%%%%%%%%%%%%%%%%%%%%%%%%%%%%%%%%%%%%%%%%%%%%%%%%%%%%%%%%%%%%%%%%%%%%%%%%%%%%
\subsection{Bias Analysis}
\label{app:bias}
%%%%%%%%%%%%%%%%%%%%%%%%%%%%%%%%%%%%%%%%%%%%%%%%%%%%%%%%%%%%%%%%%%%%%%%%%%%%%%%

Recall that $Z_k := S^{(M)}(\mathbf{X})_{I_k}$ denotes the true block signature and $Y_k := S^{(M)}(\mathbf{X}^{(n)}_k)_{I_k}$ the observable signature computed from piecewise-linear interpolation. The estimation error decomposes as:
\begin{align}
\widehat{\Sigma}_{K} - \mathbb{E}[S^{(M)}] &= \frac{1}{K}\sum_{k=1}^{K} (Y_k - \mathbb{E}[Z_1]) \nonumber \\
&= \underbrace{\frac{1}{K}\sum_{k=1}^{K} (Y_k - Z_k)}_{\mathcal{B}\ \text{(bias)}} + \underbrace{\frac{1}{K}\sum_{k=1}^{K} (Z_k - \mathbb{E}[Z_k])}_{\mathcal{V}\ \text{(fluctuation)}}.
\label{eq:bias_variance_decomp}
\end{align}
By the elementary inequality $\|a + b\|^2 \le 2\|a\|^2 + 2\|b\|^2$:
\begin{equation}
\label{eq:mse_split}
\MSE = \mathbb{E}[\|\widehat{\Sigma}_K - \mathbb{E}[S^{(M)}]\|_{\mathrm{HS}}^2] \le 2\,\mathbb{E}[\|\mathcal{B}\|_{\mathrm{HS}}^2] + 2\,\mathbb{E}[\|\mathcal{V}\|_{\mathrm{HS}}^2].
\end{equation}
The discretization error $\mathcal{B} = K^{-1}\sum_k(Y_k - Z_k)$ captures both systematic and random components of the approximation. Our bound controls $\mathbb{E}[\|\mathcal{B}\|_{\mathrm{HS}}^2]$ without separating them (see Remark~\ref{rem:bias_conservatism} below). By Assumption~\ref{ass:gamma} (discretization approximation rate), with mesh $\delta = \Delta/(n-1)$:
\begin{equation}
\mathbb{E}[\|Y_k - Z_k\|_{\mathrm{HS}}^2] = \mathbb{E}[\|S^{(M)}(\mathbf{X}^{(n)}_k)_{I_k} - S^{(M)}(\mathbf{X})_{I_k}\|_{\mathrm{HS}}^2] \le C_{\mathrm{approx}}(\Delta,M)\, \delta^\gamma,
\end{equation}
where the distributional equality across blocks follows from Assumption~\ref{ass:S} (stationarity). Substituting $\delta = \Delta/(n-1)$:
\begin{equation}
\label{eq:bias_per_block}
\mathbb{E}[\|Y_k - Z_k\|_{\mathrm{HS}}^2] \le \frac{C_{\mathrm{approx}}(\Delta,M) \cdot \Delta^\gamma}{(n-1)^\gamma}.
\end{equation}

Since the squared Hilbert--Schmidt norm $\|\cdot\|_{\mathrm{HS}}^2$ is convex, Jensen's inequality applied to the average $K^{-1}\sum_k v_k$ gives $\|K^{-1}\sum_k v_k\|_{\mathrm{HS}}^2 \le K^{-1}\sum_k \|v_k\|_{\mathrm{HS}}^2$. Taking expectations:
\begin{equation}
\label{eq:bias_bound}
\mathbb{E}[\|\mathcal{B}\|_{\mathrm{HS}}^2] \le \frac{C_{\mathrm{approx}}(\Delta, M) \cdot \Delta^\gamma}{(n-1)^\gamma}.
\end{equation}

The values of $\gamma$ are given in Proposition~\ref{prop:gamma_values}; see Appendix~\ref{app:gamma} for proofs.

\begin{remark}[Conservatism of the bias bound]
\label{rem:bias_conservatism}
The bound~\eqref{eq:bias_bound} controls the $L^2$ pathwise approximation error without exploiting cancellations in the mean; in particular, $\|\mathbb{E}[Y_k - Z_k]\|_{\mathrm{HS}}^2$ may decay faster than $\mathbb{E}[\|Y_k - Z_k\|_{\mathrm{HS}}^2]$. This conservatism explains the gap between theoretical and empirical bias rates observed in Section~\ref{sec:numerical_results}.
\end{remark}

%%%%%%%%%%%%%%%%%%%%%%%%%%%%%%%%%%%%%%%%%%%%%%%%%%%%%%%%%%%%%%%%%%%%%%%%%%%%%%%
\subsection{Variance Analysis}
\label{app:variance}
%%%%%%%%%%%%%%%%%%%%%%%%%%%%%%%%%%%%%%%%%%%%%%%%%%%%%%%%%%%%%%%%%%%%%%%%%%%%%%%

The fluctuation $\mathcal{V} = K^{-1}\sum_k(Z_k - \mathbb{E}[Z_k])$ captures the statistical variance from averaging dependent block signatures. We exploit the orthogonal decomposition of $T^{(M)}(\mathbb{R}^d)$ to analyze level-1 (path increments) separately from levels $m \ge 2$ (iterated integrals), as these exhibit fundamentally different decay rates.

\subsubsection{Level-1 Variance: Telescoping}

\begin{lemma}[Level-1 telescoping]
\label{lem:level1_telescope_proof}
The level-1 component of the block-averaging estimator satisfies
\begin{equation}
\bar{S}^{(1)}_K := \frac{1}{K}\sum_{k=1}^K S^{(1)}(\mathbf{X})_{I_k} = \frac{1}{K}(X_{K\Delta} - X_0),
\end{equation}
with variance
\begin{equation}
\label{eq:level1_var}
\Var(\bar{S}^{(1)}_K) = \frac{1}{K^2}\Var(X_{K\Delta} - X_0) \le \frac{V_1(\Delta)}{K^2},
\end{equation}
where $V_1(\Delta) := \sup_{T > 0} \Var(X_T - X_0) \le 4\,\tr(\Cov(X_0))$ under Assumption~\ref{ass:S} (note that $V_1$ is independent of $\Delta$; we retain the argument for notational uniformity with $V_m(\Delta)$, $m \ge 2$).
\end{lemma}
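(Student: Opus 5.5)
The proof has three steps: establish the telescoping identity pathwise, compute the variance by scaling, and bound the variance of the endpoint difference using stationarity.

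\emph{Telescoping identity.} The level-1 component of the signature over $I_k=[(k-1)\Delta,k\Delta]$ is the increment $X_{k\Delta}-X_{(k-1)\Delta}$. This holds for any rough-path lift, since the level-1 part of $\mathbf{X}_{s,t}$ is $X_t-X_s$ by definition of a lift, and the Lyons extension does not change level~1. Summing over $k=1,\dots,K$, the intermediate endpoints cancel, which gives $\sum_k S^{(1)}(\mathbf{X})_{I_k}=X_{K\Delta}-X_0$ pathwise. Dividing by $K$ yields the displayed identity. The equality $\Var(\bar S^{(1)}_K)=K^{-2}\Var(X_{K\Delta}-X_0)$ then follows by scaling. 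Here $\Var$ of a vector is read as $\mathbb{E}\|\cdot-\mathbb{E}\cdot\|^2=\tr\Cov$, matching the Hilbert--Schmidt convention. Finiteness is guaranteed by $\mathbb{E}\|X_0\|^2<\infty$ from Assumption~\ref{ass:M}.

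\emph{Uniform bound.} For any $T>0$, write $U=X_T-\mathbb{E}X_T$ and $V=X_0-\mathbb{E}X_0$. Then $\Var(X_T-X_0)=\mathbb{E}\|U-V\|^2\le 2\mathbb{E}\|U\|^2+2\mathbb{E}\|V\|^2$ by $\|a-b\|^2\le2\|a\|^2+2\|b\|^2$. By Assumption~\ref{ass:S}, $X_T\stackrel{d}{=}X_0$, so $\mathbb{E}\|U\|^2=\tr\Cov(X_T)=\tr\Cov(X_0)$. This gives $\Var(X_T-X_0)\le4\tr\Cov(X_0)$ for every $T$. Taking the supremum over $T>0$ shows $V_1(\Delta)\le4\tr\Cov(X_0)$, a bound independent of $\Delta$, and substituting $T=K\Delta$ gives \eqref{eq:level1_var}. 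A sharper constant $2\tr\Cov(X_0)+2|\tr\Cov(X_T,X_0)|$ is available via Cauchy--Schwarz, but it is not needed.

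\emph{Main obstacle.} There is essentially none; the only care point is definitional. One must confirm that the level-1 coordinate of the lift over $I_k$ coincides with the path increment, so that the telescoping is exact and not merely approximate. This is immediate from the lift being over $X$ and from Chen's relation. Only marginal stationarity of $X$ is used; no dependence assumption enters, which is why the rate is $K^{-2}$ and not $K^{-1}$.
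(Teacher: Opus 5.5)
Your proposal is correct and follows the paper's proof: the level-1 increments telescope pathwise to $X_{K\Delta}-X_0$, and marginal stationarity then bounds $\Var(X_T - X_0)$ uniformly in $T$. The only difference is cosmetic. You apply $\|a-b\|^2 \le 2\|a\|^2 + 2\|b\|^2$ to the separately centered endpoints, whereas the paper expands $\Var(X_T - X_0) = 2\sum_{i}(R^i(0) - R^i(T))$ in coordinate autocovariances and uses $|R^i(T)| \le R^i(0)$; both routes give $4\,\tr(\Cov(X_0))$.
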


\begin{proof}
The level-1 signature on block $I_k = [(k-1)\Delta, k\Delta]$ is the path increment: $S^{(1)}(\mathbf{X})_{I_k} = X_{k\Delta} - X_{(k-1)\Delta}$. Summing over blocks:
\begin{equation}
\sum_{k=1}^K S^{(1)}(\mathbf{X})_{I_k} = \sum_{k=1}^K (X_{k\Delta} - X_{(k-1)\Delta}) = X_{K\Delta} - X_0.
\end{equation}
Assumption~\ref{ass:S} gives marginal stationarity ($X_T \stackrel{d}{=} X_0$ for all $T$), hence $\mathbb{E}[X_T] = \mathbb{E}[X_0]$ and $X_T-X_0$ is centered. Writing $R^i(t) := \Cov(X_0^i, X_t^i)$ for the centered coordinate autocovariance, $\Var(X_T - X_0) = \mathbb{E}[\|X_T - X_0\|^2] = 2\sum_{i=1}^d(R^i(0) - R^i(T)) \le 4\sum_{i=1}^d R^i(0) = 4\,\tr(\Cov(X_0))$, using $|R^i(T)| \le R^i(0)$.
\end{proof}

\subsubsection{Levels $m \ge 2$: Orthogonal Decomposition by Tensor Level}

Let $W_k := Z_k - \mathbb{E}[Z_k]$ denote the centered block signature. By the stationarity of $\mathbf{X}$ (Assumption~\ref{ass:S}), the sequence $(W_k)_{k \ge 1}$ is strictly stationary with mean zero (Lemma~\ref{lem:stationarity}). The Hilbert--Schmidt inner product on $T^{(M)}(\mathbb{R}^d) = \bigoplus_{m=0}^{M} (\mathbb{R}^d)^{\otimes m}$ satisfies
\begin{equation}
\langle S, S' \rangle_{\mathrm{HS}} = \sum_{m=0}^{M} \langle S^{(m)}, S'^{(m)} \rangle,
\end{equation}
since tensor spaces of different degrees are orthogonal. Define the level-$m$ centered components $W_k^{(m)} := Z_k^{(m)} - \mathbb{E}[Z_k^{(m)}]$ and the level-$m$ sample mean $\bar{S}^{(m)}_K := K^{-1}\sum_{k=1}^K S^{(m)}(\mathbf{X})_{I_k}$, so that $\Var(\bar{S}^{(m)}_K) = \mathbb{E}\|K^{-1}\sum_{k=1}^K W_k^{(m)}\|^2$.

For stationary sequences, the variance of the sample mean admits the time-domain (autocovariance) representation (see \citealp[eq.~(2.4.2)]{BrockwellDavis2016} for the scalar case). For Hilbert-valued sequences, it follows from expanding $\|K^{-1}\sum_{k=1}^K W_k\|_{\mathrm{HS}}^2 = K^{-2}\sum_{j,k=1}^K \langle W_j, W_k \rangle_{\mathrm{HS}}$ and applying stationarity; cf.~\citep[Ch.~2]{Bosq2000}:
\begin{equation}
\label{eq:variance_timedomain}
\mathbb{E}\left[\left\| \frac{1}{K}\sum_{k=1}^K W_k \right\|_{\mathrm{HS}}^2\right] = \frac{1}{K} \left( c_0 + 2\sum_{h=1}^{K-1} \left(1 - \frac{h}{K}\right) c_h \right),
\end{equation}
where $c_h := \mathbb{E}[\langle W_1, W_{1+h} \rangle_{\mathrm{HS}}]$ is the lag-$h$ autocovariance for $h \ge 0$ (independent of the base index by stationarity; the factor of $2$ counts the two orientations of each lag-$h$ pair $(j,k)$ with $|j-k|=h$ in the double sum). By orthogonality:
\begin{equation}
\label{eq:cov_decomp}
c_h = \sum_{m=0}^{M} \mathbb{E}[\langle W_1^{(m)}, W_{1+h}^{(m)} \rangle] =: \sum_{m=0}^{M} c_{h,m}.
\end{equation}
Since $S^{(0)} = 1$ is deterministic, $c_{h,0} = 0$ for all $h$, and the nontrivial contributions begin at $m = 1$.

\subsubsection{Level-specific Covariance Bounds}

Under Assumption~\ref{ass:Aprime}, the bounds $|c_{h,m}| \le d^m\widetilde{C}_m\bigl(\Lambda_p(m/p)!\bigr)^{-2}\Delta^{2m\beta}\,\psi(h)$ are assumed directly, where $\psi(h) = e^{-\lambda h\Delta}$ under (E$'$) or $\psi(h) = (1+h)^{-\nu}$ under (P$'$). In that case, the proof proceeds directly to the levelwise variance bound below.

Under Assumption~\ref{ass:A}, we derive the same structure from the Hilbert-valued covariance inequality of Lemma~\ref{lem:hilbert_davydov}. Define the block $\sigma$-algebras $\mathcal{G}_k := \sigma(\mathbf{X}|_{I_k})$, where $I_k = [(k-1)\Delta, k\Delta]$. By the standing measurability convention in Section~\ref{sec:mathematical_framework}, each increment $\mathbf{X}_{s,t}$ is $\sigma(X_u : u \in [s,t])$-measurable (for canonical rough-path lifts of Gaussian processes this follows from a.s.\ convergence of piecewise-linear approximations on dyadic partitions of $[s,t]$; cf.~\citealp[Thm.~15.33(iv), Ex.~15.44]{FrizVictoir2010}). Since $\mathbf{X}$ has continuous sample paths (in $p$-variation), $\mathbf{X}|_{I_k}$ is generated by the countable collection of rational-time increments $\{\mathbf{X}_{s,t} : s, t \in \mathbb{Q} \cap I_k\}$, each measurable with respect to $\sigma(X_u : u \in I_k)$, so $\mathcal{G}_k := \sigma(\mathbf{X}|_{I_k}) \subseteq \sigma(X_u : u \in I_k) \subseteq \mathcal{F}_{\le k\Delta}$.

Since $Z_k$ is $\mathcal{G}_k$-measurable and the blocks $I_1$ and $I_{1+h}$ are separated by the time gap $(h-1)\Delta$ for $h \ge 1$, the mixing coefficient satisfies $\alpha(W_1^{(m)}, W_{1+h}^{(m)}) \le \alpha_{X}((h-1)\Delta)$. Under Assumption~\ref{ass:A}:
\begin{equation}
\label{eq:alpha_bound}
\alpha(W_1^{(m)}, W_{1+h}^{(m)}) \le \begin{cases}
C_X\, e^{-\lambda_X(h-1)\Delta} & \text{under (E)}, \\
C_X\, (1 + (h-1)\Delta)^{-\nu} & \text{under (P) with exponent } \nu.
\end{cases}
\end{equation}
Thus the mixing bound for $W_1^{(m)}$ and $W_{1+h}^{(m)}$ follows directly from their blockwise measurability and the separation gap between the underlying sigma-fields. By stationarity, the same bound holds for every lag-$h$ pair $(W_k^{(m)}, W_{k+h}^{(m)})$ with $k \ge 1$.

\paragraph{Lag-zero covariance} For $h = 0$, we have $c_{0,m} = \Var(Z_1^{(m)}) \le \mathbb{E}[\|Z_1^{(m)}\|_{\mathrm{HS}}^2]$. By Lemma~\ref{lem:level_moments}:
\begin{equation}
\label{eq:c0m_bound}
c_{0,m} \le \frac{d^m C^{\mathrm{sig}}_{m,2}}{\bigl(\Lambda_p(m/p)!\bigr)^2}\Delta^{2m\beta}.
\end{equation}

\paragraph{Cross-lag covariances: exponential mixing} Under Assumption~\ref{ass:A}(E), we apply Lemma~\ref{lem:hilbert_davydov} with $\mathsf{E} = (\mathbb{R}^d)^{\otimes m}$, $q = 4$ (hence $r = 2$), and $D = d^m$. Using stationarity:
\begin{equation}
\label{eq:chm_exp}
|c_{h,m}| \le 2\sqrt{2}\, d^m\, \alpha^{1/2}\, \|W_1^{(m)}\|_{L^4(\mathsf{E})}^2.
\end{equation}

We bound $\|W_1^{(m)}\|_{L^4(\mathsf{E})}^2$. For centered $W = Z - \mathbb{E}[Z]$, the triangle inequality and $L^p$-contraction give $\|W\|_{L^4(\mathsf{E})} \le 2\|Z\|_{L^4(\mathsf{E})}$. By Lemma~\ref{lem:level_moments} with $q = 4$:
\begin{equation}
\label{eq:L4_bound}
\|W_1^{(m)}\|_{L^4(\mathsf{E})}^2 \le 4\|Z_1^{(m)}\|_{L^4(\mathsf{E})}^2 \le \frac{4d^m \sqrt{C^{\mathrm{sig}}_{m,4}}}{\bigl(\Lambda_p(m/p)!\bigr)^2}\Delta^{2m\beta}.
\end{equation}

Combining with the mixing bound $\alpha^{1/2} \le C_X^{1/2} e^{-\lambda'(h-1)\Delta}$ where $\lambda' := \lambda_X/2$ (the square root halves the exponential decay rate; any fixed positive rate yields a summable geometric envelope, so only the constants below are affected):
\begin{equation}
\label{eq:chm_bound_exp}
|c_{h,m}| \le \frac{A_m}{\bigl(\Lambda_p(m/p)!\bigr)^2}\Delta^{2m\beta} e^{-\lambda'(h-1)\Delta},
\end{equation}
where $A_m := 8\sqrt{2}\, C_X^{1/2}d^{2m}\sqrt{C^{\mathrm{sig}}_{m,4}}$. The choice $q = 4$ (hence $r = 2$) is for concreteness: applying Lemma~\ref{lem:hilbert_davydov} instead with arbitrary $r \in (1, \infty)$ and $q = 2r/(r-1)$ --- signatures have moments of all orders under Assumption~\ref{ass:M} (Lemma~\ref{lem:level_moments}) --- replaces the decay rate $\lambda_X/2$ in~\eqref{eq:chm_bound_exp} by $\lambda_X/r$ and $A_m$ by an $r$-dependent constant, exactly as in the $r$-optimization of the polynomial case below. As $r$ ranges over $(1,\infty)$ this attains every rate $\lambda' \in (0, \lambda_X)$, which is the form quoted in Remark~\ref{rem:A_implies_Aprime}.

\paragraph{Cross-lag covariances: polynomial mixing} Under Assumption~\ref{ass:A}(P) with exponent $\nu$, eq.~\eqref{eq:alpha_bound} gives $\alpha(W_1^{(m)}, W_{1+h}^{(m)}) \le C_X(1+(h-1)\Delta)^{-\nu}$. For $h = 1$, this is $C_X$. For $h \ge 2$, we bound $(1+(h-1)\Delta)^{-\nu} \le ((h-1)\Delta)^{-\nu} \le (2/\Delta)^{\nu}\, h^{-\nu}$, since $h - 1 \ge h/2$. Writing $C_{X,\Delta} := C_X \max(1,(2/\Delta)^{\nu})$, which is finite for each fixed $\Delta > 0$, we therefore have
\begin{equation}
\label{eq:alpha_h_poly}
\alpha_h := \alpha(W_1^{(m)}, W_{1+h}^{(m)}) \le C_{X,\Delta}\, h^{-\nu} \quad \text{for all } h \ge 1.
\end{equation}
To achieve the optimal decay rate, we optimize over the exponent $r$ in Lemma~\ref{lem:hilbert_davydov}.

For $r > 1$, the constraint $2/q + 1/r = 1$ gives $q = 2r/(r-1)$. As $r \downarrow 1$, we have $q \uparrow \infty$, and Lemma~\ref{lem:hilbert_davydov} yields:
\begin{equation}
\label{eq:chm_poly_general}
|c_{h,m}| \le 2^{1+1/r}\, d^m\, \alpha_h^{1/r}\, \|W_1^{(m)}\|_{L^q(\mathsf{E})}^2.
\end{equation}
Since signatures have moments of all orders under Assumption~\ref{ass:M}, $\|W_1^{(m)}\|_{L^q(\mathsf{E})}$ is finite for all $q < \infty$.

\begin{proposition}[Polynomial mixing rate]
\label{prop:poly_mixing_rate}
Under Assumption~\ref{ass:A}(P) with exponent $\nu > 0$, for any $\varepsilon \in (0, \nu)$ (the Rio exponent loss, distinct from the discretization losses $\varepsilon_0, \varepsilon_1$ in Proposition~\ref{prop:gamma_values}) there exists a finite constant $C_{\varepsilon,m} = C_{\varepsilon,m}(\Delta)$, depending on the level $m$, on $\varepsilon$, and on $\Delta$, such that
\begin{equation}
\label{eq:chm_poly_optimal}
|c_{h,m}| \le \frac{C_{\varepsilon,m}}{\bigl(\Lambda_p(m/p)!\bigr)^2}\Delta^{2m\beta} h^{-\nu + \varepsilon} \quad \text{for all } h \ge 1.
\end{equation}
Consequently, the variance satisfies
\begin{equation}
\label{eq:var_poly_rate}
\sum_{m=2}^M \Var(\bar{S}^{(m)}_K) = O\bigl(K^{-\min(1, \nu-\varepsilon)}\bigr) \quad \text{for any } \varepsilon \in (0, \nu),
\end{equation}
which is $O(K^{-1})$ when $\nu > 1$ (taking $\varepsilon < \nu-1$) and $O(K^{-\nu+\varepsilon})$ when $\nu \le 1$.
\end{proposition}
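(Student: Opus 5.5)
The plan has two parts: a covariance bound, then a variance bound. For the covariance bound I start from~\eqref{eq:chm_poly_general} and \eqref{eq:alpha_h_poly}. Given $\varepsilon \in (0,\nu)$, I pick the Rio exponent $r = r(\varepsilon) > 1$ so that $\nu/r = \nu - \varepsilon$, that is, $r = \nu/(\nu-\varepsilon)$. Then $q = 2r/(r-1) = 2\nu/\varepsilon < \infty$. With this choice,
\[
\alpha_h^{1/r} \le C_{X,\Delta}^{1/r}\, h^{-\nu/r} = C_{X,\Delta}^{1/r}\, h^{-\nu+\varepsilon}
\]
for all $h \ge 1$, which covers $h=1$ as well since $C_{X,\Delta}$ already absorbs it.

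Next I bound the $L^q$ norm. Centering costs a factor $2$ by the triangle inequality and $L^q$-contraction, so $\|W_1^{(m)}\|_{L^q(\mathsf{E})}^2 \le 4\|Z_1^{(m)}\|_{L^q(\mathsf{E})}^2$. Lemma~\ref{lem:level_moments} with exponent $q$ then gives
\[
\|Z_1^{(m)}\|_{L^q}^2 \le d^m (C^{\mathrm{sig}}_{m,q})^{2/q} \bigl(\Lambda_p(m/p)!\bigr)^{-2}\Delta^{2m\beta}.
\]
Collecting factors, I set
\[
C_{\varepsilon,m} := 2^{3+1/r} d^{2m} C_{X,\Delta}^{1/r}(C^{\mathrm{sig}}_{m,q})^{2/q},
\]
which is finite for each fixed $\varepsilon$ because moments of all orders exist under Assumption~\ref{ass:M}. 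This proves~\eqref{eq:chm_poly_optimal}. I note that $C^{\mathrm{sig}}_{m,q}$ may blow up as $q \to \infty$, that is, as $\varepsilon \downarrow 0$, which is why the constant is non-uniform in $\varepsilon$.

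For the variance, I insert this bound into the time-domain identity~\eqref{eq:variance_timedomain}, applied levelwise via the orthogonality~\eqref{eq:cov_decomp}:
\[
\Var(\bar S^{(m)}_K) \le \frac{1}{K}\Bigl(c_{0,m} + 2\sum_{h=1}^{K-1}\bigl(1-\tfrac{h}{K}\bigr)|c_{h,m}|\Bigr).
\]
Here $c_{0,m}$ is bounded by~\eqref{eq:c0m_bound}. I drop the factor $(1 - h/K) \le 1$ and split into two cases.

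If $\nu > 1$, I take $\varepsilon < \nu - 1$. Then $\sum_{h \ge 1} h^{-\nu+\varepsilon} = \zeta(\nu-\varepsilon) < \infty$, which gives $O(K^{-1})$.

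If $\nu \le 1$, the exponent $s := \nu - \varepsilon$ lies in $(0,1)$. The integral comparison gives $\sum_{h=1}^{K-1} h^{-s} \le 1 + \int_1^{K} x^{-s}\,dx \le K^{1-s}/(1-s)$. Multiplying by $K^{-1}$ yields $O(K^{-s})$, and the $c_{0,m}/K$ term is of smaller order $K^{-1} \le K^{-s}$.

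Finally I sum over the finitely many levels $2 \le m \le M$. This yields~\eqref{eq:var_poly_rate} with the rate $K^{-\min(1,\nu-\varepsilon)}$ in both cases.

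The only delicate point is the exponent bookkeeping between $r$ and $q$, together with checking that Lemma~\ref{lem:level_moments} applies at the large exponent $q = 2\nu/\varepsilon$. Both are routine given the moment assumption. There is no genuine obstacle, and the non-uniformity in $\varepsilon$ should be stated explicitly.
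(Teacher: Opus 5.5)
Your proposal is correct and follows the paper's argument: Rio's inequality (Lemma~\ref{lem:hilbert_davydov}) with an $\varepsilon$-dependent exponent $r$, the $L^q$ moment bound of Lemma~\ref{lem:level_moments}, and summation of the covariance envelope in~\eqref{eq:variance_timedomain}. The differences are cosmetic. You take $r = \nu/(\nu-\varepsilon)$ to hit the exponent $\nu-\varepsilon$ exactly, whereas the paper takes $r = 1+\varepsilon/\nu$ and uses $\nu^2/(\nu+\varepsilon) > \nu-\varepsilon$; and by splitting on $\nu$ rather than on $\nu-\varepsilon$ you avoid the paper's separate borderline case $\nu-\varepsilon=1$, since for $\nu>1$ the $O(K^{-1})$ rate obtained with a small auxiliary $\varepsilon$ already implies $O(K^{-\min(1,\nu-\varepsilon)})$ for every $\varepsilon\in(0,\nu)$.
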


\begin{proof}
Choose $r = 1 + \varepsilon/\nu$ so that $1/r = \nu/(\nu + \varepsilon)$. Then by~\eqref{eq:alpha_h_poly}, $\alpha_h^{1/r} \le (C_{X,\Delta}\, h^{-\nu})^{\nu/(\nu+\varepsilon)} = C_{X,\Delta}^{\nu/(\nu+\varepsilon)}\, h^{-\nu^2/(\nu+\varepsilon)}$. Since $\nu^2/(\nu+\varepsilon) = \nu - \nu\varepsilon/(\nu+\varepsilon) > \nu - \varepsilon$, we obtain the decay $h^{-\nu+\varepsilon}$ (absorbing $\Delta$-dependent constants into $C_{\varepsilon,m}$).

The corresponding $q = 2(\nu+\varepsilon)/\varepsilon$ is finite, so Lemma~\ref{lem:level_moments} gives
\[
\|W_1^{(m)}\|_{L^q(\mathsf{E})}^2
\le \frac{4\,d^m\,(C^{\mathrm{sig}}_{m,q})^{2/q}}{\bigl(\Lambda_p(m/p)!\bigr)^2}\Delta^{2m\beta}.
\]
The $L^q$ norm depends on $\varepsilon$ but is bounded for each fixed $\varepsilon \in (0, \nu)$, yielding~\eqref{eq:chm_poly_optimal} after absorbing the finite level constants into $C_{\varepsilon,m}$.

For the variance, using~\eqref{eq:variance_timedomain}:
\begin{equation}
\Var(\bar{S}^{(m)}_K) \le \frac{1}{K}\left(c_{0,m} + 2\sum_{h=1}^{K-1} |c_{h,m}|\right).
\end{equation}
With $|c_{h,m}| = O(h^{-\nu+\varepsilon})$:
\begin{itemize}
\item If $\nu - \varepsilon > 1$: $\sum_{h=1}^{K-1} h^{-\nu+\varepsilon} = O(1)$, so $\Var = O(K^{-1})$.
\item If $\nu - \varepsilon < 1$: $\sum_{h=1}^{K-1} h^{-\nu+\varepsilon} = O(K^{1-\nu+\varepsilon})$, so $\Var = O(K^{-\nu+\varepsilon})$.
\item If $\nu - \varepsilon = 1$: apply~\eqref{eq:chm_poly_optimal} with $\varepsilon/2$ in place of $\varepsilon$, so that $\nu - \varepsilon/2 > 1$ and the first case gives $\Var = O(K^{-1}) = O(K^{-\min(1,\nu-\varepsilon)})$.
\end{itemize}
Since $\varepsilon \in (0, \nu)$ is otherwise arbitrary, we obtain~\eqref{eq:var_poly_rate}.
\end{proof}

\subsubsection{The Levelwise Variance Bound}

Define the level-$m$ variance contribution:
\begin{equation}
V_m(\Delta) := c_{0,m} + 2\sum_{h=1}^{\infty} |c_{h,m}|.
\end{equation}
Under exponential decay, or polynomial decay with $\nu > 1$, this series converges absolutely (for Assumption~\ref{ass:A}(P), choose the Rio loss in Proposition~\ref{prop:poly_mixing_rate} smaller than $\nu - 1$; under Assumption~\ref{ass:Aprime}(P$'$), use $\sum_h h^{-\nu} < \infty$). Under polynomial mixing with $\nu \le 1$, we instead work with the finite-$K$ analogue $V_m^{(K)}(\Delta) := c_{0,m} + 2\sum_{h=1}^{K-1}|c_{h,m}|$. The variance bounds of Proposition~\ref{prop:poly_mixing_rate} are derived directly from this truncated sum.

Summing the geometric series in~\eqref{eq:chm_bound_exp}:
\begin{equation}
\sum_{h=1}^{\infty} |c_{h,m}| \le \frac{A_m}{\bigl(\Lambda_p(m/p)!\bigr)^2}\Delta^{2m\beta}\frac{1}{1 - e^{-\lambda'\Delta}}.
\end{equation}

Combining with~\eqref{eq:c0m_bound} and absorbing $C^{\mathrm{sig}}_{m,2}$ and $A_m d^{-2m}$ into a single level-dependent constant
\begin{equation}
\label{eq:Ctilde_def}
\widetilde{C}_m := \max\bigl(C^{\mathrm{sig}}_{m,2},\, 8\sqrt{2}\, C_X^{1/2}\sqrt{C^{\mathrm{sig}}_{m,4}}\bigr),
\end{equation}
yields
\begin{equation}
\label{eq:Vm_bound}
V_m(\Delta) \le \frac{d^m\widetilde{C}_m}{\bigl(\Lambda_p(m/p)!\bigr)^2}\Delta^{2m\beta}\,\Phi_m(\Delta), \qquad \Phi_m(\Delta) := 1 + \frac{2 d^m}{1 - e^{-\lambda'\Delta}}.
\end{equation}
The inner $d^m$ in $\Phi_m$ absorbs the dimension factor from Lemma~\ref{lem:hilbert_davydov} (which is not present in the direct-covariance bound below, where $d^m$ is already in the assumed envelope).
Under Assumption~\ref{ass:Aprime}, the same structure holds with the assumed bound
\[
|c_{h,m}| \le d^m\widetilde{C}_m\bigl(\Lambda_p(m/p)!\bigr)^{-2}\Delta^{2m\beta}\psi(h),
\qquad h \ge 1,
\]
where $\widetilde{C}_m$ is enlarged if necessary to dominate $C^{\mathrm{sig}}_{m,2}$ from Lemma~\ref{lem:level_moments} so the lag-zero term $c_{0,m}$ fits the same envelope. When $\psi$ is summable (under (E$'$), or (P$'$) with $\nu > 1$), this yields
\[
V_m(\Delta) \le d^m\widetilde{C}_m\bigl(\Lambda_p(m/p)!\bigr)^{-2}\Delta^{2m\beta}\Phi_m(\Delta),
\qquad
\Phi_m(\Delta) := 1 + 2\sum_{h=1}^{\infty}\psi(h).
\]
Under (P$'$) with $\nu \le 1$, the infinite series $\Phi_m(\Delta)$ diverges. We instead bound the truncation
\[
V_m^{(K)}(\Delta) \le d^m\widetilde{C}_m\bigl(\Lambda_p(m/p)!\bigr)^{-2}\Delta^{2m\beta}\Phi_m^{(K)}(\Delta),
\qquad
\Phi_m^{(K)}(\Delta) := 1 + 2\sum_{h=1}^{K-1}\psi(h),
\]
and the resulting rate is recorded in Proposition~\ref{prop:mixing_exponent}(iii).

\subsubsection{Aggregation and Total Variance}

Combining level-1 (Lemma~\ref{lem:level1_telescope_proof}) with levels $m \ge 2$, the total variance decomposes as
\begin{equation}
\mathbb{E}[\|\mathcal{V}\|_{\mathrm{HS}}^2] = \Var(\bar{S}^{(1)}_K) + \sum_{m=2}^M \Var(\bar{S}^{(m)}_K).
\end{equation}
By~\eqref{eq:level1_var}, the level-1 term is $O(K^{-2})$. For levels $m \ge 2$, define the finite-$K$ variance constant $V_m^{(K)}(\Delta) := c_{0,m} + 2\sum_{h=1}^{K-1}|c_{h,m}|$. Using $(1 - h/K) \le 1$ in~\eqref{eq:variance_timedomain}:
\begin{align}
\sum_{m=2}^M \Var(\bar{S}^{(m)}_K) &\le \frac{1}{K}\sum_{m=2}^{M} V_m^{(K)}(\Delta).
\label{eq:variance_bound}
\end{align}
Under Assumption~\ref{ass:A}(E), Assumption~\ref{ass:Aprime}(E$'$), or either Assumption~\ref{ass:A}(P) or Assumption~\ref{ass:Aprime}(P$'$) with $\nu > 1$, the limit $V_m(\Delta) := \lim_{K \to \infty} V_m^{(K)}(\Delta)$ converges, and the bound gives $O(K^{-1})$ (i.e., $\eta = 1$). Under Assumption~\ref{ass:A}(P) with $\nu \le 1$, the truncated sum satisfies $V_m^{(K)}(\Delta) = O(K^{1-\nu+\varepsilon})$ for any $\varepsilon \in (0, \nu)$ by Proposition~\ref{prop:poly_mixing_rate}, so the bound gives $O(K^{-\nu+\varepsilon})$ (i.e., $\eta = \nu$). Under Assumption~\ref{ass:Aprime}(P$'$) with $\nu \le 1$, the direct covariance decay gives $V_m^{(K)}(\Delta) = O(K^{1-\nu})$ for $\nu < 1$ and $V_m^{(K)}(\Delta) = O(\log K)$ at $\nu = 1$. This yields the rates of Proposition~\ref{prop:mixing_exponent}(iii) without $\varepsilon$-loss.

\begin{proposition}[Variance exponent]
\label{prop:mixing_exponent}
The variance bound~\eqref{eq:variance_bound} holds with:
\begin{enumerate}
\item[\emph{(i)}] $\eta = 1$ under Assumption~\ref{ass:A}(E) or~\ref{ass:Aprime}(E$'$) (exponential decay).
\item[\emph{(ii)}] Under Assumption~\ref{ass:A}(P) with exponent $\nu$ ($\alpha$-mixing): $\eta = 1$ with variance $O(K^{-1})$ when $\nu > 1$; $\eta = \nu$ with variance $O(K^{-\nu+\varepsilon})$ for any $\varepsilon \in (0, \nu)$ when $\nu \le 1$ (the $\varepsilon$-loss arises from Rio's exponent in Lemma~\ref{lem:hilbert_davydov}).
\item[\emph{(iii)}] Under Assumption~\ref{ass:Aprime}(P$'$) with exponent $\nu$ (direct covariance decay): $\eta = 1$ if $\nu > 1$; $\eta = \nu$ if $\nu < 1$; $\eta = 1$ with a logarithmic correction $O(K^{-1}\log K)$ if $\nu = 1$. No $\varepsilon$-loss arises.
\end{enumerate}
\end{proposition}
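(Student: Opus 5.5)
The plan is to start from the time-domain representation~\eqref{eq:variance_timedomain} applied levelwise, together with the orthogonal decomposition~\eqref{eq:cov_decomp}. For each level $m\ge 2$ this gives
\[
\Var(\bar S^{(m)}_K)\le K^{-1}V_m^{(K)}(\Delta),\qquad V_m^{(K)}(\Delta)=c_{0,m}+2\sum_{h=1}^{K-1}|c_{h,m}|,
\]
where we use $(1-h/K)\le 1$. The level-1 term is handled separately by Lemma~\ref{lem:level1_telescope_proof}, which gives $O(K^{-2})$ and never affects the exponent. So the whole proof reduces to showing how $V_m^{(K)}(\Delta)$ grows in $K$ under each hypothesis. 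The lag-zero term $c_{0,m}$ is bounded uniformly in $K$ by~\eqref{eq:c0m_bound}. Since $M$ is fixed and the sum over $2\le m\le M$ is finite, it suffices to obtain the rate level by level with an $m$-dependent constant.

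\emph{Case (i).} Under Assumption~\ref{ass:Aprime}(E$'$) the envelope $e^{-\lambda h\Delta}$ is geometric, so $\sum_{h\ge1}e^{-\lambda h\Delta}=(e^{\lambda\Delta}-1)^{-1}<\infty$. Under Assumption~\ref{ass:A}(E) I would instead invoke~\eqref{eq:chm_bound_exp}, which comes from Lemma~\ref{lem:hilbert_davydov} with $q=4$, and sum the geometric series exactly as in~\eqref{eq:Vm_bound}. In both cases $V_m^{(K)}\le V_m(\Delta)<\infty$ uniformly in $K$, so the variance is $O(K^{-1})$ and $\eta=1$.

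\emph{Case (ii).} Under Assumption~\ref{ass:A}(P) I would cite Proposition~\ref{prop:poly_mixing_rate} directly. It already gives $|c_{h,m}|\lesssim h^{-\nu+\varepsilon}$ and the rate $O(K^{-\min(1,\nu-\varepsilon)})$, via $\sum_{h<K}h^{-s}=O(1)$ for $s>1$ and $O(K^{1-s})$ for $s<1$. The boundary case $\nu-\varepsilon=1$ is handled by halving $\varepsilon$. The $\varepsilon$-loss is inherited from the choice $r=1+\varepsilon/\nu$ in the Rio inequality. I will note that the constant blows up as $\varepsilon\downarrow0$, because $q=2(\nu+\varepsilon)/\varepsilon\to\infty$ makes $C^{\mathrm{sig}}_{m,q}$ grow.

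\emph{Case (iii).} Under Assumption~\ref{ass:Aprime}(P$'$) the bound $|c_{h,m}|\le d^m\widetilde C_m(\Lambda_p(m/p)!)^{-2}\Delta^{2m\beta}(1+h)^{-\nu}$ is inserted directly, with no Rio step and hence no $\varepsilon$-loss. Integral comparison then gives
\begin{itemize}
\item $\sum_{h=1}^{K-1}(1+h)^{-\nu}\le \int_1^{K}x^{-\nu}dx\le K^{1-\nu}/(1-\nu)$ for $\nu<1$;
\item $\sum_{h=1}^{K-1}(1+h)^{-\nu}\le\log K$ for $\nu=1$;
\item a finite $\zeta$-type constant for $\nu>1$.
\end{itemize}
Dividing by $K$ yields $\eta=\nu$, $K^{-1}\log K$, and $\eta=1$ respectively. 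Here $\widetilde C_m$ is enlarged to dominate $C^{\mathrm{sig}}_{m,2}$ so that $c_{0,m}$ fits the same envelope; for $K\ge2$ the constant $1$ is absorbed into $\log K/\log 2$.

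The only genuinely delicate point is the non-summable $\alpha$-mixing branch. There the exponent optimization must track that the moment constant depends on $\varepsilon$, and the borderline $\nu-\varepsilon=1$ needs the reparametrization described above. Everything else is elementary series comparison built on results already established.
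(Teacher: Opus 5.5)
Your proposal is correct and follows the paper's proof essentially step for step: you bound $\Var(\bar S^{(m)}_K)$ by $K^{-1}V_m^{(K)}(\Delta)$ via \eqref{eq:variance_timedomain}, sum the geometric envelope in case (i), defer to Proposition~\ref{prop:poly_mixing_rate} in case (ii), compare $\sum_{h<K}(1+h)^{-\nu}$ with $\int_1^K x^{-\nu}\,dx$ in case (iii), and control $c_{0,m}$ separately by Lemma~\ref{lem:level_moments}. Your explicit integral comparison and your absorption of the lag-zero term into $\log K$ for $K\ge 2$ are slightly more careful than the paper's $\Theta(\cdot)$ statements; your aside that the case-(ii) constant ``blows up'' as $\varepsilon\downarrow 0$ should be weakened to ``is not shown to be uniform'' (only the bound furnished by the proof diverges), but this does not affect the argument.
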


\begin{proof}
Part (i) follows from geometric/exponential decay of $|c_{h,m}|$: $\sum_{h=1}^\infty |c_{h,m}| < \infty$, so $\Var(\bar{S}^{(m)}_K) = O(K^{-1})$.

Part (ii) is Proposition~\ref{prop:poly_mixing_rate}.

Part (iii): under Assumption~\ref{ass:Aprime}(P$'$), $|c_{h,m}| \le C h^{-\nu}$ directly. If $\nu > 1$, then $\sum_h h^{-\nu} < \infty$ and $\Var = O(K^{-1})$. If $0 < \nu < 1$, then $\sum_{h=1}^{K-1} h^{-\nu} = \Theta(K^{1-\nu})$, so $\Var = O(K^{-\nu})$. If $\nu = 1$, then $\sum_{h=1}^{K-1} h^{-1} = \Theta(\log K)$, so $\Var = O(K^{-1}\log K)$.
\end{proof}

%%%%%%%%%%%%%%%%%%%%%%%%%%%%%%%%%%%%%%%%%%%%%%%%%%%%%%%%%%%%%%%%%%%%%%%%%%%%%%%
\subsection{Completion of Proof}
\label{app:completion}
%%%%%%%%%%%%%%%%%%%%%%%%%%%%%%%%%%%%%%%%%%%%%%%%%%%%%%%%%%%%%%%%%%%%%%%%%%%%%%%

With the bias and variance bounds established, we assemble the proof of Theorem~\ref{thm:main_result}.

\begin{proof}[Proof of Theorem~\ref{thm:main_result}]
Combining the bias bound~\eqref{eq:bias_bound}, level-1 variance bound~\eqref{eq:level1_var}, and levels $m \ge 2$ variance bound~\eqref{eq:variance_bound} via~\eqref{eq:mse_split}, we obtain in the summable regime --- where the limits $V_m(\Delta)$ below exist and Proposition~\ref{prop:mixing_exponent} gives $\eta = 1$ ---
\begin{equation}
\label{eq:mse_refined}
\MSE \le \frac{2\,C_{\mathrm{approx}}(\Delta,M)\,\Delta^\gamma}{(n-1)^\gamma} + \frac{2V_1(\Delta)}{K^2} + \frac{2}{K^\eta}\sum_{m=2}^{M} V_m(\Delta),
\end{equation}
where $V_1(\Delta) \le 4\,\tr(\Cov(X_0))$ bounds level-1 variance, $V_m(\Delta) := \lim_{K\to\infty} V_m^{(K)}(\Delta)$ is finite for $m \ge 2$ (given by the closed form~\eqref{eq:Vm_bound} under exponential decay, and by the summable polynomial series $c_{0,m} + 2\sum_{h \ge 1}|c_{h,m}|$ otherwise), and the variance exponent $\eta$ is specified by Proposition~\ref{prop:mixing_exponent}. The non-summable cases (Assumption~\ref{ass:A}(P) or~\ref{ass:Aprime}(P$'$) with $\nu \le 1$) are treated below via the truncated $V_m^{(K)}$ from Proposition~\ref{prop:mixing_exponent}(ii)--(iii).

Since $\eta \le 1$, the telescoping bound $2V_1(\Delta)K^{-2}$ from~\eqref{eq:mse_refined} is dominated by $2V_1(\Delta)K^{-\eta}$ for $K \ge 1$, so the level-1 term is absorbed into the $K^{-\eta}$ envelope. Set $C_{\mathrm{bias}}(\Delta,M) := 2C_{\mathrm{approx}}(\Delta,M)\Delta^\gamma$. We obtain the bounds of Theorem~\ref{thm:main_result} case-wise:

\emph{Summable regime} (Assumption~\ref{ass:A}(E), Assumption~\ref{ass:Aprime}(E$'$), or Assumption~\ref{ass:A}(P) / Assumption~\ref{ass:Aprime}(P$'$) with $\nu > 1$): defining $V(\Delta) := 2V_1(\Delta) + 2\sum_{m=2}^{M} V_m(\Delta) < \infty$ with $V_m(\Delta)$ finite by Proposition~\ref{prop:mixing_exponent} (given explicitly by~\eqref{eq:Vm_bound} under exponential decay) and $V_1(\Delta)$ from Lemma~\ref{lem:level1_telescope_proof} (the sharper $K^{-2}$ telescoping rate is absorbed into the $K^{-\eta}$ envelope since $\eta \le 1$),
\begin{equation}
\label{eq:mse_final}
\MSE \le \frac{C_{\mathrm{bias}}(\Delta,M)}{(n-1)^\gamma} + \frac{V(\Delta)}{K^\eta},
\end{equation}
matching~\eqref{eq:mse_bound}.

\emph{Non-summable $\alpha$-mixing} (Assumption~\ref{ass:A}(P) with $0 < \nu \le 1$): for any $\varepsilon \in (0, \nu)$, set
\[
V_\varepsilon(\Delta) := 2V_1(\Delta) + 2\sup_{K \ge 1} K^{\nu - 1 - \varepsilon}\sum_{m=2}^M V_m^{(K)}(\Delta) < \infty,
\]
where finiteness follows from Proposition~\ref{prop:poly_mixing_rate} since $V_m^{(K)}(\Delta) = O(K^{1-\nu+\varepsilon})$ for $m \ge 2$ when $\nu \le 1$. We emphasize that $V_\varepsilon(\Delta)$ is finite for each fixed $\varepsilon \in (0, \nu)$ but is not asserted to be uniform as $\varepsilon \downarrow 0$: the Rio exponent $q = 2(\nu+\varepsilon)/\varepsilon$ in Proposition~\ref{prop:poly_mixing_rate} blows up as $\varepsilon \downarrow 0$, and the Wiener chaos hypercontractivity bound $\|W^{(m)}\|_{L^q} \lesssim (q-1)^{m/2}\|W^{(m)}\|_{L^2}$~\citep[Thm.~5.10]{Janson1997} correspondingly diverges. The level-1 contribution from Lemma~\ref{lem:level1_telescope_proof} satisfies $\Var(\bar{S}^{(1)}_K) \le V_1(\Delta)/K^2$, which is absorbed into the additive $V_1(\Delta)$ in $V_\varepsilon$. Then
\[
\MSE \le \frac{C_{\mathrm{bias}}(\Delta,M)}{(n-1)^\gamma} + \frac{V_\varepsilon(\Delta)}{K^{\nu-\varepsilon}},
\]
matching case~(ii) of Theorem~\ref{thm:main_result}.

\emph{Non-summable direct covariance decay} (Assumption~\ref{ass:Aprime}(P$'$) with $\nu \le 1$): the direct decay $|c_{h,m}| \le C\, h^{-\nu}$ gives finite partial sums without $\varepsilon$-loss. For $0 < \nu < 1$, set
\[
V_{\mathrm{poly}}(\Delta) := 2V_1(\Delta) + 2\sup_{K \ge 1} K^{\nu-1}\sum_{m=2}^M V_m^{(K)}(\Delta) < \infty;
\]
This constant is finite by Proposition~\ref{prop:mixing_exponent}(iii), since $V_m^{(K)} = O(K^{1-\nu})$ for $0 < \nu < 1$.
Then
\[
\MSE \le \frac{C_{\mathrm{bias}}(\Delta,M)}{(n-1)^\gamma} + \frac{V_{\mathrm{poly}}(\Delta)}{K^\nu}.
\]
For $\nu = 1$, set $V_{\mathrm{log}}(\Delta) := 2V_1(\Delta) + 2\sup_{K \ge 2} (\log K)^{-1}\sum_{m=2}^M V_m^{(K)}(\Delta) < \infty$. This constant is finite by Proposition~\ref{prop:mixing_exponent}(iii), since $V_m^{(K)} = O(\log K)$ at $\nu = 1$. Then, for all $K \ge 2$,
\[
\MSE \le \frac{C_{\mathrm{bias}}(\Delta,M)}{(n-1)^\gamma} + V_{\mathrm{log}}(\Delta)\, \frac{\log K}{K}.
\]
Together, these establish Theorem~\ref{thm:main_result}.
\end{proof}

% Appendix B: Verification of Discretization Rates (Proposition~\ref{prop:gamma_values})
%%%%%%%%%%%%%%%%%%%%%%%%%%%%%%%%%%%%%%%%%%%%%%%%%%%%%%%%%%%%%%%%%%%%%%%%%%%%%%%
\section{Verification of Discretization Rates}
\label{app:gamma}
%%%%%%%%%%%%%%%%%%%%%%%%%%%%%%%%%%%%%%%%%%%%%%%%%%%%%%%%%%%%%%%%%%%%%%%%%%%%%%%

It remains to verify Assumption~\ref{ass:gamma} for the three process classes stated in Proposition~\ref{prop:gamma_values}. The discretization rate $\gamma$ quantifies how well piecewise-linear signatures approximate true signatures as the mesh $\delta \to 0$.

%%%%%%%%%%%%%%%%%%%%%%%%%%%%%%%%%%%%%%%%%%%%%%%%%%%%%%%%%%%%%%%%%%%%%%%%%%%%%%%
\subsection{Gaussian Semimartingales with Finite 2D \texorpdfstring{$1$}{1}-Variation Covariance ($\gamma = 1 - \varepsilon$ for any $\varepsilon > 0$; $\gamma = 1$ at level~2)}
\label{app:gamma_semi}
%%%%%%%%%%%%%%%%%%%%%%%%%%%%%%%%%%%%%%%%%%%%%%%%%%%%%%%%%%%%%%%%%%%%%%%%%%%%%%%

We work with centered continuous Gaussian semimartingales admitting the decomposition $X = X_0 + \mathsf{N} + A$ where:
\begin{itemize}
\item $X = (X^1, \ldots, X^d)$ has \emph{independent coordinates}, each $X^j$ a centered Gaussian process;
\item $\mathsf{N} = (\mathsf{N}^1, \ldots, \mathsf{N}^d)$ is a continuous local martingale \emph{with respect to the filtration} $(\mathcal{F}_t)$, $\mathcal{F}_t := \sigma(X_s : s \le t)$ --- so that $X = X_0 + \mathsf{N} + A$ is the semimartingale decomposition of $X$ in its own filtration --- satisfying $d\langle \mathsf{N}^j\rangle_t/dt \le c_\mathsf{N}^2$ a.s.\ on $[0, \Delta]$;
\item $A = (A^1, \ldots, A^d)$ is a continuous finite-variation adapted process with $\mathbb{E}[(dA^j_u/du)^2] \le c_A^2$ for each $j \in \{1, \ldots, d\}$ and $u \in [0, \Delta]$;
\item each drift coordinate $A^j$ (and hence, by Fubini plus absolute continuity, its density $dA^j_u/du$) is adapted to $\sigma(X^j_s : s \le u)$ alone; in particular, $A^j$ and $dA^j_u/du$ are independent of $X^k$ for $j \ne k$;
\item each coordinate covariance $R^j(s, t) := \mathbb{E}[X^j_s X^j_t]$ has finite 2D $1$-variation on $[0, \Delta]^2$, a property that follows from the bounded martingale quadratic-variation density and $L^2$ drift density above. Indeed, rectangle increments annihilate the constant and single-variable terms, leaving the martingale, drift, and cross contributions: the martingale part $\mathbb{E}[\mathsf{N}^j_s \mathsf{N}^j_t] = \mathbb{E}[\langle \mathsf{N}^j\rangle_{s\wedge t}]$ is a non-decreasing function of $s \wedge t$ with total increment at most $c_\mathsf{N}^2\Delta$, hence has 2D $1$-variation $\le c_\mathsf{N}^2\Delta$; the drift part $\mathbb{E}[A^j_s A^j_t]$ is absolutely continuous with density $\mathbb{E}[(dA^j_u/du)(dA^j_v/dv)]$ bounded by $c_A^2$ (Cauchy--Schwarz), hence has 2D $1$-variation $\le c_A^2\Delta^2$; and for each cross part, the martingale projection $\mathbb{E}[\mathsf{N}^j_{s,s'} \mid \mathcal{F}_v] = \mathsf{N}^j_{s\wedge v,\,s'\wedge v}$ and Fubini express the rectangle increment over $[s,s']\times[t,t']$ as $\int_t^{t'}\mathbb{E}\bigl[\mathsf{N}^j_{s\wedge v,\,s'\wedge v}\,(dA^j_v/dv)\bigr]\,dv$; for each fixed $v$, Cauchy--Schwarz across the partition followed by Bessel's inequality for the pairwise orthogonal martingale increments $\{\mathsf{N}^j_{s_k\wedge v,\,s_{k+1}\wedge v}\}_k$ along any partition $\{s_k\}$ of $[0,\Delta]$ gives $\sum_k \bigl|\mathbb{E}[\mathsf{N}^j_{s_k\wedge v,\,s_{k+1}\wedge v}\,(dA^j_v/dv)]\bigr| \le \bigl(\sum_k \mathbb{E}[(\mathsf{N}^j_{s_k\wedge v,\,s_{k+1}\wedge v})^2]\bigr)^{1/2} c_A \le c_\mathsf{N} c_A \Delta^{1/2}$, whence each cross part has 2D $1$-variation $\le c_\mathsf{N} c_A \Delta^{3/2}$; see also~\citep[Sec.~15.3]{FrizVictoir2010}.
\end{itemize}
This subclass contains the paper's main applications: stationary Ornstein--Uhlenbeck ($dX = -\theta X\,dt + \sigma\,dB$, so $\mathsf{N} = \sigma B$ with $c_\mathsf{N} = \sigma$ and $A_t = -\theta\int_0^t X_s\,ds$ with $\mathbb{E}[(dA^j_u/du)^2] = \theta^2\sigma^2/(2\theta) = \theta\sigma^2/2$); and fOU at $H = 1/2$ (that is, stationary OU). This scope is \emph{not} claimed to extend to the full Gaussian rough-path framework of~\citet{FrizRiedel2014} with $\rho > 1$ (e.g., fBm at $H \ne 1/2$, which is not a semimartingale).

Under these assumptions, $X$ satisfies Assumption~\ref{ass:M} with $\beta = 1/p$ for any $p > 2$: the three 2D $1$-variation bounds above localize from $[0,\Delta]^2$ to any square $[s,t]^2 \subseteq [0,\Delta]^2$ (replace $\Delta$ by $t-s$ in their derivations), giving $V_1(R^j;[s,t]^2) \le c_\mathsf{N}^2\,(t-s) + 2\,c_\mathsf{N}c_A\,(t-s)^{3/2} + c_A^2\,(t-s)^2 \le C\,(t-s)$ --- a H\"older-dominated control --- so the Gaussian integrability estimates of~\citet[Thm.~15.33(iii)]{FrizVictoir2010} (applied with $\rho = 1$) bound $\mathbb{E}[\|\mathbf{X}\|^r_{p\text{-var};[s,t]}]$ by $C_r\,(t-s)^{r/p}$ for every $r \ge 1$; for the martingale part alone, the same moment control also follows from the Burkholder--Davis--Gundy inequality in homogeneous $p$-variation norm~\citep[Sec.~14.1, Thm.~14.12]{FrizVictoir2010}. The canonical rough-path lift $\mathbf{X}$ --- the limit of lifted piecewise-linear approximations supplied by~\citet[Thm.~15.33]{FrizVictoir2010} --- coincides with the Stratonovich (semimartingale) lift: piecewise-linear approximations of a continuous local martingale converge in probability to its martingale lift~\citep[Thm.~14.16]{FrizVictoir2010}, and the remaining level-2 cross and drift terms involve the finite-variation part $A$, for which convergence holds by uniform convergence of the interpolants together with Riemann--Stieltjes (respectively It\^o-isometry) continuity. The finite 2D $1$-variation covariance is also the Gaussian input for the coordinate $L^2$ estimate in~\citet[Prop.~15.24]{FrizVictoir2010}; finite-chaos membership follows by taking $L^2$ limits of polynomial iterated integrals along piecewise-linear approximations, as in the Wiener chaos argument below. Throughout, write $\Delta X_i := X_{t_{i+1}} - X_{t_i}$, $e_u := X_u - X^{(n)}_u$, and $\mathcal{F}_t := \sigma(X_s : s \le t)$. For the increment bound, write $X^j = X^j_0 + \mathsf{N}^j + A^j$, so $(X^j_t - X^j_s)^2 \le 2(\mathsf{N}^j_t - \mathsf{N}^j_s)^2 + 2(A^j_t - A^j_s)^2$; after taking expectations, applying It\^o isometry to the martingale part, and applying Cauchy--Schwarz in time to the drift,
\begin{equation}
\label{eq:CR_derivation}
\mathbb{E}[(X^j_t - X^j_s)^2] \le 2\,c_\mathsf{N}^2\,|t - s| + 2\,c_A^2\,|t - s|^2 \le (2\,c_\mathsf{N}^2 + 2\,c_A^2\,\Delta)\,|t - s| =: C_R\,|t - s|
\end{equation}
for all $s, t \in [0, \Delta]$. The defining bound $C_R \ge 2\,c_\mathsf{N}^2 + 2\,c_A^2\,\Delta$ is loose (for standard Brownian motion $\mathbb{E}[(B^j_t - B^j_s)^2] = |t-s|$ exactly), but it suffices for the rate analysis below.

We establish Assumption~\ref{ass:gamma} with:
\begin{itemize}
\item $\gamma = 1$ exactly at signature level~2 (direct $L^2$ estimate below);
\item $\gamma = 1 - \varepsilon$ for any $\varepsilon > 0$ at truncation level $M$, via Friz--Riedel Theorem~5 applied at truncation level $N = M$.
\end{itemize}

\paragraph{Level 1: exact agreement}
$S^{(1)}(\mathbf{X})_{0, \Delta} = X_\Delta - X_0 = S^{(1)}(\mathbf{X}^{(n)})_{0, \Delta}$.

\paragraph{Level 2: direct $L^2$ estimate}
Throughout the level-2 analysis, $a \wedge b := \tfrac{1}{2}(a \otimes b - b \otimes a)$ denotes the antisymmetric tensor product.

The symmetric part of $S^{(2)}$ depends only on endpoints (see~\citealp[eq.~(2.6)]{FrizHairer2020}) and therefore coincides for $\mathbf{X}$ and $\mathbf{X}^{(n)}$. Applying Chen's identity (\citealp[Thm.~7.11]{FrizVictoir2010} for the piecewise-linear lift $\mathbf{X}^{(n)}$; for $\mathbf{X}$, by the multiplicativity of the weak geometric rough-path lift supplied by~\citealp[Thm.~15.33]{FrizVictoir2010}) at level~2 to the grid partition, together with the fact that piecewise-linear segments have zero L\'evy area and $X^{(n)}_{t_{i+1}} - X^{(n)}_{t_i} = \Delta X_i$, gives the cancellation
\begin{equation}
\label{eq:area_telescope}
\Delta S^{(2)}_{0, \Delta} := S^{(2)}(\mathbf{X})_{0, \Delta} - S^{(2)}(\mathbf{X}^{(n)})_{0, \Delta} = \sum_{i=0}^{n-2}\mathbb{A}_{t_i, t_{i+1}},
\end{equation}
where $\mathbb{A}_{s, t} = \int_s^t(X_u - X_s) \wedge \circ dX_u$. Stratonovich--It\^o conversion gives $\int Y \wedge \circ dX = \int Y \wedge dX + \tfrac{1}{2}\int d[Y, X]$ with $Y_u = X_u - X_{t_i}$; the covariation $[Y, X] = [X, X] = \langle \mathsf{N}\rangle$ (since $A$ has finite variation, contributing zero covariation), and $d\langle \mathsf{N}^a, \mathsf{N}^b\rangle_u$ is symmetric in $(a, b)$. Under the antisymmetric wedge, $\sum_{a, b} d\langle \mathsf{N}^a, \mathsf{N}^b\rangle_u\,e_a \wedge e_b = 0$ pointwise. Hence the Stratonovich correction vanishes, yielding the pure It\^o form
\begin{equation}
\mathbb{A}_{t_i, t_{i+1}} = \underbrace{\int_{t_i}^{t_{i+1}}(X_u - X_{t_i}) \wedge d\mathsf{N}_u}_{=:\,\mathcal{M}_i} + \underbrace{\int_{t_i}^{t_{i+1}}(X_u - X_{t_i}) \wedge dA_u}_{=:\,\mathcal{D}_i}.
\end{equation}

\subparagraph{Martingale contribution.} For $a \ne b$, the parallelogram bound $|z_1 - z_2|^2 \le 2(|z_1|^2 + |z_2|^2)$ followed by the scalar It\^o isometry and $d\langle \mathsf{N}^b\rangle_u \le c_\mathsf{N}^2\,du$ gives
\begin{equation}
\mathbb{E}[|\mathcal{M}_i^{ab}|^2] \le \tfrac{1}{2}\,c_\mathsf{N}^2\,\mathbb{E}\!\left[\int_{t_i}^{t_{i+1}}\bigl((X^a_u - X^a_{t_i})^2 + (X^b_u - X^b_{t_i})^2\bigr)du\right].
\end{equation}
The coordinate increment bound $\mathbb{E}[(X^j_u - X^j_{t_i})^2] \le C_R\,(u - t_i)$ yields $\mathbb{E}[\int_{t_i}^{t_{i+1}}(X^j_u - X^j_{t_i})^2\,du] \le C_R\,\delta^2/2$, and summing over the $d(d-1)$ off-diagonal components $(a, b)$:
\begin{equation}
\label{eq:Mi_rate}
\mathbb{E}[\|\mathcal{M}_i\|_{\mathrm{HS}}^2] \le \sum_{a \ne b}\mathbb{E}[|\mathcal{M}_i^{ab}|^2] \le \tfrac{1}{2}\,d(d-1)\,c_\mathsf{N}^2\,C_R\,\delta^2 =: C_\mathcal{M}\,\delta^2.
\end{equation}

\subparagraph{Drift contribution (Gaussian $L^2$ moments).} For $a \ne b$, Cauchy--Schwarz in $du$ and coordinate independence give
\begin{equation}
\mathbb{E}[|\mathcal{D}_i^{ab}|^2] \le \tfrac{\delta}{2}\int_{t_i}^{t_{i+1}}\!\!\mathbb{E}\!\left[(X^a_u - X^a_{t_i})^2\right]\mathbb{E}\!\left[(dA^b_u/du)^2\right]du + (\text{sym}),
\end{equation}
where by independence $\mathbb{E}[(X^a_u - X^a_{t_i})^2 (dA^b_u/du)^2] = \mathbb{E}[(X^a_u - X^a_{t_i})^2]\,\mathbb{E}[(dA^b_u/du)^2]$ for $a \ne b$. Using $\mathbb{E}[(X^a_u - X^a_{t_i})^2] \le C_R\,(u - t_i)$ and the assumption $\mathbb{E}[(dA^b/du)^2] \le c_A^2$:
\begin{equation}
\label{eq:Di_rate}
\mathbb{E}[|\mathcal{D}_i^{ab}|^2] \le \delta\,c_A^2\,C_R\,\delta^2/2, \qquad \mathbb{E}[\|\mathcal{D}_i\|_{\mathrm{HS}}^2] \le \tfrac{1}{2}\,d(d-1)\,c_A^2\,C_R\,\delta^3 =: C_\mathcal{D}\,\delta^3.
\end{equation}

\subparagraph{Summation.} By the martingale-increment property, the $\{\mathcal{M}_i\}$ are pairwise $L^2$-orthogonal: the integrand of $\mathcal{M}_j$ vanishes at $u = t_j$ and is adapted from $t_j$ onward, so $\mathbb{E}[\mathcal{M}_j \mid \mathcal{F}_{t_j}] = 0$, while $\mathcal{M}_i$ for $i < j$ is $\mathcal{F}_{t_{i+1}} \subseteq \mathcal{F}_{t_j}$-measurable.
\begin{equation}
\mathbb{E}\bigl[\|\textstyle\sum_i \mathcal{M}_i\|^2\bigr] = \sum_i \mathbb{E}[\|\mathcal{M}_i\|^2] \le (n-1)\,C_\mathcal{M}\,\delta^2 \le C_\mathcal{M}\,\Delta\,\delta.
\end{equation}
The $\{\mathcal{D}_i\}$ need not be orthogonal; Minkowski in $L^2$ gives $\|\sum_i \mathcal{D}_i\|_{L^2(\mathrm{HS})} \le (n-1)\sqrt{C_\mathcal{D}\,\delta^3}$, so $\mathbb{E}[\|\sum_i \mathcal{D}_i\|^2] \le (n-1)^2\,C_\mathcal{D}\,\delta^3 \le (\Delta/\delta)^2\,C_\mathcal{D}\,\delta^3 = C_\mathcal{D}\,\Delta^2\,\delta$. Combining via $\|\cdot\|^2 \le 2\|\mathcal{M}\|^2 + 2\|\mathcal{D}\|^2$:
\begin{equation}
\mathbb{E}[\|\Delta S^{(2)}_{0, \Delta}\|_{\mathrm{HS}}^2] \le 2\,C_\mathcal{M}\,\Delta\,\delta + 2\,C_\mathcal{D}\,\Delta^2\,\delta = C_2(\Delta)\,\delta,
\end{equation}
so $\gamma = 1$ at level~2.

\paragraph{Levels $m \ge 3$: Friz--Riedel Theorem~5}

For signature levels beyond 2, we apply~\citet[Thm.~5, p.~188]{FrizRiedel2014}. For a centered Gaussian process with independent coordinates whose covariance has finite 2D $\rho$-variation ($\rho \in [1, 2)$), for any rough-path roughness parameter $p \in (2\rho, 4)$, any auxiliary H\"older-type parameter $\gamma_{\mathrm{FR}} > \rho$ satisfying $1/\gamma_{\mathrm{FR}} + 1/\rho > 1$, any $q > 2\gamma_{\mathrm{FR}}$, any truncation level $N \in \mathbb{N}$, and any $r \ge 1$,
\begin{equation}
\label{eq:FR_Thm5}
\bigl\|\varrho_{q\text{-var};[0, \Delta]}(S_N(\mathbf{X}^{(n)}),\,S_N(\mathbf{X}))\bigr\|_{L^r} \le C(q, \rho, \gamma_{\mathrm{FR}}, K, N)\,r^{N/2}\,\sup_{0 \le t \le \Delta}\|X^{(n)}_t - X_t\|_{L^2}^{1 - \rho/\gamma_{\mathrm{FR}}},
\end{equation}
where $K$ denotes, as in~\citet[Thm.~5]{FrizRiedel2014}, any upper bound on the 2D $\rho$-variation of the covariance of $X$ over the relevant square. The factor $r^{N/2}$ arises from the $L^2$ moment bounds for Wiener chaos projections of the Gaussian rough-path lift (via~\citealp[Prop.~15.24]{FrizVictoir2010} in the proof of~\citealp[Thm.~5]{FrizRiedel2014}, p.~189, which invokes Nelson's hypercontractive equivalence of $L^r$ and $L^2$ norms on Wiener chaos); it is \emph{not} the mesh index, and fixing $r = 2$ gives $r^{N/2} = 2^{N/2}$, constant in $n$. In our setting $\rho = 1$.

\subparagraph{Sup-$L^2$ bound on the PL error.} For $t \in [t_i, t_{i+1}]$ with $\lambda := (t - t_i)/\delta \in [0, 1]$, the linear interpolation satisfies $X^{(n)}_t - X_t = \lambda(X_{t_{i+1}} - X_t) + (1 - \lambda)(X_{t_i} - X_t)$. Triangle inequality in $L^2$ and the Lipschitz increment bound $\|X^j_s - X^j_t\|_{L^2}^2 \le C_R\,|s - t|$ give
\begin{equation}
\label{eq:sup_L2}
\sup_{0 \le t \le \Delta}\bigl\|X^{(n)}_t - X_t\bigr\|_{L^2}^2 \le 4\,d\,C_R\,\delta.
\end{equation}

\subparagraph{From the rough-path distance to the Hilbert--Schmidt error.} We apply~\eqref{eq:FR_Thm5} at truncation level $N = M$ with $r = 2$. The piecewise-linear family over uniform partitions satisfies the approximation conditions of~\citet[Sec.~6.2]{FrizRiedel2014}; the statement on $[0,1]$ transfers to $[0,\Delta]$ by the deterministic time change $t \mapsto t/\Delta$, under which signatures are invariant (\citealp[Prop.~7.10]{FrizVictoir2010} for the bounded-variation approximants; the invariance passes to the Gaussian lift in the limit by naturality,~\citealp[Thm.~15.33(iv)]{FrizVictoir2010}) and only the constant changes. The inhomogeneous distance $\varrho_{q\text{-var}}$ is the maximum over levels $k = 1, \ldots, M$ of the level-$k$ $q/k$-variation distances~\citep[p.~159]{FrizRiedel2014}, so taking the trivial partition $\{0, \Delta\}$ at level $m$,
\[
\|S^{(m)}(\mathbf{X})_{0,\Delta} - S^{(m)}(\mathbf{X}^{(n)})_{0,\Delta}\|_{\mathrm{HS}} \;\le\; C_m\, \varrho_{q\text{-var};[0,\Delta]}\bigl(S_M(\mathbf{X}^{(n)}), S_M(\mathbf{X})\bigr), \qquad 1 \le m \le M,
\]
where $C_m$ accounts for the equivalence of norms on the finite-dimensional space $(\mathbb{R}^d)^{\otimes m}$. No localization or moment control of $\mathbf{X}^{(n)}$ is needed: the Friz--Riedel bound is applied directly at the truncation level of interest, and the level-$m$ endpoint differences are read off from the distance itself.

Fix $\varepsilon' \in (0,1)$ and choose the FR parameter $\gamma_{\mathrm{FR}} = 2/\varepsilon'$, so that $1 - \rho/\gamma_{\mathrm{FR}} = 1 - \varepsilon'/2$ (using $\rho = 1$; the admissibility conditions $\gamma_{\mathrm{FR}} > \rho$ and $1/\gamma_{\mathrm{FR}} + 1/\rho > 1$ are automatic), and fix any $q > 2\gamma_{\mathrm{FR}}$. Summing the level bounds, squaring, and combining~\eqref{eq:FR_Thm5} with the sup-$L^2$ bound~\eqref{eq:sup_L2}:
\begin{equation}
\label{eq:final_rate}
\begin{aligned}
\mathbb{E}\bigl[\|S^{(M)}(\mathbf{X})_{0, \Delta} - S^{(M)}(\mathbf{X}^{(n)})_{0, \Delta}\|_{\mathrm{HS}}^2\bigr]
&\le C_M\,\bigl\|\varrho_{q\text{-var}}\bigr\|_{L^2}^2
\le C_M(\Delta,\varepsilon')\,\bigl(4dC_R\,\delta\bigr)^{1-\varepsilon'/2}\\
&\le C_M'(\Delta, \varepsilon')\,\delta^{1 - \varepsilon'},
\end{aligned}
\end{equation}
which verifies Assumption~\ref{ass:gamma} with $\gamma = 1 - \varepsilon'$ for any $\varepsilon' > 0$.

\paragraph{Summary}
Assumption~\ref{ass:gamma} holds for centered continuous Gaussian semimartingales with independent coordinates, with diffusion and drift satisfying $d\langle \mathsf{N}^j\rangle/dt \le c_\mathsf{N}^2$ and $\mathbb{E}[(dA^j/du)^2] \le c_A^2$, whose coordinate covariances have finite 2D $1$-variation. This class contains stationary Ornstein--Uhlenbeck and fractional Ornstein--Uhlenbeck at $H = 1/2$. The rates are:
\begin{itemize}
\item $\gamma = 1$ exactly at signature level~2, via the direct argument~\eqref{eq:area_telescope}--\eqref{eq:Di_rate};
\item $\gamma = 1 - \varepsilon$ for any $\varepsilon > 0$ at truncation level $M$, via~\citet[Thm.~5]{FrizRiedel2014} and~\eqref{eq:final_rate}.
\end{itemize}
The level-2 rate $\gamma = 1$ is exact; the $\varepsilon$-loss enters only at truncation level $M \ge 3$, where the rigorous rate is $\gamma = 1 - \varepsilon$ for any $\varepsilon > 0$. In Corollary~\ref{cor:sample_complexity}, the heuristic exponent $\gamma = 1$ is used for sample-complexity formulas; the rigorous form carries an arbitrarily small $\varepsilon$-loss, as does the rough fOU regime $H \in (1/4,1/2)$, whereas the Young regime $H > 1/2$ is exact (Proposition~\ref{prop:gamma_values}(ii)).

%%%%%%%%%%%%%%%%%%%%%%%%%%%%%%%%%%%%%%%%%%%%%%%%%%%%%%%%%%%%%%%%%%%%%%%%%%%%%%%
\subsection{Fractional Ornstein--Uhlenbeck}
\label{app:fOU_rate}
%%%%%%%%%%%%%%%%%%%%%%%%%%%%%%%%%%%%%%%%%%%%%%%%%%%%%%%%%%%%%%%%%%%%%%%%%%%%%%%

For the fractional Ornstein--Uhlenbeck (fOU) process
\begin{equation}
dX_t = -\theta X_t\,dt + \sigma\,dB^H_t, \quad \theta > 0,\ H \in (1/4, 1),
\end{equation}
we establish the discretization rate $\gamma = 4H-1-\varepsilon$ for any $\varepsilon > 0$ at truncation level $M$ when $H \in (1/4, 1/2)$, with $\gamma = 4H-1$ attained exactly at signature level~2 via Lemma~\ref{lem:fOU_area}. For $H \in (1/2, 1)$, we establish $\gamma = 2H$ exactly, with no $\varepsilon$-loss at any level. The proof treats the rough ($H \in (1/4, 1/2)$) and Young ($H > 1/2$) regimes separately. The boundary $H = 1/2$ is handled by the Gaussian-semimartingale analysis of Appendix~\ref{app:gamma_semi} (giving $\gamma = 1-\varepsilon$ at truncation level $M$), consistent with the boundary values $4H-1 = 2H = 1$. The constraint $H > 1/4$ is intrinsic: below this threshold, the second-level rough paths of dyadic approximations fail to converge (in $L^1$ with respect to the $p$-variation distance) to a geometric rough path~\citep[Thm.~2]{CoutinQian2002}.

\begin{remark}[Trapezoidal vs.\ Euler schemes]
\label{rem:trap_vs_euler}
The piecewise-linear interpolation $X^{(n)}$ computes the \emph{trapezoidal scheme} of~\citet{NeuenkirchTindelUnterberger2010}, which for fractional Brownian motion converges at rate $n^{-(4H-1)}$ for all $H > 1/4$~\citep[Thm.~2]{NeuenkirchTindelUnterberger2010}, whereas the standard Euler scheme saturates at rate $n^{-2}$ for $H > 3/4$~\citep[Thm.~1]{NeuenkirchTindelUnterberger2010}. The piecewise-linear signature thus avoids this barrier; for fOU specifically (with $H \neq 1/2$; the boundary case is treated in Appendix~\ref{app:gamma_semi}), the SDE-decomposition argument of Lemma~\ref{lem:fOU_area} below transfers the fBm rate, yielding $\delta^{\min(4H-1,\,2)}$ at level~2.
\end{remark}

The proof proceeds by establishing the rate for fBm via~\citet{NeuenkirchTindelUnterberger2010}, then extending to fOU via explicit $L^2$ Gaussian estimates.

\subsubsection{Covariance Structure of fOU}

We first establish that fOU inherits the local covariance regularity of its fBm driver.

\begin{lemma}[fOU covariance regularity]
\label{lem:fOU_cov}
Let $X$ be the stationary fOU process with mean-reversion $\theta > 0$ and Hurst parameter $H \in (0,1)$. The stationary covariance $R(\tau) := \Cov(X_t, X_{t+\tau})$ satisfies:
\begin{equation}
\label{eq:fOU_cov_local}
R(0) - R(\tau) = \kappa_{H,\theta,\sigma}\, |\tau|^{2H} + O(|\tau|^{2}) \quad \text{as } \tau \to 0,
\end{equation}
where $\kappa_{H,\theta,\sigma} = \sigma^2/2 > 0$. In particular, $R$ is $C^1$ on $(0,\Delta]$ with
\begin{equation}
\label{eq:R_prime_local}
|R'(\tau)| \;\le\; C(H,\theta,\sigma,\Delta)\,\tau^{2H-1}, \qquad \tau \in (0,\Delta].
\end{equation}
Moreover, the 2D covariance kernel $K_X(s,t) := R(|s-t|)$ has finite 2D $\rho$-variation on $[0,\Delta]^2$:
\begin{equation}
\label{eq:fOU_2D_rho_var}
V_\rho(K_X;\,[0,\Delta]^2) < \infty \qquad \text{for every } \rho > \rho_*(H) := \max\bigl(1,\ 1/(2H)\bigr).
\end{equation}
In particular $\rho_*(H) < 2$ for all $H \in (1/4, 1)$, so $K_X$ satisfies the hypothesis of~\citet[Thm.~15.33]{FrizVictoir2010} and the canonical geometric rough-path lift of $X$ exists with finite moments of all orders.
\end{lemma}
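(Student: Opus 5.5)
The plan is to work from an explicit integral representation of the stationary covariance and treat the three claims (local expansion, derivative bound, 2D $\rho$-variation) in that order. The stationary solution is $X_t = \sigma\int_{-\infty}^t e^{-\theta(t-s)}\,dB^H_s$, understood via integration by parts as $X_t = \sigma B^H_t - \sigma\theta\int_{-\infty}^t e^{-\theta(t-s)}B^H_s\,ds$. From this I get the increment identity
\[
X_{t+\tau}-X_t = \sigma\,(B^H_{t+\tau}-B^H_t) - \theta\int_t^{t+\tau} X_u\,du .
\]
Hence $2(R(0)-R(\tau)) = \mathbb{E}[(X_{t+\tau}-X_t)^2]$ equals $\sigma^2\tau^{2H}$ plus a cross term and a drift term. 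The drift term is $\theta^2\,\mathbb{E}(\int_t^{t+\tau}X_u\,du)^2 \le \theta^2 R(0)\tau^2$. The cross term is $-2\sigma\theta\int_t^{t+\tau}\mathbb{E}[(B^H_{t+\tau}-B^H_t)X_u]\,du$. Writing $X_u = X_t + (X_u - X_t)$ and using stationarity of the pair $(B^H\text{-increments}, X)$, I bound it by $O(\tau\cdot\tau^{H}\cdot R(0)^{1/2})$ crudely.

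That crude bound only gives $O(\tau^{1+H})$, which is not $O(\tau^2)$. So the \textbf{main obstacle} is sharpening the cross term. I would try the spectral route: the spectral density is $f(\lambda)=c_H\sigma^2|\lambda|^{1-2H}/(\theta^2+\lambda^2)$, so
\[
R(0)-R(\tau)=\int(1-\cos\lambda\tau)f(\lambda)\,d\lambda .
\]
Write $\frac{1}{\theta^2+\lambda^2}=\frac{1}{\lambda^2}-\frac{\theta^2}{\lambda^2(\theta^2+\lambda^2)}$. The first piece gives exactly the fBm structure function $\frac{\sigma^2}{2}|\tau|^{2H}$, which identifies $\kappa=\sigma^2/2$. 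The second piece has density $\lesssim|\lambda|^{-1-2H}\wedge|\lambda|^{-3-2H}$, and this decomposition must be handled near $\lambda=0$, where the two terms are individually nonintegrable for $H\ge 1/2$ and need to be combined. Using $1-\cos\lambda\tau\le\lambda^2\tau^2/2$, the second piece is $O(\tau^2)$ whenever $\int\lambda^2\cdot\lambda^{-3-2H}$ converges at infinity (it does) and the low-frequency part is controlled. This yields $\psi(\tau):=R(\tau)-R(0)+\kappa|\tau|^{2H}$ as a $C^2$ remainder, which is the decomposition later quoted in Proposition~\ref{prop:fOU_assumptions}.

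For \eqref{eq:R_prime_local}, I differentiate under the spectral integral for $\tau>0$. The fBm part gives $-2H\kappa\tau^{2H-1}$, and $\psi'$ is bounded on $(0,\Delta]$. Since $\tau^{2H-1}\ge\Delta^{2H-1}$ when $H<1/2$, and the bounded term is absorbed otherwise via $\Delta$-dependent constants, this gives $|R'(\tau)|\le C\tau^{2H-1}$.

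For the 2D $\rho$-variation, the rectangle increments of $K_X(s,t)=R(0)-\kappa|s-t|^{2H}+\psi(s-t)$ annihilate the constant. The $\psi$ part has mixed derivative $-\psi''(s-t)$, which is bounded, so it has finite 2D $1$-variation and hence finite $\rho$-variation for every $\rho\ge1$. The fractional part is, up to the factor $\kappa$, the fBm covariance rectangle function. By \citet[Prop.~15.5]{FrizVictoir2010} it has finite $1/(2H)$-variation for $H\le1/2$. For $H>1/2$, the mixed derivative $|s-t|^{2H-2}$ is integrable, giving finite $1$-variation. Combining, and using monotonicity of $\rho$-variation in $\rho$ on a bounded square, gives \eqref{eq:fOU_2D_rho_var} for every $\rho>\rho_*(H)$. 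Finally, $\rho_*<2$ for $H>1/4$, so \citet[Thm.~15.33]{FrizVictoir2010} supplies the lift with moments of all orders.
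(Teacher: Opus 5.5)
Your route is the paper's in substance. You split off the fractional-Brownian structure function $\kappa|\tau|^{2H}$, show the remainder $\psi$ is $C^2$ with $\psi(0)=\psi'(0)=0$, and read the derivative bound and the 2D variation off $R=R(0)-\kappa|\tau|^{2H}+\psi$. Your partial-fraction remainder is exactly the paper's $\kappa_0|\tau|^{2H}(I(0)-I(\tau))$ before the rescaling $\mu=\lambda\tau$.

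There is, however, a concrete failure in your proof of $|R'(\tau)|\le C\tau^{2H-1}$ when $H>1/2$. There $\tau^{2H-1}\to0$ as $\tau\downarrow0$, so a term that is merely bounded on $(0,\Delta]$ cannot be absorbed into $C\tau^{2H-1}$ by any $\Delta$-dependent constant. Your sentence ``the bounded term is absorbed otherwise'' is false near the origin. What you need is $\psi'(\tau)=O(\tau)$, followed by $\tau\le\Delta^{2-2H}\tau^{2H-1}$ on $(0,\Delta]$; this is exactly how the paper closes the estimate, via $\psi'(0)=0$. The same missing ingredient affects your 2D step. You only prove $\psi(\tau)=O(\tau^2)$ and then assert that $\psi$ is $C^2$, which an $O(\tau^2)$ bound does not imply. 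Yet boundedness of $\partial_s\partial_t\,\psi(s-t)$ across the diagonal is what gives finite $1$-variation.

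Both points close in one line from your own representation, more directly than in the paper. Writing $f$ for the spectral density, your remainder is $\psi(\tau)=\theta^2\int(1-\cos\lambda\tau)\,\lambda^{-2}f(\lambda)\,d\lambda$. The first and second $\tau$-derivatives of the integrand are dominated by $\theta^2|\tau|f(\lambda)$ and $\theta^2f(\lambda)$, with $f\in L^1$. Hence:
\begin{itemize}
\item $\psi\in C^2(\mathbb{R})$;
\item $|\psi'(\tau)|\le\theta^2R(0)\,\tau$;
\item $\psi''=\theta^2R$.
\end{itemize}
This bypasses the paper's singular estimates on $J'$ and $J''$. It also gives for free the identity $R''=\theta^2R-\sigma^2H(2H-1)\tau^{2H-2}$ on $(0,\infty)$, which the paper derives separately by an Abelian-regularization argument.

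Two minor corrections:
\begin{itemize}
\item Obtain the fBm part of $R'$ by differentiating $\kappa\tau^{2H}$ directly, not under the spectral integral. For $H\le1/2$ the differentiated integral $\int\lambda^{-2H}\sin(\lambda\tau)\,d\lambda$ converges only conditionally.
\item Your remark that the two partial-fraction pieces are individually nonintegrable near $\lambda=0$ for $H\ge1/2$ is mistaken. With the factor $1-\cos\lambda\tau$ attached, both converge at the origin for every $H<1$, so nothing needs recombining.
\end{itemize}
The fBm-part variation bounds and the final appeal to Friz--Victoir, Thm.~15.33, match the paper.
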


\begin{proof}
Via the spectral representation~\eqref{eq:fOU_spectral} and the substitution $\mu = \lambda\tau$:
\begin{equation}
R(0) - R(\tau) = \frac{\sigma^2 \Gamma(2H+1) \sin(\pi H)}{\pi} \cdot |\tau|^{2H} \underbrace{\int_0^\infty \frac{(1 - \cos\mu)\, \mu^{1-2H}}{\theta^2\tau^2 + \mu^2}\, d\mu}_{=:\, I(\tau)}.
\end{equation}

\textbf{Leading term.} The function $(1-\cos\mu)\mu^{-1-2H}$ is integrable for $H \in (0,1)$, so dominated convergence gives $I(\tau) \to I(0) = \int_0^\infty (1-\cos\mu)\mu^{-1-2H}\, d\mu = \pi/(2\Gamma(2H+1)\sin(\pi H))$ as $\tau \to 0$, yielding $\kappa_{H,\theta,\sigma} = \sigma^2/2$.

\textbf{Remainder.} Write
\begin{equation}
I(0) - I(\tau) = \int_0^\infty (1-\cos\mu)\,\mu^{-1-2H} \cdot \frac{\theta^2\tau^2}{\theta^2\tau^2+\mu^2}\, d\mu.
\end{equation}
Setting $h(x) := 2(1-\cos x)/x^2$ (so $|h| \le 1$, $h(0) = 1$) and substituting $v = \mu/(\theta\tau)$:
\begin{equation}
I(0) - I(\tau) = \tfrac{1}{2}(\theta\tau)^{2-2H} \int_0^\infty \frac{v^{1-2H}\, h(\theta\tau v)}{1+v^2}\, dv.
\end{equation}
Since $v^{1-2H}/(1+v^2)$ is integrable for $H \in (0,1)$, dominated convergence yields $I(0) - I(\tau) = O(\tau^{2-2H})$, hence $R(0) - R(\tau) - \kappa\tau^{2H} = O(\tau^{2H} \cdot \tau^{2-2H}) = O(\tau^2)$.

\textbf{2D $\rho$-variation of $K_X$.} Decompose $R(\tau) = R(0) - \kappa|\tau|^{2H} + \psi(\tau)$, where $\psi(\tau) := R(\tau) - R(0) + \kappa|\tau|^{2H}$. Writing $\kappa_0 := \sigma^2\Gamma(2H+1)\sin(\pi H)/\pi$ (so that $\kappa_0 I(0) = \kappa$) and using the spectral identity above for $I(0) - I(\tau)$,
\[
\psi(\tau) = \kappa_0|\tau|^{2H}\bigl(I(0) - I(\tau)\bigr) = \tfrac{\kappa_0\,\theta^{2-2H}}{2}\,\tau^2\,J(\tau), \qquad J(\tau) := \int_0^\infty \frac{v^{1-2H}\,h(\theta\tau v)}{1+v^2}\, dv.
\]
The function $h(x) = 2(1-\cos x)/x^2$ is $C^\infty(\mathbb{R})$ with $h(0) = 1$, $h'(0) = 0$ (since $h$ is even), and $|h^{(k)}(x)| \le C_k/(1+x^2)$ for $k = 0, 1, 2$ (from the Taylor expansion at $0$ and the $O(1/x^2)$ decay of each derivative at infinity). Dominated convergence with the $\tau$-independent envelope $v^{1-2H}/(1+v^2)$ (integrable for $H \in (0,1)$) yields $J$ continuous on $[0,\Delta]$ with $J(0) = \int_0^\infty v^{1-2H}/(1+v^2)\,dv > 0$. For $\tau > 0$, differentiating under the integral sign and splitting at $v_\tau := 1/(\theta\tau)$ gives the pointwise estimates
\[
|J'(\tau)| \le C_1 \tau^{2H-1}, \qquad |J''(\tau)| \le C_2 \tau^{2H-2}, \qquad \tau \in (0,\Delta]
\]
(on $\{v \le v_\tau\}$ use the near-origin bounds $|h'(\theta\tau v)| \le C|\theta\tau v|$ and $|h''(\theta\tau v)| \le C$; on $\{v > v_\tau\}$ use $|h^{(k)}(\theta\tau v)| \le C_k/(\theta\tau v)^2$; the dominant contribution in each case comes from $v \sim v_\tau$, yielding the stated $\tau^{2H-1}$ and $\tau^{2H-2}$ rates). The $\tau^2$ prefactor in $\psi = (\kappa_0\theta^{2-2H}/2)\,\tau^2\,J$ then suppresses these boundary singularities: for $\tau \in (0,\Delta]$,
\[
\psi'(\tau) = \tfrac{\kappa_0\theta^{2-2H}}{2}\bigl[2\tau J(\tau) + \tau^2 J'(\tau)\bigr] = O(\tau) + O(\tau^{2H+1}),
\]
\[
\psi''(\tau) = \tfrac{\kappa_0\theta^{2-2H}}{2}\bigl[2J(\tau) + 4\tau J'(\tau) + \tau^2 J''(\tau)\bigr] = \kappa_0\theta^{2-2H}\,J(\tau) + O(\tau^{2H}),
\]
so $\psi'(\tau) \to 0$ and $\psi''(\tau) \to c_2 := \kappa_0\theta^{2-2H}\,J(0) > 0$ as $\tau \to 0^+$. Extending by $\psi'(0) := 0$ and $\psi''(0) := c_2$ yields $\psi \in C^2([0,\Delta])$ with $\psi(0) = \psi'(0) = 0$. In particular, $R'(\tau) = -2H\kappa\,\tau^{2H-1} + \psi'(\tau)$ on $(0,\Delta]$ with $\psi'(\tau) = O(\tau)$, and $\tau \le \Delta^{2-2H}\,\tau^{2H-1}$ there, which proves~\eqref{eq:R_prime_local}. The constant $R(0)$ contributes zero to any 2D rectangle increment, so rectangle increments of $K_X$ on $[0,\Delta]^2$ decompose as
\begin{equation*}
K_X([s,s']\times[t,t']) = -\kappa\bigl[K_{\mathrm{fBm}}([s,s']\times[t,t'])\bigr] + \tilde\psi([s,s']\times[t,t']),
\end{equation*}
where $K_{\mathrm{fBm}}(s,t) := |s-t|^{2H}$ and $\tilde\psi(s,t) := \psi(|s-t|)$. The first term is, up to the constant factor $-2$, the rectangle increment of the fractional-Brownian covariance $R_{\mathrm{fBm}}(s,t) = \tfrac{1}{2}(s^{2H} + t^{2H} - |s-t|^{2H})$ (rectangle increments annihilate the single-variable terms); its 2D $\rho$-variation on $[0,\Delta]^2$ is finite for every $\rho > 1/(2H)$ (for $H \in (0, 1/2]$ by~\citealp[Prop.~15.5]{FrizVictoir2010}; for $H > 1/2$, the function $(s,t) \mapsto |s-t|^{2H}$ is $C^1$ with locally integrable mixed partial $\partial^2_{s,t}|s-t|^{2H} = -2H(2H-1)|s-t|^{2H-2}$ on $(0,\Delta)$, yielding finite 2D $1$-variation). For $\tilde\psi \in C^2([0,\Delta]^2)$, the 2D $1$-variation is bounded by $\Delta^2\sup|\partial^2_{s,t}\tilde\psi| < \infty$, hence finite 2D $\rho$-variation for every $\rho \ge 1$. Combining both contributions via the triangle inequality for $\rho$-variation semi-norms yields~\eqref{eq:fOU_2D_rho_var} for every $\rho > \rho_*(H) = \max(1, 1/(2H))$.
\end{proof}

\subsubsection{L\'evy Area Discretization}

\begin{lemma}[fOU L\'evy area discretization]
\label{lem:fOU_area}
Let $(X^1, X^2)$ be two independent stationary fOU processes with Hurst parameter $H \in (1/4, 1) \setminus \{1/2\}$, and let $\mathbf{X}$ denote the canonical geometric rough-path lift of $X = (X^1, X^2)$. Let $\mathbb{A}_{0,\Delta}$ denote the L\'evy area (the antisymmetric part of the level-2 component of $\mathbf{X}$) and $\mathbb{A}^{(n)}_{0,\Delta}$ the L\'evy area of the piecewise-linear interpolation through $n$ equally-spaced observations. Then:
\begin{equation}
\label{eq:fOU_area_rate}
\mathbb{E}\left[|\mathbb{A}_{0,\Delta} - \mathbb{A}^{(n)}_{0,\Delta}|^2\right] \le C(H,\theta,\sigma,\Delta) \cdot \delta^{\min(4H-1,\,2)}.
\end{equation}
(The boundary case $H = 1/2$ is excluded only because Appendix~\ref{app:gamma_semi} treats it directly: at $H = 1/2$ the argument below goes through with the modification that the off-diagonal sums in Terms~2--3 become $\sum_{2 \le k \le n} k^{2H-2} = \sum_{2 \le k \le n} k^{-1} = O(\log n)$, yielding $O(\delta^{2}\log(1/\delta))$ for those terms --- still $O(\delta^{\min(4H-1,\,2)}) = O(\delta)$ there.)
\end{lemma}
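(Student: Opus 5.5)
The plan is to exploit the Chen-identity cancellation already used in Appendix~\ref{app:gamma_semi}. As in~\eqref{eq:area_telescope}, the level-1 increments agree on the grid and piecewise-linear segments carry zero area. Hence
\[
\mathbb{A}_{0,\Delta}-\mathbb{A}^{(n)}_{0,\Delta}=\sum_{i=0}^{n-2}\mathbb{A}_{t_i,t_{i+1}},
\]
a sum of local L\'evy areas over mesh intervals of length $\delta$. Because $X^1$ and $X^2$ are independent and centered Gaussian, each local area lies in the second Wiener chaos. Its second moments are therefore computable exactly by Wick/Isserlis products of the one-dimensional increment covariances. Concretely,
\[
\mathbb{E}\bigl[\mathbb{A}_{t_i,t_{i+1}}\mathbb{A}_{t_j,t_{j+1}}\bigr]=\tfrac12\iint\iint \partial^2\mathcal{K}_1\,\partial^2\mathcal{K}_2,
\]
interpreted as a 2D Young/Riemann--Stieltjes integral against the rectangle increments of $K_X(s,t)=R(|s-t|)$. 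By Lemma~\ref{lem:fOU_cov} these exist for $H>1/4$, since $\rho<2$ admits the Young pairing $1/\rho+1/\rho>1$. The mean-square error then becomes
\[
\sum_{i,j}\mathbb{E}[\mathbb{A}_{t_i,t_{i+1}}\mathbb{A}_{t_j,t_{j+1}}],
\]
which I split into three terms: diagonal ($i=j$), near-diagonal ($|i-j|=1$), and far off-diagonal ($|i-j|\ge2$).

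\textbf{Step 1 (decomposition of the covariance).} Use the splitting $R(\tau)=R(0)-\kappa|\tau|^{2H}+\psi(\tau)$ from Lemma~\ref{lem:fOU_cov}, with $\psi\in C^2$. The rectangle increments of $K_X$ then equal $-2\kappa$ times those of the fBm covariance plus a smooth $C^2$ remainder with density $\partial_{s,t}^2\tilde\psi$ bounded on $[0,\Delta]^2$. Each covariance of local areas expands into four pieces: fBm$\times$fBm, fBm$\times\psi$, $\psi\times$fBm, and $\psi\times\psi$. The pure fBm piece is exactly the trapezoidal-scheme error for fBm. For it I would invoke~\citet[Thm.~2]{NeuenkirchTindelUnterberger2010}, or reprove it directly: scaling gives diagonal terms of order $\delta^{4H}$, so $n$ of them contribute $\delta^{4H-1}$. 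Off-diagonal terms involve the fBm increment correlation $\rho_H(k)\sim k^{2H-2}$ squared, so they sum to $n\sum_k k^{4H-4}\delta^{4H}$. This is $O(\delta^{4H-1})$ for $H<3/4$ and $O(\delta^{2}\cdot\text{log or power})$ beyond, which yields the $\min(4H-1,2)$ exponent.

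\textbf{Step 2 (mixed and smooth terms).} When one factor carries $\partial^2\tilde\psi$, that factor is bounded by $C\delta^2$ per pair of intervals. The other factor has size at most $\delta^{2H}$ on the diagonal and decays like $\delta^{2H}|i-j|^{2H-2}$ off it. This gives diagonal contributions $n\delta^{2+2H}$. The off-diagonal part, handled crudely, is $\delta^{2+2H}n\sum_k k^{2H-2}$, which is $O(\delta^{2H+1})$ for $H<1/2$ and $O(\delta^{2})$-type for $H>1/2$; both are dominated by $\delta^{\min(4H-1,2)}$. These are the ``Terms~2--3'' with the $k^{2H-2}$ sums mentioned in the statement. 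The $\psi\times\psi$ term is $O(\delta^4)$ per pair and sums to $O(\delta^2)$.

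\textbf{Main obstacle.} I expect the hardest part to be the far off-diagonal fBm$\times$fBm sum when $H>1/2$. There the naive bound gives $n^2$ terms of size $\delta^{4H}k^{4H-4}$, and one must check that $\sum_k k^{4H-4}$ is handled correctly at $H\ge3/4$ so as to land at $\delta^2$ rather than something worse. I would first try to compute the exact double-increment correlations $\mathbb{E}[X^1_{t_i,u}X^1_{t_j,v}]$ via $R''$ on separated intervals. Using $|R''(\tau)|\lesssim\tau^{2H-2}$ near the origin together with Lemma~\ref{lem:gaussian_inputs}(a) away from it, each off-diagonal area covariance is $O(\delta^4\,(k\delta)^{4H-4})$. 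Summing over $i$ and $k$ then gives $\delta^{4H}\sum_k k^{4H-4}\cdot n$, which is at most $C\delta^{\min(4H-1,2)}$ up to a logarithm exactly at $H=3/4$. I would absorb that logarithm by noting $4H-1>2$ is impossible for $H<1$ except in the region where the minimum equals $2$ strictly, and I would recheck that boundary separately.
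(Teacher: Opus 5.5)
Your main line is sound, but it takes a different route from the paper's.

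\textbf{Comparison with the paper.} The paper splits the \emph{process}. From the SDE, $\tilde X^i=\sigma_i\tilde B^{H,i}+D^i$ with $D^i\in C^1$, so the cross-integral error expands bilinearly into four terms. The fBm--fBm term is handled by \citet[Thm.~2]{NeuenkirchTindelUnterberger2010}. The two fBm--drift terms and the drift--drift term are bounded using moment bounds on $(D^i)'$ and independence across coordinates.

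You split the \emph{covariance} instead. After Chen's telescoping, the error is a sum of second-chaos local areas. Their mean square is a bilinear functional of the two coordinate covariances and depends only on rectangle increments. Inserting $\mathcal{K}_i=\sigma_i^2K_{\mathrm{fBm}}+\tilde\psi_i$ isolates a pure piece equal to $\sigma_1^2\sigma_2^2$ times the fBm trapezoidal MSE. (The coefficient is $+2\kappa=\sigma^2$, not $-2\kappa$.)

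What your route buys:
\begin{itemize}
\item It needs no driving noise, no Fernique bound on $D'$, and no Young--Fubini manipulation as in Term~3.
\item It applies to any stationary Gaussian coordinates with $R=R(0)-\kappa|\tau|^{2H}+\psi$, where $\psi\in C^2$ and $\psi'(0)=0$.
\end{itemize}

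What it costs: $\tilde\psi$ is not a covariance, so the mixed pieces are not mean squares of anything and must be bounded directly as 2D Young integrals. For $H<1/2$ on a diagonal cell, $dK_{\mathrm{fBm}}$ has no density. Your ``size $\delta^{2H}$'' must then come from the Young--Towghi estimate, using $V_\rho(K_{\mathrm{fBm}};I^2)\lesssim\delta^{2H}$ and $V_1(\tilde\psi;I^2)\lesssim\delta^2$. That estimate does give $\delta^{2+2H}$ per cell.

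\textbf{The step that fails.} Your direct treatment of the fBm--fBm piece, which you single out as the main obstacle, does not work as written. The bound $\delta^4(k\delta)^{4H-4}$ per separated pair ignores the antisymmetry of the local areas.
\begin{itemize}
\item In the Wick expansion of $\mathbb{E}[\mathbb{A}_I\mathbb{A}_J]$ (with $\mathbb{A}_I=\tfrac12\iint_{u_1<u_2}(dX^1_{u_1}dX^2_{u_2}-dX^2_{u_1}dX^1_{u_2})$), the four pairing terms carry signs $+,-,-,+$.
\item Hence the locally constant part of the cross-density $|v-u|^{2H-2}$ cancels exactly.
\item The first surviving contribution uses one Taylor increment on each cell. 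It is $O(\delta^{4H}k^{4H-6})$, which is summable for every $H<1$.
\end{itemize}
Your $k^{4H-4}$ count is the Euler-scheme estimate of \citet[Thm.~1]{NeuenkirchTindelUnterberger2010}. Summed, it gives $\delta^2\log(1/\delta)$ at $H=3/4$. Since $\min(4H-1,2)=2$ exactly there, this logarithm cannot be absorbed. Your remark about $4H-1>2$ does not address it; that inequality holds throughout $H\in(3/4,1)$ anyway.

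\textbf{Fix.} Either keep the citation of the trapezoidal result \citep[Thm.~2]{NeuenkirchTindelUnterberger2010}, which gives $\delta^{4H-1}$ for all $H\in(1/4,1)$ with no log, or push your Taylor expansion to second order. With that in place, the rest closes as you describe:
\begin{itemize}
\item the mixed pieces' $k^{2H-2}$ sums give $\delta^{1+2H}$ for $H<1/2$ and $\delta^2$ for $H>1/2$;
\item the $\psi\psi$ piece gives $n^2\delta^4=O(\delta^2)$.
\end{itemize}
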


\begin{proof}
The signature of the piecewise-linear interpolation corresponds to the \emph{trapezoidal approximation} of the L\'evy area. We establish the rate by appealing to~\citet{NeuenkirchTindelUnterberger2010} for the fBm component and controlling the drift contribution via the SDE structure.

\paragraph{Step 1: Rate for fBm}
For pure fractional Brownian motion $B^H$, \citet[Thm.~2]{NeuenkirchTindelUnterberger2010} establish that the trapezoidal scheme
\begin{equation}
\hat{X}^n_T = \frac{1}{2}\sum_{i=0}^{n-1}\bigl(B^{(1)}_{iT/n} + B^{(1)}_{(i+1)T/n}\bigr)\bigl(B^{(2)}_{(i+1)T/n} - B^{(2)}_{iT/n}\bigr)
\end{equation}
converges with mean-squared error
\begin{equation}
\mathbb{E}|X_T - \hat{X}^n_T|^2 \le C(H) \cdot T^{4H} \cdot n^{-(4H-1)},
\end{equation}
valid for all $H > 1/4$ (where $n$ denotes the number of subintervals in the Neuenkirch--Tindel--Unterberger convention, corresponding to $n-1$ in the notation of the present paper). The rate relies on an algebraic cancellation in the interpolation error covariance that regularizes the singular kernel $|s-r|^{2H-2}$ to the integrable $|s-r|^{4H-2}$~\citep[Secs.~3.1 and~3.3]{NeuenkirchTindelUnterberger2010}; an analogous cancellation for dyadic approximations appears in~\citet[Lem.~12]{CoutinQian2002}.

\paragraph{Step 2: Transfer to fOU via SDE decomposition}
Integrating the SDE $dX^i_t = -\theta_i(X^i_t - \mu_i)\,dt + \sigma_i\,dB^{H,i}_t$ from the initial time, the centered increment process $\tilde{X}^i_t := X^i_t - X^i_0$ decomposes as
\begin{equation}
\label{eq:fOU_decomp}
\tilde{X}^i_t = \sigma_i \tilde{B}^{H,i}_t + D^i_t, \qquad D^i_t := -\theta_i\!\int_0^t \tilde{X}^i_s\,ds - \theta_i(X^i_0 - \mu_i)\,t,
\end{equation}
where $\tilde{B}^{H,i}_t := B^{H,i}_t - B^{H,i}_0$ is a centered fBm increment and $D^i$ is $C^1$ with derivative $(D^i)'(t) = -\theta_i\tilde{X}^i_t - \theta_i(X^i_0 - \mu_i)$, which is continuous and has bounded moments on $[0,\Delta]$ by stationarity and Fernique's theorem~\citep{Fernique1975}.

Since the piecewise-linear interpolation is linear in the path values, $\tilde{X}^{i,(n)} = \sigma_i \tilde{B}^{H,i,(n)} + D^{i,(n)}$, where $\tilde{B}^{H,i,(n)}$ and $D^{i,(n)}$ are the respective piecewise-linear interpolations. The cross-integral $\int_0^\Delta \tilde{X}^1\,d\tilde{X}^2$ and its trapezoidal approximation both decompose by bilinearity into four terms. Writing $E_{AB}$ for the trapezoidal error of $\int A\,dB$ (i.e., $E_{AB} := \int A\,dB - \int A^{(n)}\,dB^{(n)}$), the total error for the cross-integral is
\begin{equation}
\label{eq:area_decomp}
E_{12} = \sigma_1\sigma_2\, E_{\tilde{B}^1\tilde{B}^2} + \sigma_1\, E_{\tilde{B}^1 D^2} + \sigma_2\, E_{D^1 \tilde{B}^2} + E_{D^1 D^2},
\end{equation}
and the L\'evy area error satisfies $|\mathbb{A} - \mathbb{A}^{(n)}|^2 \le 2|E_{12}|^2 + 2|E_{21}|^2$. We bound each term in~\eqref{eq:area_decomp} separately.

\medskip\noindent\emph{Term 1: $E_{\tilde{B}^1\tilde{B}^2}$ (fBm area).}
Since $\tilde{B}^{H,1}$, $\tilde{B}^{H,2}$ are independent fBm increments, \citet[Thm.~2]{NeuenkirchTindelUnterberger2010} gives $\mathbb{E}[|E_{\tilde{B}^1\tilde{B}^2}|^2] \le C(H)\,\Delta^{4H}\,(n-1)^{-(4H-1)} = O(\delta^{4H-1})$.

\medskip\noindent\emph{Term 2: $E_{\tilde{B}^1 D^2}$ (fBm integrand, drift integrator).}
Since $D^2$ is $C^1$, this reduces to a Lebesgue integral: $\int \tilde{B}^{H,1}\,dD^2 = \int \tilde{B}^{H,1}_s\,(D^2)'(s)\,ds$, and similarly for its trapezoidal approximation. The error on each subinterval $[t_i, t_{i+1}]$ is $\int_{t_i}^{t_{i+1}}(\tilde{B}^{H,1}_s - \bar{B}^{H,1}_i)\,(D^2)'(s)\,ds$, where $\bar{B}^{H,1}_i := \tfrac{1}{2}(\tilde{B}^{H,1}_{t_i} + \tilde{B}^{H,1}_{t_{i+1}})$ is the trapezoidal midpoint value. By coordinate independence ($\tilde{B}^{H,1} \perp D^2$):
\begin{multline}
\label{eq:EBD_cov}
\mathbb{E}[|E_{\tilde{B}^1 D^2}|^2] = \sum_{i,j=0}^{n-2} \iint_{[t_i,t_{i+1}]\times[t_j,t_{j+1}]}
\underbrace{\mathbb{E}\bigl[(\tilde{B}^{H,1}_s - \bar{B}^{H,1}_i)(\tilde{B}^{H,1}_u - \bar{B}^{H,1}_j)\bigr]}_{=:\,\phi_{ij}(s,u)}\\
\times\; \underbrace{\mathbb{E}\bigl[(D^2)'(s)\,(D^2)'(u)\bigr]}_{=:\,\psi(s,u)}\,ds\,du.
\end{multline}
Since $(D^2)'$ has bounded second moments on $[0,\Delta]$, $|\psi(s,u)| \le C$. For the fBm trapezoidal interpolation error covariance $\phi_{ij}$: when $i = j$, Cauchy--Schwarz gives $|\phi_{ii}(s,u)| \le C\delta^{2H}$; the same Cauchy--Schwarz bound applies to adjacent intervals $|i - j| = 1$, since $\|\tilde{B}^{H,1}_s - \bar{B}^{H,1}_i\|_{L^2} \le C\delta^H$ uniformly in $i, s$, contributing $O(n) \cdot \delta^2 \cdot C\delta^{2H} = O(\Delta\,\delta^{1+2H})$ to~\eqref{eq:EBD_cov}, which is absorbed into the rate of~\eqref{eq:EBD_bound}. When $|i - j| \ge 2$, the function $|s - u|^{2H}$ is $C^2$ away from the diagonal with $|\partial_s\partial_u |s-u|^{2H}| = O((|i-j|\delta)^{2H-2})$ on $[t_i,t_{i+1}]\times[t_j,t_{j+1}]$, and a Taylor expansion around the segment midpoints gives
\begin{equation}
\label{eq:phi_offdiag}
|\phi_{ij}(s,u)| \le C\,\delta^{2H}\,|i-j|^{2H-2}, \qquad |i-j| \ge 2.
\end{equation}
Summing~\eqref{eq:EBD_cov}: for $H < 1/2$, $2H-2 < -1$ so $\sum_{k \ge 2} k^{2H-2}$ converges and we obtain $O(\delta^{1+2H})$; for $H > 1/2$, $\sum_{k=2}^{n} k^{2H-2} = O(n^{2H-1})$, yielding $O(n^{2H}\,\delta^{2+2H}) = O(\Delta^{2H}\,\delta^2) = O(\delta^2)$. In both cases:
\begin{equation}
\label{eq:EBD_bound}
\mathbb{E}[|E_{\tilde{B}^1 D^2}|^2] = O\bigl(\delta^{\min(1+2H,\,2)}\bigr).
\end{equation}
Since $\min(1+2H,\,2) \ge \min(4H-1,\,2)$ for $H < 1$, this term is at worst comparable to the fBm area rate.

\medskip\noindent\emph{Term 3: $E_{D^1\tilde{B}^2}$ (drift integrand, fBm integrator).}
Writing $g_i(s) := D^1_s - \bar{D}^1_i$ for the per-cell trapezoidal error of the integrand, where $\bar{D}^1_i := \tfrac{1}{2}(D^1_{t_i} + D^1_{t_{i+1}})$, one has $g_i(t_i) = -\tfrac{1}{2}\Delta D^1_i = -\tfrac{1}{2}\int_{t_i}^{t_{i+1}}(D^1)'(r)\,dr$, and Fubini applied to the Young integral over the cell gives the exact representation
\[
\int_{t_i}^{t_{i+1}} g_i(s)\,d\tilde{B}^{H,2}_s \;=\; -\int_{t_i}^{t_{i+1}} (D^1)'(r)\,\bigl(\tilde{B}^{H,2}_r - \bar{B}^{H,2}_i\bigr)\,dr,
\]
i.e.\ the per-cell error integrates $(D^1)'$ against the \emph{same} centered fBm interpolation-error functional $\tilde{B}^{H,2}_r - \bar{B}^{H,2}_i$ that appears in Term~2. By coordinate independence ($D^1 \perp \tilde{B}^{H,2}$),
\begin{equation}
\label{eq:EDB_bound}
\begin{aligned}
\mathbb{E}[|E_{D^1\tilde{B}^2}|^2] &= \sum_{i,j=0}^{n-2}\iint_{[t_i,t_{i+1}]\times[t_j,t_{j+1}]} \mathbb{E}\bigl[(D^1)'(r)\,(D^1)'(r')\bigr]\;\phi_{ij}(r,r')\,dr\,dr'\\
&\;\le\; C\,\delta^2 \sum_{i,j=0}^{n-2} \sup_{r,r'}\bigl|\phi_{ij}(r,r')\bigr|,
\end{aligned}
\end{equation}
where $\phi_{ij}$ is exactly the Term-2 covariance (with $\tilde{B}^{H,2}$ in place of $\tilde{B}^{H,1}$) and $|\mathbb{E}[(D^1)'(r)(D^1)'(r')]| \le C$ by bounded second moments. The bounds of Term~2 apply verbatim: $|\phi_{ij}| \le C\delta^{2H}$ for $|i-j| \le 1$ and $|\phi_{ij}| \le C\,\delta^{2H}|i-j|^{2H-2}$ for $|i-j| \ge 2$~\eqref{eq:phi_offdiag}. Summing as in Term~2: for $H < 1/2$, $\sum_k k^{2H-2} < \infty$ gives $O(n\,\delta^{2+2H}) = O(\delta^{1+2H})$; for $H > 1/2$, $\sum_{i,j}|i-j|^{2H-2} = O(n^{2H})$ gives $O(n^{2H}\delta^{2+2H}) = O(\Delta^{2H}\delta^2) = O(\delta^2)$. In both cases:
\begin{equation}
\mathbb{E}[|E_{D^1\tilde{B}^2}|^2] = O\bigl(\delta^{\min(1+2H,\,2)}\bigr).
\end{equation}
Since $\min(1+2H,\,2) \ge 2$ for $H \ge 1/2$ and $1 + 2H > 4H - 1$ for $H < 1$, this term is $O(\delta^{\min(4H-1,\,2)})$.

\medskip\noindent\emph{Term 4: $E_{D^1 D^2}$ (drift area).}
Both $D^1, D^2$ are $C^1$ with $\|(D^i)'\|_{L^\infty[0,\Delta]}$ having finite second moments by~\eqref{eq:fOU_decomp} and Fernique. Hence $\|D^i - D^{i,(n)}\|_\infty \le \delta\,\|(D^i)'\|_{L^\infty[0,\Delta]}$. Decomposing
\[
E_{D^1 D^2} = \int_0^\Delta (D^1 - D^{1,(n)})\,dD^2 + \int_0^\Delta D^{1,(n)}\,d(D^2 - D^{2,(n)}),
\]
the first term is bounded by $\|D^1 - D^{1,(n)}\|_\infty \int_0^\Delta |(D^2)'(s)|\,ds \le \delta\,\Delta\,\|(D^1)'\|_\infty\,\|(D^2)'\|_\infty$. For the second term, $D^2 - D^{2,(n)}$ vanishes at every grid point, so the integral may be recentered cell by cell,
\[
\int_0^\Delta D^{1,(n)}\,d(D^2 - D^{2,(n)}) \;=\; \sum_{i=0}^{n-2} \int_{t_i}^{t_{i+1}} \bigl(D^{1,(n)}_s - D^{1,(n)}_{t_i}\bigr)\,d(D^2 - D^{2,(n)})_s,
\]
where, with $|D^{1,(n)}_s - D^{1,(n)}_{t_i}| \le \delta\,\|(D^1)'\|_\infty$ (piecewise-linear slopes are cell averages of $(D^1)'$) and
\[
\sum_i \int_{t_i}^{t_{i+1}} |d(D^2 - D^{2,(n)})| \le 2\Delta\,\|(D^2)'\|_\infty,
\]
the second term is bounded by $2\delta\,\Delta\,\|(D^1)'\|_\infty\,\|(D^2)'\|_\infty$. Hence $|E_{D^1 D^2}| \le C\,\delta\,\|(D^1)'\|_\infty\,\|(D^2)'\|_\infty$ pathwise; squaring and using moment finiteness (independence of $D^1$ and $D^2$ together with Fernique) gives $\mathbb{E}[|E_{D^1D^2}|^2] = O(\delta^2)$.

\medskip\noindent\emph{Combined area bound.}
By~\eqref{eq:area_decomp} and the elementary inequality $|a+b+c+d|^2 \le 4(|a|^2 + |b|^2 + |c|^2 + |d|^2)$:
\begin{equation}
\mathbb{E}[|\mathbb{A} - \mathbb{A}^{(n)}|^2] \le C\bigl(\underbrace{O(\delta^{4H-1})}_{\text{fBm area}} + \underbrace{O(\delta^{\min(1+2H,\,2)})}_{\text{Term 2}} + \underbrace{O(\delta^{\min(1+2H,\,2)})}_{\text{Term 3}} + \underbrace{O(\delta^2)}_{\text{Term 4}}\bigr) = O(\delta^{\min(4H-1,\,2)}),
\end{equation}
since $\min(1+2H,\,2) \ge \min(4H-1,\,2)$ and $4H - 1 \le 2$ for $H \le 3/4$ (where the fBm term dominates), while for $H > 3/4$ the drift terms contribute $O(\delta^2)$ matching the stated bound.
\end{proof}

\subsubsection{Full Signature Discretization}

\begin{lemma}[fOU discretization rate]
\label{lem:fOU_rate}
Let $X$ be the stationary fOU process with canonical geometric rough-path lift $\mathbf{X}$, and let $X^{(n)}$ be its piecewise-linear interpolation at mesh $\delta = \Delta/(n-1)$ over the interval $[0, \Delta]$, with rough-path lift $\mathbf{X}^{(n)}$. For all $M \ge 1$ and $H \in (1/4, 1) \setminus \{1/2\}$:
\begin{equation}
\label{eq:fOU_rate_bound}
\mathbb{E}\left[\|S^{(M)}(\mathbf{X})_{0,\Delta} - S^{(M)}(\mathbf{X}^{(n)})_{0,\Delta}\|_{\mathrm{HS}}^2\right] \le
\begin{cases}
C_\varepsilon(M, H, \theta, \sigma, \Delta) \cdot \delta^{4H-1-\varepsilon}, & H \in (1/4, 1/2),\ \varepsilon > 0,\\
C(M, H, \theta, \sigma, \Delta) \cdot \delta^{2H}, & H \in (1/2, 1).
\end{cases}
\end{equation}
For $H \in (1/4, 1/2)$, the level-2 L\'evy area bound from Lemma~\ref{lem:fOU_area} attains the exact rate $\delta^{4H-1}$ without $\varepsilon$-loss; the $\varepsilon$-loss at truncation level $M$ in~\eqref{eq:fOU_rate_bound} enters only from higher levels via the Friz--Riedel Theorem~5 admissibility constraint (see~\eqref{eq:rough_level_m} below). The Young regime $H > 1/2$ carries no $\varepsilon$-loss at any level. At the boundary $H = 1/2$, fOU coincides with stationary Ornstein--Uhlenbeck and Proposition~\ref{prop:gamma_values}(i) (Appendix~\ref{app:gamma_semi}) gives $\gamma = 1-\varepsilon$ at truncation level $M$, consistent with the boundary values $4H-1 = 2H = 1$.
\end{lemma}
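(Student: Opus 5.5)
The plan is to treat the two regimes separately, since the mechanisms differ, and to handle levels $1$, $2$ and $m\ge 3$ in turn. Level~1 agrees exactly, as in Appendix~\ref{app:gamma_semi}. At level~2 the symmetric part depends only on endpoints, so the error is the L\'evy-area error. Lemma~\ref{lem:fOU_area} bounds it by $\delta^{\min(4H-1,2)}$. This is $\delta^{4H-1}$ for $H<1/2$. For $H>1/2$ it is at least as good as $\delta^{2H}$, because $\min(4H-1,2)\ge 2H$ on $(1/2,1)$.

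\emph{Rough regime $H\in(1/4,1/2)$.} For levels $m\ge 3$ I would use \citet[Thm.~5]{FrizRiedel2014}, exactly as in~\eqref{eq:FR_Thm5}, now with $\rho>\rho_*(H)=1/(2H)$ chosen arbitrarily close to $1/(2H)$. This is admissible by Lemma~\ref{lem:fOU_cov}. I take $r=2$ and truncation $N=M$, and read off the level-$m$ endpoint differences from the inhomogeneous distance, as in the semimartingale case.

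The sup-$L^2$ interpolation error follows from $\mathbb{E}(X_t-X_s)^2=2(R(0)-R(t-s))\le C|t-s|^{2H}$, which is~\eqref{eq:fOU_cov_local}. This gives $\sup_t\|X^{(n)}_t-X_t\|_{L^2}^2\le C\delta^{2H}$. Hence the squared distance is bounded by $\delta^{2H(1-\rho/\gamma_{\mathrm{FR}})}$.

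The main obstacle is that this crude route yields only an exponent near $2H\bigl(1-\tfrac{1}{2H}\cdot\tfrac{1}{\gamma_{\mathrm{FR}}}\bigr)$, and $\gamma_{\mathrm{FR}}$ is constrained by $1/\gamma_{\mathrm{FR}}+1/\rho>1$. Optimizing, with $1/\gamma_{\mathrm{FR}}\downarrow 1-1/\rho\approx 1-2H$, gives exponent $2H\bigl(1-\rho(1-1/\rho)\bigr)\cdot$ in the form $2H-(\rho-1)\cdot 2H\approx 4H-1$. Concretely, $1-\rho/\gamma_{\mathrm{FR}}\to 2-\rho$, so the exponent is $2H(2-\rho)\to 4H-1$ as $\rho\downarrow 1/(2H)$. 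Every admissible choice stays strictly inside, which produces precisely the $\varepsilon$-loss stated in~\eqref{eq:fOU_rate_bound}. I would record this as the labelled estimate~\eqref{eq:rough_level_m}. Then I sum the finitely many levels, absorbing constants into $C_\varepsilon$.

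\emph{Young regime $H\in(1/2,1)$.} Here $\rho=1$ is admissible, and the paths are $p$-variation with $p<2$. My first attempt would again be Friz--Riedel with $\rho=1$. That gives only $\delta^{2H(1-\varepsilon)}$, however, so to avoid the loss I would argue directly. I would expand the level-$m$ iterated integrals of $X$ and $X^{(n)}$ by telescoping, $S^{(m)}(X)-S^{(m)}(X^{(n)})=\sum_{\ell}\int\cdots d X^{(n)}\cdots d(X-X^{(n)})\cdots dX\cdots$. Each summand is a Young/Riemann--Stieltjes integral with exactly one slot carrying $X-X^{(n)}$.

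By the SDE decomposition~\eqref{eq:fOU_decomp}, that slot splits into an fBm interpolation error plus a $C^1$ drift error. The drift part is $O(\delta)$ pathwise, with Fernique moments. For the fBm part I would use the Gaussian $L^2$ computation as in Terms~2--3 of Lemma~\ref{lem:fOU_area}: I pair the interpolation-error covariance $\phi_{ij}$ against the bounded covariance of the other (Wiener-chaos, hypercontractive) factors. The diagonal cells give $\delta^{1+2H}$ and the off-diagonal sum gives $n^{2H}\delta^{2+2H}=O(\delta^2)$.

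The expected weak point is controlling the other factors uniformly enough to factor the covariance. If that fails, I would fall back on Young's inequality, $\|X-X^{(n)}\|_{q\text{-var}}$ with interpolation against the sup-norm, which costs an $\varepsilon$.
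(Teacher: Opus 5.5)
\textbf{The Young regime is not proved; the rest matches the paper.} Your treatment of levels $1$--$2$ coincides with the paper's. So does the rough regime: the same Friz--Riedel parameter optimization, with $1-\rho/\gamma_{\mathrm{FR}}\uparrow 2-\rho$ as $1/\gamma_{\mathrm{FR}}\downarrow 1-1/\rho$ and $2H(2-\rho)\to 4H-1$ as $\rho\downarrow 1/(2H)$. The gap is at levels $m\ge 3$ for $H>1/2$, which is exactly where the lemma claims the exact rate $\delta^{2H}$.

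\textbf{Why the factorization step fails.} The step ``pair the interpolation-error covariance $\phi_{ij}$ against the bounded covariance of the other factors'' does not go through. In Terms~2--3 of Lemma~\ref{lem:fOU_area}, that factorization is legitimate only because of two features. First, the fBm error appears (after a Fubini step) as an \emph{integrand} $\tilde B_s-\bar B_i$. Second, it is multiplied by the $C^1$ drift derivative of an \emph{independent} coordinate. The expectation then splits exactly into $\phi_{ij}(s,u)$ times $\mathbb{E}[(D^2)'(s)(D^2)'(u)]$. Neither feature holds in your telescoped summand.
\begin{itemize}
\item The error appears as a differential $d(\tilde B-\tilde B^{(n)})$ sandwiched between rough iterated integrals. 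Moving it onto an integrand requires integration by parts, which places $e_u$ against $dX_u$ or $dX^{(n)}_u$ at the same time point.
\item For words with repeated letters (e.g.\ $(1,1,2)$ or $(1,2,1)$ with the error in a coordinate-$1$ slot, which at $d=2$ is every level-3 word), the remaining factors are not independent of $e$.
\end{itemize}
The second moment then expands by the diagram formula into contractions in which $e$ is paired with the \emph{other} factors, not only with itself. These cross-contractions are not controlled by $\phi_{ij}$. Hypercontractivity does not repair this. It bounds moments of the remaining factors, but to use H\"older you must first extract $e$ from the Young integral through a pathwise $q$-variation norm. Interpolating that norm against $\|e\|_\infty\sim\delta^H$ gives only about $\delta^{2H-1}$, hence $\delta^{4H-2}$ in mean square, as the paper's own discussion notes. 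That is worse than $\delta^{2H}$ by $2-2H$, not by an $\varepsilon$. So your fallback does not ``cost an $\varepsilon$''. The $\varepsilon$-lossy route is the one you already identified, Friz--Riedel at $\rho=1$ giving $\delta^{2H(1-\varepsilon)}$, and that still falls short of the stated exact rate.

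\textbf{How the paper closes it.} It never estimates path-level error integrals. It compares the Cass--Ferrucci chaos kernels $f^X_{w,j}$ and $f^{X^{(n)}}_{w,j}$ directly and telescopes the \emph{product of kernel factors} (Lemma~\ref{lem:error_gate}). Every summand then carries one of two gates.
\begin{itemize}
\item A paired gate: a covariance difference $\langle h^e_s,h_t\rangle_{\mathcal{H}}+\langle h^{(n)}_s,h^e_t\rangle_{\mathcal{H}}$, bounded by $Ce_n$. These are precisely the cross-contractions that break your factorization, now controlled by the pointwise $L^2$ error alone.
\item A free gate $h^e_t$: it is not pointwise small, but its Cameron--Martin norm is $O(\delta^H)$. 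It is handled by contracting the gate slot first, using the cell bound $\|\chi\|_{\mathcal{H}}^2\le C\delta^{2H}$.
\end{itemize}
The Wiener isometry $\mathbb{E}[I_j^{\mathcal{H}}(g)^2]=j!\,\|\tilde g\|^2_{\mathcal{H}^{\otimes j}}$ then yields $\delta^{2H}$, with only $\sup_t\mathbb{E}[e_t^2]\le C\delta^{2H}$ entering and no sup-norm or variation norm of the error path. To rescue your route you would have to carry out this full contraction bookkeeping at the random-variable level, which amounts to the same kernel-level argument.
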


\begin{proof}
We establish the bound level by level.

\paragraph{Level 1: Zero discretization error}
The level-1 signature is the path increment: $S^{(1)}(\mathbf{X})_{0,\Delta} = X_\Delta - X_0$. Since piecewise-linear interpolation preserves endpoints, $S^{(1)}(\mathbf{X}^{(n)})_{0,\Delta} = X_\Delta - X_0$ identically.

\paragraph{Level 2: L\'evy area}
Since $X^{(n)}$ preserves endpoints, the symmetric part of level~2 is identical for $\mathbf{X}$ and $\mathbf{X}^{(n)}$, and the level-2 error reduces to the antisymmetric (L\'evy area) part:
\[
\|S^{(2)}(\mathbf{X})_{0,\Delta} - S^{(2)}(\mathbf{X}^{(n)})_{0,\Delta}\|_{\mathrm{HS}}^2 = 2\sum_{1 \le i < j \le d}|\mathbb{A}^{ij}_{0,\Delta} - \mathbb{A}^{ij,(n)}_{0,\Delta}|^2.
\]
Applying Lemma~\ref{lem:fOU_area} to each coordinate pair $(X^i, X^j)$ (independent 2D fOU by the coordinate independence of $\mathbf{X}$ in Proposition~\ref{prop:fOU_assumptions}) and summing absorbs the $\binom{d}{2}$ combinatorial factor into $C$:
\begin{equation}
\mathbb{E}\bigl[\|S^{(2)}(\mathbf{X})_{0,\Delta} - S^{(2)}(\mathbf{X}^{(n)})_{0,\Delta}\|_{\mathrm{HS}}^2\bigr] \le C \cdot \delta^{\min(4H-1,\,2)}.
\end{equation}
Since $\min(4H-1,\,2) \ge \min(4H-1,\,2H)$ for $H < 1$, this is absorbed by the rate $\delta^{\min(4H-1,\,2H)}$ in both regimes below.

\paragraph{Levels $m \ge 3$: Regime-dependent analysis}
We analyze the two regularity regimes separately, as they require fundamentally different analytical techniques.

\medskip
\noindent\textbf{Case 1: Young regime ($H > 1/2$).}
When $H > 1/2$, paths have finite $q$-variation for $q \in (1/H, 2)$, and the signature is defined via Young integration~\citep[Thm.~6.8]{FrizVictoir2010}. Two error mechanisms contribute to the total discretization error, which we analyze separately.

Since piecewise-linear interpolation preserves endpoints, the symmetric part of level~2 has zero discretization error; only the L\'evy area contributes, with $\mathbb{E}[|\mathbb{A}_{0,\Delta} - \mathbb{A}^{(n)}_{0,\Delta}|^2] = O(\delta^{\min(4H-1,\,2)})$ by Lemma~\ref{lem:fOU_area}.

\paragraph{Path-interpolation error in $L^2$}
For iterated integrals at levels $m \ge 3$, path interpolation effects arise from terms where the error $e := X - X^{(n)}$ enters the iterated integrals. For general $H$-H\"older paths, the pathwise Young--Lo\`eve inequality would require controlling the H\"older semi-norm $[e]_{C^\alpha}$, which scales as $\delta^{H-\alpha}$, worse than the sup-norm $\|e\|_\infty \sim \delta^H$, and would yield the suboptimal rate $O(\delta^{4H-2})$. The Gaussian chaos comparison below avoids this entirely: the \emph{only} path-level input it requires is the uniform pointwise $L^2$ interpolation error, which carries no such loss and no logarithmic factor. For $t \in [t_i, t_{i+1}]$ with $\lambda := (t - t_i)/\delta$, the piecewise-linear interpolation satisfies $e_t = \lambda(X_t - X_{t_{i+1}}) + (1-\lambda)(X_t - X_{t_i})$ coordinatewise, so the triangle inequality in $L^2$ and the increment bound $\mathbb{E}[(X_s - X_u)^2] = 2(R(0) - R(|s-u|)) \le C_v\,|s-u|^{2H}$ (Lemma~\ref{lem:fOU_cov}, valid for all $H \in (1/4,1)$) give
\begin{equation}
\label{eq:modulus_bound}
\sup_{0 \le t \le \Delta} \mathbb{E}[e_t^2] \;\le\; C(H,\theta,\sigma,\Delta)\,\delta^{2H}.
\end{equation}
No modulus-of-continuity or sup-norm control of $e$ is needed: only the supremum over $t$ of the pointwise $L^2$ error enters the argument.

\paragraph{Extension to all levels $m \ge 3$: Cass--Ferrucci chaos comparison}
For $H > 1/2$, the signature is defined via Young integration: since $q < 2$, the Lyons extension theorem~\citep[Thm.~9.5]{FrizVictoir2010} determines $S^{(M)}(\mathbf{X})$ uniquely from the $\mathbb{R}^d$-valued path~$X$ without area enhancement. We work in the isonormal space of the centered increment process, with Cameron--Martin space $\mathcal{H}$ and chaos spaces $\mathcal{H}_j$ as defined in Appendix~\ref{subsec:isonormal_setup}. Let $h_t \in \mathcal{H}$ represent $X_t-X_0$, let $h_t^{(n)}$ represent $X^{(n)}_t-X_0$, and put $h_t^e := h_t-h_t^{(n)}$, so that $e_t=X_t-X_t^{(n)}=I_1^\mathcal{H}(h_t^e)$. Then
\begin{equation}
\label{eq:error_gate}
e_n^2 := \sup_{0\le t\le\Delta}\|h_t^e\|_{\mathcal{H}}^2
= \sup_{0\le t\le\Delta}\mathbb{E}[e_t^2]
\le C\,\delta^{2H}
\end{equation}
by~\eqref{eq:modulus_bound}. The Cameron--Martin elements admit uniform-in-$n$ control: $\|h_t\|_{\mathcal{H}}^2 = \Var(X_t - X_0) = 2(R(0) - R(t)) \le 4R(0)$ by stationarity and $|R(t)| \le R(0)$ (Cauchy--Schwarz), and $h_t^{(n)} = \sum_i \phi_i(t)\,h_{t_i}$ is a convex combination of grid vectors with tent-function weights $\phi_i(t) \ge 0$ summing to one, so $\sup_t \|h_t^{(n)}\|_{\mathcal{H}} \le \max_i \|h_{t_i}\|_{\mathcal{H}} \le 2\sqrt{R(0)}$ uniformly in $n$. Hence $\sup_t\|h_t\|_{\mathcal{H}} + \sup_t\|h_t^{(n)}\|_{\mathcal{H}} \le C(H,\theta,\sigma,\Delta)$.

Fix a word $w=(i_1,\ldots,i_m)$. Rather than expand the difference into Young integrals containing clusters of $de$ differentials, we directly compare the finite Wiener chaos kernels of the two Gaussian paths (matching the $\mathcal{H}^{\otimes j}$-valued partial-contraction kernel notation of Lemma~\ref{lem:gaussian_inputs}(b) in Section~\ref{sec:verification}). Throughout this proof we abbreviate $f_{w,j}^X := f_{0,w,j}^{(m)}[X]$ for the kernel of Lemma~\ref{lem:gaussian_inputs}(b) applied to the limiting path $X$ (block index $k=0$, length-$m$ word $w$, chaos order $j$), and $f_{w,j}^{X^{(n)}} := f_{0,w,j}^{(m)}[X^{(n)}]$ for the corresponding kernel of the piecewise-linear approximant. The key structural input is the following error-gate decomposition.

\begin{lemma}[Error-gate decomposition]
\label{lem:error_gate}
For each word $w$ of length $m$ and admissible chaos level $j$, the kernel difference $g_{w,j}^{(n)} := f_{w,j}^X - f_{w,j}^{X^{(n)}}$ admits a finite decomposition $g_{w,j}^{(n)} = \sum_l G_l$ in which each summand $G_l$ contains at least one factor of one of two types:
\begin{enumerate}
\item[\emph{(i)}] a free factor $h_t^e \in \mathcal{H}$ for some $t \in [0,\Delta]$, satisfying $\|h_t^e\|_\mathcal{H} \le e_n$;
\item[\emph{(ii)}] a paired-covariance gate $\langle h_s^e, h_t \rangle_\mathcal{H}$ or $\langle h_s^{(n)}, h_t^e \rangle_\mathcal{H}$, bounded in absolute value by $C\,e_n$ via Cauchy--Schwarz and the uniform-in-$n$ bound $\sup_t(\|h_t\|_\mathcal{H} + \|h_t^{(n)}\|_\mathcal{H}) \le C(H,\theta,\sigma,\Delta)$.
\end{enumerate}
The remaining factors in each $G_l$ are bounded uniformly in $n$ by Lemma~\ref{lem:fOU_cov} (for covariance pairings involving $X$) and the affine-cell variation control (for covariance pairings involving $X^{(n)}$).
\end{lemma}

We prove the lemma at the level of the smooth kernels in~\citet[Def.~3.3]{CassFerrucci2024} and then pass to the limit. Fix a pairing pattern $P$ with $p$ paired indices and free set $\bar P$, so $j=|\bar P|=m-2p$. One summand of the smooth Cass--Ferrucci kernel is an integral over the ordered simplex of a finite product
\[
\prod_{a=1}^{r(P)} A_a,\qquad
A_a\in
\left\{
\left\langle \dot h^{\ell,i_r}_{u_r},\dot h^{\ell,i_s}_{u_s}\right\rangle_{\mathcal{H}}\,du_rdu_s\;(\{r,s\}\in P),\quad
\eta^{\ell,i_k}_{k}(u_k;v_k)\,du_k\;(k\in\bar P)
\right\}.
\]
Here $\eta^{\ell,i_k}_{k}$ is the free-variable Cameron--Martin factor written in~\citet[Def.~3.3]{CassFerrucci2024} as the $\varrho_\ell^{-1}\mathbb{1}_{[v^-_{k;\ell},v^+_{k;\ell})}^{i_k}$ factor; after the $u_k$-integration it is a finite linear combination of endpoint vectors $h_t^{\ell,i_k}$ with $t \in \{v^-_{k;\ell}, v^+_{k;\ell}\}$. Each endpoint vector is bounded in $\mathcal{H}$-norm by $2\sqrt{R(0)}$ uniformly in $\ell$ by stationarity and $|R| \le R(0)$, and the simplex bound of~\citet[Prop.~3.1]{CassFerrucci2024} (written there as $|P|_{st} \lesssim (t-s)^{(n-m)H}$ for $P \in \mathcal{P}^n_m$ in that reference's word-length-$n$, chaos-order-$m$ convention; in the present length-$m$, chaos-$j$ notation this reads $(t-s)^{(m-j)H}$) --- transferred to the smoothed kernels uniformly in $\ell$ by the dominating function of~\citet[Lem.~3.8, Prop.~3.9]{CassFerrucci2024} --- applies to any partial-contraction kernel containing this factor; hence the integrated free factor contributes $\mathcal{H}$-norm at most $C$ uniformly in $\ell$. The corresponding summand for $X^{(n)}$ has factors $A_a^{(n)}$ obtained by replacing $h^\ell$ by $h^{(n),\ell}$. For any fixed ordering of these factors, the exact finite telescoping identity is
\[
\prod_{a=1}^{r} A_a-\prod_{a=1}^{r} A_a^{(n)}
=\sum_{b=1}^{r}
\left(\prod_{a<b} A_a^{(n)}\right)
\left(A_b-A_b^{(n)}\right)
\left(\prod_{a>b} A_a\right).
\]
If $A_b$ is a free-variable factor, then after integrating its $u_b$-variable the difference is a finite linear combination of $h_t^e$ factors and has $\mathcal{H}$-norm at most $C_m e_n$. If $A_b$ is a paired factor, the limiting covariance increment contains
\[
\langle h_s,h_t\rangle_{\mathcal{H}}-\langle h_s^{(n)},h_t^{(n)}\rangle_{\mathcal{H}}
=\langle h_s^e,h_t\rangle_{\mathcal{H}}+\langle h_s^{(n)},h_t^e\rangle_{\mathcal{H}},
\]
hence is bounded by $C e_n$. This is the advertised error gate. Since every summand in the telescoping sum contains the distinguished index $b$, the induction on $m$ is only bookkeeping: adjoining one more letter either appends a free factor or forms one additional paired factor, and the same identity forces at least one gate in the product difference.

The domination used to remove the smoothing parameter follows from the simplex sup-norm bounds of CF24~\citep[Prop.~3.1, Lem.~3.8, Prop.~3.9]{CassFerrucci2024} --- applicable to the $X$-kernels since the CF24 standing hypotheses hold for $\mathcal{K}$ by Lemma~\ref{lem:CF24_hypotheses} (with $T = \Delta$); for $X^{(n)}$-factors and hybrid summands the direct envelope bound below substitutes --- applied at the level of the integrated gate, with the gate factor itself controlled as follows. Write $\mathcal{K}^e := \mathcal{K}^X - \mathcal{K}^{X^{(n)}}$ for the difference kernel.

\emph{Paired gates.} In a telescoping summand $G_l^\ell$ whose gate sits at a paired position $\{b, b'\}\in P$, the gate factor $\mathbb{E}[\dot X^\ell_{u_b}\dot X^\ell_{u_{b'}}] - \mathbb{E}[\dot X^{(n),\ell}_{u_b}\dot X^{(n),\ell}_{u_{b'}}]$ integrates over the two simplex variables $u_b, u_{b'}$, conditionally on the surrounding integration variables, over one of two regions: a product region $I_b \times I_{b'}$, when at least one further integration variable separates the pair in the simplex order (so that the two slot intervals are determined independently); or the triangle $\{a \le u_b \le u_{b'} \le c\}$, when the paired indices are adjacent in the simplex order. In the product case the integral is the rectangle increment $\mathcal{K}^e(I_b\times I_{b'})$. In the adjacent case, the mixed-derivative bimeasure of $\mathcal{K}^e$ is symmetric in its two arguments and absolutely continuous --- for $\mathcal{K}^X$ because $R \in C^2((0,\infty))$ off the diagonal and no atom sits on the diagonal, $R'(0^+) = 0$ being immediate from the decomposition $R(\tau) = R(0) - \kappa\tau^{2H} + \psi(\tau)$ with $\psi \in C^2$, $\psi'(0) = 0$, and $2H - 1 > 0$ (proof of Lemma~\ref{lem:fOU_cov}); for $\mathcal{K}^{X^{(n)}}$ because its rectangle increments integrate the piecewise-constant cell density exhibited below --- so the triangle integral equals exactly one half of the rectangle increment $\mathcal{K}^e([a,c]^2)$. In both cases, the bilinear identity
\begin{equation}
\label{eq:K_diff_bilinear}
\mathcal{K}^X(s,t) - \mathcal{K}^{X^{(n)}}(s,t) \;=\; \langle h_s^e, h_t\rangle_\mathcal{H} + \langle h_s^{(n)}, h_t^e\rangle_\mathcal{H}
\end{equation}
(taking the rectangle increment of both sides) combined with Cauchy--Schwarz, the gate-difference bound $\|h^e_t - h^e_s\|_\mathcal{H} \le 2e_n$, and the uniform $\mathcal{H}$-norm bounds $\sup_t\|h_t\|_\mathcal{H}, \sup_t\|h_t^{(n)}\|_\mathcal{H} \le 2\sqrt{R(0)}$ bounds the resulting increment by a constant multiple of $e_n$, uniformly in $\ell$, $n$, and the surrounding integration variables.

\emph{Free gates.} A gate at a free position $k\in\bar P$ is \emph{not} pointwise small: its kernel realization is supported in a single interpolation cell (up to the $\varrho_\ell$-mollification) but takes values of order one there, as in $h_t^e = (1-\lambda)\mathbb{1}_{(t_i,\,t]} - \lambda\mathbb{1}_{(t,\,t_{i+1}]}$ for $t = t_i + \lambda\delta$; only its Cameron--Martin norm is small. For $H > 1/2$ the increment bimeasure of $\mathcal{K}^X$ is absolutely continuous with density bounded by $D(s,t) := C(1 + |s-t|^{2H-2})$ (Lemma~\ref{lem:CF24_hypotheses}(i) together with $R'(0^+) = 0$ as above; the $L^\infty$-representation of the inner product used here is justified after~\eqref{eq:Gl_norm_isometry} below), so any measurable $\chi$ with $|\chi| \le 1$ supported in an interval of length at most $2\delta$ satisfies
\begin{equation}
\label{eq:cell_H_bound}
\|\chi\|_{\mathcal{H}}^2 \;\le\; \int\!\!\int |\chi(u)|\,|\chi(v)|\,D(u,v)\,du\,dv \;\le\; C\,\delta^{2H},
\end{equation}
a bound stable under multiplication by bounded functions and under restriction to subintervals. Free-gate summands are therefore handled below by contracting the gate slot in $\mathcal{H}$ \emph{before} the remaining slots, rather than through a pointwise bound.

The residual factor --- the integral over the remaining $m-2$ simplex variables, with the $|P|-1$ unaffected paired covariance densities and the $j$ free indicator factors --- is itself a CF24 partial-contraction integral on the index set $\{1,\ldots,m\}\setminus\{b,b'\}$ with $|P|-1$ pairs and $j$ free indices. The piecewise-linear covariance $\mathcal{K}^{X^{(n)}}$ has 2D $\rho$-variation on $[0,\Delta]^2$ uniformly bounded in $n$: on each grid rectangle $R_{ij}$, $\mathcal{K}^{X^{(n)}}$ is the bilinear interpolation of the grid values of $\mathcal{K}^X$, hence has constant mixed derivative $\Delta_{R_{ij}}\mathcal{K}^{X}/|R_{ij}|$ there; since 2D increments are additive under splitting at the interior grid lines and on each cell the bilinear rectangle increment is exactly the area fraction of the grid increment, every rectangle increment of $\mathcal{K}^{X^{(n)}}$ is the integral of this piecewise-constant density. Consequently, over \emph{every} product partition,
\[
V_1\bigl(\mathcal{K}^{X^{(n)}};[0,\Delta]^2\bigr) \;\le\; \sum_{i,j}\bigl|\Delta_{R_{ij}}\mathcal{K}^{X}\bigr| \;\le\; V_1\bigl(\mathcal{K}^{X};[0,\Delta]^2\bigr),
\]
which is finite in the present Young regime $H > 1/2$ (proof of Lemma~\ref{lem:fOU_cov}: the fractional part has locally integrable mixed partial $\partial^2_{s,t}|s-t|^{2H} = -2H(2H-1)|s-t|^{2H-2}$ and the remainder $\tilde\psi$ is $C^2$); since $V_\rho \le V_1$ for every $\rho \ge 1$, this yields the required uniform-in-$n$ 2D $\rho$-variation bound. Moreover, the piecewise-constant cell density of $\mathcal{K}^{X^{(n)}}$ obeys, uniformly in $n$, the same envelope $D(u,v) = C(1 + |u-v|^{2H-2})$ as the increment bimeasure of $\mathcal{K}^{X}$ (cf.\ the covariance-derivative hypotheses~\citealp[eqs.~(9)--(12)]{CassFerrucci2024} under which the Cass--Ferrucci estimates are proved): each cell density is the cell average of the bimeasure of $\mathcal{K}^{X}$, so on a cell within one grid step of the diagonal it equals an $O(\delta^{2H})$ grid increment (a second difference of $R$; Lemma~\ref{lem:fOU_cov}) divided by $\delta^2$, hence is $O(\delta^{2H-2}) \le C\,|u-v|^{2H-2}$ pointwise on the cell, where $|u-v| \le 2\delta$ and $2H-2 < 0$; on a cell separated from the diagonal it is bounded by $\sup_{\mathrm{cell}}|R''| \le C(1 + |u-v|^{2H-2})$ up to a fixed factor, the cell separation and $|u-v|$ being comparable there (these are the $X^{(n)}$-analogues of the covariance-derivative hypotheses verified for $\mathcal{K}$ in Lemma~\ref{lem:CF24_hypotheses}). Hence the residual integral admits a sup-norm envelope on the simplex, uniform in $\ell$ and $n$, by a direct computation requiring no CF24 input for the $X^{(n)}$- or hybrid summands: taking absolute values, enlarging the (partial) simplex domain to the box, and applying Fubini--Tonelli --- admissible since each surviving integration variable occurs in exactly one factor --- each paired factor integrates against its two variables to at most $C\iint_{[0,\Delta]^2} D(u,v)\,du\,dv < \infty$ (every non-gate paired factor is a pure-$X$ or pure-$X^{(n)}$ density obeying the envelope $D$, the telescoping never mixing the two processes within a single factor), while each free factor $\varrho_\ell^{-1}\mathbb{1}$ has unit mass; the product of these bounds is the envelope, uniformly over the free variables. For the pure-$X$ kernels this recovers what~\citet[Prop.~3.1, Lem.~3.8, Prop.~3.9]{CassFerrucci2024} give directly under Lemma~\ref{lem:CF24_hypotheses}. Two uniform bounds result. For a summand whose gate is \emph{paired}, combining the envelope with the gate scalar bound gives the pointwise estimate
\begin{equation}
\label{eq:Gl_uniform_simplex}
\bigl|G_l^\ell(v_1,\ldots,v_j)\bigr| \;\le\; C_{m,j}(H,\theta,\sigma,\Delta)\,e_n, \qquad (v_1,\ldots,v_j)\in\Delta^j[0,\Delta] \quad (\text{paired gate}),
\end{equation}
uniformly in $\ell$ and $n$. For a summand whose gate is \emph{free}, occupying the slot $q_0 \in \{1,\ldots,j\}$, the envelope yields only the order-one bound
\begin{equation}
\label{eq:Gl_O1_simplex}
\bigl|G_l^\ell(v_1,\ldots,v_j)\bigr| \;\le\; C_{m,j}(H,\theta,\sigma,\Delta), \qquad (v_1,\ldots,v_j)\in\Delta^j[0,\Delta] \quad (\text{free gate}),
\end{equation}
together with the following \emph{sectional} smallness. For fixed values of the remaining variables $v_{-q_0} := (v_q)_{q \ne q_0}$, the section $v_{q_0} \mapsto G_l^\ell(v)$ is, by construction, a mixture --- an integral over the surviving simplex variables, whose total absolute weight is the absolute-value version of the residual partial-contraction integral bounded above, hence at most $C$ uniformly in $\ell$, $n$, and $v_{-q_0}$ --- of gate elements, each a difference of at most two functions of the form covered by~\eqref{eq:cell_H_bound} --- bounded by one and supported in a single interpolation cell, up to the $\varrho_\ell$-mollification (take $\varrho_\ell \le \delta/2$, which suffices for the limit $\ell \to \infty$) --- one per endpoint of the gate variable's integration slot, as in the finite linear combination of $h_t^e$ factors in the proof of Lemma~\ref{lem:error_gate}; the resulting factor $2$ is absorbed into $C_{m,j}$. Minkowski's integral inequality in $\mathcal{H}$ therefore gives
\begin{equation}
\label{eq:free_gate_section}
\sup_{v_{-q_0}} \bigl\| G_l^\ell(\,\cdot\,, v_{-q_0}) \bigr\|_{\mathcal{H}} \;\le\; C_{m,j}(H,\theta,\sigma,\Delta)\,\delta^{H},
\end{equation}
uniformly in $\ell$ and $n$ (the ordering constraints of the simplex only restrict the gate elements to subintervals, which~\eqref{eq:cell_H_bound} tolerates). Lemma~\ref{lem:symm_ext_Linf} transfers the pointwise simplex bounds to the box: both the sorting-based extension and --- being an average of slot permutations, each bounded by the simplex essential supremum --- the symmetrization $\widetilde G_l^\ell$ (here and below $\widetilde{\,\cdot\,}$ denotes the symmetrization, i.e.\ the average over slot permutations, per the convention of Appendix~\ref{subsec:isonormal_setup}) satisfy $\|\cdot\|_{L^\infty([0,\Delta]^j)} \le C_{m,j}\,e_n$ for paired-gate summands and $\le C_{m,j}$ for free-gate summands, also uniform in $\ell, n$.

Convergence of the smooth-approximant kernels to the limiting Cass--Ferrucci kernels takes place in $\mathcal{H}^{\otimes j}$-norm for the two pure kernels $f_{w,j}^{X}$ and $f_{w,j}^{X^{(n)}}$ separately: for $X$ by~\citet[Thm.~2.3, Prop.~3.9]{CassFerrucci2024}, applicable since the CF24 standing hypotheses hold for $\mathcal{K}$ (Lemma~\ref{lem:CF24_hypotheses}); for the piecewise-linear $X^{(n)}$, whose truncated-signature coordinates are polynomials in the finitely many jointly Gaussian increments $(\Delta X_i)_{i \le n-2}$ and whose increment covariance carries the bounded piecewise-constant cell density exhibited above, the same expansion with kernels of the identical partial-contraction form, and the removal of the mollification, are classical Wiener--It\^o calculus (Isserlis/diagram formula,~\citealp[Thm.~1.28]{Janson1997}), no singular regularization being involved. Throughout, the $X^{(n)}$-expansion --- natively a chaos expansion over the isonormal space of $X^{(n)} - X_0$ --- is transported into $\mathcal{H}$ by the isometric embedding $\iota_n$ determined by $\mathbb{1}_{[0,t]} \mapsto h^{(n)}_t$ (isometric, since $\Cov(X^{(n)}_s - X_0,\, X^{(n)}_t - X_0) = \langle h^{(n)}_s, h^{(n)}_t\rangle_\mathcal{H}$ on the generating set), which intertwines multiple Wiener integrals, $I_j^{(n)}(f) = I_j^{\mathcal{H}}(\iota_n^{\otimes j} f)$~\citep[\S1.1]{Nualart2006}; under $\iota_n^{\otimes j}$ the kernels are identified with the $\mathcal{H}$-valued kernels compared here. The hybrid telescoping summands mix $X$- and $X^{(n)}$-factors and are not covered verbatim by~\citet[Prop.~3.9]{CassFerrucci2024}; their limits are constructed factorwise: each paired density and each free factor converges a.e.\ as $\ell \to \infty$ by the same smoothing-removal argument as in the pure case, and each summand's integrand is dominated, uniformly in $\ell$, by the integrable envelope underlying the residual bound above, so dominated convergence in the integrated simplex variables yields $G_l^\ell \to G_l$ a.e.\ in the kernel variables on $\Delta^j[0,\Delta]$, with the bounds~\eqref{eq:Gl_uniform_simplex}--\eqref{eq:free_gate_section} inherited by the limit (the sectional bound~\eqref{eq:free_gate_section} passes to the limit by dominated convergence in the bimeasure representation of the $\mathcal{H}$-norm below: the sections converge a.e.\ with the uniform bound~\eqref{eq:Gl_O1_simplex}, and the envelope $C^2 D$ is integrable). Symmetric extension preserves a.e.\ convergence. Telescoping after the limit yields the same finite decomposition for the kernel difference $g_{w,j}^{(n)} = f_{w,j}^X - f_{w,j}^{X^{(n)}}$ at the level of the limiting Cass--Ferrucci kernels.

By Lemma~\ref{lem:error_gate}, the coordinate error has the finite orthogonal chaos expansion
\begin{equation}
\label{eq:young_error_chaos}
\langle w,E_m\rangle
=\sum_{\substack{0\le j\le m\\ j\equiv m\,(2)}} I_j^\mathcal{H}(g_{w,j}^{(n)}),
\qquad
g_{w,j}^{(n)}:=f_{w,j}^{X}-f_{w,j}^{X^{(n)}} .
\end{equation}
For the deterministic zeroth chaos ($j=0$, only when $m$ is even; every gate is then a paired gate), the finite pairing expansion described above gives an absolute scalar bound $|g_{w,0}^{(n)}| \le C_m\,e_n$. For $j\ge 1$, the squared $\mathcal{H}^{\otimes j}$-norm of the symmetrization $\widetilde G_l$ of each telescoping summand expands as a $2j$-fold integral against the cross-copy bimeasures of $R$:
\begin{equation}
\label{eq:Gl_norm_isometry}
\bigl\|\widetilde G_l\bigr\|_{\mathcal{H}^{\otimes j}}^2 \;=\; \int_{[0,\Delta]^{2j}} \widetilde G_l(u)\,\widetilde G_l(v)\,\prod_{q=1}^{j} dR_{i_{k_q}}(u_q, v_q),
\end{equation}
where each $dR_{i_{k_q}}$ is the coordinate-$i_{k_q}$ stationary autocovariance bimeasure; the multiple-integral isometry~\citep[eqs.~(20)--(21)]{CassFerrucci2024} (eq.~\eqref{eq:wiener_isometry}) then reads $\mathbb{E}[I_j^\mathcal{H}(G_l)^2] = j!\,\|\widetilde G_l\|_{\mathcal{H}^{\otimes j}}^2$, the combination used below. For $H > 1/2$ the increment bimeasure is absolutely continuous on all of $[0,\Delta]^2$: off the diagonal $R \in C^2((0,\infty))$ with $|R''(\tau)| \le C(1 + |\tau|^{2H-2})$ on $(0,\Delta]$ (Lemma~\ref{lem:CF24_hypotheses}(i), with $T = \Delta$), and no singular part sits on the diagonal since $R'(0^+) = 0$ (as noted above; likewise $\mathcal{K}^{X^{(n)}}$ carries only its piecewise-constant cell density). Hence $|dR(u,v)| \le D(u,v)\,du\,dv$ with $D(s,t) := C(1 + |s-t|^{2H-2})$, and $\int_{[0,\Delta]^2} D(u,v)\,du\,dv < \infty$ since $2H - 2 > -1$. Consequently every bounded measurable function on $[0,\Delta]$ belongs to $\mathcal{H}$, and the representation of the Cameron--Martin inner product as integration against the bimeasure extends from step functions to $L^\infty([0,\Delta])$ --- and slotwise to bounded kernels in $\mathcal{H}^{\otimes j}$ --- by dominated convergence with envelope $D$: for $\phi, \psi \in L^\infty([0,\Delta])$ and uniformly bounded step functions $\phi_k \to \phi$, $\psi_k \to \psi$ a.e., the sequence $(\phi_k)$ is $\mathcal{H}$-Cauchy and $\langle \phi, \psi\rangle_\mathcal{H} = \lim_k \int\!\!\int \phi_k(u)\,\psi_k(v)\,dR(u,v) = \int\!\!\int \phi(u)\,\psi(v)\,dR(u,v)$. This justifies~\eqref{eq:Gl_norm_isometry} for the $L^\infty$ kernels at hand, the cell bound~\eqref{eq:cell_H_bound} above, and the slotwise contraction below.

\emph{Perfect-matching factorization: paired-gate summands.} The integrand of~\eqref{eq:Gl_norm_isometry} is a product of two factors $\widetilde G_l(u), \widetilde G_l(v)$ --- for a paired-gate summand each bounded uniformly by $C_{m,j}\,e_n$ via~\eqref{eq:Gl_uniform_simplex} together with Lemma~\ref{lem:symm_ext_Linf} --- multiplied by the $j$ cross-copy bimeasures $\prod_q D(u_q, v_q)\,du_q\,dv_q$. Each integration variable $u_q$ (respectively $v_q$) appears in \emph{exactly one} $D$-factor, namely the cross-copy contraction $D(u_q, v_q)$: the within-copy structure of $\widetilde G_l$ has been absorbed into the uniform sup-norm bound at the~\citet[Prop.~3.1]{CassFerrucci2024} step preceding~\eqref{eq:Gl_uniform_simplex}, so no within-copy $D$-factor remains explicit, and the only $D$-dependence on $u_q$ is through $D(u_q, v_q)$. The $2j$-fold integral therefore factorizes via Fubini--Tonelli into $j$ independent two-variable integrals:
\begin{equation}
\label{eq:Gl_norm_factorized}
\bigl\|\widetilde G_l\bigr\|_{\mathcal{H}^{\otimes j}}^2 \;\le\; (C_{m,j})^2\,e_n^2\,\biggl(\int_{[0,\Delta]^2} D(u,v)\,du\,dv\biggr)^j \;\le\; C'_{m,j}(H,\theta,\sigma,\Delta)\,e_n^2.
\end{equation}

\emph{Free-gate summands: contracting the gate slot first.} For a free-gate summand the available pointwise factor bound is only the order-one estimate~\eqref{eq:Gl_O1_simplex}, and~\eqref{eq:Gl_norm_factorized} is not available. Instead, observe that slot permutations act unitarily on $\mathcal{H}^{\otimes j}$ and the symmetrization operator is the average of $j!$ of them --- the orthogonal projection onto $\mathcal{H}^{\odot j}$ --- so $\|\widetilde G_l\|_{\mathcal{H}^{\otimes j}} \le \|G_l\|_{\mathcal{H}^{\otimes j}}$, and it suffices to bound the unsymmetrized summand, whose gate slot $q_0$ is well defined. By Fubini--Tonelli --- justified by the order-one bound~\eqref{eq:Gl_O1_simplex} and the integrable envelope $D$ --- the $\mathcal{H}^{\otimes j}$-norm may be evaluated by contracting the gate slot first:
\begin{equation}
\label{eq:gate_slot_first}
\bigl\|G_l\bigr\|^2_{\mathcal{H}^{\otimes j}} \;=\; \int_{[0,\Delta]^{2(j-1)}} \bigl\langle G_l(\,\cdot\,, u_{-q_0}),\, G_l(\,\cdot\,, v_{-q_0}) \bigr\rangle_{\mathcal{H}}\, \prod_{q \ne q_0} dR_{i_{k_q}}(u_q, v_q),
\end{equation}
the inner factor being the slot-$q_0$ Cameron--Martin pairing of the sections, well defined by the $L^\infty$ extension above. By the sectional bound~\eqref{eq:free_gate_section} and Cauchy--Schwarz in $\mathcal{H}$, this pairing is bounded by $(C_{m,j}\,\delta^H)^2$ uniformly in $(u_{-q_0}, v_{-q_0})$, whence
\begin{equation}
\label{eq:free_gate_norm}
\bigl\|\widetilde G_l\bigr\|_{\mathcal{H}^{\otimes j}}^2 \;\le\; \bigl\|G_l\bigr\|_{\mathcal{H}^{\otimes j}}^2 \;\le\; (C_{m,j})^2\,\delta^{2H}\,\biggl(\int_{[0,\Delta]^2} D(u,v)\,du\,dv\biggr)^{j-1} \;\le\; C'_{m,j}(H,\theta,\sigma,\Delta)\,\delta^{2H}.
\end{equation}

Summing over the $(m-j)/2 + j \le m$ telescoping summands per pairing $P \in \mathcal{P}^m_j$ (one summand per paired or free factor) and over the $|\mathcal{P}^m_j| = m!/(2^p\,p!\,j!) = O_m(1)$ pairings (with $p = (m-j)/2$ pairs), the triangle inequality in $\mathcal{H}^{\otimes j}$ (symmetrization is linear and norm-contracting) combines~\eqref{eq:Gl_norm_factorized} and~\eqref{eq:free_gate_norm}; since $e_n \le C\,\delta^H$ by~\eqref{eq:error_gate}, both envelopes are $O(\delta^{2H})$, and after absorbing the factor $j! \le m!$ into the constant,
\begin{equation}
\label{eq:young_kernel_gate_bound}
j!\,\bigl\|\widetilde g_{w,j}^{(n)}\bigr\|_{\mathcal{H}^{\otimes j}}^2 \;\le\; C_{m,j}(H,\theta,\sigma,\Delta)\,\delta^{2H}, \qquad 0\le j\le m,\ j\equiv m\,(2).
\end{equation}
Orthogonality of distinct chaos orders under the multiple-integral isometry~\citep[eqs.~(20)--(21)]{CassFerrucci2024} then gives $\mathbb{E}[|\langle w,E_m\rangle|^2] = \sum_j j!\|\widetilde g_{w,j}^{(n)}\|^2 \le C_m\,\delta^{2H}$ (the $j = 0$ term obeys $|g_{w,0}^{(n)}|^2 \le C_m^2 e_n^2 \le C\,\delta^{2H}$). Summing the squared $L^2$-norm over the $d^m$ coordinates of $E_m \in (\mathbb{R}^d)^{\otimes m}$ absorbs the tensor combinatorics into $C_m$.

This direct comparison covers every term type that would appear in a formal multilinear expansion: the contributions with no error-gate factor cancel between $f_{w,j}^X$ and $f_{w,j}^{X^{(n)}}$, while mixed and all-error terms (extreme cases of the telescoping where $b = 1$ or where the gate cascades through every position) are subsumed by the same one-gate-per-summand structure with $r$ summands of identical form. No integration-by-parts recursion is used, and no step attempts to convert a cluster of $de$ differentials into $dX$ differentials. Hence
\begin{equation}
\label{eq:young_level_m}
\mathbb{E}[\|E_m\|_{\mathrm{HS}}^2] \le C_m(H,\theta,\sigma,\Delta) \cdot \delta^{2H}, \qquad m = 3, \ldots, M.
\end{equation}
Together with the zero level-1 error and the level-2 area bound, and since $2H < \min(4H-1,\,2)$ for $H > 1/2$, the path interpolation rate dominates the L\'evy area rate at every level. Thus:
\begin{equation}
\label{eq:young_mse}
\mathbb{E}\bigl[\|S^{(M)}(\mathbf{X})_{0,\Delta} - S^{(M)}(\mathbf{X}^{(n)})_{0,\Delta}\|_{\mathrm{HS}}^2\bigr] = \sum_{m=1}^M \mathbb{E}[\|E_m\|_{\mathrm{HS}}^2] \le C(M,H,\theta,\sigma,\Delta) \cdot \delta^{2H} \quad \text{for } H > 1/2.
\end{equation}
The Young-regime exponent is therefore $\gamma = 2H$ exactly, with no $\varepsilon$-loss and no logarithmic factor.

\medskip
\noindent\textbf{Case 2: Rough regime ($H \in (1/4, 1/2)$).}
When $H \in (1/4, 1/2)$, the paths have $p$-variation only for $p > 1/H > 2$, so rough-path machinery is required. We establish the rate $O(\delta^{4H-1-\varepsilon})$ at truncation level $M$ (for any $\varepsilon > 0$) via level-by-level Gaussian $L^2$ bounds, with the level-2 rate $O(\delta^{4H-1})$ exact from Lemma~\ref{lem:fOU_area}; this bypasses the rough-path Lipschitz bound (see Remark~\ref{rem:inhomogeneous} for why that bound is suboptimal here). The boundary $H = 1/2$ is delegated to Appendix~\ref{app:gamma_semi}.

\paragraph{Level-by-level $L^2$ bounds}
Level~1 has zero error (endpoints match). At level~2, the error is the L\'evy area error: $\mathbb{E}[\|E_2\|_{\mathrm{HS}}^2] = O(\delta^{4H-1})$ by Lemma~\ref{lem:fOU_area}, where $E_m := S^{(m)}(\mathbf{X})_{0,\Delta} - S^{(m)}(\mathbf{X}^{(n)})_{0,\Delta}$ as in Case~1.

\paragraph{Level-$m \ge 3$ bound via Friz--Riedel Theorem~5}
For levels $m \ge 3$, we apply~\citet[Thm.~5]{FrizRiedel2014} to the Gaussian rough path $\mathbf{X}$ and its PL interpolation $\mathbf{X}^{(n)}$, using the 2D $\rho$-variation of $K_X$ established in Lemma~\ref{lem:fOU_cov}. Fix $\rho \in (\rho_*(H),\, 2)$ with $\rho_*(H) = \max(1,\, 1/(2H))$, and choose FR parameters $\gamma_{\mathrm{FR}} := \rho / \varepsilon'$ with $\varepsilon' > 0$ satisfying both $\varepsilon' < 1$ (equivalent to $\gamma_{\mathrm{FR}} > \rho$) and $\varepsilon' > \rho - 1$ (equivalent to the FR condition $1/\gamma_{\mathrm{FR}} + 1/\rho > 1$, since $\varepsilon'/\rho + 1/\rho > 1 \Leftrightarrow \varepsilon' > \rho - 1$). For $H \in (1/4, 1/2)$ the admissibility $\rho > 1/(2H)$ then forces $\varepsilon' > \rho - 1 > (1-2H)/(2H)$, which is bounded away from $0$. Applying~\citet[Thm.~5]{FrizRiedel2014} at truncation level $M$ with $r = 2$:
\begin{equation}
\label{eq:FR_fOU_rough}
\bigl\|\varrho_{q\text{-var};[0, \Delta]}(S_M(\mathbf{X}^{(n)}),\,S_M(\mathbf{X}))\bigr\|_{L^2} \le C_M\,\sup_{0 \le t \le \Delta}\|X^{(n)}_t - X_t\|_{L^2}^{1 - \varepsilon'}.
\end{equation}
The sup-$L^2$ PL error satisfies $\sup_t \|X^{(n)}_t - X_t\|_{L^2} \le C\,\delta^H$ by~\eqref{eq:modulus_bound}, which holds for all $H \in (1/4,1)$.

\paragraph{Conclusion}
As in Appendix~\ref{app:gamma_semi}, the inhomogeneous distance $\varrho_{q\text{-var}}$ dominates every level-$m$ endpoint difference ($m \le M$) via the trivial partition~\citep[p.~159]{FrizRiedel2014}, so
\[
\mathbb{E}[\|E_m\|_{\mathrm{HS}}^2] \;\le\; C_m\,\bigl\|\varrho_{q\text{-var};[0,\Delta]}(S_M(\mathbf{X}^{(n)}), S_M(\mathbf{X}))\bigr\|_{L^2}^2 \;\le\; C_m'\,\delta^{2H(1-\varepsilon')}.
\]
Fix a target $\varepsilon > 0$ and choose $\rho \in (1/(2H),\, 2)$ and $\varepsilon' \in (\rho-1,\, 1)$ sufficiently close to the FR admissibility boundary $(1/(2H),\, (1-2H)/(2H))$ that $2H(1 - \varepsilon') > 4H - 1 - \varepsilon$; such $(\rho, \varepsilon')$ exist because the boundary identity $2H\bigl(1 - (1-2H)/(2H)\bigr) = 2H - (1-2H) = 4H - 1$ realizes the limiting exponent and the strict inequality is preserved on a non-empty open neighborhood of the boundary. With $(\rho, \varepsilon')$ fixed, choose $q > 2\gamma_{\mathrm{FR}} = 2\rho/\varepsilon'$ once and for all; the FR constant $C(q,\rho,\gamma_{\mathrm{FR}}, K, M)$ in~\eqref{eq:FR_Thm5} is finite at this fixed tuple and may depend on $\varepsilon$, $H$, $M$. Therefore:
\begin{equation}
\label{eq:rough_level_m}
\mathbb{E}[\|E_m\|_{\mathrm{HS}}^2] \le C_m(H,\theta,\sigma,\Delta,\varepsilon)\cdot \delta^{4H - 1 - \varepsilon} \quad \text{for every } m \ge 3,\ H \in (1/4, 1/2),\ \varepsilon > 0.
\end{equation}
This bound \emph{matches} (rather than strictly dominates) the level-2 L\'evy area rate $O(\delta^{4H-1})$ of Lemma~\ref{lem:fOU_area}, modulo an arbitrarily small $\varepsilon$-loss at higher levels; the FR admissibility constraint $\varepsilon' > \rho - 1 > 0$ for $H \in (1/4, 1/2)$ is what makes the $\varepsilon$-loss unavoidable at levels $m \ge 3$ (with the present proof technique). The boundary $H = 1/2$ is delegated to Appendix~\ref{app:gamma_semi}, which gives $\gamma = 1-\varepsilon$ at truncation level $M$ (with $\gamma = 1$ exact at level~2). Assembling:
\begin{equation}
\label{eq:rough_mse}
\begin{aligned}
\mathbb{E}\bigl[\|S^{(M)}(\mathbf{X})_{0,\Delta} - S^{(M)}(\mathbf{X}^{(n)})_{0,\Delta}\|_{\mathrm{HS}}^2\bigr]
&= \sum_{m=1}^M \mathbb{E}[\|E_m\|_{\mathrm{HS}}^2] \\
&\le C(M,H,\theta,\sigma,\Delta,\varepsilon) \cdot \delta^{4H-1-\varepsilon}, \quad H \in (1/4, 1/2),\ \varepsilon > 0.
\end{aligned}
\end{equation}

\paragraph{Combined rate}
Combining~\eqref{eq:young_mse} and~\eqref{eq:rough_mse}:
\begin{equation}
\gamma =
\begin{cases}
4H - 1 - \varepsilon\ (\varepsilon > 0) & H \in (1/4, 1/2), \\
1 \text{ at level 2; } 1-\varepsilon \text{ at level } M & H = 1/2, \\
2H \text{ exactly} & H > 1/2.
\end{cases}
\end{equation}
The limiting exponents of the two regimes agree at the boundary $H = 1/2$ ($4H-1 = 2H = 1$) and match the direct analysis of Appendix~\ref{app:gamma_semi}. The $\varepsilon$-loss in the rough regime $H \in (1/4, 1/2)$ at truncation level $M$ is driven by the Friz--Riedel Theorem~5 admissibility constraint $\varepsilon' > \rho - 1$ in~\eqref{eq:FR_fOU_rough}, which forces $\varepsilon' > (1-2H)/(2H) > 0$ rather than $\varepsilon' \to 0^+$; the level-2 L\'evy area exponent $4H-1$ is attained cleanly by Lemma~\ref{lem:fOU_area}, and the Young regime $H > 1/2$ attains $\gamma = 2H$ exactly. The $2H$ exponent is consistent with prior signature-specific discretization results: for $H > 1/2$,~\citet[Thm.~2.2]{Passeggeri2020} proves a bias estimate at rate $2H$ for the expected signature of the piecewise-linear approximation of fractional Brownian motion. Here the same exponent governs the \emph{mean-squared} pathwise approximation error for truncated signatures of stationary fOU --- a second-moment rather than first-moment statement (by Jensen, our bound implies only the bias rate $\delta^{H}$), so the two results are complementary rather than nested.
\end{proof}

\begin{remark}[Gaussian $L^2$ approach vs.\ rough-path metric]
\label{rem:inhomogeneous}
The general Lyons-lift Lipschitz bound~\citep[Thm.~9.10, Cor.~9.11]{FrizVictoir2010} controls the signature difference by the $p$-variation rough-path distance $\varrho_{p\text{-var}}(\mathbf{X}, \mathbf{X}^{(n)})$, which involves the $p/2$-variation of the area difference over \emph{all} subintervals, not the global area value $|\mathbb{A}_{0,\Delta} - \mathbb{A}^{(n)}_{0,\Delta}|$. For piecewise-linear approximations, the global area error benefits from algebraic cancellations in the trapezoidal scheme, achieving $O(\delta^{(4H-1)/2})$ in $L^2$~\citep{NeuenkirchTindelUnterberger2010}. The $p/2$-variation norm $\|\mathbb{A} - \mathbb{A}^{(n)}\|_{p/2\text{-var}}$, however, sums uncancelled local errors and scales as $O(\delta^{2H-2/p})$, which degrades as $p \downarrow 1/H$. Applying~\citet[Thm.~9.10]{FrizVictoir2010} with this norm yields a rate strictly worse than $4H-1$ for levels $m \ge 3$. Our level-$m \ge 3$ bounds~\eqref{eq:young_level_m} (Young regime, via the Cass--Ferrucci finite-chaos comparison and the interpolation-error gate~\eqref{eq:error_gate}) and~\eqref{eq:rough_level_m} (rough regime, via~\citealp[Thm.~5]{FrizRiedel2014}) both circumvent this difficulty: the Young-regime bound works chaos-by-chaos and never differentiates the error path recursively, while the rough-regime bound controls the $q$-variation rough-path distance via the sup-$L^2$ PL error, sidestepping the $p/2$-variation area norm.
\end{remark}

\begin{remark}
\label{rem:fOU_consistency}
At the critical $H = 1/4$, we have $\gamma = 4H - 1 = 0$: the error fails to vanish, consistent with the failure of the second-level rough paths of dyadic approximations to converge (in $L^1$ with respect to the $p$-variation distance) to a geometric rough path~\citep[Thm.~2]{CoutinQian2002}.
\end{remark}

%%%%%%%%%%%%%%%%%%%%%%%%%%%%%%%%%%%%%%%%%%%%%%%%%%%%%%%%%%%%%%%%%%%%%%%%%%%%%%%
\subsection{Bounded Variation Paths ($\gamma = 2$)}
\label{app:gamma_bv}
%%%%%%%%%%%%%%%%%%%%%%%%%%%%%%%%%%%%%%%%%%%%%%%%%%%%%%%%%%%%%%%%%%%%%%%%%%%%%%%

For paths of bounded variation, iterated integrals reduce to Riemann--Stieltjes integrals. By~\citet[Prop.~1.28]{FrizVictoir2010}, the piecewise-linear interpolation converges uniformly to $X$ and satisfies $|X^{(n)}|_{1\text{-var};[s,t]} \le |X|_{1\text{-var};[s,t]}$. The dominant error is again the L\'evy area. By Chen's identity~\citep{Chen1954}, the level-2 error decomposes additively over subintervals: $S^{(2)}(\mathbf{X})_{0,\Delta} - S^{(2)}(\mathbf{X}^{(n)})_{0,\Delta} = \sum_{i=0}^{n-2} \mathbb{A}_i$, where $\mathbb{A}_i$ is the L\'evy area on $[t_i, t_{i+1}]$ because the symmetric part and the cross-terms from path increments are identical for $\mathbf{X}$ and $\mathbf{X}^{(n)}$. Under Assumption~\ref{ass:M} with $p = 1$ and $\beta = 1$, since $\mathbb{A}_i$ is the antisymmetric part of $\pi_2(\mathbf{X}_{t_i,t_{i+1}})$ and antisymmetrization is an orthogonal projection in Hilbert--Schmidt norm, Lemma~\ref{lem:level_moments} (whose proof applies verbatim on the subinterval $[t_i, t_{i+1}]$) gives $\mathbb{E}[\|\mathbb{A}_i\|_{\mathrm{HS}}^2] \le d^2 C_2'\,\delta^4$ with $C_2' := C^{\mathrm{sig}}_{2,2}\bigl(\Lambda_1\,\Gamma(3)\bigr)^{-2}$. By Cauchy--Schwarz, each cross-term satisfies $|\mathbb{E}[\langle \mathbb{A}_i, \mathbb{A}_j \rangle]| \le d^2 C_2'\,\delta^4$, so $\mathbb{E}[\|\sum_i \mathbb{A}_i\|_{\mathrm{HS}}^2] \le (n-1)^2 d^2 C_2'\,\delta^4 = O(\Delta^2 \delta^2)$, yielding $\gamma = 2$. For higher levels $m \ge 3$, we quantify the rate via Chen's identity and a telescoping expansion in the truncated tensor algebra. Writing $\mathbf{X}_{0,\Delta} = \bigotimes_{i=0}^{n-2}\mathbf{X}_{t_i,t_{i+1}}$ and $\mathbf{X}^{(n)}_{0,\Delta} = \bigotimes_{i=0}^{n-2}\mathbf{X}^{(n)}_{t_i,t_{i+1}}$ and then projecting onto level~$m$:
\begin{equation}
\pi_m(\mathbf{X}_{0,\Delta} - \mathbf{X}^{(n)}_{0,\Delta}) = \sum_{j=0}^{n-2}\sum_{\substack{(a_0,\ldots,a_{n-2}):\\ \sum a_i = m,\, a_j \ge 2}} \!\!\bigotimes_{i<j}\pi_{a_i}(\mathbf{X}_{t_i,t_{i+1}}) \otimes \pi_{a_j}\!\bigl(\mathbf{X}_{t_j,t_{j+1}} - \mathbf{X}^{(n)}_{t_j,t_{j+1}}\bigr) \otimes \bigotimes_{i>j}\pi_{a_i}(\mathbf{X}^{(n)}_{t_i,t_{i+1}}),
\end{equation}
where the constraint $a_j \ge 2$ holds because $\mathbf{X}$ and $\mathbf{X}^{(n)}$ agree at levels~0 and~1 on every subinterval (endpoints coincide). For a BV path $X$, the level-$a$ iterated integral $\pi_a(\mathbf{X}_{s,t}) = \int_{s < u_1 < \cdots < u_a < t}dX_{u_1}\otimes\cdots\otimes dX_{u_a}$ satisfies, via integration against the product variation measure over the simplex --- whose total mass is $\|X\|_{1\text{-var};[s,t]}^a/a!$ --- the factorial estimate $\|\pi_a(\mathbf{X}_{s,t})\|_{\mathrm{HS}} \le \|X\|_{1\text{-var};[s,t]}^a/a!$; the same bound holds for $\mathbf{X}^{(n)}$ (a BV path with $\|X^{(n)}\|_{1\text{-var};[s,t]} \le \|X\|_{1\text{-var};[s,t]}$ by~\citealp[Prop.~1.28]{FrizVictoir2010}). Applied on each subinterval of length $\delta$, the triangle inequality yields $\|\pi_{a_j}(\mathbf{X}_{t_j,t_{j+1}} - \mathbf{X}^{(n)}_{t_j,t_{j+1}})\|_{\mathrm{HS}} \le 2\|X\|_{1\text{-var};[t_j,t_{j+1}]}^{a_j}/a_j!$. Thus, writing $L_i := \|X\|_{1\text{-var};[t_i,t_{i+1}]}$, each term in the telescoping sum is bounded pointwise by
\[
\frac{2}{\prod_i a_i!}\prod_i L_i^{a_i} \;\le\; 2\prod_i L_i^{a_i}
\]
(using $a_i! \ge 1$; the intermediate factorial factor is not needed below and is dropped). Under Assumption~\ref{ass:M} with $p = 1$ and $\beta = 1$, $\mathbb{E}[L_i^{2m}] \le C_{2m}\delta^{2m}$; H\"older's inequality with exponents $p_i = m/a_i$ gives $\mathbb{E}\bigl[\prod_i L_i^{2a_i}\bigr] \le \prod_i(\mathbb{E}[L_i^{2m}])^{a_i/m} \le C_{2m}\delta^{2m}$. Each term therefore has $L^2$ norm at most $C_m\delta^m$. The number of terms is $(n-1)\cdot |\{(a_i): \sum a_i = m,\, a_j \ge 2\}| \le C_m n^{m-1}$. By Minkowski's inequality,
\begin{equation}
\bigl(\mathbb{E}[\|\pi_m(\mathbf{X}_{0,\Delta} - \mathbf{X}^{(n)}_{0,\Delta})\|^2_{\mathrm{HS}}]\bigr)^{1/2} \le C_m\, n^{m-1}\,\delta^m = C_m\,\Delta^{m-1}\,\delta,
\end{equation}
yielding $\mathbb{E}[\|\pi_m\|^2_{\mathrm{HS}}] \le C_m^2\Delta^{2(m-1)}\delta^2 = O(\delta^2)$ for each $m \ge 2$. Summing over $m = 2,\ldots,M$ (level~1 contributes zero error since endpoints coincide) gives $\mathbb{E}[\|S^{(M)}(\mathbf{X})_{0,\Delta} - S^{(M)}(\mathbf{X}^{(n)})_{0,\Delta}\|^2_{\mathrm{HS}}] = O(\delta^2)$, confirming $\gamma = 2$.

% Appendix C: fOU Expected Signatures and Block Covariance Decay
\section{Stationary fOU: Expected Signatures and Block Covariance Decay}
\label{app:expected_signature_fOU}

This appendix has two purposes: (i) it derives the level-2 and level-4 expected-signature formulas for stationary fractional Ornstein--Uhlenbeck (fOU) processes used as deterministic ground truth in Section~\ref{sec:numerical_results} --- in closed form for every $H \in (1/4,1)$, consistent with the Cass--Ferrucci 2D Young-integral construction --- and (ii) it proves Proposition~\ref{prop:fOU_covariance_decay} via four reusable lemmas (Sections~\ref{subsec:R_tails}--\ref{subsec:chaos_block}). \citet{CassFerrucci2024} recently established the general theory for expected signatures of Gaussian processes with Hurst parameter $H \in (1/4, 1)$; their formulas are proved via Wiener chaos expansion and a regularization technique that avoids singular covariance derivatives.

For $H < 1/2$, standard Wick formulas involve $R''(\tau) \sim \tau^{2H-2}$, which diverges at the origin. The framework of~\citet{CassFerrucci2024} defines the expected signature via the 2D Young integral of the covariance kernel $\mathcal{K}$, yielding well-defined values for $H > 1/4$. Since $R'(\tau) \sim \tau^{2H-1}$ is integrable for all $H > 0$, individual $R'$ factors pose no difficulty; the $H > 1/4$ threshold arises from products $R'(\tau)^2 \sim \tau^{4H-2}$, which require $4H - 2 > -1$.

\subsection{Spectral Representation and Notation}
\label{subsec:fOU_setup}

We work with the stationary fOU process of Section~\ref{sec:verification}. We suppress the coordinate index $i$, writing $R, \zeta, f, \theta, \sigma$ for $R^i, \zeta_i, f_i, \theta_i, \sigma_i$. When coordinate indices appear alongside derivative primes, we write $R_i$, $R'_i$, $R''_i$ for $R^i$, $(R^i)'$, $(R^i)''$. The stationary autocovariance admits the spectral representation \citep[Rem.~2.4, eq.~(2.2)]{CheriditoKawaguchiMaejima2003}:
\begin{equation}
\label{eq:fOU_spectral}
R(\tau) = 2\zeta \int_0^\infty \cos(\lambda\tau) \, f(\lambda) \, d\lambda, \qquad f(\lambda) = \frac{\lambda^{1-2H}}{\theta^2 + \lambda^2},
\end{equation}
where $f(\lambda)$ is the (unnormalized) spectral density and $\zeta$ is the normalization constant chosen so that $R(0) = \Var(X_0) = \sigma^2\Gamma(2H+1)/(2\theta^{2H})$:
\begin{equation}
\label{eq:spectral_normalization}
\zeta = \frac{\Var(X_0)}{2 \int_0^\infty f(\lambda) \, d\lambda} = \frac{\sigma^2 \Gamma(2H+1)}{2\theta^{2H} \cdot 2 \int_0^\infty f(\lambda) \, d\lambda}.
\end{equation}
The first derivative is obtained by differentiating under the integral:
\begin{equation}
\label{eq:fOU_R_prime_spectral}
R'(\tau) = -2\zeta \int_0^\infty \lambda \sin(\lambda\tau) \, f(\lambda) \, d\lambda.
\end{equation}

\begin{remark}[Validity of the differentiation]
For $H > 1/2$, differentiation in~\eqref{eq:fOU_R_prime_spectral} is justified by dominated convergence. For $H \le 1/2$, the integral is only conditionally convergent at infinity. In that regime, the derivative formula holds for $\tau > 0$ as an oscillatory-integral identity, justified by the $C^\infty((0,\infty))$ argument in Lemma~\ref{lem:R_tails}.
\end{remark}

\subsection{The Centered Increment Process}
\label{subsec:centered_increment}

The signature is translation-invariant, so we work with the centered increment process
\begin{equation}
\tilde{X}_t := X_t - X_0, \qquad t \ge 0.
\end{equation}
This process satisfies $\tilde{X}_0 = 0$, $\mathbb{E}[\tilde{X}_t] = 0$ (since $X$ is stationary with constant mean), and has covariance kernel
\begin{equation}
\label{eq:K_definition}
\mathcal{K}(s,t) := \Cov(\tilde{X}_s, \tilde{X}_t) = R(|t-s|) - R(s) - R(t) + R(0).
\end{equation}
Since $|R'(\tau)| \le C\,\tau^{2H-1}$ on $(0,\Delta]$ (eq.~\eqref{eq:R_prime_local}, Lemma~\ref{lem:fOU_cov}), $R$ is $\min(2H,1)$-H\"older continuous on $[0,\Delta]$, and $\mathcal{K}$ inherits this regularity in each variable. The additional terms $-R(s) - R(t) + R(0)$ depend on a single argument each, so they contribute zero mixed 2D variation; hence $\mathcal{K}$ inherits the 2D $\rho$-variation of $(s,t) \mapsto R(|t-s|)$, which is finite for every $\rho > \rho_*(H) := \max(1,\,1/(2H))$ by Proposition~\ref{prop:fOU_assumptions}. Since $\rho_*(H) < 2$ for all $H > 1/4$, we may choose $\rho < 2$, so the Gaussian rough-path criterion~\citep[Thm.~15.33]{FrizVictoir2010} applies. The quantitative estimates of~\citet{CassFerrucci2024} used in this paper --- Thm.~2.3, Prop.~3.1, Lem.~3.8, Prop.~3.9 --- are proved under the standing covariance-derivative hypotheses~\citep[eqs.~(9)--(12)]{CassFerrucci2024}, which are pointwise bounds and do \emph{not} follow from finite 2D $\rho$-variation; they are verified for $\mathcal{K}$ in Lemma~\ref{lem:CF24_hypotheses} below.

\begin{lemma}[CF24 standing hypotheses for the fOU increment kernel]
\label{lem:CF24_hypotheses}
Fix $T > 0$ and $H \in (1/4, 1)$. There exists $C = C(H, \theta, \sigma, T) < \infty$ such that the increment kernel $\mathcal{K}$ of~\eqref{eq:K_definition} satisfies, for all $0 < s < t \le T$ and $\tau \in (0,T]$,
\begin{align*}
\emph{(i)}&\quad \bigl|\partial_s \partial_t \mathcal{K}(s,t)\bigr| \;=\; |R''(t-s)| \;\le\; C\,(t-s)^{2H-2},\\
\emph{(ii)}&\quad \bigl|\tfrac{1}{2}\tfrac{d}{dt}\mathcal{K}(t,t) - \partial_t \mathcal{K}(s,t)\bigr| \;=\; |R'(t-s)| \;\le\; C\,(t-s)^{2H-1},\\
\emph{(iii)}&\quad \bigl|\tfrac{d}{d\tau}\mathcal{K}(\tau,\tau)\bigr| \;=\; 2\,|R'(\tau)| \;\le\; C\,\tau^{2H-1},\\
\emph{(iv)}&\quad \Var(\tilde{X}_t - \tilde{X}_s) \;=\; 2\bigl(R(0) - R(t-s)\bigr) \;\le\; C\,(t-s)^{2H}.
\end{align*}
These are, respectively, the standing hypotheses (9), (10), (11), (12) of~\citet{CassFerrucci2024} (stated there for the covariance of the process to which the framework is applied; here that covariance is $\mathcal{K}$). Together with the finite 2D $\rho$-variation ($\rho < 2$) established above, this places $\tilde{X}|_{[0,T]}$ within the standing framework of~\citet{CassFerrucci2024}, with constants depending only on $(H, \theta, \sigma, T)$ and the level.
\end{lemma}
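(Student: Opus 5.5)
The four identities on the left-hand sides are direct differentiations of \eqref{eq:K_definition}, so the work lies in the right-hand bounds on $R'$ and $R''$ on $(0,T]$. For $0<s<t$ we have $\mathcal{K}(s,t)=R(t-s)-R(s)-R(t)+R(0)$. Differentiating in $t$ gives $\partial_t\mathcal{K}=R'(t-s)-R'(t)$, and then in $s$ gives $\partial_s\partial_t\mathcal{K}=-R''(t-s)$. This yields the equality in (i) up to sign, which is irrelevant under absolute values. On the diagonal, $\mathcal{K}(t,t)=2(R(0)-R(t))$, so $\tfrac{d}{dt}\mathcal{K}(t,t)=-2R'(t)$, which gives (iii). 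Also $\tfrac12\tfrac{d}{dt}\mathcal{K}(t,t)-\partial_t\mathcal{K}(s,t)=-R'(t)-R'(t-s)+R'(t)=-R'(t-s)$, which gives (ii). Finally, (iv) is stationarity: $\Var(X_t-X_s)=2(R(0)-R(t-s))$.

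These manipulations need $R\in C^2((0,\infty))$. I would take this from the spectral representation \eqref{eq:fOU_spectral} (the smoothness argument deferred to Lemma~\ref{lem:R_tails}), or more concretely from the decomposition $R=R(0)-\kappa\tau^{2H}+\psi$ with $\psi\in C^2([0,T])$ from the proof of Lemma~\ref{lem:fOU_cov}. That proof was written on $[0,\Delta]$, but it applies verbatim on $[0,T]$.

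With this decomposition the bounds follow.
\begin{itemize}
\item For (iv), Lemma~\ref{lem:fOU_cov} gives $R(0)-R(\tau)=\kappa\tau^{2H}+O(\tau^2)$, and $\tau^2\le T^{2-2H}\tau^{2H}$ on $(0,T]$.
\item For (ii) and (iii), the paper already records $|R'(\tau)|\le C\tau^{2H-1}$ in \eqref{eq:R_prime_local}.
\item For (i), differentiate twice: $R''(\tau)=-2H(2H-1)\kappa\tau^{2H-2}+\psi''(\tau)$, with $\psi''$ bounded on $[0,T]$ because $\psi\in C^2$. A bounded term is at most $T^{2-2H}\tau^{2H-2}$, since $2H-2<0$. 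At $H=1/2$ the singular term vanishes and the bound holds trivially.
\end{itemize}

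The main obstacle is justifying $\psi\in C^2$ up to the origin, specifically the estimate $|J''(\tau)|\le C\tau^{2H-2}$ in the proof of Lemma~\ref{lem:fOU_cov}. Differentiating under the integral sign there needs care because the envelope is only conditionally integrable for $H\le1/2$. If that step proves fragile, I would give a direct spectral bound instead. Write $R''(\tau)=-2\zeta\int_0^\infty\lambda^2\cos(\lambda\tau)f(\lambda)\,d\lambda$, interpreted as an oscillatory integral, and split it at $\lambda=1/\tau$. The low-frequency part is bounded by $\int_0^{1/\tau}\lambda^{1-2H}\,d\lambda\lesssim\tau^{2H-2}$, using $f\le\lambda^{-1-2H}$. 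For the high-frequency part, integrate by parts once, using $\lambda^2f(\lambda)=\lambda^{1-2H}\bigl(1-\theta^2/(\theta^2+\lambda^2)\bigr)$: the boundary term at $\lambda=1/\tau$ and the remaining integral are each $O(\tau^{2H-2})$. All constants depend only on $(H,\theta,\sigma,T)$, and taking the maximum over the finitely many coordinates makes them uniform in $i$.
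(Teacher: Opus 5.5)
Your proposal is correct. For (ii)--(iv) it coincides with the paper's proof: both rest on Lemma~\ref{lem:fOU_cov}, i.e.\ $R = R(0) - \kappa\tau^{2H} + \psi$ with $\psi'(\tau) = O(\tau)$. The paper phrases this as ``$R'(\tau)\tau^{1-2H}$ and $(R(0)-R(\tau))\tau^{-2H}$ are continuous on $(0,T]$ with finite limits at $0$.''

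The genuine difference is in (i). The paper does not invoke $\psi \in C^2$. It uses the exact identity~\eqref{eq:Rpp_identity}, $R'' = \theta^2 R - \sigma^2 H(2H-1)\tau^{2H-2}$, which Lemma~\ref{lem:R_tails} proves independently by a distributional pairing with Abel regularization. From it, $R''(\tau)\tau^{2-2H}$ is continuous with limit $-\sigma^2 H(2H-1)$ at the origin. The paper then treats $H = 1/2$ by hand from $R = \frac{\sigma^2}{2\theta}e^{-\theta\tau}$.

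Since $\kappa = \sigma^2/2$, that identity says precisely $\psi'' = \theta^2 R$, so the two routes express the same fact. Yours needs nothing beyond the scaling analysis already in Lemma~\ref{lem:fOU_cov} and handles $H = 1/2$ uniformly, because the coefficient $2H(2H-1)$ vanishes there. The paper's route gives $\psi''$ in closed form and reuses the same identity for the large-$\tau$ tail of $R''$.

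The obstacle you flag in your main route is not real. The paper records $|h^{(k)}(x)| \le C_k/(1+x^2)$ for $k = 0,1,2$, so the integrands defining $J$, $J'$, $J''$ are absolutely dominated, locally uniformly in $\tau > 0$, for every $H \in (0,1)$. For example, the $J''$ integrand is $O(v^{-1-2H})$ at infinity. Differentiation under the integral sign is therefore unproblematic. Conditional (or worse) convergence affects only the raw spectral formulas for $R'$ and $R''$.

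That is where your fallback would need real work. For $H < 1/2$ the amplitude $\lambda^{1-2H}$ grows, so $\int^\infty \lambda^{1-2H}\cos(\lambda\tau)\,d\lambda$ is not even conditionally convergent. The boundary term at infinity in your integration by parts then does not vanish classically. You would have to show that an Abel-regularized value coincides with the classical $R''(\tau)$, which is essentially what the paper's proof of Lemma~\ref{lem:R_tails} does. Since your main route already closes, the fallback is unnecessary.
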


\begin{proof}
The identities are elementary calculus on $\mathcal{K}(s,t) = R(|t-s|) - R(s) - R(t) + R(0)$: for $s < t$, $\partial_t \mathcal{K}(s,t) = R'(t-s) - R'(t)$ and $\partial_s\partial_t \mathcal{K}(s,t) = -R''(t-s)$, while $\mathcal{K}(\tau,\tau) = 2(R(0) - R(\tau))$ gives $\tfrac{d}{d\tau}\mathcal{K}(\tau,\tau) = -2R'(\tau)$, whence $\tfrac12 \tfrac{d}{dt}\mathcal{K}(t,t) - \partial_t\mathcal{K}(s,t) = -R'(t-s)$.

\emph{Case $H \ne 1/2$.} Each envelope follows from continuity plus a finite limit at the origin, the proofs of Lemma~\ref{lem:fOU_cov} applying on the window $[0,T]$ with window-dependent constants. (The inputs --- Lemma~\ref{lem:fOU_cov} and the identity~\eqref{eq:Rpp_identity} --- are established independently of the present lemma: the former in Appendix~\ref{app:gamma}, the latter in Lemma~\ref{lem:R_tails} below by a self-contained spectral argument; no circularity arises.) The function $\tau \mapsto R'(\tau)\,\tau^{1-2H}$ is continuous on $(0,T]$ and tends to $-2H\kappa$ as $\tau \downarrow 0$ (since $R' = -2H\kappa\,\tau^{2H-1} + \psi'$ with $\psi'(\tau) = O(\tau)$, eq.~\eqref{eq:R_prime_local}), hence is bounded, proving \emph{(ii)}--\emph{(iii)}. By the identity~\eqref{eq:Rpp_identity}, $R''(\tau)\,\tau^{2-2H} = \theta^2 R(\tau)\,\tau^{2-2H} - \sigma^2 H(2H-1)$ is continuous on $(0,T]$ with limit $-\sigma^2 H(2H-1)$ at the origin, hence bounded, proving \emph{(i)}. Likewise $\tau \mapsto (R(0)-R(\tau))\,\tau^{-2H}$ is continuous on $(0,T]$ with limit $\kappa$ (Lemma~\ref{lem:fOU_cov}), proving \emph{(iv)}.

\emph{Case $H = 1/2$.} Here $R(\tau) = \tfrac{\sigma^2}{2\theta}\,e^{-\theta\tau}$, so off the diagonal $|R''| = \theta^2 R \le \theta^2 R(0) \le C\,\tau^{-1}$ on $(0,T]$, $|R'| = \tfrac{\sigma^2}{2}\,e^{-\theta\tau} \le \tfrac{\sigma^2}{2} = C\,\tau^{0}$, and $R(0) - R(\tau) \le \tfrac{\sigma^2}{2}\,\tau$, giving \emph{(i)}--\emph{(iv)} with the exponents $2H-2 = -1$, $2H-1 = 0$, $2H = 1$. The atomic diagonal component of the mixed-derivative bimeasure (Remark~\ref{rem:H_half_atom}) is permitted: \emph{(i)}--\emph{(ii)} are off-diagonal conditions.
\end{proof}

In particular, the increment variance is
\begin{equation}
\label{eq:increment_variance}
\Var(\tilde{X}_\Delta) = \mathcal{K}(\Delta,\Delta) = 2(R(0) - R(\Delta)) = 2v,
\end{equation}
where we define the \emph{increment variance parameter} $v := R(0) - R(\Delta)$.

Since signatures are translation-invariant in the starting point, the deterministic shift $X \mapsto \tilde{X}$ gives $S^{(M)}(\mathbf{X})_{s,t} = S^{(M)}(\tilde{\mathbf{X}})_{s,t}$ pathwise, so working with $\tilde{X}$ is equivalent to working with $X$ for signature purposes.

\subsection{Isonormal-Gaussian Setup}
\label{subsec:isonormal_setup}

Let $\mathcal{H}$ denote the Cameron--Martin space of $\tilde{X}$, that is, the reproducing-kernel Hilbert space with kernel $\mathcal{K}$ of~\eqref{eq:K_definition}, constructed as the completion of step functions supported on $[0, \infty)$ (equivalently, on any finite time window containing the blocks $I_k$ used in the sequel) under the inner product $\langle \mathbb{1}_{[0,s]}, \mathbb{1}_{[0,t]}\rangle_\mathcal{H} = \mathcal{K}(s, t)$~\citep[\S1.1]{Nualart2006}. Let $I_j^\mathcal{H} : \mathcal{H}^{\odot j} \to L^2(\Omega)$ denote the $j$-th multiple Wiener integral on symmetric tensors, extended to $\mathcal{H}^{\otimes j}$ by $I_j^\mathcal{H}(f) := I_j^\mathcal{H}(\tilde{f})$, where $\tilde{f}$ is the symmetrization of $f$. The isometry~\citep[eqs.~(20)--(21)]{CassFerrucci2024} then reads
\begin{equation}
\label{eq:wiener_isometry}
\mathbb{E}[I_j^{\mathcal{H}}(f)\,I_k^{\mathcal{H}}(g)] = \delta_{j,k}\,j!\,\langle \tilde{f}, \tilde{g}\rangle_{\mathcal{H}^{\otimes j}}.
\end{equation}
Here $\tilde{f} := (j!)^{-1}\sum_{\pi \in S_j} f^\pi$ denotes the symmetrization, $f^\pi$ the slot permutation of $f$. Two elementary consequences of~\eqref{eq:wiener_isometry} are used repeatedly. First, expanding the symmetrizations and using the invariance of the pairing under simultaneous permutation of both arguments,
\begin{equation}
\label{eq:perm_expansion}
j!\,\langle \tilde{f}, \tilde{g}\rangle_{\mathcal{H}^{\otimes j}} \;=\; \sum_{\pi \in S_j} \langle f, g^{\pi}\rangle_{\mathcal{H}^{\otimes j}},
\end{equation}
with the \emph{plain} (unsymmetrized) tensor-product pairing on the right; in particular $|\mathbb{E}[I_j^{\mathcal{H}}(f)\,I_j^{\mathcal{H}}(g)]| \le j!\,\max_{\pi \in S_j}|\langle f, g^\pi\rangle_{\mathcal{H}^{\otimes j}}|$. Second, a kernel $f$ supported on the ordered simplex (extended by zero off it) is related to its sorting-based symmetric extension $\breve{f}$ of Lemma~\ref{lem:symm_ext_Linf} by $\breve{f} = j!\,\tilde{f}$ a.e.\ (off the null set of coordinate ties, exactly one permutation sorts each point into the simplex); the sorted extension is therefore used below only for sup-norm bookkeeping --- it shares the simplex essential supremum --- while chaos identities are stated with $\tilde{f}$ or with plain pairings. Only upper bounds are consumed downstream, and $(j!)^{\pm 1}$ discrepancies are absorbed into level-dependent constants. This is the natural ambient Gaussian space of~\citet[eqs.~(13)--(17)]{CassFerrucci2024} for $\tilde{X}$.

\subsection{Uniform Tail Bounds for $R$ and $R''$}
\label{subsec:R_tails}

\begin{lemma}[Restatement of Lemma~\ref{lem:gaussian_inputs}\emph{(a)} --- Uniform tail bounds]
\label{lem:R_tails}
For the stationary fOU autocovariance $R$ of~\eqref{eq:fOU_spectral} with $H \in (1/4, 1)$, $H \neq 1/2$, there exists a constant $C_R = C_R(H, \theta, \sigma, \Delta) < \infty$ such that
\begin{equation}
\label{eq:R_uniform_tail}
|R(\tau)| \;\le\; C_R\,\tau^{2H-2}, \qquad |R''(\tau)| \;\le\; C_R\,\tau^{2H-4} \quad \text{for all } \tau \ge \Delta.
\end{equation}
\end{lemma}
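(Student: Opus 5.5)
The plan has two stages. First, derive an ODE identity relating $R''$ to $R$ and a power of $\tau$; this is the identity~\eqref{eq:Rpp_identity} that the proof of Lemma~\ref{lem:CF24_hypotheses} already uses. Second, prove the tail bound for $R$ alone and transfer it to $R''$.

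\textbf{Step 1 (ODE identity).} For $\tau>0$ I will show
\[
R''(\tau) \;=\; \theta^2 R(\tau) - \sigma^2 H(2H-1)\,\tau^{2H-2}.
\]
The route is spectral. In~\eqref{eq:fOU_spectral}, write $\lambda^2 f(\lambda) = \lambda^{1-2H} - \theta^2 f(\lambda)$. Formally differentiating twice under the integral then gives
\[
R''(\tau) = -2\zeta\int_0^\infty \lambda^{1-2H}\cos(\lambda\tau)\,d\lambda + \theta^2 R(\tau).
\]
For $\tau>0$ the oscillatory integral $\int_0^\infty \lambda^{1-2H}\cos(\lambda\tau)\,d\lambda$ equals $\Gamma(2-2H)\cos(\pi(1-H))\,\tau^{2H-2}$, understood as an Abel limit: insert $e^{-\epsilon\lambda}$ and let $\epsilon\downarrow0$. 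The normalization $\zeta$ identifies the constant as $-\sigma^2H(2H-1)$.

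Rigour comes from the same regularization. Set $R_\epsilon(\tau) := 2\zeta\int_0^\infty \cos(\lambda\tau)e^{-\epsilon\lambda}f(\lambda)\,d\lambda$; this is smooth and may be differentiated freely. Then show that $R_\epsilon\to R$ and $R_\epsilon''\to$ the right-hand side locally uniformly on compact subsets of $(0,\infty)$. This also gives the $C^\infty((0,\infty))$ regularity promised in the remark after~\eqref{eq:fOU_R_prime_spectral}. As a cross-check, the constant can instead be matched against the small-$\tau$ behaviour $R'' \sim -2H(2H-1)\kappa\,\tau^{2H-2}$ with $\kappa=\sigma^2/2$ from Lemma~\ref{lem:fOU_cov}.

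\textbf{Step 2 (tail of $R$).} Use the ODE as an algebraic identity: $R(\tau) = \theta^{-2}\bigl(R''(\tau) + \sigma^2H(2H-1)\tau^{2H-2}\bigr)$. The bound $|R(\tau)|\le C\tau^{2H-2}$ then follows once $R''(\tau)=O(\tau^{2H-2})$ is known; in fact Step 3 shows $R''$ decays faster.

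To get the large-$\tau$ decay directly, integrate by parts in~\eqref{eq:fOU_spectral}. Split $f = f\chi + f(1-\chi)$ with $\chi$ a smooth cutoff equal to $1$ near the origin.
\begin{itemize}
\item The piece $f(1-\chi)$ is smooth, and each of its derivatives is integrable at infinity. Its cosine transform therefore decays faster than any power.
\item The piece near the origin equals $\theta^{-2}\lambda^{1-2H}\bigl(1-\lambda^2/\theta^2+\dots\bigr)\chi$. Its cosine transform has the expansion $c_1\tau^{2H-2}+c_2\tau^{2H-4}+O(\tau^{2H-6})$, with the terms coming from the successive homogeneous pieces $\lambda^{1-2H+2k}$.
\end{itemize}
This yields $R(\tau) = a_1\tau^{2H-2}+a_2\tau^{2H-4}+O(\tau^{2H-6})$ as $\tau\to\infty$, with $a_1 = \sigma^2H(2H-1)/\theta^2$; the value of $a_1$ is forced by the ODE.

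\textbf{Step 3 (tail of $R''$).} Substitute the expansion into the ODE. The $\tau^{2H-2}$ terms cancel exactly, leaving $R''(\tau) = \theta^2 a_2\tau^{2H-4}+O(\tau^{2H-6})$. Continuity of $R''$ on $[\Delta,\tau_0]$ then covers the compact range, and the two pieces combine into a single constant $C_R$ valid for all $\tau\ge\Delta$.

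\textbf{Main obstacle.} The hard part is making the asymptotic expansion rigorous with a controlled remainder. The spectral density is not smooth at $0$, so the cutoff argument needs care: the error term after removing two homogeneous pieces must be shown to be $O(\tau^{2H-6})$, or at least $o(\tau^{2H-4})$. I would handle this by repeated integration by parts on the remainder $\chi\bigl(f-\theta^{-2}\lambda^{1-2H}+\theta^{-4}\lambda^{3-2H}\bigr)=O(\lambda^{5-2H})$ near $0$. Each integration by parts gains a factor $\tau^{-1}$, so it suffices that enough derivatives of the remainder are integrable; for $H<1$ this holds after six derivatives. I would justify the cosine transforms of the homogeneous pieces via Abel summation.
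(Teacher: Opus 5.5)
Your proposal follows the same architecture as the paper: the identity $R''(\tau)=\theta^2R(\tau)-\sigma^2H(2H-1)\tau^{2H-2}$, a two-term expansion $R=a_1\tau^{2H-2}+a_2\tau^{2H-4}+\cdots$ with $\theta^2a_1=\sigma^2H(2H-1)$, exact cancellation of the $\tau^{2H-2}$ term in $R''$, and compactness on $[\Delta,\tau_0]$.

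The differences lie only in how the two inputs are established.
\begin{itemize}
\item \emph{The expansion.} The paper cites Cheridito--Kawaguchi--Maejima (Thm.~2.3 with $N=2$). You derive it directly, via a cutoff, the Taylor expansion of $f$ at $0$, Abel-summed transforms of the homogeneous pieces, and integration by parts. This is self-contained but longer.
\item \emph{The identity.} The paper first proves $R\in C^\infty((0,\infty))$ separately, using a low/high-frequency split and non-stationary phase. It then checks the identity weakly against $\phi\in C_c^\infty((0,\infty))$. Your damped approximants give $C^2$ regularity and the identity at once. Indeed $R_\epsilon\to R$ uniformly, and $R_\epsilon''=\theta^2R_\epsilon-2\zeta\,\Gamma(2-2H)(\epsilon^2+\tau^2)^{H-1}\cos\bigl((2-2H)\arctan(\tau/\epsilon)\bigr)$ converges locally uniformly on $(0,\infty)$. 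This is a slightly cleaner route.
\end{itemize}

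One claim in your last paragraph needs correcting. Take the remainder $g(\lambda)=\chi(\lambda)\lambda^{5-2H}/\bigl(\theta^4(\theta^2+\lambda^2)\bigr)$. Near $0$ one has $g^{(k)}(\lambda)\sim c_k\lambda^{5-2H-k}$ with $c_k\neq 0$ for $H\neq 1/2$. So $g^{(6)}\sim c_6\lambda^{-1-2H}$ is never integrable at the origin, and $g^{(5)}$ is integrable only for $H<1/2$. Six integrations by parts are therefore not available.

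Four are, and they suffice. The functions $g,g',g'',g'''$ vanish at $0$, and $g^{(4)}\sim c_4\lambda^{1-2H}$ is integrable for $H<1$. This gives a remainder $O(\tau^{-4})\le C\tau^{2H-4}$ for $\tau\ge 1$, which is all Step~3 needs.

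If you want the $O(\tau^{2H-6})$ remainder, first peel off the next homogeneous term $\theta^{-6}\lambda^{5-2H}$. Six integrations by parts on the resulting $O(\lambda^{7-2H})$ remainder are then legitimate.
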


\begin{proof}
By~\citet[Thm.~2.3, applied with $N = 2$]{CheriditoKawaguchiMaejima2003}, the stationary fOU autocovariance admits the Poincar\'e asymptotic expansion
\begin{equation}
\label{eq:R_poincare}
R(\tau) \;=\; \kappa_{H,1}\,\tau^{2H-2} \;+\; \kappa_{H,2}\,\tau^{2H-4} \;+\; O(\tau^{2H-6}) \qquad (\tau \to \infty),
\end{equation}
with explicit constants $\kappa_{H,1} = \tfrac{1}{2}\sigma^2\theta^{-2}(2H)(2H-1)$ and $\kappa_{H,2} = \tfrac{1}{2}\sigma^2\theta^{-4}(2H)(2H-1)(2H-2)(2H-3)$. Hence there exists $T_0 = T_0(H,\theta,\sigma)$ such that $|R(\tau)| \le C\,\tau^{2H-2}$ for $\tau \ge T_0$. Continuity of $R$ on the compact interval $[\Delta, T_0]$ and positivity of $\tau \mapsto \tau^{2H-2}$ there absorb $\sup_{[\Delta, T_0]}|R(\tau)|/\tau^{2H-2}$ into the constant, yielding the $R$-part of~\eqref{eq:R_uniform_tail}.

For $R''$, write $f(\lambda) = \lambda^{1-2H}/(\theta^2+\lambda^2)$ in~\eqref{eq:fOU_spectral}. Since $f(\lambda)\sin(\lambda\tau) \to 0$ as $\lambda \to \infty$, while at $\lambda = 0$ one has $f(\lambda)\sin(\lambda\tau) \sim \theta^{-2}\tau\,\lambda^{2-2H} \to 0$ for every $H<1$, an integration by parts (IBP) in $\lambda$ gives
\[
R(\tau) = -\frac{2\zeta}{\tau}\int_0^\infty \sin(\lambda\tau)\,f'(\lambda)\,d\lambda, \qquad \tau>0.
\]
Here $f'(\lambda) \sim (1-2H)\theta^{-2}\lambda^{-2H}$ as $\lambda \downarrow 0$, whereas $f'(\lambda)=O(\lambda^{-2-2H})$ as $\lambda\to\infty$. At infinity the post-integration-by-parts integrand $f'(\lambda)\sin(\lambda\tau)$ is therefore absolutely integrable. At the origin, $f'(\lambda)$ alone fails to be absolutely integrable for $H > 1/2$, but the sine factor rescues integrability via $\sin(\lambda\tau)f'(\lambda) \sim (1-2H)\theta^{-2}\tau\,\lambda^{1-2H}$, which is integrable for every $H < 1$. Further iteration of this IBP argument requires care because the second-derivative boundary term $[f'(\lambda)\cos(\lambda\tau)]_0^\infty$ no longer vanishes at $\lambda = 0$ (where $f'(\lambda) \sim (1-2H)\theta^{-2}\lambda^{-2H}$ diverges). For $\tau > 0$, however, smoothness can be obtained directly: split $R(\tau) = R_\Lambda(\tau) + R^\Lambda(\tau)$ at any fixed $\Lambda > 0$, with $R_\Lambda(\tau) := 2\zeta\int_0^\Lambda \cos(\lambda\tau) f(\lambda)\,d\lambda$ and $R^\Lambda(\tau)$ the tail. Since $\lambda^k f(\lambda)$ is integrable on $[0, \Lambda]$ for every $k$ (as $f(\lambda) \sim \theta^{-2}\lambda^{1-2H}$ near 0 with $1-2H > -1$ for $H < 1$), differentiation under the integral gives $R_\Lambda \in C^\infty(\mathbb{R})$. Repeated nonstationary-phase IBP on $[\Lambda, \infty)$ (where each IBP step introduces a $\tau^{-1}$ factor and the relevant $f^{(k)}$ behaves as $O(\lambda^{-1-2H-k})$ at infinity, while the finite-endpoint boundary term at $\lambda = \Lambda$ is a smooth function of $\tau$ on $[\tau_0, \infty)$ --- as $f$ and its derivatives are finite at the fixed $\Lambda > 0$ --- hence harmless) shows $R^\Lambda \in C^\infty([\tau_0, \infty))$ for any $\tau_0 > 0$; hence $R \in C^\infty((0,\infty))$.

Since $R \in C^\infty((0,\infty))$ by the preceding paragraph, $R''$ exists classically on $(0,\infty)$. We claim the exact second-order identity
\begin{equation}
\label{eq:Rpp_identity}
R''(\tau) \;=\; \theta^2 R(\tau) \;-\; \sigma^2 H(2H-1)\,\tau^{2H-2}, \qquad \tau > 0.
\end{equation}
(At $H = 1/2$ the second term vanishes and~\eqref{eq:Rpp_identity} reduces to $R'' = \theta^2 R$, recovering the exponential OU covariance.) To prove~\eqref{eq:Rpp_identity}, fix $\phi \in C_c^\infty((0,\infty))$ and set $\widehat{\phi}_c(\lambda) := \int_0^\infty \phi(\tau)\cos(\lambda\tau)\,d\tau$, which decays faster than any polynomial. Two integrations by parts give $\int_0^\infty \phi''(\tau)\cos(\lambda\tau)\,d\tau = -\lambda^2\widehat{\phi}_c(\lambda)$, so Fubini's theorem (justified since $f \in L^1(0,\infty)$ and $\phi, \phi'' \in L^1$), the spectral representation~\eqref{eq:fOU_spectral}, and the algebraic identity $(\theta^2+\lambda^2)f(\lambda) = \lambda^{1-2H}$ give
\[
\int_0^\infty \bigl(\theta^2 R(\tau) - R''(\tau)\bigr)\phi(\tau)\,d\tau
= \int_0^\infty R(\tau)\bigl(\theta^2\phi(\tau) - \phi''(\tau)\bigr)\,d\tau
= 2\zeta \int_0^\infty \lambda^{1-2H}\,\widehat{\phi}_c(\lambda)\,d\lambda,
\]
where the first equality moves the derivatives onto $R$ ($\phi$ is compactly supported in $(0,\infty)$, where $R$ is smooth). The right-hand side is evaluated by Abelian regularization. For $\alpha > 0$ and $s := 2-2H \in (0, 3/2)$, the absolutely convergent Gamma integral $\int_0^\infty \lambda^{s-1} e^{-(\alpha - i\tau)\lambda}\,d\lambda = \Gamma(s)\,(\alpha - i\tau)^{-s}$ (principal branch) yields, upon taking real parts,
\[
\int_0^\infty \lambda^{1-2H} e^{-\alpha\lambda}\cos(\lambda\tau)\,d\lambda
= \Gamma(2-2H)\,(\alpha^2+\tau^2)^{-(1-H)}\cos\bigl((2-2H)\arctan(\tau/\alpha)\bigr).
\]
Inserting $e^{-\alpha\lambda}$ into the $\lambda$-integral (dominated convergence, using $\lambda^{1-2H}|\widehat{\phi}_c| \in L^1$), exchanging the order of integration at fixed $\alpha > 0$ (absolute convergence), and letting $\alpha \downarrow 0$ (dominated convergence on $\operatorname{supp}\phi \subset (0,\infty)$, where $\arctan(\tau/\alpha) \to \pi/2$) gives
\[
\int_0^\infty \lambda^{1-2H}\,\widehat{\phi}_c(\lambda)\,d\lambda
= \Gamma(2-2H)\cos\bigl(\pi(1-H)\bigr)\int_0^\infty \tau^{2H-2}\,\phi(\tau)\,d\tau.
\]
Since $\cos(\pi(1-H)) = -\cos(\pi H)$, $2\zeta = \sigma^2\Gamma(2H+1)\sin(\pi H)/\pi$, and the reflection formula gives $\Gamma(2-2H)\Gamma(2H-1) = \pi/\sin(\pi(2H-1)) = -\pi/\sin(2\pi H)$,
\begin{align*}
2\zeta\,\Gamma(2-2H)\cos\bigl(\pi(1-H)\bigr)
&= -\frac{\sigma^2\Gamma(2H+1)\sin(\pi H)\cos(\pi H)}{\pi}\cdot\Bigl(-\frac{\pi}{\sin(2\pi H)\,\Gamma(2H-1)}\Bigr) \\
&= \frac{\sigma^2\,\Gamma(2H+1)}{2\,\Gamma(2H-1)}
= \sigma^2 H(2H-1).
\end{align*}
Thus both sides of~\eqref{eq:Rpp_identity} have the same pairing with every $\phi \in C_c^\infty((0,\infty))$; both are continuous on $(0,\infty)$, so they coincide pointwise.

The tail bound is now immediate: $\kappa_{H,1} = \tfrac{1}{2}\sigma^2\theta^{-2}(2H)(2H-1) = \sigma^2 H(2H-1)/\theta^2$, so~\eqref{eq:Rpp_identity} and the expansion~\eqref{eq:R_poincare} give the exact leading-order cancellation
\begin{equation}
\label{eq:Rpp_poincare}
R''(\tau) \;=\; \theta^2\bigl(R(\tau) - \kappa_{H,1}\,\tau^{2H-2}\bigr) \;=\; \theta^2\kappa_{H,2}\,\tau^{2H-4} \;+\; O(\tau^{2H-6}) \qquad (\tau \to \infty).
\end{equation}
Hence there exists $T_1 = T_1(H,\theta,\sigma)$ with $|R''(\tau)| \le C\,\tau^{2H-4}$ for $\tau \ge T_1$; continuity of $R''$ on the compact interval $[\Delta,\max(T_1,T_0)]$ absorbs the remaining range, completing the proof.
\end{proof}

\subsection{Chaos Decomposition for Block Signatures}
\label{subsec:chaos_block}

Throughout this subsection, fix a truncation level $M \ge \lfloor p \rfloor$ (where $p > \max(2, 1/H)$ is the rough-path parameter of Proposition~\ref{prop:fOU_assumptions}) and a word $w = (w_1, \ldots, w_m)$ of length $|w| = m \le M$. We locally redefine $I_k := [k\Delta, (k+1)\Delta]$ with left endpoint $s_k := k\Delta$, so that $I_0 = [0, \Delta]$ and the separation between $I_0$ and $I_h$ is $(h-1)\Delta$ for $h \ge 1$. Let $Z_k^{(m)} \in (\mathbb{R}^d)^{\otimes m}$ denote the level-$m$ component of the block signature $S^{(M)}(\tilde{\mathbf{X}})_{s_k, s_k + \Delta}$, and let $c_{h,m}(w) := \Cov(\langle w, Z_0^{(m)}\rangle, \langle w, Z_h^{(m)}\rangle)$.

\begin{lemma}[Restatement of Lemma~\ref{lem:gaussian_inputs}\emph{(b)} --- Chaos expansion of block signature]
\label{lem:chaos_expansion_block}
Under the assumptions of Proposition~\ref{prop:fOU_assumptions}, $\tilde{\mathbf{X}}$ admits a Gaussian rough-path lift in $G^{\lfloor p \rfloor}(\mathbb{R}^d)$ with Fernique-type moments by~\citet[Thm.~15.33]{FrizVictoir2010}, and the Lyons extension theorem~\citep[Thm.~9.5]{FrizVictoir2010} determines $\langle w, Z_k^{(m)}\rangle \in L^2(\Omega)$. Applying~\citet[Thm.~2.3, Prop.~3.9]{CassFerrucci2024} to each signature coordinate yields the Wiener chaos expansion
\begin{equation}
\label{eq:chaos_expansion_block}
\langle w, Z_k^{(m)}\rangle \;=\; \sum_{\substack{0 \le j \le m\\ j \equiv m\,(2)}} I_j^{\mathcal{H}}(f_{k,w,j}^{(m)}),
\end{equation}
where $f_{k,w,j}^{(m)} \in \mathcal{H}^{\otimes j}$ is the partial-contraction kernel constructed in~\citet[Def.~3.3]{CassFerrucci2024}, supported on the ordered simplex of $I_k^j$ and extended by zero to $I_k^j$. The constants in all kernel bounds below are independent of the block index $k$. Centering $Z_k^{(m)}$ eliminates the $j=0$ (deterministic) contribution, so the isometry~\eqref{eq:wiener_isometry} gives
\begin{equation}
\label{eq:chm_chaos_sum}
c_{h,m}(w) \;=\; \sum_{\substack{1 \le j \le m\\ j \equiv m\,(2)}} j!\,\bigl\langle \tilde{f}_{0,w,j}^{(m)}, \tilde{f}_{h,w,j}^{(m)}\bigr\rangle_{\mathcal{H}^{\otimes j}}
\;=\; \sum_{\substack{1 \le j \le m\\ j \equiv m\,(2)}} \sum_{\pi \in S_j} \bigl\langle f_{0,w,j}^{(m)}, (f_{h,w,j}^{(m)})^{\pi}\bigr\rangle_{\mathcal{H}^{\otimes j}},
\end{equation}
the second form by the permutation expansion~\eqref{eq:perm_expansion}.
\end{lemma}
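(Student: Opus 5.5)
The plan is to reduce the expansion to the Cass--Ferrucci theorem applied on a single block, and then read off the covariance formula from the isometry~\eqref{eq:wiener_isometry}.

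\emph{Step 1 (membership in $L^2$ and finite chaos).} By Proposition~\ref{prop:fOU_assumptions}(M), the increment process $\tilde X$ admits its canonical lift with Fernique moments of every order. The Lyons extension, together with the pathwise bound~\eqref{eq:level_rough_factorial_pathwise}, then puts $\langle w,Z_k^{(m)}\rangle$ in every $L^q$. To see that this coordinate lies in the first $m$ Wiener chaoses, we note that the coordinate is an $L^2$ limit of polynomials of degree $m$ in the jointly Gaussian increments of the dyadic piecewise-linear approximations (naturality, a.s.\ and $L^2$ convergence via Fernique). The closed space $\bigoplus_{j\le m}\mathcal H_j$ is preserved under such limits. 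The parity restriction $j\equiv m\ (2)$ follows from the symmetry $\tilde X\mapsto-\tilde X$: it preserves the law and multiplies the level-$m$ coordinate by $(-1)^m$, while it acts on $\mathcal H_j$ by $(-1)^j$.

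\emph{Step 2 (identification of the kernels).} We apply \citet[Thm.~2.3, Prop.~3.9]{CassFerrucci2024} on the window $[s_k,s_k+\Delta]$. Their standing hypotheses (9)--(12) hold for $\mathcal K$ by Lemma~\ref{lem:CF24_hypotheses}, applied with $T=(k+1)\Delta$, and the finite 2D $\rho$-variation with $\rho<2$ is established above.

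The one subtlety is block-uniformity. The process $\tilde X$ restricted to $I_k$ is not stationary, because $\mathcal K$ involves $R(s)$ and $R(t)$. However, the signature over $I_k$ depends only on increments inside $I_k$, and these have covariance $R(|t-s|)$ shifted by $s_k$. We therefore apply CF24 to the shifted process $u\mapsto \tilde X_{s_k+u}-\tilde X_{s_k}$ on $[0,\Delta]$, which has the same law as $\tilde X|_{[0,\Delta]}$ by stationarity. We then transport its kernels into $\mathcal H$ by the isometry sending $\mathbb 1_{[0,u]}\mapsto \mathbb 1_{[s_k,s_k+u]}$, which intertwines the multiple integrals as in \citet[\S1.1]{Nualart2006}. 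This gives kernels supported on $\Delta^j[I_k]$, extended by zero, with all constants coming from the $T=\Delta$ window and hence independent of $k$. The regularised Cass--Ferrucci approximants converge in $\mathcal H^{\otimes j}$, and their limits are the projections of the coordinate onto each chaos; uniqueness of the chaos decomposition then identifies these limits with the projections from Step~1.

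\emph{Step 3 (covariance).} Subtracting the mean removes exactly $I_0(f_{k,w,0})=\mathbb E\langle w,Z_k^{(m)}\rangle$. We then expand $c_{h,m}(w)$ bilinearly in the chaos sums. Orthogonality of distinct orders in~\eqref{eq:wiener_isometry} kills all cross terms. The diagonal terms are $j!\langle\tilde f_0,\tilde f_h\rangle$, and~\eqref{eq:perm_expansion} rewrites each as a sum of plain pairings over $S_j$.

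\emph{Main obstacle.} The step requiring the most care is Step~2: justifying that CF24's expansion, which is stated for a process started at $0$ on a fixed interval, transfers to each block with constants uniform in $k$. The transport isometry $\iota_k$ must be shown to be well defined, i.e.\ that $\langle \mathbb 1_{[s_k,s_k+u]},\mathbb 1_{[s_k,s_k+v]}\rangle_{\mathcal H}$ equals the shifted kernel. I would check this directly from~\eqref{eq:K_definition}: the rectangle increment of $\mathcal K$ over $[s_k,s_k+u]\times[s_k,s_k+v]$ is $R(0)-R(u)-R(v)+R(|u-v|)=\mathcal K(u,v)$.
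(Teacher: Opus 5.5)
Your proposal is correct and follows essentially the paper's route: you apply Cass--Ferrucci with $T=\Delta$ to the re-based increment process $u\mapsto \tilde X_{s_k+u}-\tilde X_{s_k}$, transport its kernels into $\mathcal H$ via the shift isometry $\iota_k$ (you verify the isometry directly from the rectangle increment of $\mathcal K$), and then read off the covariance from the Wiener isometry and the permutation expansion. Your Step~1 (finite chaos and parity via polynomial approximants and the reflection $\tilde X\mapsto-\tilde X$) is an extra independent check that the paper delegates to Cass--Ferrucci, and your initial mention of $T=(k+1)\Delta$ is unnecessary once you work on the re-based process.
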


\begin{proof}
The application of~\citet{CassFerrucci2024} is blockwise, which is what makes the constants $k$-uniform. For each $k$, consider the re-based increment process $\tilde{X}^{(k)}_t := X_{s_k + t} - X_{s_k}$, $t \in [0, \Delta]$. By stationarity (Assumption~\ref{ass:S}), $\tilde{X}^{(k)} \stackrel{d}{=} \tilde{X}|_{[0,\Delta]}$, and its increment covariance is $\mathcal{K}|_{[0,\Delta]^2}$ for \emph{every} $k$; Lemma~\ref{lem:CF24_hypotheses} (with $T = \Delta$) verifies the CF24 standing hypotheses for it. Hence~\citet[Thm.~2.3, Prop.~3.9]{CassFerrucci2024} applies to $\tilde{X}^{(k)}$ on $[0,\Delta]$ and, by translation invariance of the signature, expands $\langle w, Z_k^{(m)}\rangle = \langle w, S^{(m)}(\tilde{\mathbf{X}}^{(k)})_{0,\Delta}\rangle$ with kernels on $[0,\Delta]^j$ and constants independent of $k$. The expansion is transported into the ambient space $\mathcal{H}$ by the shift embedding $\iota_k$ determined by $\mathbb{1}_{[0,t]} \mapsto \mathbb{1}_{(s_k,\, s_k + t]}$, which is isometric --- increment covariances are shift-invariant by stationarity, so $\langle \mathbb{1}_{(s_k, s_k+t]}, \mathbb{1}_{(s_k, s_k+t']}\rangle_{\mathcal{H}} = \mathcal{K}(t,t')$ --- and intertwines multiple Wiener integrals, $I_j^{(k)}(f) = I_j^{\mathcal{H}}(\iota_k^{\otimes j} f)$~\citep[\S1.1]{Nualart2006}. Under $\iota_k^{\otimes j}$ the block-$0$ kernel maps to its $s_k$-shift: this is the kernel $f_{k,w,j}^{(m)}$ supported on $I_k^j$, and in particular $\|f_{k,w,j}^{(m)}\|_{L^\infty}$ and the CF24 constants below carry no $k$-dependence. The displayed covariance identity is then the isometry~\eqref{eq:wiener_isometry} applied to~\eqref{eq:chaos_expansion_block} after centering.
\end{proof}

\begin{lemma}[Restatement of Lemma~\ref{lem:gaussian_inputs}\emph{(c)} --- Symmetric-extension $L^\infty$ bound]
\label{lem:symm_ext_Linf}
Let $\Delta^j[s,t] := \{(v_1, \ldots, v_j) \in [s,t]^j : v_1 \le \cdots \le v_j\}$ denote the ordered simplex, and let $\breve{f}_{k,w,j}^{(m)}$ denote the sorting-based symmetric extension of $f_{k,w,j}^{(m)}$ to the full box $I_k^j = [s_k, s_k + \Delta]^j$, defined by $\breve{f}_{k,w,j}^{(m)}(v_1, \ldots, v_j) := f_{k,w,j}^{(m)}(v_{\pi(1)}, \ldots, v_{\pi(j)})$, where $\pi$ is the unique permutation sorting $(v_1, \ldots, v_j)$ in increasing order. Then $\breve{f}_{k,w,j}^{(m)} = j!\,\tilde{f}_{k,w,j}^{(m)}$ a.e.\ (Appendix~\ref{subsec:isonormal_setup}), and
\begin{equation}
\label{eq:symm_ext_Linf}
\|\breve{f}_{k,w,j}^{(m)}\|_{L^\infty(I_k^j)} \;=\; \|f_{k,w,j}^{(m)}\|_{L^\infty(\Delta^j[s_k, s_k+\Delta])} \;\le\; C_{\mathrm{CF}}\,\Delta^{(m-j)H},
\end{equation}
where $C_{\mathrm{CF}} = C_{\mathrm{CF}}(H, \theta, \sigma, \Delta, m) < \infty$ is independent of the block index $k$ (Lemma~\ref{lem:chaos_expansion_block}). In particular $\|\tilde{f}_{k,w,j}^{(m)}\|_{L^\infty(I_k^j)} \le \|\breve{f}_{k,w,j}^{(m)}\|_{L^\infty(I_k^j)}$, so the same bound holds for the symmetrization.
\end{lemma}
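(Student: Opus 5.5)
The plan is to prove the three assertions in order: the a.e.\ identity $\breve{f}_{k,w,j}^{(m)} = j!\,\tilde{f}_{k,w,j}^{(m)}$, then the equality of essential suprema, then the simplex bound. The symmetrization bound is then a one-line corollary.

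\emph{The a.e.\ identity.} Let $N \subset I_k^j$ be the union of the hyperplanes $\{v_a = v_b\}$, $a \neq b$, which is Lebesgue-null. For $v \notin N$ there is exactly one permutation $\pi_v$ sorting $v$ into the interior of $\Delta^j[s_k,s_k+\Delta]$. Since $f := f_{k,w,j}^{(m)}$ vanishes off the simplex, $f^\pi(v) = 0$ for every $\pi \neq \pi_v$. Hence $j!\,\tilde f(v) = \sum_{\pi \in S_j} f^\pi(v) = f^{\pi_v}(v) = \breve f(v)$, which is the identity recorded in Appendix~\ref{subsec:isonormal_setup}.

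\emph{Equality of essential suprema.} Partition $I_k^j \setminus N$ into the $j!$ open chambers $\{v : v_{\pi(1)} < \cdots < v_{\pi(j)}\}$. On each chamber, $\breve f$ is $f$ composed with a coordinate permutation, which is a linear measure-preserving bijection onto the open simplex. Preimages of null sets are therefore null, so the essential supremum of $\breve f$ over each chamber equals that of $f$ over the simplex. Taking the maximum over chambers gives the first equality in~\eqref{eq:symm_ext_Linf}.

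\emph{The simplex bound.} For $k=0$, I would invoke the partial-contraction sup-norm estimate of~\citet[Prop.~3.1]{CassFerrucci2024}. In the present length-$m$, chaos-$j$ notation it reads $|P|_{st} \lesssim (t-s)^{(m-j)H}$, and I would apply it with $(s,t) = (0,\Delta)$ to each pairing pattern $P \in \mathcal{P}^m_j$. Its hypotheses are the standing conditions (9)--(12), which hold for $\mathcal{K}$ on $[0,\Delta]$ by Lemma~\ref{lem:CF24_hypotheses} with $T=\Delta$. Summing over the $|\mathcal{P}^m_j| = O_m(1)$ patterns gives $C_{\mathrm{CF}}\,\Delta^{(m-j)H}$.

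For general $k$, the kernel $f_{k,w,j}^{(m)}$ is by Lemma~\ref{lem:chaos_expansion_block} the $s_k$-shift of the block-$0$ kernel under $\iota_k^{\otimes j}$. A translation of the argument preserves sup norms, so $C_{\mathrm{CF}}$ carries no $k$-dependence.

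\emph{Symmetrization.} From the a.e.\ identity, $|\tilde f| = |\breve f|/j! \le |\breve f|$, which gives the last claim.

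\emph{Main obstacle.} The delicate step is passing the CF24 bound from the smoothed approximants (mollification parameter $\varrho_\ell$) to the limiting kernel as a genuine pointwise-a.e.\ function. This matters especially for $H \le 1/2$, where $\mathcal{H}$ contains distributions and the kernel is a priori only an element of $\mathcal{H}^{\otimes j}$. I would proceed in three steps:
\begin{enumerate}
\item Use the dominating function of~\citet[Lem.~3.8, Prop.~3.9]{CassFerrucci2024} to show that the smoothed kernels are bounded by $C\,\Delta^{(m-j)H}$ uniformly in $\ell$ and converge a.e.\ on the simplex.
\item Deduce by dominated convergence that the a.e.\ limit obeys the same bound.
\item Identify this a.e.\ limit with the $\mathcal{H}^{\otimes j}$-limit. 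The convergence is in $\mathcal{H}^{\otimes j}$ with $L^\infty$ domination, so I would pass to a subsequence converging a.e.
\end{enumerate}
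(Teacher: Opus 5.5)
Your argument follows the paper's proof: the a.e.\ identity via the unique sorting permutation, the equality of essential suprema, the simplex bound from \citet[Prop.~3.1]{CassFerrucci2024} under Lemma~\ref{lem:CF24_hypotheses}, and $k$-independence via the shift embedding of Lemma~\ref{lem:chaos_expansion_block}. Your chamber-by-chamber argument, using that coordinate permutations preserve Lebesgue measure, is a slightly more careful version of the paper's one-line justification of the essential-supremum equality.

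The one step that fails as written is item~3 of your ``main obstacle'' plan. Convergence in $\mathcal{H}^{\otimes j}$ with a uniform $L^\infty$ bound does not yield an a.e.-convergent subsequence when $H>1/2$. There the $\mathcal{H}$-norm is an integral against a bimeasure with integrable density $\lesssim 1+|u-v|^{2H-2}$, so it is weaker than the $L^2$-norm. For example, $\sin(N\cdot)$ on $[0,\Delta]$ is bounded and tends to $0$ in $\mathcal{H}$. Yet no subsequence tends to $0$ a.e., since $\int_0^\Delta \sin^2(Nu)\,du \to \Delta/2$.

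Relatedly, your parenthetical has the regimes reversed. It is for $H>1/2$ that $\mathcal{H}$, like the fBm Cameron--Martin space, contains genuine distributions; for $H<1/2$ it is a function space.

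The step is dispensable, however. As the paper uses it, \citet[Prop.~3.9]{CassFerrucci2024} already identifies the limiting kernel $P_{st}$ as the a.e.\ limit of the approximants. Its ``Moreover'' clause also gives $|P_{st}|\lesssim|P|_{st}$ pointwise on the simplex, and combining this with Prop.~3.1 is exactly how the paper concludes.

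If you want a self-contained identification for $H>1/2$, use weak rather than a.e.\ convergence. Bounded kernels lie in $\mathcal{H}^{\otimes j}$, and pairing with any step tensor is an integral against an integrable density. Dominated convergence then shows the approximants converge weakly to their a.e.\ limit, which must therefore coincide with the strong limit.

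Finally, passing the uniform bound to the a.e.\ limit in your step~2 needs no dominated convergence: a pointwise bound survives pointwise limits.
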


\begin{proof}
By~\citet[Prop.~3.1]{CassFerrucci2024}, the uniform bound $|P|_{st} \lesssim (t-s)^{(n-m)H}$ holds over the free-variable simplex $\Delta^m[s,t]$ for every pairing-integral $P \in \mathcal{P}^n_m$; with $(n,m)_{\mathrm{CF24}} \leftrightarrow (m,j)_{\mathrm{paper}}$ and $(t-s) = \Delta$, this gives the simplex bound $|P|_{s_k, s_k + \Delta} \lesssim \Delta^{(m-j)H}$ on $\Delta^j[s_k, s_k+\Delta]$. By the `Moreover' part of~\citet[Prop.~3.9]{CassFerrucci2024}, $|P_{st}| \lesssim |P|_{st}$ on $\Delta^j[s,t]$, the right-hand side being the absolute-value integral bounded in~\citet[Prop.~3.1]{CassFerrucci2024}; hence the limiting kernel $f_{k,w,j}^{(m)}$ satisfies the simplex essential-supremum bound $\|f_{k,w,j}^{(m)}\|_{L^\infty(\Delta^j[s_k, s_k+\Delta])} \le C_{\mathrm{CF}}\,\Delta^{(m-j)H}$; the constants are $k$-independent by the blockwise application and shift embedding of Lemma~\ref{lem:chaos_expansion_block}. The sorting permutation $\pi$ is uniquely defined on $I_k^j \setminus E$, where $E := \{v \in I_k^j : v_i = v_{i'} \text{ for some } i \ne i'\}$ has Lebesgue measure zero and is therefore irrelevant for the essential supremum. Sorting-based extension preserves the essential supremum, because every point in $I_k^j \setminus E$ corresponds under some permutation to a point in the simplex: $\|\breve{f}_{k,w,j}^{(m)}\|_{L^\infty(I_k^j)} = \|f_{k,w,j}^{(m)}\|_{L^\infty(\Delta^j[s_k, s_k+\Delta])}$. The relation $\breve{f} = j!\,\tilde{f}$ a.e.\ holds because, for $f$ supported on the simplex and extended by zero, exactly one term of the sum $\sum_{\pi} f^\pi$ is non-zero at each point of $I_k^j \setminus E$, namely the sorting permutation; the bound for the symmetrization follows since $j!\,\tilde{f} = \breve{f}$.
\end{proof}

\begin{lemma}[Restatement of Lemma~\ref{lem:gaussian_inputs}\emph{(d)} --- Cameron--Martin integral representation on separated supports]
\label{lem:CM_integral_rep}
Assume the tail bound~\eqref{eq:R_uniform_tail} of Lemma~\ref{lem:R_tails} and the conventions of Lemma~\ref{lem:chaos_expansion_block}. For $h \ge 2$, the partial-contraction kernels of~\citet[Def.~3.3]{CassFerrucci2024} satisfy, in the \emph{plain} (unsymmetrized) tensor-product pairing and for every slot permutation $\pi \in S_j$,
\begin{equation}
\label{eq:CM_integral_rep}
\bigl\langle f_{0,w,j}^{(m)}, (f_{h,w,j}^{(m)})^{\pi}\bigr\rangle_{\mathcal{H}^{\otimes j}}
\;=\; (-1)^j \int_{I_0^j \times I_h^j} f_{0,w,j}^{(m)}(u)\, (f_{h,w,j}^{(m)})^{\pi}(v) \prod_{\ell=1}^{j} R''(v_\ell - u_\ell)\,du\,dv.
\end{equation}
The resulting envelope bounds below depend only on $L^\infty$-norms and the separated supports, both permutation-invariant, so they hold uniformly over $\pi$.
\end{lemma}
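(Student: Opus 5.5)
The plan is to reduce the tensor pairing to a product of scalar Cameron--Martin pairings, slot by slot. We then identify each scalar pairing, for functions with separated supports, with integration against the mixed second derivative of $\mathcal{K}$, which equals $-R''(v-u)$ off the diagonal.

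\emph{Step 1: rank-one step functions.} Take $\phi = \mathbb{1}_{(a,b]}$ with $(a,b] \subset I_0$ and $\psi = \mathbb{1}_{(c,e]}$ with $(c,e] \subset I_h$, where $c - b \ge (h-1)\Delta \ge \Delta > 0$. By the definition of $\mathcal{H}$,
\[
\langle \phi, \psi\rangle_{\mathcal{H}} = \mathcal{K}(b,e) - \mathcal{K}(a,e) - \mathcal{K}(b,c) + \mathcal{K}(a,c).
\]
The single-variable terms $-R(s)-R(t)+R(0)$ of \eqref{eq:K_definition} cancel in this rectangle increment. What remains is $R(e-b) - R(e-a) - R(c-b) + R(c-a)$. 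Since $R \in C^\infty((0,\infty))$ (Lemma~\ref{lem:R_tails}) and all arguments are at least $\Delta$, the fundamental theorem of calculus in both variables gives
\[
\langle \phi,\psi\rangle_{\mathcal{H}} = -\int_a^b\!\!\int_c^e R''(v-u)\,dv\,du.
\]
This is the sign source: one factor $-1$ per slot.

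\emph{Step 2: tensor products and bounded kernels.} For elementary tensors $\otimes_\ell \phi_\ell$ and $\otimes_\ell \psi_\ell$, with the $\phi_\ell$ supported in $I_0$ and the $\psi_\ell$ in $I_h$, the $\mathcal{H}^{\otimes j}$ pairing is the product of the slot pairings. This yields \eqref{eq:CM_integral_rep} with the factor $(-1)^j\prod_\ell R''(v_\ell-u_\ell)$. The identity is bilinear, so it extends to linear combinations of step tensors.

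Both sides are then continuous in the relevant topology. The right side is bounded by
\[
\|F\|_{L^1(I_0^j)}\,\|G\|_{L^1(I_h^j)}\,\bigl(C_R\Delta^{2H-4}\bigr)^j
\]
by Lemma~\ref{lem:R_tails}(a), since $R''$ is bounded on $[(h-1)\Delta,(h+1)\Delta]$. For the left side, we need step-tensor approximations of $f_{0,w,j}^{(m)}$ and $(f_{h,w,j}^{(m)})^\pi$ that converge both in $\mathcal{H}^{\otimes j}$ and in $L^1$, or at least a.e.\ with a uniform $L^\infty$ bound. The sup bound is Lemma~\ref{lem:symm_ext_Linf}, which is permutation invariant. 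Given such approximations, the left side converges by continuity of the inner product and the right side by dominated convergence.

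\emph{Step 3: the obstacle.} The delicate point is the left-hand convergence for $H<1/2$. There a bounded function need not lie in $\mathcal{H}$ with norm controlled by its sup, because the local bimeasure $\partial_s\partial_t\mathcal{K}$ is singular near the diagonal. I would handle it through the Cass--Ferrucci construction itself. The kernel $f^{(m)}_{k,w,j}$ is by definition the $\mathcal{H}^{\otimes j}$-limit of the smoothed kernels of Def.~3.3 (Prop.~3.9). These smoothed kernels are finite combinations of mollified indicators, so Step~2 applies to them exactly.

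It then suffices to pass to the limit on the right side at fixed separated supports. This needs only a.e.\ convergence of the smoothed kernels with a uniform dominating bound, which is precisely the `Moreover' domination of Prop.~3.9 together with Lem.~3.8. The near-diagonal singularity never enters, because the cross-block integrand $R''(v-u)$ is smooth on the separated region. For $H>1/2$ the bimeasure extension in Appendix~\ref{app:gamma} gives an alternative route.

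Finally, applying a slot permutation $\pi$ to the $I_h$-kernel only relabels which $v$-variable pairs with which $u$-variable. It therefore changes neither the structure of the identity nor the envelope estimates, which depend only on sup norms and the separation $(h-1)\Delta$.
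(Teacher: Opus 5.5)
Your Step~1, the transfer of the step-function identity to the Cass--Ferrucci approximants, and the right-hand limit are exactly the paper's argument, including the sign bookkeeping and the $\pi$-invariance of the envelope. On the transfer: the approximants depend on their free arguments only through partition cells, so they are mixtures of indicator tensors and Fubini applies. On the right-hand limit: it is dominated convergence on $I_0^j\times I_h^j$, using the a.e.\ convergence of Cass--Ferrucci Prop.~3.9 and the $\ell$-uniform bound of their Lem.~3.8.

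The substantive difference is how you pass the \emph{left-hand} side to the limit. You assert that the unsymmetrized approximant kernels converge to $f^{(m)}_{k,w,j}$ in $\mathcal{H}^{\otimes j}$, then use continuity of the inner product. In this proof the paper takes from Prop.~3.9 only a.e.\ convergence and domination on the simplex. The input it treats as robust is the $L^2(\Omega)$-convergence of $I_j^{\mathcal{H}}(f^\ell)$, which by~\eqref{eq:wiener_isometry} controls only the symmetrizations $\tilde f^\ell$. For $H<1/2$ this does not by itself give convergence of the simplex-supported plain kernels. You would need extra input, e.g.\ boundedness of restriction to the ordered simplex on $\mathcal{H}^{\otimes j}$. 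As you yourself observe, a.e.\ convergence with an $L^\infty$ bound cannot substitute there.

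The paper avoids this by summing the approximant-level identity over $\pi\in S_j$. By~\eqref{eq:perm_expansion} the left side becomes $j!\,\langle \tilde f^{\ell}_{0,w,j},\tilde f^{\ell}_{h,w,j}\rangle_{\mathcal{H}^{\otimes j}}$, which converges by the chaos isometry, while the right side converges termwise by dominated convergence. The individual-$\pi$ left side is then read through the right-hand limit.

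Proposition~\ref{prop:fOU_covariance_decay} consumes only this aggregate. So your proof closes if you sum over $\pi$ before letting $\ell\to\infty$. Alternatively, supply a precise reference for plain-kernel convergence, which would buy the stronger per-$\pi$ identity.

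For $H>1/2$ your remark is right. The integrable envelope $D(u,v)=C(1+|u-v|^{2H-2})$ embeds $L^\infty$ continuously into $\mathcal{H}$, so a.e.\ convergence with domination does yield plain-kernel convergence there.

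One smaller omission: the Cass--Ferrucci convergence and domination are stated on the simplex. The approximants' free windows can protrude slightly off it, and $(f_{h,w,j}^{(m)})^\pi$ lives on a permuted simplex. Before applying dominated convergence on the box you must check a.e.\ convergence to zero off the simplex and the uniform bound on all of $I_k^j$, as the paper does.
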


\begin{proof}
For $u \in I_0$ and $v \in I_h$ with $h \ge 2$, one has $v - u \ge (h-1)\Delta > 0$, so the cross term $R(v-u)$ of the covariance kernel $\mathcal{K}(u,v) = R(v-u) - R(u) - R(v) + R(0)$ is smooth on this cross-block region; the single-argument terms carry no joint dependence, so $\mathcal{K}$ has there the mixed partial density
\begin{equation}
\label{eq:bimeasure_density}
\partial_u \partial_v \mathcal{K}(u, v) \;=\; -R''(v-u), \qquad u \in I_0,\; v \in I_h.
\end{equation}
For subintervals $A = (a, b] \subseteq I_0$ and $B = (c, d] \subseteq I_h$ with $b < c$, the definition of the Cameron--Martin inner product via covariance increments gives
\[
\bigl\langle \mathbb{1}_A, \mathbb{1}_B\bigr\rangle_{\mathcal{H}} \;=\; \mathcal{K}(b,d) - \mathcal{K}(a,d) - \mathcal{K}(b,c) + \mathcal{K}(a,c);
\]
the single-argument terms of $\mathcal{K}$ cancel in this rectangle increment, which therefore equals the rectangle increment of $R(v-u)$; since the cross term is $C^2$ on $\{v - u \ge (h-1)\Delta\}$, the fundamental theorem of calculus and~\eqref{eq:bimeasure_density} yield
\[
\bigl\langle \mathbb{1}_A, \mathbb{1}_B\bigr\rangle_{\mathcal{H}} \;=\; -\int_a^b\!\!\int_c^d R''(v-u)\,dv\,du.
\]
Extending by linearity, the same identity holds for all step functions $\phi, \psi$ supported in $I_0, I_h$ respectively:
\[
\langle \phi, \psi\rangle_{\mathcal{H}} \;=\; -\int_{I_0 \times I_h} \phi(u)\,\psi(v)\,R''(v-u)\,du\,dv.
\]

Let $\Delta_h := (h-1)\Delta > 0$. Since $v-u \ge \Delta_h$ on $I_0 \times I_h$, Lemma~\ref{lem:R_tails} gives $\|R''\|_{L^\infty(I_h-I_0)} \le C_R\,\Delta_h^{2H-4}$. The bilinear form $B(\phi,\psi) := -\int_{I_0\times I_h}\phi(u)\psi(v)R''(v-u)\,du\,dv$ is therefore continuous on $L^2(I_0)\times L^2(I_h)$, with
\[
|B(\phi,\psi)| \;\le\; C_R\,\Delta_h^{2H-4}\,|I_0\times I_h|^{1/2}\,\|\phi\|_{L^2(I_0)}\|\psi\|_{L^2(I_h)}.
\]
The displayed identity extends from step functions to the CF24 approximant kernels by bilinearity and Fubini: by~\citet[Def.~3.3]{CassFerrucci2024}, each approximant kernel is an integrable mixture of indicator (step-function) tensors, and on the separated supports the bimeasure has the bounded density $-R''(v-u)$, so the step-function identity integrates over the mixture. (We do not invoke abstract $\mathcal{H}$-density: convergence in $\mathcal{H}$ alone would not interact with the $L^2$-continuity of $B$.) The passage to the limiting kernels then proceeds through dominated convergence on the finite-measure box $I_0^j \times I_h^j$ (Lemma~\ref{lem:symm_ext_Linf}), as follows. Applying this coordinatewise on simple tensors and passing to the limit in $\mathcal{H}^{\otimes j}$ for the CF24 kernels $f_{k,w,j}^{(m)}$ (supported on $I_k^j$ by~\citealp[Def.~3.3]{CassFerrucci2024}) reduces to an $L^2$-limit on separated supports. By~\citet[Prop.~3.9]{CassFerrucci2024}, the Def.~3.3 approximants $P^\ell_{st}$ converge a.e.\ on the simplex $\Delta^j[s,t]$ to $P_{st}$, and by~\citet[Lem.~3.8]{CassFerrucci2024} (dominating function) they are bounded on the simplex uniformly in $\ell$: $\sup_\ell|P^\ell_{st}| \le C'_{\mathrm{CF}}(H,\theta,\sigma,\Delta,m,j) < \infty$ (both results applicable by Lemma~\ref{lem:CF24_hypotheses}). Both statements extend from the simplex to the box $I_k^j$. The free-variable factors of~\citet[Def.~3.3]{CassFerrucci2024} are $\varrho_\ell^{-1}$-normalized indicator windows of width at most $\varrho_\ell$, so each approximant vanishes unless every free argument lies within $\varrho_\ell$ of the integration simplex: off the closed simplex, almost every point of $I_k^j$ eventually exits all supports, whence the approximants converge to zero there --- matching the extension of the limiting kernel by zero --- and a.e.\ convergence holds on the whole box. The dominating-function bound is likewise box-wide: an approximant depends on its free arguments only through the partition cells containing them, and every cell configuration on which it is non-zero is realized at a point of the ordered simplex (the free coordinates of any ordered tuple in the support of its integrand), so the supremum over $I_k^j$ coincides with the supremum over the simplex. The approximant kernels $f^{\,\ell}_{0,w,j}$ and $f^{\,\ell}_{h,w,j}$ therefore satisfy $\|f^{\,\ell}_{k,w,j}\|_{L^\infty(I_k^j)} \le C'_{\mathrm{CF}}$ uniformly in $\ell$, and the same bound holds for every slot permutation, whose application changes neither the $L^\infty$-norm nor the mixture-of-indicators structure. Combined with $|R''(v_r-u_r)| \le C_R((h-1)\Delta)^{2H-4}$ from Lemma~\ref{lem:R_tails} on the rectangle $I_0 \times I_h$, this yields the uniform dominator
\[
\bigl|f^{\,\ell}_{0,w,j}(u)\,(f^{\,\ell}_{h,w,j})^{\pi}(v)\,\prod_{r=1}^j R''(v_r-u_r)\bigr|
\;\le\;
(C'_{\mathrm{CF}})^2\,\bigl(C_R((h-1)\Delta)^{2H-4}\bigr)^j
\]
on the finite-measure box $I_0^j \times I_h^j$, uniformly in $\pi \in S_j$. Dominated convergence therefore passes the coordinatewise bilinear form to the limit and yields~\eqref{eq:CM_integral_rep}; for the limiting kernels, the left-hand side is thus understood through this $L^2$-limit on the separated supports, where the plain pairing is realized by the $L^2$-continuous form $B$ applied coordinatewise. The sequel in fact consumes only the aggregate of~\eqref{eq:CM_integral_rep} over $\pi \in S_j$, for which the identification is unambiguous: by~\eqref{eq:perm_expansion} applied at approximant level and the $L^2(\Omega)$-convergence of the chaos components~\citep[Thm.~2.3, Prop.~3.9]{CassFerrucci2024}, the sum $\sum_{\pi \in S_j}\bigl\langle f_{0,w,j}^{(m)}, (f_{h,w,j}^{(m)})^{\pi}\bigr\rangle_{\mathcal{H}^{\otimes j}}$ equals $j!\,\bigl\langle \tilde{f}_{0,w,j}^{(m)}, \tilde{f}_{h,w,j}^{(m)}\bigr\rangle_{\mathcal{H}^{\otimes j}}$, the quantity appearing in~\eqref{eq:chm_chaos_sum}.
\end{proof}

\subsection{Level 2: Exact Formula}
\label{subsec:level2_exact}

\begin{proposition}[Level-2 expected signature for fOU]
\label{prop:level2}
For the centered fOU increment process $\tilde{X}$ with independent coordinates, the level-2 expected signature is well-defined for all $H \in (1/4, 1)$:
\begin{equation}
\label{eq:level2_formula}
\mathbb{E}[S^{(2)}_{ij}] = \begin{cases}
v_i := R^i(0) - R^i(\Delta) & \text{if } i = j, \\
0 & \text{if } i \neq j,
\end{cases}
\end{equation}
where $v_i$ is the \emph{increment variance parameter} for coordinate $i$, computed via
\begin{equation}
\label{eq:vi_spectral}
v_i = R^i(0) - R^i(\Delta) = 2\zeta_i \int_0^\infty (1 - \cos(\lambda \Delta)) \, f_i(\lambda) \, d\lambda.
\end{equation}
\end{proposition}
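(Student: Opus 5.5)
The proof splits the level-2 signature into its symmetric and antisymmetric parts and treats each separately. The symmetric part depends only on the endpoints: for a geometric rough path, $\mathrm{Sym}(S^{(2)}_{0,\Delta}) = \tfrac12 \tilde X_\Delta\otimes\tilde X_\Delta$ (cf.~\citealp[eq.~(2.6)]{FrizHairer2020}). This identity holds for the canonical Gaussian lift of Proposition~\ref{prop:fOU_assumptions}, since it holds for each lifted piecewise-linear approximant and passes to the limit in probability.

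Taking expectations of the symmetric part, and using that the coordinates of $\tilde X_\Delta$ are centered and independent, gives
\[
\mathbb{E}[\mathrm{Sym}(S^{(2)})_{ij}] = \tfrac12\,\mathbb{1}\{i=j\}\,\mathcal{K}^i(\Delta,\Delta).
\]
By~\eqref{eq:increment_variance}, $\mathcal{K}^i(\Delta,\Delta) = 2(R^i(0)-R^i(\Delta))$, so this equals $v_i\mathbb{1}\{i=j\}$. This uses only the endpoint marginals and is therefore valid for every $H\in(1/4,1)$.

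The antisymmetric part is the L\'evy area $\mathbb{A}^{ij}_{0,\Delta}$, which vanishes for $i=j$. For $i\neq j$ I show $\mathbb{E}[\mathbb{A}^{ij}_{0,\Delta}]=0$ using the sign symmetry $X^j\mapsto -X^j$. This map preserves the law of $X$, because coordinates are independent and each is a centered Gaussian. Applied to the piecewise-linear lifts, it sends $\mathbb{A}^{ij,(n)}$ to $-\mathbb{A}^{ij,(n)}$. By naturality of the canonical lift (\citealp[Thm.~15.33(iv)]{FrizVictoir2010}) the same holds for the limit, so $\mathbb{A}^{ij}_{0,\Delta}\stackrel{d}{=}-\mathbb{A}^{ij}_{0,\Delta}$. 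Integrability follows from Lemma~\ref{lem:level_moments}, and hence the mean is zero.

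An alternative route for the off-diagonal case is chaos-theoretic. Lemma~\ref{lem:chaos_expansion_block} with $m=2$ gives $\mathbb{E}\langle ij,Z^{(2)}\rangle = f^{(2)}_{ij,0}$. This term is a pairing contraction between distinct coordinates and vanishes because the cross-covariance is zero.

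The remaining step is the spectral formula~\eqref{eq:vi_spectral}, obtained by subtracting the representation~\eqref{eq:fOU_spectral} at $\tau=0$ and $\tau=\Delta$. Both integrals converge absolutely because $f_i\in L^1(0,\infty)$: near the origin $f_i(\lambda)\sim\theta_i^{-2}\lambda^{1-2H}$ with $1-2H>-1$, and at infinity $f_i(\lambda)=O(\lambda^{-1-2H})$. In fact $(1-\cos\lambda\Delta)f_i$ is integrable on its own, so no oscillatory-integral care is needed.

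The main, though mild, obstacle is justifying the passage to the limit for the rough regime $H<1/2$, where the level-2 object is not a classical integral. I handle this through $L^1$ convergence of the lifted approximants: the convergence in probability supplied by~\citet[Thm.~15.33]{FrizVictoir2010} is upgraded to $L^1$ by the uniform Fernique-type moment bounds. The expectation identities proved for smooth approximants then pass to the limit.
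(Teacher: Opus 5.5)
Your proposal is correct and follows the paper's proof. In both, the diagonal and symmetric off-diagonal entries come from the geometric identity $\mathrm{Sym}(S^{(2)}_{0,\Delta}) = \tfrac12\tilde X_\Delta^{\otimes 2}$ together with independence of coordinates, and the off-diagonal L\'evy area has mean zero by the law-preserving reflection $X^j\mapsto -X^j$. Your extra steps make explicit what the paper leaves implicit: the transfer of the sign flip to the lift via piecewise-linear approximations, integrability from Lemma~\ref{lem:level_moments}, and absolute convergence of the spectral integral. The $L^1$ limit passage you flag as the main obstacle is harmless but not needed, since both arguments apply directly to the limiting lift.
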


\begin{proof}
Since $\tilde{\mathbf{X}}$ is a geometric rough path, $\operatorname{Sym}(S^{(2)}(\tilde{\mathbf{X}})_{0,\Delta}) = \tfrac{1}{2}(\tilde{X}_\Delta)^{\otimes 2}$ \citep[eq.~(2.6)]{FrizHairer2020}, so $S^{(2)}_{ii} = (\tilde{X}^i_\Delta)^2/2$ and $\mathbb{E}[S^{(2)}_{ii}] = \Var(\tilde{X}^i_\Delta)/2 = v_i$ by~\eqref{eq:increment_variance}. For $i \neq j$, the symmetric part $S^{(2)}_{ij} + S^{(2)}_{ji} = \tilde{X}^i_\Delta\, \tilde{X}^j_\Delta$ has vanishing expectation by independence of coordinates; the antisymmetric L\'evy-area part $\tfrac{1}{2}(S^{(2)}_{ij} - S^{(2)}_{ji})$ changes sign under $\tilde{X}^j \mapsto -\tilde{X}^j$, a law-preserving reflection on coordinate~$j$ (valid since $\tilde{X}^j$ is centered Gaussian and independent of $\tilde{X}^i$), so its expectation also vanishes. Hence $\mathbb{E}[S^{(2)}_{ij}] = 0$ for $i \neq j$.
\end{proof}

\subsection{Level 4: Wick's Theorem and Pairing Structure}
\label{subsec:level4_wick}

Since the centered increment process $\tilde{X}$ is Gaussian with mean zero (Appendix~\ref{subsec:centered_increment}), the distributional symmetry $\tilde{X} \stackrel{d}{=} -\tilde{X}$ implies $S^{(m)}(\tilde{\mathbf{X}}) \stackrel{d}{=} (-1)^m S^{(m)}(\tilde{\mathbf{X}})$, so odd signature levels vanish in expectation. Level 4 is computed via Wick's theorem.

\begin{theorem}[Wick's theorem for signature]
\label{thm:wick}
For the centered fOU increment process $\tilde{X}$ of Appendix~\ref{subsec:centered_increment} with independent coordinates and $H > 1/4$, let $\mathcal{P}_{ijkl}$ denote the set of valid pairings, i.e., partitions of $\{1,2,3,4\}$ into pairs where the indices match within each pair.
\begin{enumerate}
\item[\textup{(a)}] If $H > 1/2$, then
\begin{equation}
\label{eq:level4_wick}
\mathbb{E}[S^{(4)}_{ijkl}]
=
\int_{0 < t_1 < t_2 < t_3 < t_4 < \Delta}
\sum_{\pi \in \mathcal{P}_{ijkl}}
\prod_{(a,b)\in\pi}
\bigl(-\delta_{i_a,i_b} R''_{i_a}(t_b-t_a)\bigr)\,dt_1\,dt_2\,dt_3\,dt_4,
\end{equation}
as a classical Lebesgue integral.
\item[\textup{(b)}] If $H \in (1/4, 1/2]$, then
\[
\mathbb{E}[S^{(4)}_{ijkl}] = \sum_{\pi \in \mathcal{P}_{ijkl}} P_\pi^{(ijkl)},
\]
where each $P_\pi^{(ijkl)}$ is defined via the Cass--Ferrucci Wiener chaos expansion and the associated 2D Young integral of the covariance kernel $\mathcal{K}$, as in~\citep[Thm.~2.3, Def.~3.3]{CassFerrucci2024}.
\end{enumerate}
In case~\textup{(a)}, $\mathcal{K}_{i_a}(t_a,t_b) = R^{i_a}(|t_a-t_b|) - R^{i_a}(t_a) - R^{i_a}(t_b) + R^{i_a}(0)$ has mixed derivative $-{R^{i_a}}''(t_b-t_a)$ for $t_a < t_b$.
\end{theorem}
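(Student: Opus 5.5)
For case (a), I would prove \eqref{eq:level4_wick} by approximation through the canonical lift. In the Young regime $H>1/2$, the signature coefficient $S^{(4)}_{ijkl}$ over $[0,\Delta]$ is the $L^2$ limit of the corresponding iterated integrals of the piecewise-linear interpolations $\tilde X^{(n)}$. This follows from Lemma~\ref{lem:fOU_rate}, or from the $L^2$ convergence in Friz--Riedel, together with uniform integrability from the Fernique moments of Proposition~\ref{prop:fOU_assumptions}(M). For each $n$, the coefficient
\[
\int_{0<t_1<\cdots<t_4<\Delta}\dot{\tilde X}^{(n),i}_{t_1}\cdots\dot{\tilde X}^{(n),l}_{t_4}\,dt
\]
is a polynomial of degree four in finitely many jointly Gaussian increments. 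Classical Isserlis/Wick applies to it and gives the sum over pairings of products of $\mathbb{E}[\dot{\tilde X}^{(n),a}_{t_a}\dot{\tilde X}^{(n),b}_{t_b}]$. By coordinate independence and centering, only index-matched pairings survive, which produces the factor $\delta_{i_a,i_b}$. Each surviving factor is the piecewise-constant cell density of $\mathcal K^{X^{(n)}}$, that is, the cell average of the bimeasure of $\mathcal K$. That bimeasure is absolutely continuous with density $-R''(t_b-t_a)$ for $t_a<t_b$: Lemma~\ref{lem:CF24_hypotheses}(i) gives the density, and since $R'(0^+)=0$ when $H>1/2$ there is no diagonal atom.

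The remaining work is to pass $n\to\infty$ inside the four-fold simplex integral. The cell densities converge a.e.\ to $-R''(t_b-t_a)$ by continuity of $R''$ off the diagonal. They are also dominated, uniformly in $n$, by the envelope $D(u,v)=C(1+|u-v|^{2H-2})$ established in the Young-regime proof of Lemma~\ref{lem:fOU_rate}. The product of the two envelopes is integrable on $[0,\Delta]^4$ by Fubini, since each variable appears in exactly one factor and $2H-2>-1$. Dominated convergence then shows the Wick sum converges to the right-hand side of \eqref{eq:level4_wick}, which is in particular a classical Lebesgue integral. The left-hand side converges to $\mathbb{E}[S^{(4)}_{ijkl}]$ by the $L^2$ convergence above.

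For case (b), with $H\in(1/4,1/2]$, the integrand $R''(\tau)\sim\tau^{2H-2}$ is no longer integrable for adjacent pairings, so I would not attempt a Lebesgue formula. Instead I would invoke \citet[Thm.~2.3]{CassFerrucci2024}, which is admissible because Lemma~\ref{lem:CF24_hypotheses} verifies its standing hypotheses for $\mathcal K$ and $\rho_*(H)<2$. By Lemma~\ref{lem:chaos_expansion_block}, the expectation equals the zeroth-chaos kernel $f^{(4)}_{0,w,0}$, which that theorem writes as a sum over complete pairings $\pi$ of the 2D-Young pairing integrals $P_\pi$ from Def.~3.3. Coordinate independence again kills unmatched pairings, since each $P_\pi$ factorizes into coordinate covariances. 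At $H=1/2$ the diagonal atom of Remark~\ref{rem:H_half_atom} is exactly what the Young/regularized construction accommodates.

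The final remark, that $\mathcal K_{i_a}$ has mixed derivative $-R''$ off the diagonal, is the elementary computation already in the proof of Lemma~\ref{lem:CF24_hypotheses}. I expect the main obstacle to be the uniform-in-$n$ domination in case (a), specifically near the diagonal for adjacent pairings. Here I would reuse the argument from Lemma~\ref{lem:fOU_rate}: a near-diagonal cell density is an $O(\delta^{2H})$ second difference divided by $\delta^2$, hence bounded by $C|u-v|^{2H-2}$.
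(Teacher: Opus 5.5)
Your case~(b) is the paper's argument: apply the Cass--Ferrucci expansion (admissible by Lemma~\ref{lem:CF24_hypotheses}), keep the zeroth chaos, and discard index-mismatched pairings by coordinate independence.

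For case~(a) you take a genuinely different route. The paper derives (a) from the same Cass--Ferrucci zeroth-chaos pairing structure. It then asserts in one line that, since $R''$ is locally integrable, each regularized pairing evaluates as a classical Lebesgue integral against $-R''$. You bypass the regularized integrals entirely, in four steps:
\begin{enumerate}
\item you apply finite-dimensional Isserlis to the piecewise-linear interpolants;
\item you identify the surviving factors as cell averages of the increment bimeasure;
\item you pass to the limit by dominated convergence, using the uniform envelope $C(1+|u-v|^{2H-2})$ from the Young-regime proof of Lemma~\ref{lem:fOU_rate};
\item you use $L^2$ convergence of the interpolated signatures, which that lemma supplies without using the present theorem, so there is no circularity.
\end{enumerate}

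The paper's route is shorter and treats (a) and (b) uniformly. But it leaves implicit what actually makes (a) true. Local integrability of $R''$ alone is not enough: at $H=1/2$, $R''$ is bounded, yet by Remark~\ref{rem:H_half_atom} the classical formula fails because the bimeasure of $\mathcal{K}$ carries the diagonal atom $-2R'(0^+)\,\delta(t-s)$. The needed input is absolute continuity of that bimeasure, i.e.\ $R'(0^+)=0$.

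Your argument makes this mechanism visible. The near-diagonal domination holds exactly when $2H-2>-1$. It breaks down at $H=1/2$, where the diagonal cell densities grow like $\delta^{-1}$ against the non-integrable envelope $|u-v|^{-1}$, which is precisely where the atom appears. The cost of your route is its dependence on the uniform cell-density envelope and the $L^2$ approximation result, both of which the paper proves elsewhere with Cass--Ferrucci tools.
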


\begin{proof}
By~\citet[Thm.~2.3]{CassFerrucci2024} (applicable for every $H \in (1/4,1)$, including $H = 1/2$, by Lemma~\ref{lem:CF24_hypotheses}), $S^{(4)}_{ijkl}$ admits a finite Wiener chaos expansion of order $\le 4$; taking expectations retains only the zeroth chaos, whose pairing structure is given by~\citet[Def.~3.3, Prop.~3.9]{CassFerrucci2024}. In case~\textup{(a)}, $R''$ is locally integrable on the 4D simplex, so each pairing evaluates as a classical Lebesgue integral against the mixed derivative $-R''$ of $\mathcal{K}$. This is the Isserlis--Wick formula~\citep[Thm.~1.28]{Janson1997} applied to the Gaussian increments $dX$. In case~\textup{(b)}, the same pairings are realized via the regularized 2D Young integral of $\mathcal{K}$ from~\citet[Def.~3.3, Prop.~3.9]{CassFerrucci2024}.
\end{proof}

For a word $(i, j, k, l)$, the three possible pairings are:
\begin{enumerate}
\item[(P1)] \textbf{Adjacent pairing:} $(t_1, t_2)$ and $(t_3, t_4)$. Valid when $i = j$ and $k = l$.
\item[(P2)] \textbf{Alternating pairing:} $(t_1, t_3)$ and $(t_2, t_4)$. Valid when $i = k$ and $j = l$.
\item[(P3)] \textbf{Nested pairing:} $(t_1, t_4)$ and $(t_2, t_3)$. Valid when $i = l$ and $j = k$.
\end{enumerate}

\subsection{Regularized Formulas for Stationary fOU}
\label{subsec:regularization}

Each pairing contributes an integral involving $R''(\tau) \sim \tau^{2H-2}$, which is not locally integrable near $\tau = 0$ for $H < 1/2$. Accordingly, the 4D simplex integrals are ill-defined in the classical sense. The regularization framework of~\citet[Thm.~2.3, Def.~3.3, Prop.~3.9]{CassFerrucci2024} defines the expected signature via Wiener chaos expansion and the 2D Young integral of the covariance kernel $\mathcal{K}$, yielding well-defined values for $H > 1/4$.

The central result of this subsection is that the closed-form $R'$ formulas obtained for $H > 1/2$ by classical integration by parts remain valid \emph{verbatim} on the whole range $H \in (1/4, 1)$. The mechanism is a smooth stationary approximation: mollified versions of $X$ are stationary with smooth \emph{even} autocovariances $R_\varepsilon$, so $R_\varepsilon'(0) = 0$ and the integration by parts producing the $R'$ formulas has no boundary terms at any $\varepsilon > 0$; the formulas then pass to the limit by dominated convergence. Remark~\ref{rem:H_half_atom} explains why, by contrast, a classical computation against the pointwise density $-R''$ with boundary terms retained \emph{fails} at $H = 1/2$, even though $R''$ is integrable there.

\begin{lemma}[Smooth stationary approximation]
\label{lem:mollifier}
Let $\varphi$ be an even probability density in $C_c^\infty(\mathbb{R})$ with $\operatorname{supp}\varphi \subseteq [-1,1]$ (without loss of generality), $\varphi_\varepsilon := \varepsilon^{-1}\varphi(\cdot/\varepsilon)$, and let $X^\varepsilon_t := \int_{\mathbb{R}} X_{t-s}\,\varphi_\varepsilon(s)\,ds$ denote the coordinatewise mollification of the stationary fOU process (which is defined on the whole line). Then, for every $H \in (1/4,1)$ and $\varepsilon \in (0, 1/2]$:
\begin{enumerate}
\item[\emph{(i)}] $X^\varepsilon$ is a stationary Gaussian process with $C^1$ sample paths, and its coordinate autocovariances $R_\varepsilon = R * (\varphi*\varphi)_\varepsilon$ are smooth and even; in particular $R_\varepsilon'(0) = 0$.
\item[\emph{(ii)}] For every word $w$ with $|w| \le 4$, $\mathbb{E}[\langle w, S^{(4)}(\tilde{\mathbf{X}}^\varepsilon)_{0,\Delta}\rangle] \to \mathbb{E}[\langle w, S^{(4)}(\tilde{\mathbf{X}})_{0,\Delta}\rangle]$ as $\varepsilon \downarrow 0$.
\item[\emph{(iii)}] $R_\varepsilon' \to R'$ pointwise on $(0,\Delta]$ as $\varepsilon \downarrow 0$, with the uniform envelope $|R_\varepsilon'(\tau)| \le C\,\tau^{2H-1}$ for all $\tau \in (0,\Delta]$ and $\varepsilon \in (0,1/2]$; moreover $R_\varepsilon \to R$ uniformly on $[0,\Delta]$ and $\sup_\varepsilon \|R_\varepsilon\|_\infty \le R(0)$.
\end{enumerate}
\end{lemma}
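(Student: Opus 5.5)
I would prove the three items in order. Item~(iii) is purely deterministic, (i) is structural, and (ii) is the only probabilistic limit.

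\emph{Item (i).} Since $X$ is stationary Gaussian on $\mathbb{R}$ with continuous paths, $X^\varepsilon_t=\int X_{t-s}\varphi_\varepsilon(s)\,ds$ is a linear functional of $X$. Hence $X^\varepsilon$ is jointly Gaussian, and it is stationary because shifting $t$ commutes with the convolution. Differentiating under the integral, $\dot X^\varepsilon_t=\int X_{t-s}\varphi_\varepsilon'(s)\,ds$, so the paths are $C^1$ (in fact $C^\infty$). Fubini gives $\Cov(X^\varepsilon_0,X^\varepsilon_\tau)=\iint R(\tau-s+s')\varphi_\varepsilon(s)\varphi_\varepsilon(s')\,ds\,ds'=R*(\varphi*\varphi)_\varepsilon(\tau)$, using evenness of $\varphi$. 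This is smooth because $(\varphi*\varphi)_\varepsilon\in C_c^\infty$, and it is even because $R$ is even. Therefore $R_\varepsilon'(0)=0$.

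\emph{Item (iii).} Write $\rho_\varepsilon:=(\varphi*\varphi)_\varepsilon$, supported in $[-2\varepsilon,2\varepsilon]$ with unit mass. Then $|R_\varepsilon|\le\sup|R|=R(0)$. Uniform convergence on $[0,\Delta]$ follows from uniform continuity of $R$ on $[-1,\Delta+1]$.

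For the derivative, note that $R$ is absolutely continuous with $R'$ locally integrable: by \eqref{eq:R_prime_local} and evenness, $|R'(\tau)|\le C|\tau|^{2H-1}$ near $0$, and $R$ is smooth elsewhere by Lemma~\ref{lem:R_tails}. Hence $R_\varepsilon'=R'*\rho_\varepsilon$, where $R'$ is odd. Pointwise convergence on $(0,\Delta]$ holds because $R'$ is continuous there.

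The envelope is the main point. For $\tau\ge4\varepsilon$ the window $[\tau-2\varepsilon,\tau+2\varepsilon]\subset[\tau/2,2\tau]$, so $|R_\varepsilon'(\tau)|\le\sup_{[\tau/2,2\tau]}|R'|\le C\tau^{2H-1}$; for $H>1/2$ one uses $\tau^{2H-1}$ up to constants on the bounded range. For $\tau<4\varepsilon$ I split into cases.
\begin{itemize}
\item If $H<1/2$, bound $|R'*\rho_\varepsilon(\tau)|\le\|\rho_\varepsilon\|_\infty\int_{|u|\le6\varepsilon}C|u|^{2H-1}\,du\lesssim\varepsilon^{-1}\varepsilon^{2H}\lesssim\tau^{2H-1}$, since $\tau<4\varepsilon$ and $2H-1<0$.
\item If $H>1/2$, $|R'|\le C|u|^{2H-1}\le C\varepsilon^{2H-1}$ on the window. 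This is not $\lesssim\tau^{2H-1}$ directly, so I use oddness instead: $R'*\rho_\varepsilon(\tau)=\int R'(\tau-s)\rho_\varepsilon(s)\,ds$ vanishes at $\tau=0$, and a mean value bound with $|R''|\lesssim|u|^{2H-2}$ (from \eqref{eq:Rpp_identity}) gives $|R_\varepsilon'(\tau)|\le\tau\sup|R''*\rho_\varepsilon|\lesssim\tau\varepsilon^{2H-2}\le C\tau^{2H-1}$ for $\tau<4\varepsilon$.
\item If $H=1/2$, $R'$ is bounded and the claim is trivial.
\end{itemize}

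\emph{Item (ii).} This is the expected obstacle. I plan to use Gaussian rough-path continuity. The covariance of $\tilde X^\varepsilon$ is $\mathcal K_\varepsilon(s,t)=R_\varepsilon(|t-s|)-R_\varepsilon(s)-R_\varepsilon(t)+R_\varepsilon(0)$. Its rectangle increments are averages, under $\rho_\varepsilon$, of rectangle increments of $R(|\cdot|)$ shifted along the diagonal. Consequently the 2D $\rho$-variation of $\mathcal K_\varepsilon$ on $[0,\Delta]^2$ is bounded uniformly in $\varepsilon$ by that of $K_X$ on a slightly enlarged square, which is finite by Lemma~\ref{lem:fOU_cov}. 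In addition, $\sup_t\|X^\varepsilon_t-X_t\|_{L^2}\to0$ by uniform continuity of $R$.

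I then apply \citet[Thm.~5]{FrizRiedel2014} to the jointly Gaussian pair $(X^\varepsilon,X)$ at truncation $N=4$; if the joint 2D variation of the pair is not directly covered, I would fall back on the general Friz--Victoir Gaussian continuity theorem. This gives $\varrho_{q\text{-var}}(S_4(\tilde{\mathbf X}^\varepsilon),S_4(\tilde{\mathbf X}))\to0$ in $L^2$. Since $X^\varepsilon$ is $C^1$, its canonical lift is the classical iterated integral, so the endpoint level-$4$ coordinates converge in $L^2$, hence in $L^1$, and the expectations converge.
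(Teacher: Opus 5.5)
Your proof is correct, and items (i) and (ii) follow the paper's argument essentially verbatim. The paper likewise bounds the 2D $\rho$-variation of the mollified increment covariance uniformly by averaging translated rectangle increments of $K_X$ over $[-1,\Delta+1]^2$ with Jensen partitionwise, uses $\sup_t\|X^\varepsilon_t-X_t\|_{L^2}=\|X^\varepsilon_0-X_0\|_{L^2}\to 0$, and applies \citet[Thm.~5]{FrizRiedel2014} at level~4 with the trivial partition.

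Your hedge about the joint covariance of the pair is unnecessary. The cross-covariance $\Cov(X^\varepsilon_s,X_t)=(R*\varphi_\varepsilon)(t-s)$ is itself a $\varphi_\varepsilon$-average of translates of $R$, so the same Jensen bound covers every entry of the pair's covariance. This is how the paper phrases the condition.

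The only genuine divergence is the envelope in (iii) in the regime $\tau\lesssim\varepsilon$.
\begin{itemize}
\item The paper treats all $H$ in one case. It applies the mean value theorem with the derivative placed on the mollifier, $R_\varepsilon''=R'*(\psi_\varepsilon)'$ with $\|(\psi_\varepsilon)'\|_\infty\lesssim\varepsilon^{-2}$. This gives $|R_\varepsilon'(\tau)|\lesssim\tau\,\varepsilon^{2H-2}\lesssim\tau^{2H-1}$ using only the $R'$ envelope and $R_\varepsilon'(0)=0$.
\item You split by $H$ instead. For $H<1/2$ you use the cruder bound $\|\rho_\varepsilon\|_\infty\int_{|u|\le 6\varepsilon}|R'|$, which suffices because $2H-1<0$.
\item For $H>1/2$ you apply the mean value theorem with the derivative on $R$. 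This needs the extra input $|R''(u)|\lesssim|u|^{2H-2}$ from \eqref{eq:Rpp_identity}. It also needs $R'$ to be absolutely continuous through $0$, which holds since $R'(0^+)=0$ and $R''$ is integrable for $H>1/2$, though you should state it.
\end{itemize}
Both routes are valid. The paper's is more economical, with one case and no second-derivative information on $R$. Yours makes the mechanism in each regime more transparent.
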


\begin{proof}
\emph{(i)} Mollification over the whole line preserves stationarity and Gaussianity; $t \mapsto X^\varepsilon_t$ is $C^1$ a.s.\ with $\dot{X}^\varepsilon = X * (\varphi_\varepsilon)'$, and Fubini gives $\Cov(X^\varepsilon_s, X^\varepsilon_t) = (R * \varphi_\varepsilon * \varphi_\varepsilon)(t-s) = R_\varepsilon(t-s)$. Since $\varphi$ and $R$ are even, $R_\varepsilon$ is even, and it is $C^\infty$ because $R$ is continuous and bounded while $(\varphi*\varphi)_\varepsilon \in C_c^\infty$; evenness and smoothness force $R_\varepsilon'(0) = 0$.

\emph{(ii)} Mollifier approximations converge to the canonical lift at every truncation level. The coordinatewise mollifications form an admissible approximation family for~\citet[Thm.~5]{FrizRiedel2014}; their Section~6.1 treats mollifier approximations of the constant extension of the path, while we mollify the stationary process on the line, so we verify the two conditions of~\citet[Sec.~6]{FrizRiedel2014} directly, using only path and covariance data on the fixed enlarged window $[-1, \Delta+1]$ --- condition~(1) holds for every continuous path, since $|X^\varepsilon_t - X_t| \le \sup_{|s| \le \varepsilon}|X_{t-s} - X_t| \to 0$ uniformly on $[0,\Delta]$ by uniform continuity on $[-1, \Delta+1]$, and condition~(2) is the uniform covariance bound below (cf.~\citealp[Prop.~15.14]{FrizVictoir2010} for the constant-extension analogue) --- and the statement transfers from $[0,1]$ to $[0,\Delta]$ by the deterministic time change used in Appendix~\ref{app:gamma}. The 2D $\rho$-variation of the mollified increment covariance is uniformly dominated: every rectangle increment of $\mathcal{K}^{X^\varepsilon}$ --- and likewise of each entry of the joint increment covariance of any pair $(X^\varepsilon, X^{\varepsilon'})$ with $0 \le \varepsilon' \le \varepsilon \le 1/2$ --- is an average of translated rectangle increments of $\mathcal{K}^{X}$ against the relevant mollifier product, so Jensen's inequality applied partitionwise gives $V_\rho(\mathcal{K}^{X^\varepsilon}; [0,\Delta]^2) \le V_\rho(\mathcal{K}^{X}; [-1,\Delta+1]^2) < \infty$ for $\varepsilon \le 1/2$, and the same bound entrywise for every pair. Since moreover $\sup_{0 \le t \le \Delta}\|X^\varepsilon_t - X_t\|_{L^2} = \|X^\varepsilon_0 - X_0\|_{L^2} \to 0$ by stationarity and continuity of $R$, \citet[Thm.~5]{FrizRiedel2014} applied at truncation level $N = 4$ (with any $\gamma_{\mathrm{FR}} > \rho$ satisfying $1/\gamma_{\mathrm{FR}} + 1/\rho > 1$ and any $q > 2\gamma_{\mathrm{FR}}$, admissible since $\rho < 2$) yields $\varrho_{q\text{-var};[0,\Delta]}\bigl(S_4(\mathbf{X}^\varepsilon),\, S_4(\mathbf{X})\bigr) \to 0$ in $L^r$ for every $r \ge 1$, where $S_4$ denotes the level-$4$ Lyons lift. Taking the trivial partition $\{0, \Delta\}$ at level~$|w|$~\citep[p.~159]{FrizRiedel2014} bounds $|\langle w, S^{(4)}(\tilde{\mathbf{X}}^\varepsilon)_{0,\Delta}\rangle - \langle w, S^{(4)}(\tilde{\mathbf{X}})_{0,\Delta}\rangle|$ by a constant multiple of this distance, so $L^1$ convergence gives the convergence of expectations.

\emph{(iii)} Write $\psi := \varphi*\varphi$ (even, supported in $[-2, 2]$ since $\operatorname{supp}\varphi \subseteq [-1,1]$, $\|\psi_\varepsilon\|_{L^1} = 1$, $\|(\psi_\varepsilon)'\|_{L^\infty} \le C\varepsilon^{-2}$). Since $|R'(\tau)| \le C\,\tau^{2H-1}$ on $(0, 2\Delta+2]$ (Lemma~\ref{lem:fOU_cov}, whose proof applies on any compact window, with window-dependent constant; this window covers all convolution arguments below, including $5\tau/4 \le 5\Delta/4$) and $R$ is even and absolutely continuous, $R_\varepsilon' = R' * \psi_\varepsilon$. For $\tau > 0$ fixed and $\varepsilon < \tau/8$, $R'$ is continuous near $\tau$, so $R_\varepsilon'(\tau) \to R'(\tau)$. For the envelope, consider two cases. If $\varepsilon \le \tau/8$, every argument $\tau - u$ with $u \in \operatorname{supp}\psi_\varepsilon \subseteq [-\tau/4, \tau/4]$ lies in $[3\tau/4, 5\tau/4] \subseteq [\tau/2, 3\tau/2]$, so $|R_\varepsilon'(\tau)| \le \sup_{[\tau/2, 3\tau/2]}|R'| \le C\tau^{2H-1}$. If $\varepsilon > \tau/8$, use $R_\varepsilon'(0) = 0$ and the mean value theorem: $|R_\varepsilon'(\tau)| \le \tau \sup_{[0,\tau]}|R_\varepsilon''|$, and $|R_\varepsilon''(s)| = |(R' * (\psi_\varepsilon)')(s)| \le \|(\psi_\varepsilon)'\|_\infty \int_{|v| \le s + 2\varepsilon}|R'(v)|\,dv \le C\varepsilon^{-2}\,\varepsilon^{2H} = C\varepsilon^{2H-2}$ (here $s + 2\varepsilon \le \tau + 2\varepsilon < 10\varepsilon$), so $|R_\varepsilon'(\tau)| \le C\tau\,\varepsilon^{2H-2} \le C'\tau^{2H-1}$ since $2H - 2 < 0$ and $\varepsilon > \tau/8$. Uniform convergence $R_\varepsilon \to R$ and $\|R_\varepsilon\|_\infty \le \|R\|_\infty = R(0)$ are immediate from continuity of $R$ and $\|\psi_\varepsilon\|_{L^1} = 1$.
\end{proof}

The structure of the three pairings differs with respect to singularity: in Pairing~2 (alternating), the IBP integration variables stay strictly positive throughout; in Pairing~1 (adjacent) and the $t_3$-integration of Pairing~3 (nested), the IBP arguments reach $\tau = 0$, where the boundary contribution is $R_\varepsilon'(0) = 0$ for the smooth approximants. In the proofs below, the Isserlis--Wick expansion of $\mathbb{E}[\langle w, S^{(4)}(\tilde{\mathbf{X}}^\varepsilon)\rangle]$ at fixed $\varepsilon > 0$ decomposes into the valid pairing contributions for the word $w$. For words with $i \neq j$ exactly one pairing is valid, so the pairing contribution coincides with the full coordinate expectation and Lemma~\ref{lem:mollifier}(ii) identifies its limit directly; for the diagonal word $(i,i,i,i)$, all three contributions are computed by the same integrations by parts at every $\varepsilon$ and pass to the limit jointly, identifying their sum --- which is all that Theorem~\ref{thm:level4_complete} uses. We present each pairing below.

\subsubsection{Pairing 1: Adjacent Pairs $(t_1, t_2)$ and $(t_3, t_4)$}

\begin{proposition}[P1 formula, all $H \in (1/4,1)$]
\label{prop:P1}
For a word $(i, i, j, j)$ with distinct coordinates $i \neq j$, with adjacent pairing, and every $H \in (1/4, 1)$,
\begin{equation}
\label{eq:P1_regularized}
P_1^{(iijj)} = \iint_{s + w < \Delta,\, s,w > 0} R'_i(s) \, R'_j(w) \, ds \, dw.
\end{equation}
\end{proposition}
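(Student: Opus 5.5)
The plan is to establish the formula first at the level of the smooth stationary approximants $X^\varepsilon$ of Lemma~\ref{lem:mollifier}, and then pass to the limit $\varepsilon \downarrow 0$.

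\emph{Step 1: reduction to a double integral at fixed $\varepsilon$.} Fix $\varepsilon \in (0,1/2]$. The approximant $X^\varepsilon$ has $C^1$ paths, so its signature is a classical iterated integral, and the Isserlis--Wick formula \citep[Thm.~1.28]{Janson1997} applies to $\dot{\tilde X}^\varepsilon$. For the word $(i,i,j,j)$ with $i\ne j$, independence of coordinates leaves only the adjacent pairing. This gives
\[
P_1^{(iijj),\varepsilon} = \int_{0<t_1<t_2<t_3<t_4<\Delta} \bigl(-R''_{i,\varepsilon}(t_2-t_1)\bigr)\bigl(-R''_{j,\varepsilon}(t_4-t_3)\bigr)\,dt,
\]
where $-R''_\varepsilon(t_b-t_a)=\partial_{t_a}\partial_{t_b}\mathcal K^{X^\varepsilon}$ is continuous. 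Because only one pairing is valid, this quantity is the full expectation $\mathbb{E}[S^{(4)}_{iijj}(\tilde{\mathbf X}^\varepsilon)]$.

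\emph{Step 2: integration by parts.} I integrate out the inner variable of each pair. Substituting $s=t_2-t_1$ gives $\int_0^{t_2}(-R''_{i,\varepsilon})(t_2-t_1)\,dt_1=-R'_{i,\varepsilon}(t_2)+R'_{i,\varepsilon}(0)=-R'_{i,\varepsilon}(t_2)$, using $R'_\varepsilon(0)=0$ from Lemma~\ref{lem:mollifier}(i). By the same computation, the $t_4$-integral over $(t_3,\Delta)$ equals $R'_{j,\varepsilon}(0)-R'_{j,\varepsilon}(\Delta-t_3)=-R'_{j,\varepsilon}(\Delta-t_3)$. What remains is
\[
\int_{0<t_2<t_3<\Delta} R'_{i,\varepsilon}(t_2)\,R'_{j,\varepsilon}(\Delta-t_3)\,dt_2\,dt_3 .
\]
The change of variables $s=t_2$, $w=\Delta-t_3$ turns the constraint $t_2<t_3$ into $s+w<\Delta$, which gives exactly \eqref{eq:P1_regularized} with $R'_{\cdot,\varepsilon}$ in place of $R'_\cdot$. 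I will double-check the signs at this point: the two minus signs from the mixed derivatives cancel against the two minus signs produced by the integrations.

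\emph{Step 3: passage to the limit.} On the left, Lemma~\ref{lem:mollifier}(ii) gives $\mathbb{E}[S^{(4)}_{iijj}(\tilde{\mathbf X}^\varepsilon)]\to\mathbb{E}[S^{(4)}_{iijj}(\tilde{\mathbf X})]$. This limit equals $P_1^{(iijj)}$, because by Theorem~\ref{thm:wick} the adjacent pairing is the only contribution for this word in every regime. On the right, Lemma~\ref{lem:mollifier}(iii) gives pointwise convergence $R'_\varepsilon\to R'$ on $(0,\Delta]$, together with the uniform envelope $|R'_\varepsilon(s)R'_\varepsilon(w)|\le C s^{2H-1}w^{2H-1}$. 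This envelope is integrable on the triangle for every $H>0$, so dominated convergence applies.

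\emph{Main obstacle.} The key step is the boundary cancellation $R'_\varepsilon(0)=0$ together with the $\varepsilon$-uniform envelope. For $H\le 1/2$ the function $R'$ itself is singular at the origin, and without the mollification a naive integration by parts would produce ill-defined or incorrect boundary terms (cf. the $H=1/2$ atom). Both ingredients are supplied by Lemma~\ref{lem:mollifier}, so the remaining work is bookkeeping: the Fubini steps at fixed $\varepsilon$ are justified by continuity of $R''_\varepsilon$ on the compact simplex.
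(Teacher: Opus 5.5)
Your proposal is correct and follows the paper's proof essentially step for step. Both arguments apply Isserlis--Wick to the mollified process $X^\varepsilon$ of Lemma~\ref{lem:mollifier}, integrate by parts in $t_1$ and $t_4$ with the boundary terms killed by $R'_\varepsilon(0)=0$, substitute $s=t_2$, $w=\Delta-t_3$, and pass to the limit using the envelope $C\,s^{2H-1}w^{2H-1}$ on the right and Lemma~\ref{lem:mollifier}(ii) on the left. The only difference is that the paper also records the direct, unmollified integration by parts for $H>1/2$; your uniform mollification argument makes that separate case unnecessary.
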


\begin{proof}
For $H > 1/2$, $R'(\tau) \sim \tau^{2H-1} \to 0$ as $\tau \to 0^+$ and $R''$ is locally integrable, so direct integration by parts in $t_1$ and $t_4$ over the classical 4D simplex integral of Theorem~\ref{thm:wick}(a) (with vanishing boundary terms) yields~\eqref{eq:P1_regularized} after the substitution $s = t_2$, $w = \Delta - t_3$.

For general $H \in (1/4,1)$, apply Lemma~\ref{lem:mollifier}. The process $X^\varepsilon$ has smooth stationary covariance, so the classical Isserlis--Wick computation~\citep[Thm.~1.28]{Janson1997} applies at every $\varepsilon > 0$ with density $\mathbb{E}[\dot{X}^{\varepsilon,i}_u \dot{X}^{\varepsilon,i}_v] = -R_{\varepsilon,i}''(v-u)$, and the same integration by parts --- now with boundary terms $R_{\varepsilon}'(0) = 0$ vanishing \emph{identically}, by evenness --- gives
\[
\mathbb{E}\bigl[\langle (i,i,j,j),\, S^{(4)}(\tilde{\mathbf{X}}^\varepsilon)_{0,\Delta}\rangle\bigr] = \iint_{s+w<\Delta} R_{\varepsilon,i}'(s)\,R_{\varepsilon,j}'(w)\,ds\,dw.
\]
By Lemma~\ref{lem:mollifier}(iii) the integrand converges pointwise a.e.\ and is dominated by $C\,s^{2H-1}w^{2H-1} \in L^1$ of the triangle, so the right side converges to~\eqref{eq:P1_regularized}; by Lemma~\ref{lem:mollifier}(ii) the left side converges to $\mathbb{E}[S^{(4)}_{iijj}]$.
\end{proof}

\begin{remark}[Convergence of the integral~\eqref{eq:P1_regularized} for all $H > 0$]
\label{rem:P1_conv}
The integral~\eqref{eq:P1_regularized} converges for all $H > 0$: it is a triangular integral in the $(s, w)$ plane where the singularities are on the axes, not the diagonal. Since $R'(\tau) \sim \tau^{2H-1}$ and $2H - 1 > -1$ for all $H > 0$, each 1D marginal integral converges. Evaluating the inner integral yields the equivalent 1D form:
\begin{equation}
\label{eq:P1_1D}
P_1^{(iijj)} = \int_0^\Delta R'_i(s) \bigl[R_j(\Delta-s) - R_j(0)\bigr] \, ds.
\end{equation}
By Proposition~\ref{prop:P1}, this expression equals $\mathbb{E}[S^{(4)}_{iijj}]$ for every $H \in (1/4,1)$. For $H \le 1/4$ the integral still converges, but the level-4 expected signature itself is no longer defined within the Cass--Ferrucci framework (Theorem~\ref{thm:level4_complete}), and no identification is claimed.
\end{remark}

\subsubsection{Pairing 2: Alternating Pairs $(t_1, t_3)$ and $(t_2, t_4)$}

\begin{proposition}[P2 formula, all $H \in (1/4,1)$]
\label{prop:P2}
For a word $(i, j, i, j)$ with distinct coordinates $i \neq j$, with alternating pairing, and every $H \in (1/4,1)$,
\begin{equation}
\label{eq:P2_regularized}
P_2^{(ijij)} = \iint_{0 < t_2 < t_3 < \Delta} \bigl[R'_i(t_3) - R'_i(t_3 - t_2)\bigr] \bigl[R'_j(\Delta - t_2) - R'_j(t_3 - t_2)\bigr] \, dt_2 \, dt_3.
\end{equation}
\end{proposition}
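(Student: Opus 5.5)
The plan mirrors the proof of Proposition~\ref{prop:P1}. We first do a classical computation for $H>1/2$, then extend to all $H\in(1/4,1)$ via the smooth stationary approximation of Lemma~\ref{lem:mollifier}.

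At fixed $\varepsilon>0$, the process $X^\varepsilon$ has $C^1$ paths, so the level-4 coordinate for the word $(i,j,i,j)$ is a classical iterated Riemann integral. For $i\neq j$, independence of the coordinates and Isserlis--Wick leave only the alternating pairing $(t_1,t_3)$, $(t_2,t_4)$. This gives
\[
\mathbb{E}\bigl[\langle (i,j,i,j), S^{(4)}(\tilde{\mathbf{X}}^\varepsilon)_{0,\Delta}\rangle\bigr]=\int_{0<t_1<t_2<t_3<t_4<\Delta} R_{\varepsilon,i}''(t_3-t_1)\,R_{\varepsilon,j}''(t_4-t_2)\,dt,
\]
with the two minus signs cancelling.

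Next we integrate out $t_1\in(0,t_2)$ and $t_4\in(t_3,\Delta)$ exactly. Using $\partial_{t_1}R_\varepsilon'(t_3-t_1)=-R_\varepsilon''(t_3-t_1)$, the $t_1$-integral equals $R_{\varepsilon,i}'(t_3)-R_{\varepsilon,i}'(t_3-t_2)$. Similarly, the $t_4$-integral equals $R_{\varepsilon,j}'(\Delta-t_2)-R_{\varepsilon,j}'(t_3-t_2)$. This yields the integrand of~\eqref{eq:P2_regularized} with $R_\varepsilon$ in place of $R$, integrated over $0<t_2<t_3<\Delta$.

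No boundary term at the origin arises here. The arguments $t_3$ and $\Delta-t_2$ are bounded below by $t_3-t_2$, which is positive on the open region. This is the structural point the paper flags for Pairing~2. For $H>1/2$ the same computation can be run directly on Theorem~\ref{thm:wick}(a), since $R''$ is locally integrable, but the mollified route covers all $H$ at once.

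The passage $\varepsilon\downarrow0$ is the main obstacle. By Lemma~\ref{lem:mollifier}(ii), the left side converges to $\mathbb{E}[S^{(4)}_{ijij}]$, and since only one pairing is valid this equals $P_2^{(ijij)}$. On the right side, Lemma~\ref{lem:mollifier}(iii) gives pointwise convergence of every $R_\varepsilon'$ factor off the measure-zero set where an argument vanishes. For domination we use the uniform envelope $|R_\varepsilon'(\tau)|\le C\tau^{2H-1}$.

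The delicate term is the product $R_i'(t_3-t_2)R_j'(t_3-t_2)$, which is dominated by $C(t_3-t_2)^{4H-2}$. This is integrable over the triangle exactly because $4H-2>-1$, i.e.\ $H>1/4$, which is the intrinsic threshold. The other three products are dominated, in the variables $a=t_3-t_2$, $t_3$, and $\Delta-t_2$, by terms such as $t_3^{2H-1}(t_3-t_2)^{2H-1}$. After the inner integration these are bounded by $C\,t_3^{4H-1}$ or similar, which is finite since $H>1/4$ and $2H-1>-1$. Dominated convergence then identifies the limit with~\eqref{eq:P2_regularized}.
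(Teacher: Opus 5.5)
Your proposal is correct and follows the paper's proof essentially step for step. Both apply the Isserlis--Wick computation to the mollified process $X^\varepsilon$ of Lemma~\ref{lem:mollifier}, integrate out $t_1$ and $t_4$ exactly with no boundary terms at the origin, and pass to the limit by dominated convergence using the uniform envelope $|R_\varepsilon'(\tau)|\le C\tau^{2H-1}$, whose worst term $(t_3-t_2)^{4H-2}$ is integrable exactly when $H>1/4$. The only cosmetic difference is that the paper also runs the classical computation separately for $H>1/2$ before the mollified argument, which you correctly note is subsumed.
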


\begin{proof}
For $H > 1/2$, the starting expression is the classical Lebesgue integral
\[
P_2 = \int_{0 < t_1 < t_2 < t_3 < t_4 < \Delta} R''_i(t_3 - t_1) R''_j(t_4 - t_2) \, dt.
\]
Integrating over $t_1 \in (0, t_2)$ and $t_4 \in (t_3, \Delta)$ gives
\[
\int_0^{t_2} R''_i(t_3 - t_1)\,dt_1 = R'_i(t_3) - R'_i(t_3 - t_2), \qquad
\int_{t_3}^\Delta R''_j(t_4 - t_2)\,dt_4 = R'_j(\Delta - t_2) - R'_j(t_3 - t_2),
\]
since the arguments stay strictly positive. This yields~\eqref{eq:P2_regularized}.

For general $H \in (1/4, 1)$, the 4D $R''$-integrand contains factors $R''(\tau) \sim \tau^{2H-2}$ that are not locally integrable on the simplex when $H < 1/2$, so the classical 4D integral is ill-posed there. Apply instead Lemma~\ref{lem:mollifier}: at every $\varepsilon > 0$ the classical computation for $X^\varepsilon$ is valid, and since the inner IBP variables $t_1 \in (0, t_2)$ and $t_4 \in (t_3, \Delta)$ produce $R_\varepsilon'$-factors evaluated at strictly positive arguments (no boundary terms arise at all for this pairing),
\[
\begin{aligned}
\mathbb{E}\bigl[\langle (i,j,i,j),\, S^{(4)}(\tilde{\mathbf{X}}^\varepsilon)_{0,\Delta}\rangle\bigr]
&= \iint_{0<t_2<t_3<\Delta} \bigl[R_{\varepsilon,i}'(t_3) - R_{\varepsilon,i}'(t_3-t_2)\bigr]\\
&\quad\times \bigl[R_{\varepsilon,j}'(\Delta-t_2) - R_{\varepsilon,j}'(t_3-t_2)\bigr]\,dt_2\,dt_3.
\end{aligned}
\]
With $u := t_3 - t_2$, the envelope of Lemma~\ref{lem:mollifier}(iii) dominates the integrand uniformly in $\varepsilon$ by $C\bigl(t_3^{2H-1} + u^{2H-1}\bigr)\bigl((\Delta-t_2)^{2H-1} + u^{2H-1}\bigr)$, which is integrable on the 2D simplex for $H > 1/4$: the worst term is $u^{4H-2}$ with $4H-2 > -1$ (Remark~\ref{rem:P2_conv}), and every mixed term is a product of integrable 1D singularities. Pointwise convergence of the integrands (Lemma~\ref{lem:mollifier}(iii)) plus dominated convergence and Lemma~\ref{lem:mollifier}(ii) yield~\eqref{eq:P2_regularized}.
\end{proof}

\begin{remark}[$P_2$ convergence requires $H > 1/4$]
\label{rem:P2_conv}
We analyze the convergence near the diagonal $t_2 \to t_3$. Let $u = t_3 - t_2$. The dominant singular terms in the expansion of~\eqref{eq:P2_regularized} are:
\begin{equation}
R'_i(u) R'_j(u) \sim u^{2H-1} \cdot u^{2H-1} = u^{4H-2}.
\end{equation}
For the 1D integral transverse to the diagonal to converge, we require:
\begin{equation}
\int_0^\varepsilon u^{4H-2} \, du < \infty \quad \Longleftrightarrow \quad 4H - 2 > -1 \quad \Longleftrightarrow \quad H > 1/4.
\end{equation}
Thus $P_2$ diverges for $H \le 1/4$. The divergence is polynomial for $H < 1/4$ (where the exponent $4H - 2 < -1$) and logarithmic at the critical boundary $H = 1/4$ (where the exponent equals $-1$).
\end{remark}

\subsubsection{Pairing 3: Nested Pairs $(t_1, t_4)$ and $(t_2, t_3)$}

\begin{proposition}[P3 formula, all $H \in (1/4,1)$]
\label{prop:P3}
For a word $(i, j, j, i)$ with distinct coordinates $i \neq j$, with nested pairing, and every $H \in (1/4,1)$,
\begin{equation}
\label{eq:P3_regularized}
P_3^{(ijji)} = \int_0^\Delta R'_i(t) \bigl[R_j(t) - R_j(0)\bigr] \, dt - \int_0^\Delta (\Delta - w) R'_i(w) R'_j(w) \, dw.
\end{equation}
\end{proposition}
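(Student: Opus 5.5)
We follow the template of Propositions~\ref{prop:P1} and~\ref{prop:P2}. First we derive~\eqref{eq:P3_regularized} by an exact calculation for the smooth stationary approximants $X^\varepsilon$ of Lemma~\ref{lem:mollifier}. Then we pass to the limit $\varepsilon\downarrow 0$. For $i\neq j$ only the nested pairing is valid for the word $(i,j,j,i)$. So at fixed $\varepsilon>0$ the Isserlis--Wick formula~\citep[Thm.~1.28]{Janson1997} gives
\[
\mathbb{E}\bigl[\langle (i,j,j,i), S^{(4)}(\tilde{\mathbf{X}}^\varepsilon)_{0,\Delta}\rangle\bigr]=\int_{0<t_1<t_2<t_3<t_4<\Delta} R''_{\varepsilon,i}(t_4-t_1)\,R''_{\varepsilon,j}(t_3-t_2)\,dt.
\]
The two sign factors $(-1)$ from the densities $-R''$ cancel. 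For $H>1/2$ the same computation can be run directly on $R$, since $R'(0^+)=0$ and $R''$ is locally integrable.

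\textbf{Step 1 (inner $j$-pair).} We integrate $t_3$ over $(t_2,t_4)$. This gives $R'_{\varepsilon,j}(t_4-t_2)-R'_{\varepsilon,j}(0)$, and the boundary term vanishes identically because $R_\varepsilon$ is even (Lemma~\ref{lem:mollifier}(i)). This is the only step where the IBP argument reaches $\tau=0$.

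\textbf{Step 2 (change of variables).} Substitute $a:=t_4-t_1$ and $w:=t_4-t_2$, so that $0<w<a$. For fixed $(a,w)$, $t_4$ ranges over $(a,\Delta)$, which has length $\Delta-a$. This yields
\[
\int_{0<w<a<\Delta}(\Delta-a)\,R''_{\varepsilon,i}(a)\,R'_{\varepsilon,j}(w)\,dw\,da .
\]

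\textbf{Step 3 (IBP in $a$).} We use
\[
\int_w^\Delta(\Delta-a)R''_{\varepsilon,i}(a)\,da=-(\Delta-w)R'_{\varepsilon,i}(w)+R_{\varepsilon,i}(\Delta)-R_{\varepsilon,i}(w),
\]
in which all arguments are strictly positive. Substituting, the expectation splits as
\[
-\int_0^\Delta(\Delta-w)R'_{\varepsilon,i}(w)R'_{\varepsilon,j}(w)\,dw\;+\;\int_0^\Delta R'_{\varepsilon,j}(w)\bigl(R_{\varepsilon,i}(\Delta)-R_{\varepsilon,i}(w)\bigr)\,dw .
\]

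\textbf{Step 4 (IBP in $w$ on the second term).} Integrate by parts with antiderivative $R_{\varepsilon,j}(w)-R_{\varepsilon,j}(0)$. The boundary term $\bigl[(R_{\varepsilon,j}-R_{\varepsilon,j}(0))(R_{\varepsilon,i}(\Delta)-R_{\varepsilon,i})\bigr]_0^\Delta$ vanishes at both endpoints: at $0$ the first factor is zero, and at $\Delta$ the second factor is zero. The second term therefore becomes $\int_0^\Delta R'_{\varepsilon,i}(t)\bigl[R_{\varepsilon,j}(t)-R_{\varepsilon,j}(0)\bigr]dt$. This is exactly~\eqref{eq:P3_regularized} with $R_\varepsilon$ in place of $R$.

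\textbf{Step 5 (limit $\varepsilon\downarrow 0$).} By Lemma~\ref{lem:mollifier}(iii) we have $R'_\varepsilon\to R'$ pointwise on $(0,\Delta]$, $R_\varepsilon\to R$ uniformly, and the uniform envelopes $|R'_\varepsilon(\tau)|\le C\tau^{2H-1}$ and $\|R_\varepsilon\|_\infty\le R(0)$ hold. The first integrand is then dominated by $C\Delta\,w^{4H-2}$, which is integrable exactly because $H>1/4$ (as in Remark~\ref{rem:P2_conv}). The second integrand is dominated by $2CR(0)\,t^{2H-1}$. Dominated convergence therefore carries the right-hand side to~\eqref{eq:P3_regularized}. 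Lemma~\ref{lem:mollifier}(ii) identifies the limit of the left-hand side with $\mathbb{E}[S^{(4)}_{ijji}]$.

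\textbf{Expected obstacle.} The main point needing care is Step~1 together with the bookkeeping in Steps~2--3. One must check that no other boundary term evaluates $R'_\varepsilon$ at $0$, or at an argument whose $\varepsilon$-limit is singular but not covered by the envelope. Step~1 is the only place where this happens, and evenness of $R_\varepsilon$ kills that term for every $\varepsilon$. Every other step only requires the envelope bounds above.
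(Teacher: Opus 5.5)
Your proof is correct and follows the paper's approach. You apply Isserlis--Wick to the mollified process $X^\varepsilon$, the only boundary term at the origin is $R'_{\varepsilon,j}(0)=0$ by evenness, and you pass to the limit by dominated convergence with the envelopes $w^{4H-2}$ and $t^{2H-1}$ together with Lemma~\ref{lem:mollifier}(ii).

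The only difference is the order of integration. The paper integrates out $t_1$ first, which is a plain fundamental-theorem step giving $R'_i(t_4)-R'_i(t_4-t_2)$, then integrates $t_3$, and splits the two terms directly by Fubini. Your order ($t_3$ first, then the $(a,w)$ substitution) instead needs the two integrations by parts of Steps~3--4, which are harmless at fixed $\varepsilon$.
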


\begin{proof}
For $H > 1/2$, the classical simplex integral is well-defined. Integrating over $t_1 \in (0, t_2)$ gives $\int_0^{t_2} R''_i(t_4 - t_1)\,dt_1 = R'_i(t_4) - R'_i(t_4 - t_2)$, where $t_4 - t_1 \in [t_4 - t_2,\, t_4]$ stays strictly positive. The remaining integration over $t_3 \in (t_2, t_4)$ involves $R''_j(t_3 - t_2)$ with argument reaching zero at $t_3 = t_2$; the boundary term vanishes since $R'(\tau) = O(\tau^{2H-1}) \to 0$ as $\tau \to 0^+$ (Lemma~\ref{lem:fOU_cov}). Direct IBP thus yields $P_3 = \iint_{0 < t_2 < t_4 < \Delta} [R'_i(t_4) - R'_i(t_4 - t_2)] R'_j(t_4 - t_2) \, dt_2 \, dt_4$. Substituting $u = t_4 - t_2$ and splitting: the first term gives $\int_0^\Delta R'_i(t_4)\int_0^{t_4} R'_j(u)\,du\,dt_4 = \int_0^\Delta R'_i(t)[R_j(t) - R_j(0)]\,dt$; the second gives $-\iint_{0<t_2<t_4<\Delta} R'_i(t_4-t_2)R'_j(t_4-t_2)\,dt_2\,dt_4$, and switching the order of integration (for fixed $u \in (0,\Delta)$, $t_4$ ranges from $u$ to $\Delta$) yields $-\int_0^\Delta (\Delta-u)R'_i(u)R'_j(u)\,du$, giving~\eqref{eq:P3_regularized}.

For general $H \in (1/4,1)$, the identical computation applies to the smooth stationary approximation $X^\varepsilon$ of Lemma~\ref{lem:mollifier} at every $\varepsilon > 0$ --- the $t_3$-boundary term is now \emph{exactly} $R_{\varepsilon,j}'(0) = 0$, by evenness --- giving~\eqref{eq:P3_regularized} with $(R_{\varepsilon,i}, R_{\varepsilon,j})$ in place of $(R_i, R_j)$. The first integrand is dominated uniformly in $\varepsilon$ by $C\,t^{2H-1}\cdot 2R(0) \in L^1(0,\Delta)$ and the second by $C\,\Delta\,w^{4H-2} \in L^1(0,\Delta)$ for $H > 1/4$ (Lemma~\ref{lem:mollifier}(iii)); pointwise convergence, dominated convergence, and Lemma~\ref{lem:mollifier}(ii) yield~\eqref{eq:P3_regularized}.
\end{proof}

\begin{remark}[The diagonal atom at $H = 1/2$]
\label{rem:H_half_atom}
At $H = 1/2$ (stationary OU), $R''$ is integrable near the origin, and it is tempting to evaluate $P_1$ and $P_3$ as classical simplex integrals against the smooth density $-R''$, retaining the integration-by-parts boundary terms $R'(0^+) = -\sigma^2/2 \neq 0$. This is \emph{incorrect}: the mixed second derivative of the increment covariance $\mathcal{K}$ is not the pointwise density $-R''(|t-s|)$ alone but the measure $-R''(|t-s|)\,ds\,dt - 2R'(0^+)\,\delta(t-s)\,ds\,dt$, whose atomic diagonal component ($-2R'(0^+) = \sigma^2$, the quadratic-variation density of the OU martingale part) is invisible to the pointwise density. The smooth approximants of Lemma~\ref{lem:mollifier} have $R_\varepsilon'(0) = 0$ exactly, and the covariance mass they concentrate near $\tau = 0$ accounts precisely for the would-be boundary terms; in the limit the boundary contributions cancel against the atom, and the closed forms of Propositions~\ref{prop:P1}--\ref{prop:P3} hold at $H = 1/2$ as well. Consistently, the closed forms~\eqref{eq:P1_regularized},~\eqref{eq:P2_regularized},~\eqref{eq:P3_regularized} satisfy the shuffle identity --- the sum of $\mathbb{E}[S^{(4)}_w]$ over the six shuffles of $(i,i)$ and $(j,j)$ equals $\mathbb{E}[S^{(2)}_{ii}]\,\mathbb{E}[S^{(2)}_{jj}] = v_i v_j$ --- identically in $R$, whereas the boundary-term variants violate it.
\end{remark}

\begin{remark}[$P_3$ convergence requires $H > 1/4$]
\label{rem:P3_conv}
The analysis below concerns the right-hand side of~\eqref{eq:P3_regularized}. The second integral $\int_0^\Delta (\Delta - w) R'_i(w) R'_j(w)\,dw$ contains the product $R'_i(w)R'_j(w) \sim w^{4H-2}$. As with Pairing~2, convergence requires:
\begin{equation}
4H - 2 > -1 \quad \Longleftrightarrow \quad H > 1/4.
\end{equation}
The first integral $\int_0^\Delta R'_i(t)[R_j(t) - R_j(0)]\,dt$ involves $R'_i(t)[R_j(t) - R_j(0)] \sim t^{2H-1} \cdot t^{2H} = t^{4H-1}$, which converges for all $H > 0$. However, the divergence of the second integral means that~\eqref{eq:P3_regularized} is ill-defined for $H \le 1/4$.
\end{remark}

\subsection{Summary: Complete Level-4 Formulas}
\label{subsec:level4_summary}

\begin{theorem}[Level-4 expected signature for stationary fOU]
\label{thm:level4_complete}
For the centered fOU increment process $\tilde{X}$ with independent coordinates, the level-4 expected signature (via~\citealp[Thm.~2.3, Def.~3.3]{CassFerrucci2024}) is well-defined and finite for $H > 1/4$, and for every $H \in (1/4, 1)$ and distinct coordinates $i \neq j$ it is given by the closed forms
\begin{align}
\mathbb{E}[S^{(4)}_{iijj}] &= P_1^{(iijj)} = \iint_{s + w < \Delta,\, s,w > 0} R'_i(s) R'_j(w) \, ds \, dw, \label{eq:summary_P1} \\[6pt]
\mathbb{E}[S^{(4)}_{ijij}] &= P_2^{(ijij)} = \iint_{0 < t_2 < t_3 < \Delta} [R'_i(t_3) - R'_i(t_3-t_2)][R'_j(\Delta-t_2) - R'_j(t_3-t_2)] \, dt_2 \, dt_3, \label{eq:summary_P2} \\[6pt]
\mathbb{E}[S^{(4)}_{ijji}] &= P_3^{(ijji)} = \int_0^\Delta R'_i(t)[R_j(t) - R_j(0)] \, dt - \int_0^\Delta (\Delta-w) R'_i(w) R'_j(w) \, dw. \label{eq:summary_P3}
\end{align}
For the diagonal word $(i,i,i,i)$, all three pairings contribute:
\begin{equation}
\mathbb{E}[S^{(4)}_{iiii}] = P_1^{(iiii)} + P_2^{(iiii)} + P_3^{(iiii)},
\end{equation}
where $P_k^{(iiii)}$ denote the closed-form expressions~\eqref{eq:summary_P1}--\eqref{eq:summary_P3} evaluated with $R_i = R_j$; for the diagonal word only the \emph{sum} of the three pairing contributions is identified (see the preamble of Appendix~\ref{subsec:regularization}), which is all that is used here and in Section~\ref{sec:numerical_results}.

For $H \le 1/4$, the integrals defining $P_2$ and $P_3$ diverge (Remarks~\ref{rem:P2_conv} and~\ref{rem:P3_conv}) and this regime falls outside the Cass--Ferrucci $H > 1/4$ framework; whether alternative renormalization schemes extend the definition below $H = 1/4$ is an open question.

For numerical computation, either the closed forms above or a direct Riemann--Stieltjes discretization of the Cass--Ferrucci approximants of~\citep[Def.~3.3, Prop.~3.9]{CassFerrucci2024} may be used. The experiments in Section~\ref{sec:numerical_results} use the closed forms for $H > 1/2$ and the Riemann--Stieltjes discretization, Richardson-extrapolated over a geometric sequence of meshes, for $P_1, P_3$ at $H \le 1/2$, where, for $H < 1/2$, the singularity of $R'$ at the origin makes direct quadrature of the closed forms numerically delicate (single-pass summation converges only slowly there, so extrapolation over meshes recovers the accuracy); the agreement of the two routes where both are applied, and the shuffle identity of Remark~\ref{rem:H_half_atom}, serve as cross-checks.
\end{theorem}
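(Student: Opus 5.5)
The statement to prove is Theorem~\ref{thm:level4_complete}. It is essentially an assembly result, so I would prove it by collecting the pairing-wise identities already established and checking that they exhaust $\mathbb{E}[S^{(4)}_w]$.

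\textbf{Step 1: well-definedness and finiteness.} For $H\in(1/4,1)$, Lemma~\ref{lem:CF24_hypotheses} verifies the Cass--Ferrucci standing hypotheses for the increment kernel $\mathcal{K}$. Then \citet[Thm.~2.3]{CassFerrucci2024} gives a finite chaos expansion of $\langle w,S^{(4)}(\tilde{\mathbf X})_{0,\Delta}\rangle$ in $L^2(\Omega)$. Its expectation is the zeroth-chaos term, which is finite. By Theorem~\ref{thm:wick}, this term equals a sum over the valid pairings $\mathcal P_{ijkl}$.

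\textbf{Step 2: off-diagonal words $i\neq j$.} Only the three words $(i,i,j,j)$, $(i,j,i,j)$, $(i,j,j,i)$ appear in the statement. By the pairing-validity list (P1)--(P3), each of these words admits exactly one valid pairing. Hence $\mathbb{E}[S^{(4)}_w]$ equals that single pairing contribution, and I would invoke:
\begin{itemize}
\item Proposition~\ref{prop:P1} for $(i,i,j,j)$, giving \eqref{eq:summary_P1};
\item Proposition~\ref{prop:P2} for $(i,j,i,j)$, giving \eqref{eq:summary_P2};
\item Proposition~\ref{prop:P3} for $(i,j,j,i)$, giving \eqref{eq:summary_P3}.
\end{itemize}
Each of these propositions was proved on all of $(1/4,1)$ through the stationary mollification of Lemma~\ref{lem:mollifier}. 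There, part (ii) identifies the limit of the expectation and part (iii) supplies dominated convergence on the closed forms.

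\textbf{Step 3: the diagonal word $(i,i,i,i)$.} This is the one step needing care, because all three pairings are valid. I would repeat the mollified computation at fixed $\varepsilon$. The classical Isserlis--Wick formula for the smooth process $X^\varepsilon$ writes $\mathbb{E}[\langle (i,i,i,i),S^{(4)}(\tilde{\mathbf X}^\varepsilon)\rangle]$ as the sum of the three simplex integrals against $-R_{\varepsilon,i}''$. The same integrations by parts as in Propositions~\ref{prop:P1}--\ref{prop:P3} then apply term by term, with the boundary terms vanishing because $R'_{\varepsilon}(0)=0$. This yields the sum of the three closed forms evaluated at $R_{\varepsilon,i}=R_{\varepsilon,j}$. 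The envelopes of Lemma~\ref{lem:mollifier}(iii) dominate each of the three terms uniformly in $\varepsilon$; the worst singularity is $w^{4H-2}$, which is integrable since $H>1/4$. So the sum converges to $P_1^{(iiii)}+P_2^{(iiii)}+P_3^{(iiii)}$, while Lemma~\ref{lem:mollifier}(ii) sends the left side to $\mathbb{E}[S^{(4)}_{iiii}]$. Only the sum is identified, which matches the statement, so no separate identification of the individual diagonal pairings is needed.

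\textbf{Step 4: the $H\le 1/4$ remark.} This follows from Remarks~\ref{rem:P2_conv} and~\ref{rem:P3_conv}, where the transverse singularity $u^{4H-2}$ is non-integrable. The remark on numerical computation is descriptive and needs no proof beyond citing \citet[Def.~3.3, Prop.~3.9]{CassFerrucci2024} for convergence of the approximants.
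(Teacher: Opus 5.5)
Your proposal is correct and follows the paper's route: you read the off-diagonal closed forms off Propositions~\ref{prop:P1}--\ref{prop:P3} (each of those words has exactly one valid pairing), and you handle the diagonal word as the preamble of Appendix~\ref{subsec:regularization} does, summing the three integrated-by-parts pairings for the mollified process at fixed $\varepsilon$ and passing to the limit jointly via Lemma~\ref{lem:mollifier}(ii)--(iii). The $H \le 1/4$ claim rests on Remarks~\ref{rem:P2_conv} and~\ref{rem:P3_conv} in both your argument and the paper's.
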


\begin{proof}
The closed forms are Propositions~\ref{prop:P1},~\ref{prop:P2}, and~\ref{prop:P3}; the diagonal word follows by summing the three pairings in Theorem~\ref{thm:wick}. Divergence for $H \le 1/4$ follows from the convergence analysis in Remarks~\ref{rem:P2_conv} and~\ref{rem:P3_conv}.
\end{proof}

\subsection{Proof of Proposition~\ref{prop:fOU_covariance_decay}}
\label{subsec:proof_fOU_covariance_decay}

The semimartingale case $H = 1/2$ is handled first, via the direct Gaussian $\alpha$-mixing criterion; the $H \neq 1/2$ case is then assembled from Lemmas~\ref{lem:R_tails}--\ref{lem:CM_integral_rep}, followed by finite-level rough-factorial normalization, the adjacent-block ($h=1$) Cauchy--Schwarz rescue, and the multi-dimensional Wick-independence reduction.

\emph{Case $H = 1/2$.} For each coordinate $i$, the stationary OU process $X^i$ is a centered Gaussian Markov process with $\Corr(X^i_0,X^i_t)=e^{-\theta_i t}$. For stationary Gaussian processes, the $\alpha$-mixing coefficient is sandwiched by the maximal correlation coefficient $\rho_{\max}$ via $\alpha(t) \le \rho_{\max}(t) \le 2\pi\alpha(t)$~\citep[eq.~(1.9), Ch.~IV.1]{IbragimovRozanov1978}, and the theorem in~\citet[Thm.~2, Ch.~IV.2]{IbragimovRozanov1978} is stated for the operator norm $\|B_\tau\|$, which in the stationary Gaussian setting equals the maximal correlation coefficient $\rho_{\max}(\tau)$ of~\citet[eq.~(1.9), Ch.~IV.1]{IbragimovRozanov1978}. For the scalar OU process the Markov property reduces this to $\rho_{\max,i}(t) = |\Corr(X^i_0, X^i_t)| = e^{-\theta_i t}$, so $\alpha_{X^i}(t) \le e^{-\theta_i t}$ for each $i$. Since the coordinates are independent, $\mathcal{F}_{\le s}^{X} = \bigvee_{i=1}^d \mathcal{F}_{\le s}^{X^i}$ and the union bound for $\alpha$-mixing across independent $\sigma$-fields~\citep[Thm.~5.1(a)]{Bradley2005} gives
\[
\alpha_X(t) \;\le\; \sum_{i=1}^d \alpha_{X^i}(t) \;\le\; d\,e^{-\theta_{\min} t},
\qquad \theta_{\min}:=\min_{1\le i\le d}\theta_i > 0,
\]
establishing Assumption~\ref{ass:A}(E) with $C_X = d$ and $\lambda_X = \theta_{\min}$.

\emph{Case $H \neq 1/2$.} By Lemma~\ref{lem:chaos_expansion_block}, the two-block covariance admits the chaos expansion~\eqref{eq:chm_chaos_sum}, whose second (permutation-expanded) form consists of plain pairings. Lemma~\ref{lem:R_tails} provides the tail bounds~\eqref{eq:R_uniform_tail}; Lemma~\ref{lem:CM_integral_rep} then gives the integral representation~\eqref{eq:CM_integral_rep} for each plain pairing in that expansion, uniformly over the slot permutations $\pi$; and Lemma~\ref{lem:symm_ext_Linf} provides the simplex-respecting $L^\infty$ bound~\eqref{eq:symm_ext_Linf} on the CF24 kernels, invariant under $\pi$. Applying these bounds and Fubini to the right-hand side of~\eqref{eq:CM_integral_rep}, uniformly in $\pi \in S_j$:
\begin{align}
\bigl|\bigl\langle f_{0,w,j}^{(m)}, (f_{h,w,j}^{(m)})^{\pi}\bigr\rangle_{\mathcal{H}^{\otimes j}}\bigr|
&\;\le\; C_{\mathrm{CF}}^2\,\Delta^{2(m-j)H} \prod_{\ell=1}^{j} \int_{I_0 \times I_h} |R''(v_\ell - u_\ell)|\,du_\ell\,dv_\ell \nonumber\\
&\;\le\; C_{\mathrm{CF}}^2\,\Delta^{2(m-j)H} \bigl[C_R\,((h-1)\Delta)^{2H-4}\,\Delta^2\bigr]^{j},
\label{eq:CM_cross_bound_step}
\end{align}
where the last line uses the pointwise bound $|R''(v-u)| \le C_R((h-1)\Delta)^{2H-4}$ (from~\eqref{eq:R_uniform_tail} and monotonicity of $\tau^{2H-4}$ on $[(h-1)\Delta, (h+1)\Delta]$) multiplied by the rectangle area $|I_0 \times I_h| = \Delta^2$. Using $((h-1)\Delta)^{j(2H-4)} \le ((h-1)\Delta)^{2H-4}\,\Delta^{(j-1)(2H-4)}$ for $h \ge 2$ and absorbing $j$-dependent $\Delta$-powers into a constant:
\begin{equation}
\label{eq:CM_cross_bound_final}
\bigl|\bigl\langle f_{0,w,j}^{(m)}, (f_{h,w,j}^{(m)})^{\pi}\bigr\rangle_{\mathcal{H}^{\otimes j}}\bigr|
\;\le\; K_{m,j}(H, \theta, \sigma, \Delta)\,((h-1)\Delta)^{2H-4}, \qquad h \ge 2,\quad j \ge 1,\quad \pi \in S_j.
\end{equation}

\emph{Assembly.} Define the level-$m$ diagonal covariance sum
\[
c_{h,m} := \sum_{|w|=m} c_{h,m}(w).
\]
Substituting~\eqref{eq:CM_cross_bound_final} into the permutation-expanded form of~\eqref{eq:chm_chaos_sum} --- each chaos level contributing $j!$ plain pairings, every one obeying the same envelope --- and summing over the $d^m$ words and the finitely many admissible chaos levels:
\begin{equation}
\label{eq:chm_assembled}
|c_{h,m}| \;\le\; d^m\,\overline{K}_m\,((h-1)\Delta)^{2H-4},
\qquad \overline{K}_m := \sum_{\substack{1 \le j \le m\\ j \equiv m (2)}} j!\,K_{m,j}.
\end{equation}

\emph{Finite-level rough-factorial normalization.} Assumption~\ref{ass:Aprime}(P$'$) requires only a finite level-dependent constant $\widetilde{C}_m$, not a sharp growth estimate in $m$. For each fixed $m$, define
\[
\widetilde{C}_m \;:=\; \overline{K}_m\,\bigl(\Lambda_p(m/p)!\bigr)^2\,\Delta^{-2m\beta} \;<\; \infty,
\]
which is finite by finiteness of $\overline{K}_m$ established above. The rough-factorial denominator is therefore a normalization of finite level constants, not an additional uniform-in-$m$ estimate. Then~\eqref{eq:chm_assembled} rewrites as
\begin{equation}
\label{eq:chm_assumption_form}
|c_{h,m}| \;\le\; \frac{d^m\widetilde{C}_m}{\bigl(\Lambda_p(m/p)!\bigr)^2}\Delta^{2m\beta}((h-1)\Delta)^{2H-4} \quad \text{for all } h \ge 2,
\end{equation}
establishing~\eqref{eq:chm_assumption_form} and the matching display in Proposition~\ref{prop:fOU_covariance_decay}. Since $h-1 \ge (1+h)/3$ for $h \ge 2$ and $2H-4 < 0$, one has $(h-1)^{2H-4} \le 3^{4-2H}\,(1+h)^{-(4-2H)}$, so Assumption~\ref{ass:Aprime}(P$'$) holds with $\nu = 4-2H$ after absorbing the fixed factors $3^{4-2H}$ and $\Delta^{2H-4}$ into $\widetilde{C}_m$.

\emph{Adjacent blocks ($h = 1$).} The bound~\eqref{eq:chm_assumption_form} applies only for $h \ge 2$, since $((h-1)\Delta)^{2H-4}$ diverges at $h=1$. For $h=1$, Cauchy--Schwarz and Lemma~\ref{lem:level_moments} give $|c_{1,m}| \le c_{0,m} \le d^m C^{\mathrm{sig}}_{m,2}\bigl(\Lambda_p(m/p)!\bigr)^{-2}\Delta^{2m\beta}$. Redefining $\widetilde{C}_m \leftarrow \max(\widetilde{C}_m,\,2^\nu C^{\mathrm{sig}}_{m,2})$ ensures that both the $h \ge 2$ envelope already proved and the $h=1$ envelope $(1+h)^{-\nu} = 2^{-\nu}$ are satisfied simultaneously.

\emph{Multi-dimensional extension.} For each word $w = (w_1, \ldots, w_m) \in \{1, \ldots, d\}^m$, let $a_i(w) := \#\{r : w_r = i\}$, so $\sum_{i=1}^d a_i(w) = m$. For a fixed chaos level $j$, the kernel $f_{k,w,j}^{(m)}$ is a finite sum over the surviving Wick pairings $P$ of the $m-j$ contracted positions (pairings across distinct coordinates vanish by independence), and the component indexed by $P$ lives in $\bigotimes_{i=1}^d \mathcal{H}_i^{\otimes a_i(w,P)}$, where $a_i(w,P)$ counts the free positions of $P$ carrying coordinate $i$ and $\sum_{i=1}^d a_i(w,P) = j$; distinct pairings may leave distinct profiles $(a_i(w,P))_{i=1}^d$, so the profile attaches to the component, not to $(w,j)$. By orthogonality of $\mathcal{H}_1, \ldots, \mathcal{H}_d$ inside $\mathcal{H} = \bigoplus_{i=1}^d \mathcal{H}_i$, two components pair to zero in $\mathcal{H}^{\otimes j}$ unless their slotwise coordinate labels --- in particular their profiles --- coincide; likewise for every slot-permuted pairing, a slot permutation relabeling slots without changing profiles. Each surviving component pairing therefore factors coordinatewise into the scalar pairings already bounded above, and summing over the finitely many (at most $(m!)^2$) component pairs enlarges $\widetilde{C}_m$ by a combinatorial factor only. This completes the proof of Proposition~\ref{prop:fOU_covariance_decay}. \qed

% -------------------------------------------------------------------
% References
% -------------------------------------------------------------------
\bibliographystyle{imsart-nameyear}
\bibliography{references}

\begin{thebibliography}{33}
% BibTex style file: imsart-nameyear.bst, 2017-11-03
% Default style options (sort=1,type=nameyear).
% Used options (sort=1,type=nameyear).

\bibitem[\protect\citeauthoryear{Boedihardjo
  et~al.}{2016}]{BoedihardjoGengLyonsYang2016}
\begin{barticle}[author]
\bauthor{\bsnm{Boedihardjo},~\bfnm{Horatio}\binits{H.}},
  \bauthor{\bsnm{Geng},~\bfnm{Xi}\binits{X.}},
  \bauthor{\bsnm{Lyons},~\bfnm{Terry}\binits{T.}} \AND
  \bauthor{\bsnm{Yang},~\bfnm{Danyu}\binits{D.}}
(\byear{2016}).
\btitle{The signature of a rough path: {U}niqueness}.
\bjournal{Advances in Mathematics}
\bvolume{293}
\bpages{720--737}.
\bdoi{10.1016/j.aim.2016.02.011}
\end{barticle}
\endbibitem

\bibitem[\protect\citeauthoryear{Bosq}{2000}]{Bosq2000}
\begin{bbook}[author]
\bauthor{\bsnm{Bosq},~\bfnm{Denis}\binits{D.}}
(\byear{2000}).
\btitle{Linear Processes in Function Spaces: Theory and Applications}.
\bseries{Lecture Notes in Statistics}
\bvolume{149}.
\bpublisher{Springer-Verlag}.
\bdoi{10.1007/978-1-4612-1154-9}
\end{bbook}
\endbibitem

\bibitem[\protect\citeauthoryear{Bradley}{2005}]{Bradley2005}
\begin{barticle}[author]
\bauthor{\bsnm{Bradley},~\bfnm{Richard~C.}\binits{R.~C.}}
(\byear{2005}).
\btitle{Basic Properties of Strong Mixing Conditions. A Survey and Some Open
  Questions}.
\bjournal{Probability Surveys}
\bvolume{2}
\bpages{107--144}.
\bdoi{10.1214/154957805100000104}
\end{barticle}
\endbibitem

\bibitem[\protect\citeauthoryear{Brockwell and
  Davis}{2016}]{BrockwellDavis2016}
\begin{bbook}[author]
\bauthor{\bsnm{Brockwell},~\bfnm{Peter~J.}\binits{P.~J.}} \AND
  \bauthor{\bsnm{Davis},~\bfnm{Richard~A.}\binits{R.~A.}}
(\byear{2016}).
\btitle{Introduction to Time Series and Forecasting},
\bedition{Third} ed.
\bpublisher{Springer}.
\bdoi{10.1007/978-3-319-29854-2}
\end{bbook}
\endbibitem

\bibitem[\protect\citeauthoryear{Cass and Ferrucci}{2024}]{CassFerrucci2024}
\begin{barticle}[author]
\bauthor{\bsnm{Cass},~\bfnm{Thomas}\binits{T.}} \AND
  \bauthor{\bsnm{Ferrucci},~\bfnm{Emilio}\binits{E.}}
(\byear{2024}).
\btitle{On the {W}iener chaos expansion of the signature of a {G}aussian
  process}.
\bjournal{Probability Theory and Related Fields}
\bvolume{189}
\bpages{909--947}.
\bdoi{10.1007/s00440-023-01255-z}
\end{barticle}
\endbibitem

\bibitem[\protect\citeauthoryear{Chen}{1954}]{Chen1954}
\begin{barticle}[author]
\bauthor{\bsnm{Chen},~\bfnm{Kuo-Tsai}\binits{K.-T.}}
(\byear{1954}).
\btitle{Iterated Integrals and Exponential Homomorphisms}.
\bjournal{Proceedings of the London Mathematical Society}
\bvolume{s3-4}
\bpages{502--512}.
\bdoi{10.1112/plms/s3-4.1.502}
\end{barticle}
\endbibitem

\bibitem[\protect\citeauthoryear{Cheridito, Kawaguchi and
  Maejima}{2003}]{CheriditoKawaguchiMaejima2003}
\begin{barticle}[author]
\bauthor{\bsnm{Cheridito},~\bfnm{Patrick}\binits{P.}},
  \bauthor{\bsnm{Kawaguchi},~\bfnm{Hideyuki}\binits{H.}} \AND
  \bauthor{\bsnm{Maejima},~\bfnm{Makoto}\binits{M.}}
(\byear{2003}).
\btitle{Fractional {O}rnstein--{U}hlenbeck processes}.
\bjournal{Electronic Journal of Probability}
\bvolume{8}
\bpages{1--14}.
\bdoi{10.1214/EJP.v8-125}
\end{barticle}
\endbibitem

\bibitem[\protect\citeauthoryear{Chevyrev and
  Kormilitzin}{2016}]{ChevyrevKormilitzin2016}
\begin{bmisc}[author]
\bauthor{\bsnm{Chevyrev},~\bfnm{Ilya}\binits{I.}} \AND
  \bauthor{\bsnm{Kormilitzin},~\bfnm{Andrey}\binits{A.}}
(\byear{2016}).
\btitle{A Primer on the Signature Method in Machine Learning}.
\end{bmisc}
\endbibitem

\bibitem[\protect\citeauthoryear{Chevyrev and Lyons}{2016}]{ChevyrevLyons2016}
\begin{barticle}[author]
\bauthor{\bsnm{Chevyrev},~\bfnm{Ilya}\binits{I.}} \AND
  \bauthor{\bsnm{Lyons},~\bfnm{Terry}\binits{T.}}
(\byear{2016}).
\btitle{Characteristic functions of measures on geometric rough paths}.
\bjournal{Annals of Probability}
\bvolume{44}
\bpages{4049--4082}.
\bdoi{10.1214/15-AOP1068}
\end{barticle}
\endbibitem

\bibitem[\protect\citeauthoryear{Chevyrev and
  Oberhauser}{2022}]{Chevyrev2022Signature}
\begin{barticle}[author]
\bauthor{\bsnm{Chevyrev},~\bfnm{Ilya}\binits{I.}} \AND
  \bauthor{\bsnm{Oberhauser},~\bfnm{Harald}\binits{H.}}
(\byear{2022}).
\btitle{Signature moments to characterize laws of stochastic processes}.
\bjournal{Journal of Machine Learning Research}
\bvolume{23}
\bpages{1--42}.
\end{barticle}
\endbibitem

\bibitem[\protect\citeauthoryear{Coutin and Qian}{2002}]{CoutinQian2002}
\begin{barticle}[author]
\bauthor{\bsnm{Coutin},~\bfnm{Laure}\binits{L.}} \AND
  \bauthor{\bsnm{Qian},~\bfnm{Zhongmin}\binits{Z.}}
(\byear{2002}).
\btitle{Stochastic analysis, rough path analysis and fractional {B}rownian
  motions}.
\bjournal{Probability Theory and Related Fields}
\bvolume{122}
\bpages{108--140}.
\bdoi{10.1007/s004400100158}
\end{barticle}
\endbibitem

\bibitem[\protect\citeauthoryear{Cuchiero, Gazzani and
  Svaluto-Ferro}{2023}]{Cuchiero2022Signature}
\begin{barticle}[author]
\bauthor{\bsnm{Cuchiero},~\bfnm{Christa}\binits{C.}},
  \bauthor{\bsnm{Gazzani},~\bfnm{Guido}\binits{G.}} \AND
  \bauthor{\bsnm{Svaluto-Ferro},~\bfnm{Sara}\binits{S.}}
(\byear{2023}).
\btitle{Signature-based models: {T}heory and calibration}.
\bjournal{SIAM Journal on Financial Mathematics}
\bvolume{14}
\bpages{910--957}.
\bdoi{10.1137/22M1512338}
\end{barticle}
\endbibitem

\bibitem[\protect\citeauthoryear{Davies and Harte}{1987}]{DaviesHarte1987}
\begin{barticle}[author]
\bauthor{\bsnm{Davies},~\bfnm{R.~B.}\binits{R.~B.}} \AND
  \bauthor{\bsnm{Harte},~\bfnm{D.~S.}\binits{D.~S.}}
(\byear{1987}).
\btitle{Tests for {H}urst effect}.
\bjournal{Biometrika}
\bvolume{74}
\bpages{95--101}.
\bdoi{10.1093/biomet/74.1.95}
\end{barticle}
\endbibitem

\bibitem[\protect\citeauthoryear{Davydov}{1968}]{Davydov1968}
\begin{barticle}[author]
\bauthor{\bsnm{Davydov},~\bfnm{Yu.~A.}\binits{Y.~A.}}
(\byear{1968}).
\btitle{Convergence of Distributions Generated by Stationary Stochastic
  Processes}.
\bjournal{Theory of Probability \& Its Applications}
\bvolume{13}
\bpages{691--696}.
\bdoi{10.1137/1113086}
\end{barticle}
\endbibitem

\bibitem[\protect\citeauthoryear{Dedecker et~al.}{2007}]{Dedecker2007Weak}
\begin{bbook}[author]
\bauthor{\bsnm{Dedecker},~\bfnm{J{\'e}r{\^o}me}\binits{J.}},
  \bauthor{\bsnm{Doukhan},~\bfnm{Paul}\binits{P.}},
  \bauthor{\bsnm{Lang},~\bfnm{Gabriel}\binits{G.}},
  \bauthor{\bsnm{Le{\'o}n~R.},~\bfnm{Jos{\'e}~R.}\binits{J.~R.}},
  \bauthor{\bsnm{Louhichi},~\bfnm{Sana}\binits{S.}} \AND
  \bauthor{\bsnm{Prieur},~\bfnm{Cl{\'e}mentine}\binits{C.}}
(\byear{2007}).
\btitle{Weak Dependence: With Examples and Applications}.
\bseries{Lecture Notes in Statistics}
\bvolume{190}.
\bpublisher{Springer}.
\bdoi{10.1007/978-0-387-69952-3}
\end{bbook}
\endbibitem

\bibitem[\protect\citeauthoryear{Doukhan and
  Louhichi}{1999}]{DoukhanLouhichi1999}
\begin{barticle}[author]
\bauthor{\bsnm{Doukhan},~\bfnm{Paul}\binits{P.}} \AND
  \bauthor{\bsnm{Louhichi},~\bfnm{Sana}\binits{S.}}
(\byear{1999}).
\btitle{A new weak dependence condition and applications to moment
  inequalities}.
\bjournal{Stochastic Processes and their Applications}
\bvolume{84}
\bpages{313--342}.
\bdoi{10.1016/S0304-4149(99)00055-1}
\end{barticle}
\endbibitem

\bibitem[\protect\citeauthoryear{Fernique}{1975}]{Fernique1975}
\begin{bincollection}[author]
\bauthor{\bsnm{Fernique},~\bfnm{Xavier}\binits{X.}}
(\byear{1975}).
\btitle{Regularit\'{e} des trajectoires des fonctions al\'{e}atoires
  gaussiennes}.
In \bbooktitle{\'{E}cole d'\'{E}t\'{e} de Probabilit\'{e}s de Saint-Flour
  IV--1974}.
\bseries{Lecture Notes in Mathematics}
\bvolume{480}
\bpages{1--96}.
\bpublisher{Springer}.
\bdoi{10.1007/BFb0080190}
\end{bincollection}
\endbibitem

\bibitem[\protect\citeauthoryear{Friz and Hairer}{2020}]{FrizHairer2020}
\begin{bbook}[author]
\bauthor{\bsnm{Friz},~\bfnm{Peter~K.}\binits{P.~K.}} \AND
  \bauthor{\bsnm{Hairer},~\bfnm{Martin}\binits{M.}}
(\byear{2020}).
\btitle{A Course on Rough Paths}.
\bseries{Universitext}.
\bpublisher{Springer}.
\bdoi{10.1007/978-3-030-41556-3}
\end{bbook}
\endbibitem

\bibitem[\protect\citeauthoryear{Friz and Riedel}{2014}]{FrizRiedel2014}
\begin{barticle}[author]
\bauthor{\bsnm{Friz},~\bfnm{Peter~K.}\binits{P.~K.}} \AND
  \bauthor{\bsnm{Riedel},~\bfnm{Sebastian}\binits{S.}}
(\byear{2014}).
\btitle{Convergence rates for the full {G}aussian rough paths}.
\bjournal{Annales de l'Institut Henri Poincar\'{e} Probabilit\'{e}s et
  Statistiques}
\bvolume{50}
\bpages{154--194}.
\bdoi{10.1214/12-AIHP507}
\end{barticle}
\endbibitem

\bibitem[\protect\citeauthoryear{Friz and Victoir}{2010}]{FrizVictoir2010}
\begin{bbook}[author]
\bauthor{\bsnm{Friz},~\bfnm{Peter~K.}\binits{P.~K.}} \AND
  \bauthor{\bsnm{Victoir},~\bfnm{Nicolas~B.}\binits{N.~B.}}
(\byear{2010}).
\btitle{Multidimensional Stochastic Processes as Rough Paths}.
\bseries{Cambridge Studies in Advanced Mathematics}.
\bpublisher{Cambridge University Press}.
\bdoi{10.1017/CBO9780511845079}
\end{bbook}
\endbibitem

\bibitem[\protect\citeauthoryear{Gatheral, Jaisson and
  Rosenbaum}{2018}]{GatheralJaissonRosenbaum2018}
\begin{barticle}[author]
\bauthor{\bsnm{Gatheral},~\bfnm{Jim}\binits{J.}},
  \bauthor{\bsnm{Jaisson},~\bfnm{Thibault}\binits{T.}} \AND
  \bauthor{\bsnm{Rosenbaum},~\bfnm{Mathieu}\binits{M.}}
(\byear{2018}).
\btitle{Volatility is rough}.
\bjournal{Quantitative Finance}
\bvolume{18}
\bpages{933--949}.
\bdoi{10.1080/14697688.2017.1393551}
\end{barticle}
\endbibitem

\bibitem[\protect\citeauthoryear{Gehringer and Li}{2022}]{GehringerLi2022}
\begin{barticle}[author]
\bauthor{\bsnm{Gehringer},~\bfnm{Johann}\binits{J.}} \AND
  \bauthor{\bsnm{Li},~\bfnm{Xue-Mei}\binits{X.-M.}}
(\byear{2022}).
\btitle{Functional Limit Theorems for the Fractional {O}rnstein--{U}hlenbeck
  Process}.
\bjournal{Journal of Theoretical Probability}
\bvolume{35}
\bpages{426--456}.
\bdoi{10.1007/s10959-020-01044-7}
\end{barticle}
\endbibitem

\bibitem[\protect\citeauthoryear{Hambly and Lyons}{2010}]{HamblyLyons2010}
\begin{barticle}[author]
\bauthor{\bsnm{Hambly},~\bfnm{Ben~M.}\binits{B.~M.}} \AND
  \bauthor{\bsnm{Lyons},~\bfnm{Terry~J.}\binits{T.~J.}}
(\byear{2010}).
\btitle{Uniqueness for the signature of a path of bounded variation and the
  reduced path group}.
\bjournal{Annals of Mathematics}
\bvolume{171}
\bpages{109--167}.
\bdoi{10.4007/annals.2010.171.109}
\end{barticle}
\endbibitem

\bibitem[\protect\citeauthoryear{Ibragimov and
  Rozanov}{1978}]{IbragimovRozanov1978}
\begin{bbook}[author]
\bauthor{\bsnm{Ibragimov},~\bfnm{I.~A.}\binits{I.~A.}} \AND
  \bauthor{\bsnm{Rozanov},~\bfnm{Yu.~A.}\binits{Y.~A.}}
(\byear{1978}).
\btitle{Gaussian Random Processes}.
\bseries{Applications of Mathematics}
\bvolume{9}.
\bpublisher{Springer}.
\bdoi{10.1007/978-1-4612-6275-6}
\end{bbook}
\endbibitem

\bibitem[\protect\citeauthoryear{Janson}{1997}]{Janson1997}
\begin{bbook}[author]
\bauthor{\bsnm{Janson},~\bfnm{Svante}\binits{S.}}
(\byear{1997}).
\btitle{Gaussian Hilbert Spaces}.
\bseries{Cambridge Tracts in Mathematics}
\bvolume{129}.
\bpublisher{Cambridge University Press}.
\bdoi{10.1017/CBO9780511526169}
\end{bbook}
\endbibitem

\bibitem[\protect\citeauthoryear{Kidger et~al.}{2020}]{Kidger2020Neural}
\begin{binproceedings}[author]
\bauthor{\bsnm{Kidger},~\bfnm{Patrick}\binits{P.}},
  \bauthor{\bsnm{Morrill},~\bfnm{James}\binits{J.}},
  \bauthor{\bsnm{Foster},~\bfnm{James}\binits{J.}} \AND
  \bauthor{\bsnm{Lyons},~\bfnm{Terry}\binits{T.}}
(\byear{2020}).
\btitle{Neural controlled differential equations for irregular time series}.
In \bbooktitle{Advances in Neural Information Processing Systems}
\bvolume{33}
\bpages{6696--6707}.
\end{binproceedings}
\endbibitem

\bibitem[\protect\citeauthoryear{Lucchese, Pakkanen and
  Veraart}{2025}]{Lucchese2025Learning}
\begin{binproceedings}[author]
\bauthor{\bsnm{Lucchese},~\bfnm{Lorenzo}\binits{L.}},
  \bauthor{\bsnm{Pakkanen},~\bfnm{Mikko~S.}\binits{M.~S.}} \AND
  \bauthor{\bsnm{Veraart},~\bfnm{Almut E.~D.}\binits{A.~E.~D.}}
(\byear{2025}).
\btitle{Learning with expected signatures: {T}heory and applications}.
In \bbooktitle{Proceedings of the 42nd International Conference on Machine
  Learning}.
\bseries{PMLR}
\bvolume{267}
\bpages{40995--41055}.
\end{binproceedings}
\endbibitem

\bibitem[\protect\citeauthoryear{Lyons}{1998}]{Lyons1998}
\begin{barticle}[author]
\bauthor{\bsnm{Lyons},~\bfnm{Terry~J.}\binits{T.~J.}}
(\byear{1998}).
\btitle{Differential equations driven by rough signals}.
\bjournal{Revista Matem{\'a}tica Iberoamericana}
\bvolume{14}
\bpages{215--310}.
\bdoi{10.4171/RMI/240}
\end{barticle}
\endbibitem

\bibitem[\protect\citeauthoryear{Neuenkirch, Tindel and
  Unterberger}{2010}]{NeuenkirchTindelUnterberger2010}
\begin{barticle}[author]
\bauthor{\bsnm{Neuenkirch},~\bfnm{Andreas}\binits{A.}},
  \bauthor{\bsnm{Tindel},~\bfnm{Samy}\binits{S.}} \AND
  \bauthor{\bsnm{Unterberger},~\bfnm{J\'{e}r\'{e}mie}\binits{J.}}
(\byear{2010}).
\btitle{Discretizing the fractional {L}\'evy area}.
\bjournal{Stochastic Processes and their Applications}
\bvolume{120}
\bpages{223--254}.
\bdoi{10.1016/j.spa.2009.10.007}
\end{barticle}
\endbibitem

\bibitem[\protect\citeauthoryear{Nualart}{2006}]{Nualart2006}
\begin{bbook}[author]
\bauthor{\bsnm{Nualart},~\bfnm{David}\binits{D.}}
(\byear{2006}).
\btitle{The {M}alliavin Calculus and Related Topics},
\bedition{second} ed.
\bseries{Probability and its Applications}.
\bpublisher{Springer-Verlag}.
\bdoi{10.1007/3-540-28329-3}
\end{bbook}
\endbibitem

\bibitem[\protect\citeauthoryear{Passeggeri}{2020}]{Passeggeri2020}
\begin{barticle}[author]
\bauthor{\bsnm{Passeggeri},~\bfnm{Riccardo}\binits{R.}}
(\byear{2020}).
\btitle{On the signature and cubature of the fractional {B}rownian motion for
  $H > 1/2$}.
\bjournal{Stochastic Processes and their Applications}
\bvolume{130}
\bpages{1226--1257}.
\bdoi{10.1016/j.spa.2019.04.013}
\end{barticle}
\endbibitem

\bibitem[\protect\citeauthoryear{Rio}{1993}]{Rio1993}
\begin{barticle}[author]
\bauthor{\bsnm{Rio},~\bfnm{Emmanuel}\binits{E.}}
(\byear{1993}).
\btitle{Covariance inequalities for strongly mixing processes}.
\bjournal{Annales de l'Institut Henri Poincar\'{e} Probabilit\'{e}s et
  Statistiques}
\bvolume{29}
\bpages{587--597}.
\end{barticle}
\endbibitem

\bibitem[\protect\citeauthoryear{Wong and Zakai}{1965}]{WongZakai1965}
\begin{barticle}[author]
\bauthor{\bsnm{Wong},~\bfnm{Eugene}\binits{E.}} \AND
  \bauthor{\bsnm{Zakai},~\bfnm{Moshe}\binits{M.}}
(\byear{1965}).
\btitle{On the Relation between Ordinary and Stochastic Differential
  Equations}.
\bjournal{International Journal of Engineering Science}
\bvolume{3}
\bpages{213--229}.
\bdoi{10.1016/0020-7225(65)90045-5}
\end{barticle}
\endbibitem

\end{thebibliography}

\end{document}